\newcommand{\ccc}{c}
\newcommand{\Q}{\mathbb Q}
\newcommand{\nin}{\notin} \newcommand{\DS}{\displaystyle}
\DeclareMathOperator{\kerneu}{ker_N}
\newcommand{\kerneun}[2]{{\rm ker}_N^{#1,#2}}
\makeatletter \newcommand{\VEC}[2][r]{
  \gdef\@VORNE{1} \left(\hskip-\arraycolsep%
    \begin{array}{#1}\vekSp@lten{#2}\end{array}%
    \hskip-\arraycolsep\right)}
\newcommand{\eps}{\varepsilon}
\renewcommand{\Re}{\mathop{\mathrm{Re}}}
\renewcommand{\Im}{\mathop{\mathrm{Im}}}
\newcommand{\Rel}{a}
\newcommand{\IP}[2]{\ensuremath{\left\langle\kern-0.25ex\left\langle\kern-0.25ex\left\langle #1, #2 \right\rangle\kern-0.25ex\right\rangle\kern-0.25ex\right\rangle}}
\newcommand{\NNNORM}[1]{\ensuremath{\left\vert\kern-0.25ex\left\vert\kern-0.25ex\left\vert #1 \right\vert\kern-0.25ex\right\vert\kern-0.25ex\right\vert}}
\newenvironment{pdeq}{ \left\{ \begin{aligned}}{\end{aligned}\right.}
\newcommand{\vp}[1]{\ensuremath{\left(#1\right)}}
\newcommand{\calb}{{\mathcal B}}
\newcommand{\calc}{{\mathcal C}}
\newcommand{\cald}{{\mathcal D}}
\newcommand{\calf}{{\mathcal F}}
\newcommand{\calh}{{\mathcal H}}
\newcommand{\calk}{{\mathcal K}}
\newcommand{\call}{{\mathcal L}}
\newcommand{\calm}{{\mathcal M}}
\newcommand{\calp}{{\mathcal P}}
\newcommand{\calq}{{\mathcal Q}}
\newcommand{\R}{\mathbb{R}}
\newcommand{\Z}{\mathbb{Z}}
\newcommand{\C}{\mathbb{C}}
\newcommand{\N}{\mathbb{N}}
\DeclareMathOperator{\supp}{supp}
\DeclareMathOperator{\dist}{dist}
\DeclareMathOperator{\sgn}{sgn}
\newcommand{\restr}[1]{{_{|#1}}}
\newcommand{\set}[1]{\ensuremath{\{#1\}}}
\newcommand{\setc}[2]{\ensuremath{\left\{#1\ \colon \ #2\right\}}}
\newcommand{\seqN}[1]{\ensuremath{\set{#1}_{n=1}^\infty}}
\renewcommand{\d}{\mathrm{d}}
\newcommand{\DeltaB}{\Delta}
\newcommand{\norm}[1]{\lVert#1\rVert}
\newcommand{\labs}[1]{\left\lvert #1 \right\rvert}
\newcommand{\opnorm}[1]{{\lvert\kern-0.25ex\lvert\kern-0.25ex\lvert #1 \rvert\kern-0.25ex\rvert\kern-0.25ex\rvert}}
\newcommand{\WSR}[2]{W^{#1,#2}}
\newcommand{\CR}[1]{C^{#1}}
\newcommand{\LR}[1]{L^{#1}}
\newcommand{\LRloc}[1]{L^{#1}_{\mathrm{loc}}} 
 \newcommand{\CRi}{\CR \infty}
\newcommand{\CRci}{\CR \infty_\mathrm{c}}
\newcommand{\newCCtr}[2][d]{
\newcounter{#2}\setcounter{#2}{0}
\expandafter\xdef\csname kyedtheconst#2\endcsname{#1}
}
\newcommand{\Cc}[2][nolabel]{
\stepcounter{#2}
\expandafter\ensuremath{\csname kyedtheconst#2\endcsname_{\arabic{#2}}}
\ifthenelse{\equal{#1}{nolabel}}
{}
{\expandafter\xdef\csname kyedconst#1\endcsname
{\expandafter\ensuremath{\csname kyedtheconst#2\endcsname_{\arabic{#2}}}}}
}
\newcommand{\Ccn}[2][nolabel]{
\expandafter\ensuremath{\csname kyedtheconst#2\endcsname}
\ifthenelse{\equal{#1}{nolabel}}
{}
{\expandafter\xdef\csname kyedconst#1\endcsname
{\expandafter\ensuremath{\csname kyedtheconst#2\endcsname}}}
}
\newcommand{\Cclast}[1]{
\expandafter\ensuremath{\csname kyedtheconst#1\endcsname_{\arabic{#1}}}
}
\newcommand{\Ccllast}[1]{
\addtocounter{#1}{-1}
\expandafter\ensuremath{\csname kyedtheconst#1\endcsname_{\arabic{#1}}}
\addtocounter{#1}{1}
}
\newcommand{\const}[1]{
\expandafter{\ifcsname kyedconst#1\endcsname
  \csname kyedconst#1\endcsname
\else
  \errmessage{Undefined Kyedconstant #1.}%
\fi}
}
\newcolumntype{L}{>{\displaystyle} l <{}}
\newenvironment{TC} {\left \{\begin{array}{LL}} {\end{array} \right.}
\newcommand{\alp}{\alpha}
\newcommand{\gam}{\gamma}
\newcommand{\zet}{\zeta}
\newcommand{\kap}{\kappa}
\newcommand{\lam}{\lambda}
\renewcommand{\phi}{\varphi}
\newcommand{\sig}{\sigma}
\newcommand{\Gam}{\Gamma}
\newcommand{\Sig}{\Sigma}
\newcommand{\Ome}{\Omega}
\newcommand{\scl}[1]{{\mathfrak{s}}_{#1}}
\newcommand{\sclOme}[1]{\scl{#1}^{\Ome}}
\newcommand{\resconst}{\alp_1}
\newcommand{\rpconst}{\alp_0}
\newcommand{\NTL}[2]{\|#1\|_{L^2({#2})}}
\newcommand{\cciL}[1]{C_{\mathrm{c}}^{\infty}(#1)}
\newcommand{\NNN}[2]{\|#1\|_{#2}}
\newcommand{\skpL}[3]{\left(#1, #2\right)_{#3}}
\newcommand{\HS}[1]{H^{#1}_{\alp}}
\newcommand{\HM}[2]{[#1]_{#2,\alp}}
\newcommand{\HMalp}[3]{[#1]_{#2,#3}}
\newcommand{\HN}[2]{|#1|_{#2,\alp}}
\newcommand{\HNalp}[3]{|#1|_{#2,#3}}
\newcommand{\HSalp}[2]{H^{#1}_{#2}}
\newcommand{\HMS}[3]{\left[#1\right]_{#2,#3}}
\newcommand{\HSpol}{\calp} 
\newcommand{\HSL}[2]{\HS{#1}({#2})}
\newcommand{\HSoo}[1]{\mathring{H}^{#1}_{\alp}(\p'\Ome)}
\newcommand{\HSooalp}[2]{\mathring{H}^{#1}_{#2}(\p'\Ome)}
\newcommand{\VS}[1]{H^{#1}_{\alp}(\Ome)}
\newcommand{\VSalp}[2]{H^{#1}_{#2}(\Ome)}
\newcommand{\VSoo}[1]{\mathring{H}^{#1}_{\alp}(\Ome)}
\newcommand{\VSooalp}[2]{\mathring{H}^{#1}_{#2}(\Ome)}
\newcommand{\VSpol}{\calp^{\Ome}}
\newcommand{\PSOme}{\calp_{\Ome}}
\newcommand{\PHSoo}[1]{T\HSoo{k}}
\newcommand{\ZS}[1]{H^{#1}_{\alp}(\Ome)}
\newcommand{\ZM}[2]{[\hspace{-0.3ex}[#1]\hspace{-0.3ex}]_{#2,\alp}}
\newcommand{\ZMalp}[3]{[\hspace{-0.3ex}[#1]\hspace{-0.3ex}]_{#2,#3}}
\newcommand{\ZMM}[3]{[\hspace{-0.3ex}[#1]\hspace{-0.3ex}]_{#2,#3}}
\newcommand{\ZN}[2]{\|#1\|_{#2,\alp}}
\newcommand{\ZNA}[3]{\|#1\|_{#2,#3}}
\newcommand{\ZMS}[3]{[\hspace{-0.3ex}[#1]\hspace{-0.3ex}]_{#2,#3}}
\newcommand{\ZNS}[3]{\left\|#1\right\|_{#2,#3}}
\newcommand{\ZSoo}[1]{{\mathring H}^{#1}_{\alp}(\Ome)}
\newcommand{\ZSooalp}[2]{{\mathring H}^{#1}_{#2}(\Ome)}
\newcommand{\Zskp}[3]{\left(\kern-0.25ex\left(#1, #2\right)\kern-0.25ex\right)_{#3}}
\definecolor{darkcyan}{rgb}{0.0, 0.45, 0.95} 
\definecolor{lightgray}{rgb}{0.8, 0.8, 0.8} 
\definecolor{myviolet}{rgb}{1, 0.5, 0.5} 
\newcommand{\myviolet}{\color{myviolet}}
\newcommand{\NR}[1]{\small sd}
\newcounter{margcount} 
\renewcommand{\t}{\tilde}
\def\XXint#1#2#3{{\setbox0=\hbox{$#1{#2#3}{\int}$}
\vcenter{\hbox{$#2#3$}}\kern-.5\wd0}}
\theoremstyle{plain}
\newtheorem{theorem}{Theorem}[section]
\newtheorem{lemma}[theorem]{Lemma}
\newtheorem{definition}[theorem]{Definition}
\newtheorem{proposition}[theorem]{Proposition}
\newtheorem{corollary}[theorem]{Corollary}
\newtheorem{remark}[theorem]{Remark}
\newcommand{\dd}{\,\mathrm{d}}
\newcommand{\MOme}{\Omega}
\newcommand{\poly}{\ensuremath{\mathfrak{p}}}
\newcommand{\Mpot}{\ensuremath{\Phi}}
\newcommand{\vvv}{v}
\newcommand{\vv}{\mathfrak{v}}
\renewcommand{\ggg}{g}
\newcommand{\fff}{f}
\newcommand{\gggg}{\mathfrak{g}}
\newcommand{\ffff}{\mathfrak{f}}
\newcommand{\vf}{v}
\newcommand{\ang}{\theta}
\renewcommand{\hat}{\widehat}
\newcounter{MODUS}
\newcommand{\DETAILS}[1]{
  {\myviolet{$\langle\hspace{-0.4ex}\langle$ 
      #1$\rangle\hspace{-0.4ex}\rangle$
    }}
}
\newcommand{\DETAILS}[1]{}
\newcommand{\NODETAILS}[1]{}
\def\vekSp@lten#1{\xvekSp@lten#1;vekL@stLine;} \def\vekL@stLine{vekL@stLine} \def\xvekSp@lten#1;{\def\temp{#1}%
  \ifx\temp\vekL@stLine \else \ifnum\@VORNE=1\gdef\@VORNE{0} \else\@arraycr\fi%
  #1%
  \expandafter\xvekSp@lten \fi} \makeatother
\newcommand{\LL}{\mathcal L}
\newcommand{\SUS}{\subset}
\newcommand{\bet}{\beta}
\newcommand{\Zwischenrechnung}[1]{}
\newcommand{\pa}{\partial}
\newcommand{\p}{\partial}
\newcommand{\OL}{\overline}
\newcommand{\upref}[2]{\hspace{-0.8ex}\stackrel{\eqref{#1}}{#2}} 
\newcommand{\lupref}[2]{\hspace{0ex} \stackrel{\eqref{#1}}{#2}} 
\DeclareMathOperator*{\spann}{span}
\newcommand{\BS}{\backslash}
\DeclareOldFontCommand{\bf}{\normalfont\bfseries}{\mathbf}
\numberwithin{equation}{section} 
\begin{document}


\title{Well-Posedness and Regularity of the Heat Equation with Robin Boundary Conditions in the Two-Dimensional Wedge}

%

\author{ %
  Marco Bravin\footnote{Departmento de Matem\'atica Aplicada y Ciencias de la Computac\'{\i}on, E.T.S.I.~Industriales y de Telecomunicac\'{\i}on, Universidad de Cantabria, 39005 Santander, Spain.
} \and
  Manuel~V.~Gnann\footnote{Delft Institute of Applied Mathematics, Faculty of
    Electrical Engineering, Mathematics and Computer Sciences, Delft University
    of Technology, Mekelweg 4, 2628 CD Delft, Netherlands} \and %
  Hans Kn\"upfer\footnote{Institute of Mathematics and IWR, Heidelberg
    University, INF 205, 69120 Heidelberg, Germany} \and %
  Nader Masmoudi\footnote{Courant Institute, New York, USA, NY 10012, 251 Mercer
    St.  New York} \and %
  Floris~B.~Roodenburg$^\dagger$ \and %
  Jonas~Sauer\footnote{Corresponding author: \texttt{jonas.sauer@uni-jena.de}. Institute for Mathematics, Faculty of Mathematics and
    Computer Science, University of Jena, Ernst-Abbe-Platz 2, 07737 Jena,
    Germany} }

\maketitle
\abstract{Well-posedness and higher regularity of the heat equation with Robin boundary conditions in an unbounded two-dimensional wedge is established in an $\LR{2}$-setting of monomially weighted spaces.
A mathematical framework is developed which allows to obtain arbitrarily high regularity without a smallness assumption on the opening angle of the wedge.
The challenging aspect is that the resolvent problem exhibits two breakings of the scaling invariance, one in the equation and one in the boundary condition.
}


\section{Introduction}\label{sec:int}

We consider for some fixed $\gam \in (0,\infty)$ the inhomogeneous boundary value
problem
  \begin{align}   \label{bvp}%
    \begin{pdeq}
      \p_t U - \Delta U & =  F \qquad &&\text{in $\R_+\times \Ome$,}  \\
      \gam U + \pa_{\nu} U  &= G &&\text{on $\R_+\times \p' \Ome$,} \\
      U\restr{t=0} & = 0 && \text{on $\Ome$}. 
    \end{pdeq} 
  \end{align}
Here,  $\Ome$ (given in polar coordinates) is the two-dimensional wedge
\begin{align*}
  \Ome = \{ r(\cos \phi, \sin \phi) \ : \ r > 0, \phi \in  (0,\ang) \}
\end{align*}
for some given opening angle $\ang \in (0,2\pi)$, $\p'\Ome$ is the boundary of $\Ome$ without the tip $\{0\}\subset\R^2$ and
$\nu$ is the outer unit normal on $\p' \Ome$.
The functions $F=F(t,x)$ and $G=G(t,x)$ are given data, while the function $U=U(t,x)$ is unknown.
We note that there is an extensive literature on boundary value problems for elliptic operators on non--smooth domains, see e.g.~\cite{Brown-1994, Cos19, JeK95, MiM07, She07} and the references therein for general domains and \cite{CSW24, Grisvard2011, KozlovMazyaRossmann-Book} for wedge domains, where techniques based on the Mellin transform have proven to be successful.
Also parabolic boundary problems in the wedge have been studied extensively, see e.g.~\cite{Degtyarev-2010, Fro91, KLS21, Koz88, KoR20, Naz01, PrS07, Solonnikov-1984, FrS91} for a non-exhaustive list.
However, to our knowledge no particular attention is attributed to higher-order regularity in the case of non-scaling invariant problems as the one considered in \eqref{bvp}.
For a bounded domain, the terms with highest scaling are of leading order, while terms of lower lower scaling can be treated by perturbative methods.
This is not evident in the case of an unbounded domain and for a non-scaling invariant operator.
The application of the Mellin transform, as applied to the scaling invariant case, does not directly solve the problem in the inhomogeneous case.
We develop a framework to treat such problems.
For simplicity of the exposition, we consider as a model problem the heat equation with Robin boundary condition as the simplest model with inhomogeneous boundary conditions in the parabolic setting.

\medskip

The resolvent problem corresponding to \eqref{bvp} is coercive in the unweighted energy norm, see Lemma \ref{lem-coercive}.
However, one difficulty to obtain solutions with higher regularity is that the unweighted energy norm is not suitable for applying standard elliptic regularity theory as the domain is not smooth.
In fact, the Neumann Laplacian exhibits certain resonances, by which we mean non-trivial elements in the kernel of the Neumann Laplacian which possess a scaling in the radial variable, see Proposition \ref{prp-ellneumann}.
In order to avoid scalings of the involved seminorms which match those of the resonances, weighted norms are natural to use, cf. \cite{Kondratev-1967}.
Here, the weights are power weights in the distance to the tip of $\Omega$.
To get both existence of weak solutions as well as higher regularity, we work in intersection spaces where both weighted and unweighted norms are controlled.
This approach necessitates a careful analysis, since the transition from weak solutions to classical solutions in this setting is surprisingly non--trivial.
This is related to the fact that the spaces of test functions associated with the intersection type spaces are naturally sum-type spaces. 
In order to show surjectivity in the test function space, we solve a test function problem which is similar to the original problem but has a reduced complexity in terms of scaling invariance.
This method was used in related settings in previous works \cite{BGKMRS24, GnW25}.
In this paper we further develop and highlight this technique for the model \eqref{bvp}.
In particular, we account for all opening angles which do correspond to a resonance\footnote{For this reason, we include the condition $\tfrac\pi\ang\notin\Q$ in Theorems \ref{thm-ex} and \ref{thm-higher}, which guarantees that for every $q\in \calq$ there are unique $j\in \N_0$ and $\ell \in \Z$ such that $q=j+\tfrac\pi\ang\ell$, where $\calq$ is defined in Definition \ref{def-nonhom-norm}.
    In practice we only work with a bounded subset $\tilde\calq\subset \calq$, and the condition $\tfrac\pi\ang\notin\Q$ could be weakenend by only demanding that for every $q\in \tilde\calq$ there are unique $j\in \N_0$ and $\ell \in \Z$ such that $q=j+\tfrac\pi\ang\ell$.} via the quantity $\dist(\alp+1,\frac\pi\ang\Z)$, where $r^{-\alp}$ is a monomial weight in the radial variable $r$.
As mentioned above, the non-scaling invariance of our boundary condition does not allow to  use directly the method from \cite{KozlovMazyaRossmann-Book}.
Instead we use an iterative approach where we successively obtain higher regularity.
The test function in this scheme is used to obtain classical solutions in our intersection spaces as a starting point for the induction argument.

\medskip

Our first main result provides well-posedness of the problem \eqref{bvp} for right hand sides with base regularity in the framework of weighted, fractional Sobolev norms.
These norms have a monomial weight $r^{-\alp}$ in the radial variable $r$ and an exponential weight $e^{-\bet t}$ in time.
We refer to Section \ref{sec:norms} for the precise definitions of these spaces.
Let us emphasize that the unweighted spaces are not suited for higher regularity due to resonances.
Therefore the condition \eqref{ass-alp-0} is natural: The first condition excludes the appearance of such resonances, while the second condition ensures that tools related to Hardy's inequality are available.
Note that we only consider the case of negative exponents $\alp\in (-1,0)$ for the monomial weight.
This is due to the fact that we first construct a variational solution in unweighted spaces.
The transition to weighted spaces then necessitates local control of the weighted norms by the unweighted ones, which translates to negative weights.
We emphasize that the estimates are uniform in the Robin parameter $\gam\in (0,\infty)$, but that the norms depend on $\gam$ in a natural way dictated by scaling.
Indeed, problem \eqref{bvp} and correspondingly Theorems \ref{thm-ex} and \ref{thm-higher} can be reduced to $\gam=1$ by means of the scaled quantities $\t U:=U\circ S_\gam$, $\t F:=\gam^{-2} F\circ S_\gam$, and $\t G:=\gam^{-1} G\circ S_\gam$, where $S_\gam(t,x):=(t/\gam^2,x/\gam)$.
It is this scaling which underlies all norms, and we again refer to Section \ref{sec:norms} for the precise definitions.
\begin{theorem}[Well-Posedness]\label{thm-ex} %
  Let $\ang \in (0,2\pi)$ be such that $\frac \pi \ang \nin \Q$.
  Let $\alp_1\in(0,\infty)$ and suppose that $\alp \in (-1,0)$ satisfies 
  \begin{align} \label{ass-alp-0} %
    \dist(\ang(\alp+1),\pi \Z) \geq \alp_1 \qquad  \text{and} \qquad
    \ang |\alp| \ \geq \ \alp_1.
  \end{align}
  Let $\gamma\in(0,\infty)$ and $\bet \geq \gam^2$ and let
  \begin{align*}
   F &\in \mathbb{F}:=L_{\bet}^2(\ZS{0}) \cap H_{\bet,0}^{\frac\alp 2}(L^2(\Ome)), \\
   G &\in \mathbb{G}:=L_{\bet}^2(\HS{\frac12}(\p'\Ome))\cap H_{\bet,0}^{\frac 14}(\HS{0}(\p'\Ome)) \cap H_{\bet,0}^{\frac\alp 2+\frac14} (L^2(\p \Ome)).
  \end{align*}
  Then there exists a unique solution $U\in \mathbb{E}:=H_{\bet,0}^1(\ZS{0})\cap L_{\bet}^2(\ZS{2})$ to \eqref{bvp}, and it fulfills
  \begin{align*}
    \NNN{U}{\mathbb{E}} + \gam^\frac12\NNN{U}{H_{\bet,0}^{\frac12}(\HS{0}(\p'\Ome))} \lesssim_{\resconst,\ang} \NNN{F}{\mathbb{F}}+\NNN{G}{\mathbb{G}}.
  \end{align*}
  \end{theorem}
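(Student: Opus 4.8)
The plan is to pass from the parabolic problem \eqref{bvp} to a family of resolvent problems via the Laplace transform in time, to solve these uniformly in the spectral parameter in the intersection of unweighted and weighted spaces, and then to transfer the estimates back. First I would use the scaling $S_\gam$ recorded above to reduce to $\gam=1$ and $\bet\geq 1$; since all norms are built through precisely this scaling the reduction is lossless and the constants inherit their dependence only on $\resconst$ and $\ang$. Applying the Fourier transform in $t$ to $e^{-\bet t}U$ and writing $s=\bet+i\eta$, problem \eqref{bvp} turns into
\[
 s\hat U-\Delta\hat U=\hat F\ \text{ in }\Ome,\qquad \hat U+\pa_\nu\hat U=\hat G\ \text{ on }\p'\Ome,
\]
parametrised by $\Re s=\bet\geq 1$. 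Under this transform the fractional-in-time components of $\mathbb F$ and $\mathbb G$ become the $|s|^{\alp/2}$-, $|s|^{1/4}$- and $|s|^{\alp/2+1/4}$-weighted $L^2$-norms in $\eta$ of $\hat F$ and $\hat G$; hence once a resolvent estimate is proved uniformly in $s$ and carrying exactly these powers of $|s|$, Plancherel's theorem in $\eta$ returns the asserted bound on $U\in\mathbb E$, including the trace term $\gam^{1/2}\NNN{U}{H^{1/2}_{\bet,0}(\HS{0}(\p'\Ome))}$ from the corresponding resolvent trace estimate.

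For fixed $s$ with $\Re s\geq 1$ I would first construct a weak solution in the \emph{unweighted} space $H^1(\Ome)$ by the Lax--Milgram lemma, the coercivity of the underlying sesquilinear form being exactly Lemma~\ref{lem-coercive}; this already yields $|s|\,\NNN{\hat U}{L^2}+\NNN{\hat U}{H^1}\lesssim\text{(data)}$ uniformly in $s$. The harder step is to upgrade this to $\hat U\in\ZS{2}$ with the $|s|$-weighted estimate. Away from the tip of $\Ome$ this is standard elliptic regularity up to a smooth boundary portion; near the tip one localises and applies a Mellin-transform analysis in the spirit of Kondrat'ev and \cite{KozlovMazyaRossmann-Book}. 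The operator pencil of the model problem at the tip is invertible along the relevant line in the Mellin plane precisely under the first condition in \eqref{ass-alp-0}, $\dist(\ang(\alp+1),\pi\Z)\geq\resconst$, which removes the resonances of the Neumann Laplacian identified in Proposition~\ref{prp-ellneumann}; the second condition $\ang|\alp|\geq\resconst$ makes the Hardy-type inequalities available, so that the resolvent term $s\hat U$ and the zeroth-order Robin term --- the two breakings of scaling invariance --- are absorbed as lower-order perturbations. The restriction $\alp\in(-1,0)$ enters here: the weighted $\ZS{2}$-norm near the tip must be controlled locally by the unweighted $H^1$-norm already in hand, which forces a negative weight. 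The hypothesis $\tfrac\pi\ang\notin\Q$ guarantees that the exponents appearing in the Mellin expansions form an unambiguous discrete set (the set $\calq$ of Definition~\ref{def-nonhom-norm}), so that the splitting into singular and regular parts is well defined.

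The remaining, and genuinely delicate, point is that $\mathbb F$, $\mathbb G$ and $\mathbb E$ are intersection-type spaces, so the test-function spaces are sum-type spaces and one cannot directly test the weak equation against the functions needed to identify the weak solution with a strong one. Following the device of \cite{BGKMRS24, GnW25}, I would solve an auxiliary \emph{test-function problem} --- of the same structure as the resolvent problem above but with one fewer breaking of scaling invariance, hence amenable to the Mellin method of \cite{KozlovMazyaRossmann-Book} directly --- whose solvability supplies enough admissible test functions to justify all integrations by parts and thereby transfer weak to classical solutions within the intersection spaces; this classical solution is also the starting point for the induction leading to Theorem~\ref{thm-higher}. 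Assembling the uniform-in-$s$ resolvent estimates with Plancherel in $\eta$ gives existence and the claimed bound, and uniqueness follows by applying the coercivity estimate of Lemma~\ref{lem-coercive} to the difference of two solutions. I expect the combination of the $s$-uniform weighted $\ZS{2}$-estimate with the test-function construction to be the main obstacle: the resonance geometry must be tracked carefully through \eqref{ass-alp-0}, and the passage from weak to classical solutions in the intersection spaces --- which the authors themselves flag as surprisingly non-trivial --- is where the real work lies.
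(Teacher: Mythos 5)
Your high-level architecture --- reduce to $\gam=1$ by scaling, Laplace-transform in time to a family of resolvent problems with $\Re\mu=\bet$, prove a resolvent estimate carrying the exact powers $|\mu|^{\alp/2}$, $|\mu|^{1/4}$, $|\mu|^{\alp/2+1/4}$, and return via Plancherel and Paley--Wiener --- is exactly the paper's, and your identification of the roles of the two conditions in \eqref{ass-alp-0} and of $\tfrac{\pi}{\ang}\notin\Q$ is accurate. However, the middle of your argument contains a genuine gap and an internal inconsistency. You propose a \emph{standard} $H^1(\Ome)$ Lax--Milgram scheme followed by an upgrade to $\ZS{2}$ by ``localising near the tip and applying a Mellin analysis in which the resolvent term $\mu u$ and the Robin zeroth-order term are absorbed as lower-order perturbations.'' That absorption is precisely the step the paper identifies as unavailable: in the unbounded wedge a dyadic rescaling sends $\mu u\mapsto 2^{2k}\mu u$ and $\gam u\mapsto 2^k\gam u$, so the two scale-breakings cannot be treated perturbatively uniformly over all scales. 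The paper's substitute is a \emph{global} weighted energy identity obtained by testing the equation with $C_0|\mu|^{\alp}\OL u + r^{-2\alp}\OL u$ (Lemmas \ref{lem-alptest} and \ref{lem-weisol}); this controls $|\mu|^{\alp/2}$-weighted quantities such as $|\mu|\ZM{u}{0}$ and $\ZM{\Delta u}{0}$ \emph{first}, after which the scaling-invariant pure Neumann problem is solved by Mellin analysis with $\t f=-\Delta u$ and $\t g=g-u$ as genuine data (Theorem \ref{thm-res-base-reg} via Proposition \ref{prp-ellneumann}), not as perturbations. Without this intermediate weighted estimate your pencil analysis has no way to close.

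Second, your account of the test-function problem does not fit the weak formulation you chose. For a standard $H^1$ Lax--Milgram form, $\CRci(\Ome)$ test functions are directly admissible and no auxiliary problem is needed to identify the weak solution with a distributional one. The reason the paper needs the test-function problem \eqref{tf-resolvent} is that its Lax--Milgram is performed in the \emph{second-order} space $H$ of \eqref{def-space-H} with the modified form $B(u,v)=\int_\Ome(\mu u-\Delta u)(c_0-c_1(r\pa_r)^2+c_2|\mu|r^2-\pa_\phi^2)\OL v\,\dd x+\dots$ of Definition \ref{lem-bilinear}: the solution is then classical by construction, but one must show that the image of $v\mapsto(c_0-c_1(r\pa_r)^2+c_2|\mu|r^2-\pa_\phi^2)v$ contains $\CRci(\Ome)$ in order to recover the PDE (Proposition \ref{prp-testfun}, Lemma \ref{lem-ex-sol}). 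Finally, your proposal never addresses the singular part of the solution: $\ZS{2}=\VSooalp{2}{\alp}\oplus\zet\VSpol_{2,\alp}$, and landing in this space requires extracting the polynomial $\poly_u$ from the kernel $\kerneun{0}{\sclOme{2+\alp}}$, estimating $\|\poly_u\|_{\VSpol_{2,\alp}}$ via Lemma \ref{lem-inter1-wedge}, and ruling out the $\ln r$ contribution by the finiteness of $\ZMalp{u}{1}{0}$ --- all of which is carried out in Theorem \ref{thm-res-base-reg} and Corollary \ref{js600} and is essential to the stated conclusion.
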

  Our second main result shows that the solution exhibits higher regularity if the data does.
  Roughly speaking, we show that regularity of order $\ell\in\N$ for the data translates into regularity order $\ell+2$ for the solution.
  To avoid resonance effects it is natural to make the assumptions in terms of the scaling $\sclOme{\sigma}:=\sigma-1$.
  More precisely, we assume that $(\alp,\ell) \in (-1,0)\times\N$ satisfies 
\begin{align} \label{ass-alp}  %
  \min \big \{ \dist(\ang \sclOme{j+\alp+2},\pi \Z), \ang|\sclOme{j+\alp+1}|, \ang|\sclOme{j+\alp}| \big \} \ \geq \ \alp_1
\end{align}
 for some  $\alpha_1 > 0$ and all $j\in\N_0$ with $j\le \ell$.
\begin{theorem}[Higher Regularity]\label{thm-higher} %
  Let $\ang \in (0,2\pi)$ be such that $\frac \pi \ang \nin \Q$.  Suppose that there are
  $\rpconst,\resconst\in (0,\infty)$ such that $(\alp,\ell)\in (-1,0)\times \N$
  satisfies \eqref{ass-alp} and $|\sclOme{\ell+\alp+2}|\le \rpconst$.
  Let $\gamma\in(0,\infty)$ and $\bet \geq \gamma^2$.
  Suppose that
  \begin{align*}
   F &\in \mathbb{F}_\ell:=\bigcap_{j=0}^\ell H_{\bet,0}^{\frac j 2}(\VS{\ell-2}), \\
   G &\in \mathbb{G}_{\ell+\frac12}:=\bigcap_{j=0}^\ell H_{\bet,0}^{\frac j 2}(\HSL{\ell-j+\frac 12}{\p' \Ome}) \cap H_{\bet,0}^{\frac12(\ell+\frac12)}(\HS{0}(\p'\Ome))\cap H_{\bet,0}^{\frac12(\ell-\frac12)}(\HS{1}(\p'\Ome)).
  \end{align*}
  Then there exists a unique solution
  $U\in \mathbb{E}_{\ell+2}:=\bigcap_{j=0}^{\ell+2} H_{\bet,0}^{\frac j2}(\VS{\ell+2-j})$ to \eqref{bvp}, and it fulfills
  \begin{align*}
    \NNN{U}{\mathbb{E}_{\ell+2}}\lesssim_{\resconst,\rpconst,\ang} \NNN{F}{\mathbb{F}_\ell}+\NNN{G}{\mathbb{G}_{\ell+\frac12}}.
     \end{align*}
\end{theorem}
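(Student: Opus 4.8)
The plan is to prove the estimate by induction on $\ell$, with Theorem~\ref{thm-ex} playing the role of the base level: it provides the solution in the intersection space $\mathbb E$, which coincides up to norm equivalence with $\mathbb E_2$ (interpolating the components $L^2_\bet(\ZS 2)$ and $H^1_{\bet,0}(\ZS 0)$ recovers the intermediate component $H^{1/2}_{\bet,0}(\VS 1)$). By the scaling $S_\gam$ recalled before the statement it suffices to treat $\gam=1$, and the hypothesis $|\sclOme{\ell+\alp+2}|\le\rpconst$ forces $\ell$ to be a priori bounded, so the constants in the induction may be allowed to depend on $\ell$. For the inductive step we assume the conclusion at level $\ell-1$; since $\mathbb F_\ell\subset\mathbb F_{\ell-1}$, $\mathbb G_{\ell+1/2}\subset\mathbb G_{\ell-1/2}$, and \eqref{ass-alp} at level $\ell$ contains \eqref{ass-alp} at level $\ell-1$ together with its smallness bound, the inductive hypothesis hands us $U\in\mathbb E_{\ell+1}$, and it remains to gain one order, i.e.\ the components $\VS{\ell+2}$ and $H^{1/2}_{\bet,0}(\VS{\ell+1})$ of $\mathbb E_{\ell+2}$.

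Laplace transforming \eqref{bvp} in $t$ reduces the full estimate to a resolvent estimate of the form $\|\res\widehat U\|_{\VS{k}}+\|\widehat U\|_{\VS{k+2}}\lesssim\|\widehat F\|_{\VS{k}}+(\text{boundary terms})$, uniform for $\Re\res\gtrsim\bet$ and for $\gam\in(0,\infty)$ — the $\gam$-uniformity because the relevant scaling is built into the norms — after which transforming back produces, from the two left-hand terms, the time-regular and the spatially regular components of $\mathbb E_{\ell+2}$, the intermediate ones following by interpolation. A related mechanism is differentiation of \eqref{bvp} in $t$: then $\pa_tU$ solves the same problem with data $(\pa_tF,\pa_tG)$ two levels lower — the subscript $0$ in the scales $H^{\bullet/2}_{\bet,0}$ encoding exactly the compatibility that makes this admissible — so a lower instance of the theorem controls the time-heavy components. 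For the spatial gain, the leading part of $\res-\Delta$ together with $\gam+\pa_\nu$ in the wedge scaling is the Neumann Laplacian, whose Mellin symbol — by Proposition~\ref{prp-ellneumann} — is invertible precisely away from the resonance set $\tfrac\pi\ang\Z$; the first inequality in \eqref{ass-alp}, $\dist(\ang\sclOme{j+\alp+2},\pi\Z)\ge\resconst$ for all $j\le\ell$, keeps every radial scaling met in the argument uniformly off this set, while $\ang|\sclOme{j+\alp+1}|\ge\resconst$ and $\ang|\sclOme{j+\alp}|\ge\resconst$ make the Hardy-type estimates for the weight $r^{-\alp}$ available. The two lower-scaling terms $\res\widehat U$ and $\gam\widehat U$ — the breakings of scaling invariance in the equation and in the boundary condition — cannot be absorbed directly on the unbounded domain and are instead incorporated by an iteration inside the intersection-type spaces, where the scaling-dictated and the monomially weighted norms are controlled simultaneously; this is the point at which the induction on $\ell$ feeds in.

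These manipulations have to be legitimated: one must know that the variational solution produced by Theorem~\ref{thm-ex}, and inductively the solution at each level, genuinely lies in the intersection spaces and may be differentiated in $t$ and localized for elliptic regularity. The obstruction is that the spaces of test functions dual to the intersection spaces are naturally \emph{sum}-type spaces, so surjectivity in the test function space is not automatic; as in \cite{BGKMRS24,GnW25} it is obtained by solving an auxiliary ``test function problem'', structurally similar to \eqref{bvp} but of reduced scaling complexity, carrying only one breaking of scaling invariance instead of two. Its solvability upgrades the weak solution to a classical solution in $\mathbb E_2$, which is the genuine starting point of the induction; uniqueness in $\mathbb E_{\ell+2}$ is inherited from the uniqueness already established in Theorem~\ref{thm-ex}.

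I expect the main obstacle to be the spatial step together with this weak-to-classical transition: perturbing off the scaling-invariant Neumann Laplacian in the presence of \emph{two} simultaneous breakings of scaling invariance while keeping all estimates uniform in $\res$ and $\gam$ and staying inside the intersection spaces, and, in parallel, solving the reduced-complexity test function problem with bounds sharp enough that the regularity gained at each step of the induction is genuine rather than merely formal.
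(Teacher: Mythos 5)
Your overall architecture --- reduce to $\gam=1$ by scaling, pass to the resolvent problem via the Laplace transform, use the test-function problem to upgrade the Lax--Milgram solution to a classical one, keep scalings away from $\frac{\pi}{\ang}\Z$ via \eqref{ass-alp}, and iterate elliptic regularity --- matches the paper's. The organization differs, though: the paper performs no induction on $\ell$ at the parabolic level and never differentiates \eqref{bvp} in $t$. Instead it proves a single resolvent estimate $\sum_{j=0}^{\ell+2}|\mu|^{j/2}\ZN{u}{\ell+2-j}\lesssim\cdots$ (Theorem \ref{thm-res-high}) by an induction on the \emph{spatial} regularity index at fixed $\mu$ (Proposition \ref{thm-res-high-smooth}), carrying all powers of $|\mu|$ along simultaneously, and then applies the inverse Laplace transform and Plancherel once; this produces every component $H_{\bet,0}^{j/2}(\VS{\ell+2-j})$ of $\mathbb{E}_{\ell+2}$ without any time differentiation or compatibility discussion.

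The genuine gap is the treatment of the singular (polynomial) parts. The conditions \eqref{ass-alp} only keep the \emph{endpoint} scalings $\sclOme{j+\alp+2}$ off the resonance set $\frac{\pi}{\ang}\Z$; they do not prevent resonances $\pi_k$ from lying \emph{inside} the strip swept out when one passes from scaling $\sclOme{2+\alp}$ to $\sclOme{\ell+2+\alp}$. Each such crossing contributes, by Proposition \ref{prp-ellneumann}(ii), an element of $\kerneun{\sig_1}{\sig_2}$ --- a residue of the Mellin-space Green's function --- so the solution lands only in $\VS{k}=\VSooalp{k}{\alp}\oplus\zet\VSpol_{k,\alp}$ and not in the regular space. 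Your proposal never constructs or estimates these contributions, yet they are a substantial part of the proof: one must (a) show the $\ln r$ kernel element is absent (done in Theorem \ref{thm-res-base-reg} by a divergence argument on $\int_0^1|r\pa_r\ln r|^2\,\frac{\d r}{r}$), (b) estimate the polynomial coefficients via the two-scale interpolation Lemma \ref{lem-inter1-wedge}, and (c) solve a separate hierarchy of ODEs in $\phi$ for the polynomial parts of the \emph{data} (Proposition \ref{thm-res-pol}), since $f$ and $g$ themselves live in direct sums with polynomial summands. Without (a)--(c) the iteration stalls at the first resonance. A secondary issue: the base of your induction needs $\mathbb{F}_\ell\subset\mathbb{F}$ and $\mathbb{G}_{\ell+\frac12}\subset\mathbb{G}$, which is not immediate because $\mathbb{F}$ and $\mathbb{G}$ contain \emph{unweighted} components such as $H_{\bet,0}^{\alp/2}(L^2(\Ome))$; the paper controls the corresponding resolvent term $|\mu|^{\alp/2}\ZMalp{f}{0}{0}$ by $\ZM{f}{0}+|\mu|^{-1/2}\ZM{f}{1}$ via Lemma \ref{lem-interp}, an interpolation your argument would also have to invoke explicitly.
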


\begin{remark}
We are confident that our techniques may be combined with a partial Fourier transform in lateral directions to treat the problem at hand in an (actual, higher-dimensional) wedge of the form $\Omega\times \R^d$. Since the main focus of the paper is to introduce a novel method treating non-scaling invariant equations, we present the problem in a two-dimensional setup in order to reduce challenges which relate to known methods to a minimum.
\end{remark}

  The paper is organized as follows.
  In Section \ref{sec:pre} we collect embedding, trace and interpolation estimates relating to Sobolev norms with power weights.
  Section \ref{sec:var} is devoted to establishing a variational solution to the resolvent equation corresponding to \eqref{bvp}.
  In Section \ref{sec:res_par} we provide higher regularity results for the resolvent equation and prove Theorems \ref{thm-ex} and \ref{thm-higher}.

  \section{Preliminaries}\label{sec:pre}

\subsection{Notation and Definition of Spaces}\label{sec:norms}

  By $\N$ we denote the set of natural numbers starting from $1$, and we write $\N_0:=\N\cup\set{0}$.
  $\Q$ represents the rational numbers, $\R$ the real numbers and $\C$ the complex numbers.
  We assume that all functions are by default complex valued.
  If $\mathsf{H}$ and $\mathsf{K}$ are two Hilbert spaces with scalar products $(\cdot,\cdot)_{\mathsf{H}}$ and $(\cdot,\cdot)_{\mathsf{K}}$, respectively, which are continuously embedded into a common Hausdorff space $V$, then we equip $\mathsf{H}\cap \mathsf{K}$ with the scalar product $(\cdot,\cdot)_{\mathsf{H}} + (\cdot,\cdot)_{\mathsf{K}}$, thus turning $\mathsf{H}\cap \mathsf{K}$ into a Hilbert space.
  For $k\in\N_0$, an open subset $O\subset \R^d$ and $O\subset V\subset \OL O$, we denote by $\CR{k}(V)$  the set of $k$-times continuously differentiable functions on $O$ such that all derivatives of order up to $k$ have a continuous extension to $V$. The space $\CR{k}_c(V)$ denotes the subspace of all $f\in \CR{k}(V)$ with support compact in $V$.
  We write $\CRi(V):=\bigcap_{k\in\N_0} \CR{k}(V)$ and $\CRci(V):=\bigcap_{k\in\N_0} \CR{k}_c(V)$.
  
  \medskip
  
  We decompose the boundary of the wedge $\Ome$ into $\p_c \Ome \cup \p_0 \Ome \cup \p_1 \Ome$, where $\p_c\Ome:=\set{0}\subset\R^2$, and where $\p_0\Ome:=\{ r(1, 0) \ : \ r > 0 \}$ and $\p_1\Ome:=\{ r(\cos \ang, \sin \ang) \ : \ r > 0 \}$ are the lower and upper connected component of $\p'\Ome:=\p\Ome \BS \p_c\Ome$, respectively.
  For $\eps > 0$ we define the sector $\Sig_\eps$ as the set of all $z\in \C\BS\set{0}$ with $|\arg z| < \eps$.
  For $M\subset \R$ we define the vertical strip $S_M:=\setc{\lam\in\C}{\Re\lam\in M}$.
  If $M=\set{\bet}$ for one $\bet\in\R$, we simply write $S_\beta$.
  For a scalar-valued function $u$, we denote by $\nabla u$ its gradient, and we use the short-hand notation $|\nabla u|^2 + |\nabla r\nabla u|^2 := |\partial_r u|^2 + |r^{-1}\pa_\phi u| + |\pa_r r\pa_r u|^2 + |\pa_r\pa_\phi u|^2 + |r^{-1}\pa_\phi^2 u|^2$.
  
  \medskip
  
We use weighted Sobolev spaces with integer number of derivatives in the wedge with a power weight $r^{-\alp}$ in the radial variable and their trace spaces on the boundary.
Since these trace spaces have fractional regularity, we define those spaces in terms of the Mellin transform in the radial variable.
For sufficient control of the solution globally in time we use exponential weights in time.
Since we tackle the parabolic equation in terms of its resolvent equation, we use the Laplace transform in the time variable.

\medskip

Let $\mathsf{H}$ be a Hilbert space and
$f\in \LRloc{1}(\R_+,\mathsf{H})$.
Then the Mellin transform (at $\lam\in\C$) and Laplace transform (at $\mu\in \C$) are defined by
\begin{align*}
  \DS \calm f(\lam) \ := \widehat f(\lam) \ := \ \frac{1}{\sqrt{2\pi}} \int_0^\infty r^{-\lam} f(r) \ \frac{\d r}{r}, &&
  \DS\LL f(\mu)  \ := \ \frac 1{\sqrt{2\pi}} \int_{-\infty}^\infty e^{-\mu t} f(t) \dd t.
\end{align*}
The complex number $\lambda$ will always refer to the variable in Mellin space related to the radial variable in physical spaces, while $\mu$ refers to the variable in Laplace space related to the temporal variable in physical spaces.
We refer to Appendix \ref{app-a} for more details about these transforms and their properties.

The properties of weighted spaces are often dictated by an inherent (dimension-dependent) scaling.
We therefore introduce for $\sigma\in \R$  the notation
 \begin{align} \label{def-sigOme} %
   \sclOme{\sigma} := \sigma-1, \qquad %
    \scl{\sigma}:=\sigma-\frac{1}{2} = \ \sclOme{\sigma+\frac 12}.
  \end{align}
Moreover, we fix the Robin parameter $\gam\in (0,\infty)$ in the boundary condition of \eqref{bvp}.
As outlined at the end of the introduction, Theorems \ref{thm-ex} and \ref{thm-higher} will follow from the result for $\gam=1$  by a scaling argument.
For this reason,  we work with $\gam=1$ in all sections below and hence do not include the dependence of the norms and spaces on $\gam$ in our notation.
\begin{definition}[Regular spaces] \label{def-homspace}%
  Let $k,\ell \in \N_0$, $\bet,s\in \R$. For $v\in\LRloc{1}(\Ome)$ and $\psi \in \LRloc{1}(\p\Ome)$ we
  define
   \begin{enumerate}
   \item %
      $\DS \ZMM{v}{\ell}{\bet}^2 \ %
     := \  \sum_{j = 0}^\ell \int_0^\ang \int_0^\infty \labs{r^{-\sclOme{\ell+\bet}} (r\pa_r)^j\pa_\phi^{\ell-j} v(r,\phi)}^2 \,\frac{\d r}{r} \d\phi$.
     \item \label{def-znorm} $\DS \ZNA{v}{k}{\bet}^2 \ %
     := \ \sum_{\ell = 0}^k \gamma^{2(k-\ell)}\ZMM{v}{\ell}{\bet}^2$.
 \item  %
      $\DS\HMalp{\psi}{s}{\bet}^2 \ := \ \HMalp{\psi(0)}{s}{\bet}^2 + \HMalp{\psi(\ang)}{s}{\bet}^2$, where $\HMalp{\psi(\phi)}{s}{\bet}^2 \ := \ %
      \int_{\Re \lam = \scl{s+\bet}} |\lam|^{2s}\big|\widehat \psi(\lam,\phi)\big|^2   \ \d\Im\lam$.  \ %
 \item  \label{seminorm-x}  $\DS \HNalp{\psi}{s}{\bet}^2 \ %
      := \ \gam^{2s}\HMalp{\psi}{0}{\bet}^2  + \HMalp{\psi}{s}{\bet}^2$. 
  \end{enumerate}
  The corresponding weighted inner products are denoted by $\Zskp{\cdot}{\cdot}{k,\bet}$ and $\skpL{\cdot}{\cdot}{s,\bet}$.
  We define the Hilbert spaces $\VSooalp{k}{\bet}$ and $\HSooalp{s}{\bet}$ as the completion of
  $\cciL{\OL \Ome \BS \{ 0 \}}$, respectively $\cciL{\p'\Omega}$, with respect to the
  corresponding norms in \eqref{def-znorm} and \eqref{seminorm-x}.

 \end{definition}
  We give a corresponding representation of $\ZMalp{v}{\ell}{\bet}$ in Mellin variables in Lemma \ref{def-tracenorm}.
  We will also show in Lemma \ref{def-tracenorm-a} below that the spaces $\HSooalp{s}{\bet}$ are indeed trace spaces.
  
  \medskip
  
  For our results in Theorems \ref{thm-ex} and \ref{thm-higher} we need to avoid singularities which depend on the structure of the elliptic operator and also the opening angle.
  In order to capture the singularity of our solutions near the origin, we need
  to allow for polynomial expansions in terms of the radial variable $r$ at the origin.
  Since the spaces $\VSooalp{k}{\bet}$ are defined by density, any $v\in \VSooalp{k}{\bet}$ can be approximated by smooth and compactly supported functions in each seminorm $\ZMalp{v}{\ell}{\bet}$ with $0\le \ell\le k$ (and correspondingly for the spaces on the boundary).
  The following lemma shows that we have corresponding control for norms of lower derivates but same scaling.
  In particular, it implies that $\zet r^\delta \in \VSooalp{k}{\bet}$ for a cut-off function $\zet \in \cciL{[0,\infty)}$ with $1_{[0,1]} \leq \zet \leq 1_{[0,2]}$ if and only if $\delta\ge \sclOme{k+\bet}$.
  For smaller values of $\delta$ the singularity at the origin is too strong to approximate the monomial by a smooth function supported compactly away from the origin.
  \begin{lemma} \label{lem-ZM-dense} %
  Let $k\in\N_0$ and $\bet\in\R$ with
  $\scl{k+\bet- \frac 12} = \sclOme{k+\bet} \neq 0$.  Then
  \begin{enumerate}
  \item $v \in L_{\rm loc}^1(\p_0 \Ome)$ satisfies
    $\sum_{\ell=0}^{k-1} \HMS{\vvv}{k-\frac 12 - \ell}{\bet + \ell} < \infty$ if
    and only if there is a sequence of functions
    $v_n \in \cciL{\p_0 \Ome \BS \{ 0 \}}$ such that
    $\HMalp{v_n - v}{k-\frac 12}{\bet} \to 0$ as $n \to \infty$.
  \item $v \in L_{\rm loc}^1(\Ome)$ satisfies
    $\sum_{\ell=0}^k \ZMS{v}{k-\ell}{\bet+\ell} < \infty$ if and only if there
    is a sequence of functions
    $v_n \in \cciL{\OL \Ome \BS \{ 0 \}}$ such that $\ZMM{v_n - v}{k}{\bet} \to 0$
    as $n \to \infty$.
  \end{enumerate}
\end{lemma}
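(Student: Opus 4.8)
The plan is to prove both parts by the same two ingredients. The first is a Hardy-type inequality showing that, on test functions, the top seminorm $\ZMM{\cdot}{k}{\bet}$ already dominates the whole family of lower-order, equal-scaling seminorms; this yields the direction ``approximable $\Rightarrow$ finite''. The second is a cut-off-and-mollification construction, giving ``finite $\Rightarrow$ approximable''. Part~(i) is the one-dimensional shadow of part~(ii) on the ray $\p_0\Ome\cong(0,\infty)$, most transparent in Mellin variables, so I describe (ii) and indicate the simpler changes for (i). For the Hardy inequality, abbreviate $c:=\sclOme{k+\bet}\neq 0$. For $f\in\cciL{(0,\infty)}$ the substitution $t=\ln r$, $g:=r^{-c}f$ gives $r^{-c}(r\pa_r)f=\pa_t g+cg$, hence $\int_\R\snorm{\pa_t g+cg}^2\,\d t=\int_\R\bigl(\snorm{\pa_t g}^2+c^2\snorm{g}^2\bigr)\,\d t\ge c^2\int_\R\snorm{g}^2\,\d t$ (the cross term being a total derivative), i.e.\ $\int_0^\infty r^{-2c}\snorm{f}^2\,\tfrac{\d r}{r}\le c^{-2}\int_0^\infty r^{-2c}\snorm{(r\pa_r)f}^2\,\tfrac{\d r}{r}$. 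Applying this in $r$ for each fixed $\phi$ and iterating, every summand $\int_0^\ang\!\int_0^\infty r^{-2c}\snorm{(r\pa_r)^j\pa_\phi^a w}^2\,\tfrac{\d r}{r}\,\d\phi$ of $\ZMS{w}{k-\ell}{\bet+\ell}^2$ (so $j+a=k-\ell<k$) is bounded by $c^{-2\ell}\int_0^\ang\!\int_0^\infty r^{-2c}\snorm{(r\pa_r)^{k-a}\pa_\phi^a w}^2\,\tfrac{\d r}{r}\,\d\phi$, which is a summand of $\ZMM{w}{k}{\bet}^2$: one only ever \emph{raises} the radial order while keeping the angular order $a$ fixed, so no (false) Poincar\'e estimate in $\phi$ is invoked. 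Summing, $\sum_{\ell=0}^k\ZMS{w}{k-\ell}{\bet+\ell}^2\lesssim_{k,c}\ZMM{w}{k}{\bet}^2$ for $w\in\cciL{\OL\Ome\setminus\{0\}}$, the reverse being the $\ell=0$ term. For (i), on the line $\Re\lam=\sclOme{k+\bet}$ one has $\snorm\lam\ge\snorm{\sclOme{k+\bet}}>0$, so $\sum_{\ell=0}^{k-1}\snorm\lam^{2(k-\frac12-\ell)}\asymp\snorm\lam^{2(k-\frac12)}$, and Mellin--Plancherel turns this into $\sum_{\ell=0}^{k-1}\HMS{v}{k-\frac12-\ell}{\bet+\ell}^2\asymp\HMalp{v}{k-\frac12}{\bet}^2$.

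For ``approximable $\Rightarrow$ finite'', suppose $v_n\in\cciL{\OL\Ome\setminus\{0\}}$ with $\ZMM{v_n-v}{k}{\bet}\to 0$. Applying the Hardy inequality to $v_n-v_m$ shows $(v_n)$ is Cauchy in the complete norm $N:=\bigl(\sum_{\ell=0}^k\ZMS{\cdot}{k-\ell}{\bet+\ell}^2\bigr)^{1/2}$, hence $v_n\to\tilde v$ in $N$ for some $\tilde v$ with $N(\tilde v)<\infty$. Since $N$ dominates the weighted $L^2$-norm $\ZMS{\cdot}{0}{\bet+k}$, we get $v_n\to\tilde v$ in $\LRloc{1}(\Ome)$; and $\ZMM{v_n-v}{k}{\bet}\to 0$ gives $(r\pa_r)^j\pa_\phi^{k-j}v_n\to(r\pa_r)^j\pa_\phi^{k-j}v$ in $\LRloc{1}(\Ome)$ for each $j$. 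Therefore $(r\pa_r)^j\pa_\phi^{k-j}(\tilde v-v)=0$ for all $0\le j\le k$, i.e.\ $\tilde v-v$ is a polynomial of total degree $\le k-1$ in $\ln r$ and $\phi$; since $c\neq 0$, no nonzero such polynomial has finite weighted $L^2$-norm, so in the regime in which the lemma is used---where in addition $v$ is the $\LRloc{1}$-limit of the $v_n$, automatic in the ambient completion $\VSooalp{k}{\bet}$---this polynomial vanishes, giving $v=\tilde v$ and $\sum_{\ell=0}^k\ZMS{v}{k-\ell}{\bet+\ell}=N(v)<\infty$. In case (i) this is simpler, $\HMalp{\cdot}{k-\frac12}{\bet}$ being a genuine norm dominating a weighted $L^2$-norm, so its limit is automatically $v$.

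For ``finite $\Rightarrow$ approximable'', assume $N(v)<\infty$. Truncate in $r$ with a logarithmic cut-off $\chi_\eps$, equal to $1$ on $[\eps,\eps^{-1}]$, supported in $[\eps/2,2\eps^{-1}]$ and with $\sup_r\snorm{(r\pa_r)^i\chi_\eps}\lesssim_i 1$ uniformly in $\eps$: expanding $(r\pa_r)^j\pa_\phi^{k-j}\bigl((\chi_\eps-1)v\bigr)$ by the Leibniz rule, the terms where no derivative hits $\chi_\eps$ are controlled by tails $\int_{\{r\le\eps\}\cup\{r\ge\eps^{-1}\}}r^{-2c}\snorm{(r\pa_r)^j\pa_\phi^{k-j}v}^2\,\tfrac{\d r}{r}\,\d\phi$ of the finite integral $\ZMM{v}{k}{\bet}^2$, and those where $i\ge 1$ derivatives hit $\chi_\eps$ by tails $\int_{\{\eps/2\le r\le\eps\}\cup\{\eps^{-1}\le r\le2\eps^{-1}\}}r^{-2c}\snorm{(r\pa_r)^{j-i}\pa_\phi^{k-j}v}^2\,\tfrac{\d r}{r}\,\d\phi$ of the finite integral $\ZMS{v}{k-i}{\bet+i}^2$ (the equal-scaling is exactly what makes the weight $r^{-2c}$ match up); both vanish as $\eps\to 0$ by dominated convergence, so $\ZMM{\chi_\eps v-v}{k}{\bet}\to 0$. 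Now $\chi_\eps v$ is supported in a bounded ``curved box'' $K_\eps=\{\eps/4\le r\le 4\eps^{-1},\ 0\le\phi\le\ang\}$, on which $\ZMM{\cdot}{k}{\bet}$ is equivalent to the usual $\HSR{k}(K_\eps)$-norm; using density of $\ciiL{\OL{K_\eps}}$ in $\HSR{k}(K_\eps)$ for this Lipschitz domain and multiplying by a fixed cut-off supported in $\{\eps/4<r<4\eps^{-1}\}$ and $\equiv 1$ on $\supp\chi_\eps$ produces functions in $\cciL{\OL\Ome\setminus\{0\}}$ approximating $\chi_\eps v$ in $\ZMM{\cdot}{k}{\bet}$; a diagonal choice $\eps=\eps_n\to 0$ gives the required $v_n$. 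For (i) the analogous (standard) statement is density of $\cciL{\p_0\Ome\setminus\{0\}}$ in $\{v:\HMalp{v}{k-\frac12}{\bet}<\infty\}$, once more using $\sclOme{k+\bet}\neq 0$ to keep the radial truncation harmless.

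The main obstacle is the identification $v=\tilde v$ in the first direction: the top seminorm $\ZMM{\cdot}{k}{\bet}$ has as kernel the polynomials of total degree $<k$ in $\ln r$ and $\phi$, so a priori $v$ could differ from the well-behaved $N$-limit by such a polynomial---this is the ``surprisingly non-trivial'' transition flagged in the introduction. What excludes it is precisely the hypothesis $\sclOme{k+\bet}\neq 0$, which forces every nonzero such polynomial to have infinite weighted $L^2$-norm; the remaining steps---the Hardy estimate, the cut-off, and the mollification---are routine once the equal-scaling bookkeeping is kept straight.
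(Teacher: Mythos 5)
The paper offers no self-contained argument for this lemma (the ``proof'' is a pointer to \cite{BGKMRS24}), so I am judging your write-up on its own merits. Your architecture is the standard one and, as far as I can tell, the same as in the cited Lemma C.2: a radial Hardy inequality shows that on test functions the top seminorm $\ZMM{\cdot}{k}{\bet}$ controls the whole equal-scaling family $\ZMS{\cdot}{k-\ell}{\bet+\ell}$ (this is exactly where $\sclOme{k+\bet}\neq 0$ enters), which gives ``approximable $\Rightarrow$ finite'' via a Cauchy argument in the sum-norm $N$; a logarithmic cut-off with $(r\pa_r)^i\chi_\eps$ uniformly bounded and supported in the transition annuli, followed by local mollification, gives ``finite $\Rightarrow$ approximable''. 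Your bookkeeping in the Hardy step --- raising only the radial order $j\mapsto k-a$ at fixed angular order $a$, so that no Poincar\'e inequality in $\phi$ is ever invoked --- is exactly right, and the Leibniz/dominated-convergence estimate for the cut-off correctly identifies the equal-scaling seminorms $\ZMS{v}{k-i}{\bet+i}$ as what absorbs the commutator terms.

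The one genuine soft spot is the identification $v=\tilde v$ in part (ii), which you flag but then resolve only by importing an extra hypothesis (``in the regime in which the lemma is used, where $v$ is the $L^1_{\rm loc}$-limit of the $v_n$''). That is not a proof of the statement as printed: with the real-space definition of the seminorm in Definition \ref{def-homspace}, the implication ``approximable $\Rightarrow$ finite'' in (ii) literally fails for $v\equiv 1$, $k=1$, $v_n\equiv 0$, since then $\ZMM{v_n-v}{1}{\bet}=0$ for all $n$ while $\ZMS{1}{0}{\bet+1}=\infty$. So either one must read $\ZMM{v_n-v}{k}{\bet}$ for non-test-functions through the Mellin representation \eqref{eq_mellin_domain} (under which polynomials in $\ln r$ and $\phi$ have infinite seminorm, the kernel ambiguity disappears, and your Cauchy limit is forced to equal $v$), or one must add precisely the hypothesis you smuggle in; you should state explicitly which of the two you adopt rather than deferring to ``the regime in which the lemma is used''. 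A second, minor point: in part (i) the orders $k-\tfrac12-\ell$ are fractional, so the truncation step is not a literal Leibniz computation; it is cleanest to perform the cut-off on the Mellin side (where your observation $|\lam|\ge|\sclOme{k+\bet}|$ on the line $\Re\lam=\sclOme{k+\bet}$ already gives the equivalence of all the boundary seminorms), and a one-line appeal to ``standard density'' hides this. Everything else --- the Hardy computation after $t=\ln r$, the vanishing of the cross term, the local equivalence with $\HSR{k}$ on the truncated region and the mollification --- is correct.
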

\begin{proof}
  See proof of Lemma C.2 in \cite{BGKMRS24}.
\end{proof}
  Throughout the rest of the paper, we fix a cut-off function $\zet \in \cciL{[0,\infty)}$ such that $1_{[0,1]} \leq \zet \leq 1_{[0,2]}$.
  The regular spaces only allow for functions which vanish sufficiently quickly at the origin.
  For our solutions, however, we need to allow for functions which have certain singular behaviours close to the origin.
  Indeed, the Laplace operator with Neumann boundary condition has
    an infinite dimensional kernel, cf.~Appendix \ref{sec:neu}, consisting of such singular functions.
  \begin{definition}[Singular spaces] \label{def-nonhom-norm}%
    Let $\ang \in (0,2\pi)$ be such that $\frac\pi\ang\notin\Q$ and define the set of admissible exponents $\calq:=\setc{j+\tfrac{\pi}{\ang} \ell}{j,\ell\in\N_0}$.
    For $\beta,s\in \R$ and $k\in \N_0$ we define the polynomial spaces
    \begin{enumerate}
    \item $\VSpol_{k,\bet} \ := \ \big\{\poly:\Ome\to\C \ | \ \poly(r,\phi) = \sum_{q\in \calq, q<\sclOme{k+\bet}} a_{q}(\phi) r^{q} \ \text{with} \ a_{q} \in  \WSR{k}{2}((0,\ang)) \big\}$, \label{pol-form}
    \item
      $\HSpol_{s,\bet} \ := \ \big\{\poly:\R\to\C \ | \ \poly(r) =
      \sum_{q \in \calq, q<\scl{s+\bet}} a_{q} r^{q} \
      \text{with} \ a_{q} \in \R \big\}$, \label{pol-form-bdry}
    \end{enumerate}
    and equip them with the norms
    \begin{align*}
    \|\poly\|_{\VSpol_{k,\bet}}^2:=\sum_{q\in \calq, q<\sclOme{k+\bet}} \gam^{2(\sclOme{k+\bet}-q)}\|a_q\|_{\WSR{k}{2}((0,\ang))}^2, \qquad \text{respectively} \quad \|\poly\|_{\HSpol_{s,\bet}}^2:=\sum_{q\in \calq, q<\scl{s+\bet}} \gam^{2(\scl{s+\bet}-q)}|a_q|^2.
    \end{align*}
    Moreover, we define the spaces $\VSalp{k}{\bet}:=\VSooalp{k}{\bet}\oplus \zet\VSpol_{k,\beta}$ and $\HSalp{s}{\bet}:=\HSooalp{s}{\bet}\oplus \zet \HSpol_{s,\bet}$, and equip them with the norms
    \begin{enumerate}[resume]
    \item \label{add-norm} %
      $\DS \ZNA{u+\zet\poly_u}{k}{\bet}^2 \ %
        :=  \ \ZNA{u}{k}{\bet}^2 + \|\poly_u\|_{k,\beta}^2$,  %
      \item \label{add-norm-bdr}
        $\DS \HNalp{\psi+\zet\poly_\psi}{s}{\bet}^2 \ := \ \HNalp{\psi}{s}{\bet}^2 + \|\poly_\psi\|_{s,\bet}^2$.
    \end{enumerate}
  \end{definition}

    The fact that $\HMalp{r^\delta}{0}{\bet} \ = \ \infty$ for all $\bet,\delta\in\R$ shows that polynomials are not in the regular spaces.
    There is another element of the kernel of the Laplace operator with Neumann boundary conditions, namely the logarithm $v(r,\phi) = \ln r$.
    Note that the logarithm is not included in our choice of polynomial expansions.
    This is because our approach is to first construct a variational solution which cannot contain
    a logarithm in its expansion by design, and then subsequently showing higher
    regularity results for this variational solution.
    
    \medskip
    
    Finally, we define parabolic spaces with fractional time derivatives and
    vanishing initial data:
    \begin{definition}[Parabolic norms] %
      Let $\mathsf{H}$ be a Hilbert space and let $\bet, s \in \R$.
      For $F \in \cald_0$ where
      \begin{align*}
        \cald_0 \ := \ \{ \phi \in \cciL{\R,\mathsf{H}} \ : \ \phi(t) = 0 \text{\ for $t \leq 0$ } \}
      \end{align*}
      we define the norm
      \begin{align*}
        \NNN{F}{H_{\bet,0}^{s}(\mathsf{H})} \ := \ \Big( \int_0^\infty e^{-\bet t} \NNN{(|\p_t|_\bet + \gam)^{s} F(t)}{\mathsf{H}}^2 \dd t \Big)^{\frac 12} \qquad\qquad %
        \text{where } \ 
        |\p_t|_\bet^s F(t) \ := \ \call_\bet^{-1} (|\cdot|^s \call F)(t).
      \end{align*}
      The space $H_{\bet,0}^{s}(\mathsf{H})$ is defined as the completion of $\cald_0$ with respect to $\NNN{\cdot}{H_{\bet,0}^{s}(\mathsf{H})}$.
      We write $L_{\bet}^2(\mathsf{H}) :=H_{\bet,0}^{0}(\mathsf{H})$.
  \end{definition}

\subsection{Different Characterizations of Norms}

Even though we restrict ourselves to integer derivatives for the norms monitoring the size of the respective quantities in $\Omega$, our proof method requires a corresponding characterization in terms of Mellin variables.
We emphasize that in this section and in the rest of the paper, we will always assume $\gam=1$ for the Robin parameter.
\begin{lemma}[Mellin representation of bulk norm]\label{def-tracenorm}
   For $\ell\in\N_0$, $\alp\in\R$ and $\vvv \in \CRci(\overline{\Ome}\BS\set{0})$
   we have
 \begin{align}\label{eq_mellin_domain}
  \ZM{\vvv}{\ell}^2 \ = \  \sum_{j+m=\ell} & \int_0^\ang \int_{\Re \lam = \sclOme{\ell+\alpha}} \labs{\lam^j \pa_\phi^{m}  \widehat {\vvv}}^2  \d \Im\lam \, \d\phi.
\end{align}
\end{lemma}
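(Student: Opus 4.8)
The plan is to reduce the identity to Parseval's theorem for the Mellin transform, applied term by term in the sum over $j+m=\ell$. First I would recall that for fixed $\phi$ the Mellin transform $\calm$ is, up to the choice of the line $\Re\lam = c$, a unitary map between $L^2$ of the half-line $(0,\infty)$ with measure $r^{-2c}\,\frac{\d r}{r}$ and $L^2$ of the vertical line $\{\Re\lam = c\}$ with measure $\d\Im\lam$; this is the content of the Mellin--Plancherel theorem recorded in Appendix \ref{app-a}. The precise statement I need is that for $g\in\CRci((0,\infty))$ one has
\begin{align*}
  \int_0^\infty \labs{r^{-c} g(r)}^2\,\frac{\d r}{r} \ = \ \int_{\Re\lam = c} \labs{\widehat g(\lam)}^2\,\d\Im\lam,
\end{align*}
together with the two elementary transform rules $\widehat{(r\pa_r g)}(\lam) = \lam\,\widehat g(\lam)$ (integration by parts, using that $g$ is compactly supported away from $0$ and $\infty$) and the fact that $\calm$ commutes with $\pa_\phi$ since the latter acts in the transverse variable only.

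Next I would fix $\ell\in\N_0$ and $\alp\in\R$ and write $c:=\sclOme{\ell+\alp} = \ell+\alp-1$, which is exactly the abscissa appearing on the right-hand side of \eqref{eq_mellin_domain}. By Definition \ref{def-homspace}\eqref{def-znorm} (with $\gam=1$, so that $\ZM{\vvv}{\ell}^2 = \ZMM{\vvv}{\ell}{\alp}^2$) we have
\begin{align*}
  \ZM{\vvv}{\ell}^2 \ = \ \sum_{j=0}^\ell \int_0^\ang \int_0^\infty \labs{r^{-c}\,(r\pa_r)^j\pa_\phi^{\ell-j}\vvv(r,\phi)}^2\,\frac{\d r}{r}\,\d\phi.
\end{align*}
For each fixed $j$ and fixed $\phi\in(0,\ang)$, the function $r\mapsto (r\pa_r)^j\pa_\phi^{\ell-j}\vvv(r,\phi)$ lies in $\CRci((0,\infty))$ because $\vvv\in\CRci(\overline\Ome\BS\{0\})$; applying the Mellin--Plancherel identity with weight $c$ and then the transform rule $\widehat{(r\pa_r)^j h} = \lam^j\widehat h$ gives
\begin{align*}
  \int_0^\infty \labs{r^{-c}\,(r\pa_r)^j\pa_\phi^{\ell-j}\vvv(r,\phi)}^2\,\frac{\d r}{r} \ = \ \int_{\Re\lam = c} \labs{\lam^j\,\pa_\phi^{\ell-j}\widehat\vvv(\lam,\phi)}^2\,\d\Im\lam,
\end{align*}
where I have also used that $\widehat{\pa_\phi^{\ell-j}\vvv}(\lam,\phi) = \pa_\phi^{\ell-j}\widehat\vvv(\lam,\phi)$, which is justified by differentiating under the Mellin integral (legitimate by the compact support of $\vvv$). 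Summing over $j$ from $0$ to $\ell$, relabelling $j+m=\ell$, and using Tonelli to interchange the $\phi$-integral with the $\Im\lam$-integral yields precisely \eqref{eq_mellin_domain}.

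The only genuinely nontrivial points — and hence where I would be most careful — are the justification of differentiation under the integral sign (both in $\phi$ and in deriving the $r\pa_r$ rule via integration by parts, where the boundary terms vanish thanks to $\supp\vvv$ being compact in $\overline\Ome\BS\{0\}$) and the precise normalization of the Mellin--Plancherel theorem on the shifted line $\Re\lam=c$, which is exactly the form cited from Appendix \ref{app-a}. Everything else is bookkeeping. I expect no essential obstacle: once the appendix's unitarity statement is in hand, the proof is a two-line computation per term.
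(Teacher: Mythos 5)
Your proposal is correct and follows essentially the same route as the paper: apply the Mellin--Plancherel identity on the line $\Re\lam=\sclOme{\ell+\alp}$ together with the rule $\widehat{(r\pa_r)^j h}=\lam^j\widehat h$ for each term with $j+m=\ell$, then integrate in $\phi$ and sum. The only cosmetic slip is that the formula you quote for $\ZM{\vvv}{\ell}^2$ is the seminorm of Definition \ref{def-homspace}(i) rather than the norm in item \eqref{def-znorm}, but that is exactly the quantity appearing in the lemma, so the argument is unaffected.
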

\begin{proof}
  For any $\phi\in(0,\ang)$ and $j,m\in \N_0$, we calculate with Lemma \ref{lem-mellin}~\eqref{mel-2} and
  \eqref{mel-plancherel} for $\bet:=\sclOme{\ell+\alpha}$
  \begin{align*}
   \NTL{\lam^{j} \pa_\phi^{m}  \widehat \vvv(\cdot,\phi)}{S_{\bet}}^2  \
   = \NTL{r^{-\bet}(r\pa_r)^{j} \pa_\phi^{m} \vvv(\cdot,\phi)}{\R_+,\frac{\d r}{r}}^2.   
 \end{align*}
 Integrating $\phi$ over $(0,\ang)$ and summing over $j+m = \ell$, we get the
 asserted identity \eqref{eq_mellin_domain}.
\end{proof}

\begin{lemma}[Real space representation of boundary norms] 
  Let $\ell \in \N_0$ and let $\alp \in \R$. Let
  $c=\prod_{j=1}^{\ell} \min\{|\frac{\scl{j+\alp}}{\scl{\ell+\alp}}|,1\}$ and
  $C=\prod_{j=1}^{\ell} \max\{|\frac{\scl{j+\alp}}{\scl{\ell+\alp}}|,1\}$. Then
  for $\psi \in \CRci(\R_+)$ we have
    \begin{enumerate}
    \item  \label{real-spc-plancherel} 
   $\DS \HM{\psi}{\ell}^2 \  = \ \int_0^\infty \big| r^{- \scl{\ell+\alpha}} (r\pa_{r})^{\ell} \psi(r) \big|^2 \ \frac{\d r}{r}$, %
 \item $\DS c \HM{\psi}{\ell} \leq \norm{r^{-\alp}\pa_r^\ell\psi}_{\LR{2}(\R_+)} \leq C \HM{\psi}{\ell}$. \label{est-hardy-5}
    \end{enumerate}  
\end{lemma}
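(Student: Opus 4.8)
The plan is to pass to Mellin variables, where both assertions become pointwise estimates on a multiplier. Throughout I use the two facts from Lemma~\ref{lem-mellin} that already appear in the proof of Lemma~\ref{def-tracenorm}: the commutation relation $\widehat{(r\pa_r)f}(\lam)=\lam\,\widehat f(\lam)$ (equation~\eqref{mel-2}) and the Mellin--Plancherel isometry $\int_{\Re\lam=\bet}|\widehat f(\lam)|^2\,\d\Im\lam=\int_0^\infty|r^{-\bet}f(r)|^2\,\frac{\d r}{r}$ (equation~\eqref{mel-plancherel}). Since $\psi\in\CRci(\R_+)$ has support compact in $(0,\infty)$, its Mellin transform is entire and no boundary terms arise in the integrations by parts underlying these identities.

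For~\eqref{real-spc-plancherel} I would argue exactly as in Lemma~\ref{def-tracenorm}, but without the angular variable: iterating the commutation relation gives $\widehat{(r\pa_r)^\ell\psi}(\lam)=\lam^\ell\widehat\psi(\lam)$, and applying the Mellin--Plancherel isometry with $\bet:=\scl{\ell+\alp}$ to $f=(r\pa_r)^\ell\psi$ turns the defining integral $\HM{\psi}{\ell}^2=\int_{\Re\lam=\scl{\ell+\alp}}|\lam|^{2\ell}|\widehat\psi(\lam)|^2\,\d\Im\lam$ into $\int_0^\infty|r^{-\scl{\ell+\alp}}(r\pa_r)^\ell\psi(r)|^2\,\frac{\d r}{r}$, which is the claimed identity.

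For~\eqref{est-hardy-5} the starting point is the Euler operator identity $r^\ell\pa_r^\ell=\prod_{k=0}^{\ell-1}(r\pa_r-k)$, verified by a short induction (or by testing both sides on monomials $r^s$); in Mellin variables it reads $\widehat{r^\ell\pa_r^\ell\psi}(\lam)=\prod_{k=0}^{\ell-1}(\lam-k)\,\widehat\psi(\lam)$. Using $\d r=r\,\frac{\d r}{r}$ and $-\scl{\ell+\alp}=\tfrac12-\alp-\ell$ one rewrites $\norm{r^{-\alp}\pa_r^\ell\psi}_{\LR{2}(\R_+)}^2=\int_0^\infty|r^{-\scl{\ell+\alp}}r^\ell\pa_r^\ell\psi|^2\,\frac{\d r}{r}$, so Mellin--Plancherel with $\bet=\scl{\ell+\alp}$ gives
\begin{align*}
  \norm{r^{-\alp}\pa_r^\ell\psi}_{\LR{2}(\R_+)}^2=\int_{\Re\lam=\scl{\ell+\alp}}\Bigl(\,\prod_{k=0}^{\ell-1}|\lam-k|^2\Bigr)\,|\widehat\psi(\lam)|^2\,\d\Im\lam.
\end{align*}
It remains to compare the multiplier $\prod_{k=0}^{\ell-1}|\lam-k|^2$ with $|\lam|^{2\ell}$ pointwise on the line $\Re\lam=\scl{\ell+\alp}$. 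Re-indexing $j:=\ell-k$, the point $\lam-k$ has real part $\scl{\ell+\alp}-(\ell-j)=\scl{j+\alp}$ and imaginary part $\Im\lam$, so $|\lam-k|^2/|\lam|^2=(\scl{j+\alp}^2+(\Im\lam)^2)/(\scl{\ell+\alp}^2+(\Im\lam)^2)$. Since $t\mapsto(a^2+t^2)/(b^2+t^2)$ is monotone with values between $a^2/b^2$ and $1$, each such ratio lies in $[\min\{|\scl{j+\alp}/\scl{\ell+\alp}|,1\}^2,\max\{|\scl{j+\alp}/\scl{\ell+\alp}|,1\}^2]$; multiplying over $j=1,\dots,\ell$ and taking a square root yields $c^2|\lam|^{2\ell}\le\prod_{k=0}^{\ell-1}|\lam-k|^2\le C^2|\lam|^{2\ell}$ on that line. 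Inserting this into the displayed identity and comparing with $\HM{\psi}{\ell}^2=\int_{\Re\lam=\scl{\ell+\alp}}|\lam|^{2\ell}|\widehat\psi(\lam)|^2\,\d\Im\lam$, then taking a square root, gives~\eqref{est-hardy-5}.

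I do not expect a genuine obstacle. The one point needing care is the multiplier comparison, namely the observation that an integer shift of $\lam$ along the critical line $\Re\lam=\scl{\ell+\alp}$ produces exactly the real parts $\scl{j+\alp}$, combined with the elementary monotonicity of $t\mapsto(a^2+t^2)/(b^2+t^2)$. It is also worth noting that $c$ and $C$ implicitly require $\scl{\ell+\alp}\ne0$; if moreover $\scl{j+\alp}=0$ for some $1\le j<\ell$, then $c=0$ and the lower bound in~\eqref{est-hardy-5} is vacuous while the upper bound persists.
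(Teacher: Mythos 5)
Your proof is correct and follows essentially the same route as the paper: both reduce \eqref{est-hardy-5} via Mellin--Plancherel to comparing the multiplier $\prod_{j=1}^{\ell}|\lam-\ell+j|$ with $|\lam|^{\ell}$ on the line $\Re\lam=\scl{\ell+\alp}$, your Euler-operator identity $r^\ell\pa_r^\ell=\prod_{k=0}^{\ell-1}(r\pa_r-k)$ being just a repackaging of the paper's formula $\widehat{\pa_r^\ell\psi}(\lam)=(\lam+1)\cdots(\lam+\ell)\widehat\psi(\lam+\ell)$. Your closing remark that the constants implicitly require $\scl{\ell+\alp}\neq 0$ (and that $c=0$ if some intermediate $\scl{j+\alp}$ vanishes) is a fair observation about the statement itself.
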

\begin{proof}
  The identity \eqref{real-spc-plancherel} is Plancherel's identity in Lemma \ref{lem-mellin}\eqref{mel-plancherel} in view of Definition \ref{def-homspace}\eqref{seminorm-x} and Lemma \ref{lem-mellin}\eqref{mel-2}.
  Moreover, by Lemma \ref{lem-mellin}\eqref{mel-1} and \eqref{mel-2} we have
  $\widehat{\pa_r^\ell\psi}(\lam)=(\lam+1)(\lam+2)\cdots(\lam+\ell)\widehat{\psi}(\lam+\ell)$,
  so that
  \begin{align*}
   \norm{r^{-\alp}\pa_r^\ell\psi}_{\LR{2}(\R_+)}^2 &= \int_{\Re\lam=\scl{\ell+\alp}} \Big( \prod_{j=1}^{\ell} \frac{ |\lam-\ell+j|}{|\lam|} \Big)^2 \labs{\lam}^{2\ell} |\widehat{\psi}(\lam)|^2\d\Im\lam.
  \end{align*}
  Consequently, \eqref{est-hardy-5} follows.  
\end{proof}

\subsection{Estimates in Homogeneous Spaces} \label{sus-melrep} %

In this section, we state and prove some basic estimates which are useful when
working with the weighted spaces $\HS{k}$ and $\ZS{k}$.
We first recall Hardy's inequality, see e.g.~\cite{Mas11a}.
The following version can be found in \cite[Lemma 5.1]{Knu15}.
 \begin{lemma}[Hardy's inequality] \label{lem-hardy} Let $\bet \neq 0$ and
   suppose that $r^{\bet+1}\p_r\vvv \in L^2(\R_+, \frac{\dd r}r)$. We have
   \begin{align*} 
     \inf_{c \in \R} \NTL{r^\bet (\vvv-c)}{\R_+,\frac{\dd r}r} \
     &\leq \ \bet^{-1} \NTL{r^{\bet+1} \p_r\vvv}{\R_+,\frac{\dd r}r}.  
   \end{align*}
 \end{lemma}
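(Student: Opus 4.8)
The plan is to transfer the weighted inequality from $\R_+$ to an exponentially weighted one-dimensional estimate on $\R$ by the substitution $t=\ln r$, and then to recognise the latter as a convolution estimate. Since the reflection $r\mapsto 1/r$ maps $\vvv$ to $\vvv(1/\,\cdot\,)$, interchanges $\bet$ and $-\bet$, and leaves both sides of the claimed bound invariant (with $|\bet|^{-1}$ on the right for $\bet<0$), we may assume $\bet>0$. Writing $w(t):=\vvv(e^t)$ and $g(t):=e^{\bet t}w'(t)$, the identity $\tfrac{\dd r}{r}=\dd t$ together with $r^{\bet+1}\p_r\vvv\big|_{r=e^t}=g(t)$ shows that $g\in L^2(\R)$ with $\|g\|_{L^2(\R)}=\NTL{r^{\bet+1}\p_r\vvv}{\R_+,\frac{\dd r}r}$, and that the assertion is equivalent to
\begin{align*}
  \inf_{c\in\R}\big\|e^{\bet\,\cdot}(w-c)\big\|_{L^2(\R)}\ \leq\ \bet^{-1}\|g\|_{L^2(\R)}.
\end{align*}

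To carry this out I would first pin down the constant. Since $w'=e^{-\bet t}g$ and $e^{-\bet\,\cdot}\in L^2((T,\infty))$ for every $T$ (this is where $\bet>0$ enters), Cauchy--Schwarz gives $|w(t_2)-w(t_1)|\le\|e^{-\bet\,\cdot}\|_{L^2((t_1,t_2))}\|g\|_{L^2(\R)}\to 0$ as $t_1,t_2\to\infty$, so $c:=\lim_{t\to\infty}w(t)$ exists and $w(t)-c=-\int_t^\infty e^{-\bet s}g(s)\,\dd s$. Substituting $u=s-t$ yields
\begin{align*}
  e^{\bet t}\big(w(t)-c\big)\ =\ -\int_0^\infty e^{-\bet u}\,g(t+u)\,\dd u,
\end{align*}
which exhibits $e^{\bet\,\cdot}(w-c)$ as $(-g)$ convolved with the kernel $u\mapsto e^{-\bet u}\mathbf{1}_{(0,\infty)}(u)$; Minkowski's integral inequality then gives
\begin{align*}
  \big\|e^{\bet\,\cdot}(w-c)\big\|_{L^2(\R)}\ \leq\ \int_0^\infty e^{-\bet u}\|g(\cdot+u)\|_{L^2(\R)}\,\dd u\ =\ \Big(\int_0^\infty e^{-\bet u}\,\dd u\Big)\|g\|_{L^2(\R)}\ =\ \bet^{-1}\|g\|_{L^2(\R)},
\end{align*}
and undoing the substitution finishes the proof. (Alternatively, the last two steps can be replaced by a Mellin argument in the spirit of Lemma~\ref{def-tracenorm}: by Plancherel one has $\NTL{r^\bet(\vvv-c)}{\R_+,\frac{\dd r}r}^2=\int_{\Re\lam=-\bet}|\widehat{(\vvv-c)}(\lam)|^2\,\dd\Im\lam$ and $\NTL{r^{\bet+1}\p_r\vvv}{\R_+,\frac{\dd r}r}^2=\int_{\Re\lam=-\bet}|\lam|^2\,|\widehat{(\vvv-c)}(\lam)|^2\,\dd\Im\lam$, and $|\lam|\ge|\bet|$ on the line $\Re\lam=-\bet$; the subtraction of $c$ is still needed to ensure $\vvv-c$ lies in the weighted space so that its Mellin transform is defined.)

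The step requiring the most care --- and the only real obstacle --- is the low-regularity bookkeeping: from $r^{\bet+1}\p_r\vvv\in L^2(\R_+,\frac{\dd r}{r})$ one has to know that $\vvv$, equivalently $w$, is locally absolutely continuous so that the fundamental theorem of calculus applies, and that the limit $c$ genuinely exists; both follow because $w'=e^{-\bet\,\cdot}g\in L^1_{\mathrm{loc}}(\R)$, the latter since $g\in L^2(\R)\subset L^1_{\mathrm{loc}}(\R)$. It is also worth emphasising where the hypothesis $\bet\neq0$ is indispensable: it is precisely what makes $u\mapsto e^{-\bet u}\mathbf{1}_{(0,\infty)}(u)$ an integrable kernel (equivalently, keeps $|\lam|$ away from $0$ on $\Re\lam=-\bet$), and its $L^1$-norm equals $|\bet|^{-1}$, which is the source of the sharp constant.
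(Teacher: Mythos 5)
Your proof is correct. Note that the paper itself does not prove this lemma — it is quoted from \cite[Lemma 5.1]{Knu15} — so there is no in-paper argument to compare against; your write-up supplies a complete, self-contained proof. The key steps all check out: the substitution $t=\ln r$ turns $r^{\bet+1}\p_r\vvv$ into $e^{\bet t}w'(t)$ and $\tfrac{\dd r}{r}$ into $\dd t$; the inversion $r\mapsto 1/r$ does swap $\bet\leftrightarrow-\bet$ while preserving both norms, so reducing to $\bet>0$ is legitimate (and you are right that the constant in the statement should be read as $|\bet|^{-1}$, since for $\bet<0$ a literal $\bet^{-1}$ would be negative); the existence of $c=\lim_{t\to\infty}w(t)$ via Cauchy--Schwarz and the representation of $e^{\bet t}(w-c)$ as a convolution with the kernel $e^{-\bet u}\mathbf{1}_{(0,\infty)}$ of $L^1$-norm $\bet^{-1}$ give exactly the claimed bound by Minkowski's integral inequality (Young's inequality $L^1*L^2\to L^2$). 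You also correctly flag the two genuine technical points: local absolute continuity of $w$ (which follows from $w'\in L^1_{\mathrm{loc}}$) and, for the Mellin variant, the need to know a priori that $r^\bet(\vvv-c)$ lies in the weighted $L^2$ space before invoking Plancherel — which is why the direct convolution argument is the cleaner route.
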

 We provide several estimates relating to the norms in homogeneous spaces:
 \begin{lemma}[Estimates for boundary norms]\label{lem-hardy-mel}
   Let $s,\alp,\bet\in \R$ be such that $\scl{s+\alp}\ne 0$ and $\scl{s+\alp+\bet}\ne 0$.
   We define $c:= \min \big\{|\frac{\scl{s+\alp+\bet}}{\scl{s+\alp}}|^{\sgn s},1 \big
   \}$ and $C:=\max \big \{|\frac{\scl{s+\alp+\bet}}{\scl{s+\alp}}|^{\sgn s},1
   \big \}$.
   For $\vvv \in \CRci((0,\infty))$ we have
  \begin{subequations}
  \begin{align}
   c^{|s|} \HM{r^{-\bet}\vvv}{s} &\leq \HMS{\vvv}{s}{\alp+\bet} \leq C^{|s|} \HM{r^{-\bet}\vvv}{s}, \label{hardy-3} \\
   c^{|s|} \HM{r^{1-\bet}\pa_r\vvv}{s} &\leq \HMS{\vvv}{s+1}{\alp+\bet-1}\leq C^{|s|} \HM{r^{1-\bet}\pa_r\vvv}{s}, \label{hardy-4} \\
   |\scl{s+\alp+\bet}|^{\bet} \HMS{\vvv}{s}{\alp+\bet} &\leq \ \HMS{\vvv}{s+\bet}{\alp}, \label{hardy-5} \\
   c^{|s|}|\scl{s+\alp+\bet+1}|\HM{r^{-\bet-1}\vvv}{s} &\leq \HMS{\pa_r\vvv}{s}{\alp+\bet}. \label{hardy-6}
  \end{align}
  \end{subequations}
 \end{lemma}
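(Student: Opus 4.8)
The plan is to push everything to the Mellin side, where each seminorm is an explicit weighted contour integral. For $\vvv\in\CRci((0,\infty))$ the defining integral of $\widehat\vvv(\lam)$ converges absolutely for every $\lam\in\C$, so $\widehat\vvv$ is entire, and by the definition of the seminorm in Mellin variables (Definition~\ref{def-homspace}, read on a single boundary component as in the real-space representation of boundary norms above) one has, for every $t,w\in\R$,
\begin{align*}
  \HMS{\vvv}{t}{w}^2 \ = \ \int_{\Re\lam = \scl{t+w}} |\lam|^{2t}\,\labs{\widehat\vvv(\lam)}^2 \ \d\Im\lam
\end{align*}
with both sides in $[0,\infty]$. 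The two calculus rules from Lemma~\ref{lem-mellin}\eqref{mel-1} and \eqref{mel-2}, namely $\widehat{r^{c}\vvv}(\lam) = \widehat\vvv(\lam - c)$ for real $c$ and $\widehat{\pa_r\vvv}(\lam) = (\lam+1)\widehat\vvv(\lam+1)$, together with the purely real change of variables $\mu = \lam + c$ (which shifts the line of integration by $c$), suffice to bring both sides of each of \eqref{hardy-3}--\eqref{hardy-6} onto a common vertical line, as integrals of $\labs{\widehat\vvv}^2$ against explicit polynomial weights.

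The one genuinely new ingredient is an elementary pointwise comparison of these weights: for $a,b\in\R\setminus\set{0}$ the ratio $(a^2+\tau^2)/(b^2+\tau^2)$ is monotone in $\tau^2\ge 0$, equals $a^2/b^2$ at $\tau=0$ and tends to $1$ as $|\tau|\to\infty$, hence lies between $\min\set{a^2/b^2,1}$ and $\max\set{a^2/b^2,1}$ for all $\tau\in\R$; raising to the power $s$ and noting that the roles of $\min$ and $\max$ are exchanged when $s<0$ yields
\begin{align*}
  \bigl(\min\{|a/b|^{\sgn s},1\}\bigr)^{|s|}\,|b+i\tau|^{s}
  \ \le \ |a+i\tau|^{s}
  \ \le \ \bigl(\max\{|a/b|^{\sgn s},1\}\bigr)^{|s|}\,|b+i\tau|^{s}
\end{align*}
for every $s,\tau\in\R$. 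The choice $a=\scl{s+\alp+\bet}$ and $b=\scl{s+\alp}$, both nonzero by hypothesis, reproduces exactly the constants $c$ and $C$ in the statement.

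With these two facts the estimates follow by matching contours. For \eqref{hardy-3}, $\widehat{r^{-\bet}\vvv}(\lam) = \widehat\vvv(\lam+\bet)$ and the substitution $\mu = \lam + \bet$ give $\HM{r^{-\bet}\vvv}{s}^2 = \int_{\Re\mu = \scl{s+\alp+\bet}} |\mu-\bet|^{2s}\labs{\widehat\vvv(\mu)}^2\,\d\Im\mu$, which sits on the same line $\Re\mu = \scl{s+\alp+\bet}$ as $\HMS{\vvv}{s}{\alp+\bet}^2 = \int_{\Re\mu=\scl{s+\alp+\bet}} |\mu|^{2s}\labs{\widehat\vvv(\mu)}^2\,\d\Im\mu$, and on that line $\Re(\mu-\bet) = \scl{s+\alp}$; the pointwise comparison with $a = \scl{s+\alp+\bet}$, $b = \scl{s+\alp}$, $\tau = \Im\mu$ then gives \eqref{hardy-3}. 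Estimate \eqref{hardy-4} is the same after one use of the derivative rule: $\widehat{r^{1-\bet}\pa_r\vvv}(\lam) = (\lam+\bet)\widehat\vvv(\lam+\bet)$, so after $\mu = \lam+\bet$ both $\HM{r^{1-\bet}\pa_r\vvv}{s}^2$ and $\HMS{\vvv}{s+1}{\alp+\bet-1}^2$ become integrals over $\Re\mu = \scl{s+\alp+\bet}$ carrying the common factor $|\mu|^2$, leaving exactly the $|\mu-\bet|^{2s}$ versus $|\mu|^{2s}$ comparison. For \eqref{hardy-6}, the derivative rule and $\mu = \lam+1$ yield $\HMS{\pa_r\vvv}{s}{\alp+\bet}^2 = \int_{\Re\mu = \scl{s+\alp+\bet+1}} |\mu-1|^{2s}|\mu|^2\labs{\widehat\vvv(\mu)}^2\,\d\Im\mu$, while $\widehat{r^{-\bet-1}\vvv}(\lam) = \widehat\vvv(\lam+\bet+1)$ and $\mu = \lam+\bet+1$ give $\HM{r^{-\bet-1}\vvv}{s}^2 = \int_{\Re\mu = \scl{s+\alp+\bet+1}} |\mu-\bet-1|^{2s}\labs{\widehat\vvv(\mu)}^2\,\d\Im\mu$; now $|\mu|^2 \ge |\Re\mu|^2 = |\scl{s+\alp+\bet+1}|^2$ on that line, and the pointwise comparison applied to $|\mu-1|^{2s}$ versus $|\mu-\bet-1|^{2s}$ (whose real parts are again $\scl{s+\alp+\bet}$ and $\scl{s+\alp}$, giving back the same $c$) yields \eqref{hardy-6}. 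Finally \eqref{hardy-5} is immediate, since $\HMS{\vvv}{s}{\alp+\bet}^2$ and $\HMS{\vvv}{s+\bet}{\alp}^2$ already live on the common line $\Re\lam = \scl{s+\alp+\bet}$ and their integrands differ only by $|\lam|^{2\bet}$, which satisfies $|\lam|^{2\bet} \ge |\Re\lam|^{2\bet} = |\scl{s+\alp+\bet}|^{2\bet}$ there provided $\bet\ge 0$.

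I anticipate no real obstacle here; the steps requiring care are the bookkeeping of which vertical line each seminorm occupies after the shifts and derivatives, and tracking the exponent $\sgn s$ in $c$ and $C$ (the roles of $\min$ and $\max$ swap for negative $s$). It should be noted that the argument for \eqref{hardy-5} uses $\bet\ge 0$ — for $\bet<0$ the infimum of $|\lam|^{2\bet}$ over the line is $0$ — which is the regime in which that estimate is later invoked.
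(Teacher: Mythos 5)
Your proof is correct and follows essentially the same route as the paper: both reduce each estimate to a pointwise comparison of the weights $|\lam|^{2s}$ along a common vertical line, using the elementary bound $\min\{1,|\Re z/\Re w|\}\le|z/w|\le\max\{1,|\Re z/\Re w|\}$ for $|\Im z|=|\Im w|$ together with the Mellin shift and derivative rules. Your observation that \eqref{hardy-5} needs $\bet\ge 0$ (since $\inf_{\Re\lam=\scl{s+\alp+\bet}}|\lam|^{2\bet}=0$ otherwise) is accurate and applies equally to the paper's own argument, which uses the estimate only in that regime.
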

 \begin{proof}
   By an elementary calculation for all $z,w \in \C$ with
   $|\Im z| = |\Im w|$ we have
   \begin{align}\label{prf-hardy-mel}
     \min\left\{1,\labs{\frac{\Re z}{\Re w}} \right\} \ %
     \leq \ \labs{\frac{z}{w}}  \ %
     \leq \ \max\left\{1,\labs{\frac{\Re z}{\Re w}} \right\}.
   \end{align}
 Observe that
   $\scl{s+\alp}+\bet=\scl{s+\alp+\bet}$, cf.\@ \eqref{def-sigOme}, and
   $\widehat{r^{-\bet}\vvv}(\lam)=\widehat{\vvv}(\lam+\bet)$, cf.\@ Lemma \ref{lem-mellin}\eqref{mel-1}, so that
  \begin{align*}
   \HM{r^{-\bet}\vvv}{s}^2= \int_{\Re\lam=\scl{s+\alp+\bet}} \left|\frac{\lam-\bet}{\lam}\right|^{2s} \labs{\lam}^{2s} |\widehat{\vvv}(\lam)|^2\d\Im\lam.
  \end{align*}
  Using the definition of $\HM{\vvv}{s}$ in Definition \ref{def-homspace} together with Lemma \ref{lem-mellin} and \eqref{prf-hardy-mel}, we obtain  \eqref{hardy-3}. Estimate \eqref{hardy-4} follows from \eqref{hardy-3} applied to $r\pa_r\vvv$ if we observe $\HMS{r\pa_r\vvv}{s}{\alp+1}=\HM{\vvv}{s+1}$. Similarly,
  \begin{align*}
  \HMS{\vvv}{s}{\alp+\bet}^2
  &= \int_{\Re\lam=\scl{s+\alp+\bet}} \frac1{|\lam|^{2\bet}}  \labs{\lam}^{2s+2\bet} |\widehat{\vvv}(\lam)|^2\d\Im\lam, \\
  \HM{r^{-\bet}\vvv}{s}^2
  &= \int_{\Re\lam=\scl{s+\alp+\bet-1}} \frac{|\lam-\bet+1|^{2s}}{|\lam|^{2s}|\lam+1|^2} \labs{\lam}^{2s} \labs{\lam+1}^2 |\widehat{\vvv}(\lam+1)|^2\d\Im\lam.
  \end{align*}
  These two identities imply \eqref{hardy-5} and \eqref{hardy-6}, respectively.
\end{proof}
\begin{lemma}[Estimates for wedge norms]\label{lem_mellin_domain} %
  Let $\ell \in \N_0, \alp,\bet\in\R$ and let
  \begin{align*}
      b_{\bet}:= 
  \frac 12 \Big(\sum_{j=0}^\ell  
  \max\Big\{\Big|\tfrac{\sclOme{\ell+\alp}}{\sclOme{\ell+\alp+\bet}}\Big|^{j},1\Big\}\Big)^{-1}, \qquad %
  B_{\bet}:= 2 \sum_{j=0}^\ell
  \max\Big\{\Big|\tfrac{\sclOme{\ell+\alp+\bet}}{\sclOme{\ell+\alp}}\Big|^{j},1\Big\}.
  \end{align*}
  Then for $\vvv \in \CRci(\overline{\Ome}\BS\set{0})$ we have with the notation
  $\nabla_{r,\phi} = (\p_r, \frac 1r \p_\phi)$
\begin{subequations}\label{wedge}
  \begin{align}
    b_{\bet} \ZM{r^{-\bet}\vvv}{\ell} &\leq \ZMS{\vvv}{\ell}{\alp+\bet} \leq B_{\bet} \ZM{r^{-\bet}\vvv}{\ell}, \label{wedge-1} \\
  \begin{split}
    b_{\bet+1} \ZM{r^{-\bet}\nabla_{r,\phi}\vvv}{\ell} &\leq \ZMS{\vvv}{\ell+1}{\alp+\bet} \ %
                                                          \leq B_{\bet+1} \ZM{r^{-\bet}\nabla_{r,\phi}\vvv}{\ell}, \label{wedge-2}
  \end{split} \\
   \ZMS{\vvv}{\ell}{\alp+k} &\leq |\sclOme{\ell+\alp+k}|^{-k} \ZMS{\vvv}{\ell+k}{\alp}, \label{wedge-3} \\
  b_{\bet-1}|\sclOme{\ell+\alp+\bet}|\ZM{r^{-\bet}\vvv}{\ell} &\leq \ZMS{\pa_r\vvv}{\ell}{\alp+\bet-1}. \label{wedge-4}
  \end{align}
  Moreover, if $\pa_\phi^m \vvv|_{\pa_0\Ome}=0$ for all $m\in\set{0,\ldots,\ell}$, then we have
  \begin{align}
   b_{\bet-1}\ZM{r^{-\bet}\vvv}{\ell} &\leq \ang \ZMS{r^{-1}\pa_\phi\vvv}{\ell}{\alp+\bet-1}. \label{wedge-5}
  \end{align}
\end{subequations}
 \end{lemma}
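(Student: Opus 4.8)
The plan is to reduce each of \eqref{wedge-1}--\eqref{wedge-5} to a pointwise comparison in Mellin space, in the spirit of the proof of Lemma~\ref{lem-hardy-mel}. By Lemma~\ref{def-tracenorm} (with the weight $\alp$ there replaced by an arbitrary $\gam\in\R$, which the same proof allows) one has, for $\vvv\in\CRci(\overline{\Ome}\BS\set{0})$, the representation $\ZMS{\vvv}{\ell}{\gam}^{2}=\sum_{j+m=\ell}\int_0^\ang\int_{\Re\lam=\sclOme{\ell+\gam}}|\lam^j\pa_\phi^m\widehat{\vvv}(\lam,\phi)|^2\,\d\Im\lam\,\d\phi$, and by Lemma~\ref{lem-mellin}\eqref{mel-1},\eqref{mel-2} one has $\widehat{r^{-\bet}\vvv}(\lam)=\widehat{\vvv}(\lam+\bet)$, $\widehat{\pa_r\vvv}(\lam)=(\lam+1)\widehat{\vvv}(\lam+1)$ and $\widehat{r^{-1}\pa_\phi\vvv}(\lam)=\pa_\phi\widehat{\vvv}(\lam+1)$, hence the corresponding formulas for the two components of $r^{-\bet}\nabla_{r,\phi}\vvv$. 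First I would use these identities together with a real shift of the integration contour to rewrite, in each estimate, both sides as integrals over one and the same vertical line: $\Re\lam=\sclOme{\ell+\alp+\bet}$ in \eqref{wedge-1},~\eqref{wedge-4},~\eqref{wedge-5}, the line $\Re\lam=\sclOme{\ell+1+\alp+\bet}$ in \eqref{wedge-2}, and---with no shift---$\Re\lam=\sclOme{\ell+\alp+k}=\sclOme{\ell+k+\alp}$ in \eqref{wedge-3}. On that common line the integrands then differ only through factors $|\lam-c|^{2j}$ versus $|\lam|^{2j}$, where $c\in\R$ is such that $\Re(\lam-c)$ equals $\sclOme{\ell+\alp}$ (or, in \eqref{wedge-4}--\eqref{wedge-5}, also $\sclOme{\ell+\alp+\bet-1}$).

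The only analytic ingredient is the elementary two-sided bound \eqref{prf-hardy-mel}, i.e.\ $\min\{1,|\Re z/\Re w|\}\le|z/w|\le\max\{1,|\Re z/\Re w|\}$ whenever $|\Im z|=|\Im w|$. On the relevant line this shows that, for $0\le j\le\ell$, the quantities $|\lam-c|^{2j}$ and $|\lam|^{2j}$ are comparable with constant $\big(\max\{1,|\sclOme{\ell+\alp}/\sclOme{\ell+\alp+\bet}|^{\pm1}\}\big)^{2j}$, and likewise with $\bet$ replaced by $\bet+1$ in \eqref{wedge-2} and by $\bet-1$ in \eqref{wedge-4}--\eqref{wedge-5}. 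Summing over $j+m=\ell$ and bounding the resulting $j$-dependent factors by $\sum_{j=0}^\ell\max\{|\sclOme{\ell+\alp}/\sclOme{\ell+\alp+\bet}|^{\pm j},1\}$ from above and by its reciprocal from below turns them into exactly the constants $b_\bullet,B_\bullet$ of the statement; the spare factor~$2$ in their definition accounts for the double counting caused by the two components of $\nabla_{r,\phi}$. This yields \eqref{wedge-1} directly and \eqref{wedge-2} after the reindexings $(j,m)\mapsto(j+1,m)$ for the $\pa_r$-component and $(j,m)\mapsto(j,m+1)$ for the $r^{-1}\pa_\phi$-component. For \eqref{wedge-4} one runs the same comparison for $\widehat{\pa_r\vvv}$ and additionally uses $|\lam|^{2}\ge|\Re\lam|^{2}=|\sclOme{\ell+\alp+\bet}|^{2}$ on the line to extract the factor $|\sclOme{\ell+\alp+\bet}|$; likewise \eqref{wedge-3} follows from $|\lam|^{2k}\ge|\Re\lam|^{2k}=|\sclOme{\ell+\alp+k}|^{2k}$ after the reindexing $(j,m)\mapsto(j+k,m)$ (this is the non-scaling-invariant analogue of \eqref{hardy-5} and requires $k\ge0$).

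For \eqref{wedge-5} the additional ingredient is a one-dimensional Poincaré inequality in the angular variable. Taking the Mellin transform in $r$, the hypothesis $\pa_\phi^m\vvv|_{\pa_0\Ome}=0$ for $m\in\set{0,\dots,\ell}$ becomes $\pa_\phi^m\widehat{\vvv}(\lam,0)=0$ for every $\lam$, so that $\norm{\pa_\phi^m\widehat{\vvv}(\lam,\cdot)}_{\LR{2}(0,\ang)}\le\ang\,\norm{\pa_\phi^{m+1}\widehat{\vvv}(\lam,\cdot)}_{\LR{2}(0,\ang)}$ by the fundamental theorem of calculus and the Cauchy--Schwarz inequality. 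Inserting this into the term indexed by $(j,m)$ and combining it with the comparison of $|\lam-\bet|^{2j}$ and $|\lam-1|^{2j}$ on $\Re\lam=\sclOme{\ell+\alp+\bet}$ yields \eqref{wedge-5} (with room to spare, since the claimed constant is $b_{\bet-1}$ rather than $2b_{\bet-1}$). The step I expect to require the most care is the index bookkeeping in \eqref{wedge-2}: one must verify that the two reindexed families together reconstruct $\ZMS{\vvv}{\ell+1}{\alp+\bet}^{2}$ up to---and no worse than---the factors $b_{\bet+1}$, $B_{\bet+1}$, which is precisely why the sums defining $b_\bullet,B_\bullet$ run up to $\ell$ and carry the factor~$2$. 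Everything else is the verbatim wedge counterpart of the computations in the proof of Lemma~\ref{lem-hardy-mel}.
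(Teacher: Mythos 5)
Your proposal is correct and follows essentially the same route as the paper: the paper likewise writes $\ZM{\vvv}{\ell}^2$ via \eqref{eq_mellin_domain} as a sum over $j+m=\ell$ of one-dimensional Mellin norms, reduces \eqref{wedge-1}, \eqref{wedge-3}, \eqref{wedge-4} to the pointwise line comparison \eqref{prf-hardy-mel} (packaged there as Lemma \ref{lem-hardy-mel}, which you re-derive inline), obtains \eqref{wedge-2} from \eqref{wedge-1} via the splitting of $\ZM{\vvv}{\ell+1}$ into the $r\pa_r\vvv$ and $\pa_\phi\vvv$ contributions (your reindexing is the same computation), and proves \eqref{wedge-5} by exactly the angular Poincar\'e/Jensen step combined with the Mellin shift. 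The constant bookkeeping you describe also matches, with the generous factors in $b_\bullet$, $B_\bullet$ absorbing the slack in both arguments.
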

\begin{proof}
  The inequalities \eqref{wedge-1}, \eqref{wedge-3} and \eqref{wedge-4} follow
  from Lemma \ref{lem-hardy-mel} upon noting that
  $\scl{j+m+\alp-\frac12}=\sclOme{\ell+\alp}$ if $j + m = \ell$ and thus by
  \eqref{eq_mellin_domain}
 \begin{align*}
  \ZM{\vvv}{\ell}^2= \sum_{j+m=\ell}  \int_0^\ang \HMS{\pa_\phi^{m} \vvv(\cdot,\phi)}{j}{m+\alp-\frac12}^2 \, \d\phi.
 \end{align*}
 Note that $\max\{\ZMS{r\pa_r\vvv}{\ell}{\alp+1},\ZMS{\pa_\phi\vvv}{\ell}{\alp+1}\}\leq \ZM{\vvv}{\ell+1}\leq  \ZMS{r\pa_r\vvv}{\ell}{\alp+1} + \ZMS{\pa_\phi\vvv}{\ell}{\alp+1}$, so that \eqref{wedge-2} follows from \eqref{wedge-1}.
 For \eqref{wedge-5} we use $\pa_\phi^m \widehat{\vvv}(\lam,\phi')=\int_0^{\phi'} \pa_\phi^{m+1} \widehat{\vvv}(\lam,\phi) \d\phi$ and Jensen's inequality to observe
 \begin{align*}
 |\pa_\phi^m \widehat{\vvv}(\lam,\phi)|^2\leq \ang \int_0^{\ang} \labs{\pa_\phi^{m+1} \widehat{\vvv}(\lam,\phi)}^2 \d\phi.
 \end{align*}
This implies
 \begin{align*}
 \int_0^\ang &\int_{\Re \lam = \sclOme{\ell+\alpha}} \labs{ \lam^{j}  \pa_\phi^{m}  \widehat {\vvv}(\lam+\bet,\phi) }^2  \d\Im\lam \, \d\phi \leq \ang^2 \int_0^\ang \int_{\Re \lam = \sclOme{\ell+\alpha}} \labs{ \lam^{j}  \pa_\phi^{m+1}  \widehat {\vvv}(\lam+\bet,\phi) }^2  \d\Im\lam \, \d\phi \\
 &= \ang^2  \int_0^\ang \int_{\Re \lam = \sclOme{\ell+\alpha+\bet-1}} \left|\frac{\lam+1-\bet}{\lam}\right|^j\labs{ \lam^{j}  \pa_\phi^{m}  \widehat {\left(r^{-1}\pa_\phi \vvv\right)}(\lam,\phi) }^2  \d\Im\lam \, \d\phi.
 \end{align*}
 Summing over $j+m=\ell$ gives \eqref{wedge-5} in view of \eqref{eq_mellin_domain} and \eqref{prf-hardy-mel}.
\end{proof}
\begin{lemma}[Interpolation estimates] \label{lem-interp} %
  Let $\ell \in \N_0$.  For $\bet,\bet_1,\bet_2\in\R$ with $\bet_1<\bet<\bet_2$,
  $\eta \in (\bet,\bet+1)$ and for any
  $\vf \in C^\infty_\mathrm{c}(\OL \Ome \setminus \{0\})$ we have
  \begin{enumerate}
  \item \label{lem-cauchy-schwarz-2}
    $\DS \HMS{\vvv}{\ell}{\bet}\le
    \HMS{\vvv}{\ell}{\bet_1}^{\frac{\bet_2-\bet}{\bet_2-\bet_1}}\HMS{\vvv}{\ell}{\bet_2}^{\frac{\bet-\bet_1}{\bet_2-\bet_1}}$,
  \item \label{it-interp-11} %
    $\DS \HMS{\vf}{\ell}{\eta} \ %
    \leq \
    c\HMS{\vf}{\ell}{\bet}^{1+\bet-\eta} \HMS{\vf}{\ell+1}{\bet}^{\eta-\bet}$, \hfill %
  \item \label{lem-cauchy-schwarz-2-wedge}
    $\DS \ZMS{\vvv}{\ell}{\bet}\le
    \ZMS{\vvv}{\ell}{\bet_1}^{\frac{\bet_2-\bet}{\bet_2-\bet_1}}\ZMS{\vvv}{\ell}{\bet_2}^{\frac{\bet-\bet_1}{\bet_2-\bet_1}}$,
  \item \label{it-interp-12-wedge}
    $\DS \ZMS{\vf}{\ell}{\eta} \ \leq \
    c_\Ome \ZMS{\vf}{\ell}{\bet}^{1+\bet-\eta}
    \ZMS{\vf}{\ell+1}{\bet}^{\eta-\bet}$, \hfill %
  \end{enumerate}
  where $c:=|\scl{\ell+\bet+1}|^{\bet-\eta}$ if $\scl{\ell+\bet+1}\neq 0$ and
  $c:=|\scl{\ell+\eta}|^{2(\bet-\eta)}$ otherwise. Furthermore,
  $c_\Ome:=|\sclOme{\ell+\bet+1}|^{\bet-\eta}$ if
  $\sclOme{\ell+\bet+1}\neq 0$ and $c_\Ome:=|\sclOme{\ell+\eta}|^{2(\bet-\eta)}$
  otherwise.
\end{lemma}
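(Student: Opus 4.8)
The plan is to transfer everything to the Mellin side, where each of the four seminorms is a weighted $L^2$–norm of $\hat\vf$ over a vertical line whose real part depends affinely on the weight parameter: by Lemma~\ref{def-tracenorm} (with the weight exponent replaced by $\bet$) and by Definition~\ref{def-homspace}, for $\vf\in\CRci(\OL\Ome\setminus\set{0})$ one has
\[
  \ZMS{\vf}{\ell}{\bet}^2=\sum_{j+m=\ell}\int_0^\ang\!\!\int_{\Re\lam=\sclOme{\ell+\bet}}\!\!\labs{\lam^j\pa_\phi^m\hat\vf(\lam,\phi)}^2\,\d\Im\lam\,\d\phi ,\qquad
  \HMS{\vf}{\ell}{\bet}^2=\sum_{\phi\in\set{0,\ang}}\int_{\Re\lam=\scl{\ell+\bet}}\!\!\labs{\lam}^{2\ell}\labs{\hat\vf(\lam,\phi)}^2\,\d\Im\lam .
\]
Because $\vf$ has compact support in $r\in(0,\infty)$, each $\lam\mapsto\pa_\phi^m\hat\vf(\lam,\phi)$ extends to an entire function with rapid decay in $\Im\lam$, uniformly on vertical strips, so the $L^2$–version of the Hadamard three–lines theorem is available: if $F$ is holomorphic with such decay on a strip $a\le\Re z\le b$, then $\|F(x_\theta+\mathrm i\,\cdot)\|_{L^2(\R)}\le\|F(a+\mathrm i\,\cdot)\|_{L^2(\R)}^{1-\theta}\|F(b+\mathrm i\,\cdot)\|_{L^2(\R)}^{\theta}$ for $x_\theta=(1-\theta)a+\theta b$ (obtained by pairing $F(x+\mathrm i\,\cdot)$ with an arbitrary unit vector in $L^2(\R)$ and applying the scalar three–lines lemma). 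The affine identities $\sclOme{\ell+\bet}=\ell+\bet-1$ and $\scl{\ell+\bet}=\ell+\bet-\tfrac12$ will be used repeatedly.

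For \eqref{lem-cauchy-schwarz-2} and \eqref{lem-cauchy-schwarz-2-wedge} I would fix a boundary ray $\phi\in\set{0,\ang}$ (resp. a pair $(j,m)$ with $j+m=\ell$ and an angle $\phi\in(0,\ang)$) and apply the $L^2$ three–lines theorem to $\lam\mapsto\lam^\ell\hat\vf(\lam,\phi)$ (resp. $\lam\mapsto\lam^j\pa_\phi^m\hat\vf(\lam,\phi)$) on the strip between $\Re\lam=\sclOme{\ell+\bet_1}$ and $\Re\lam=\sclOme{\ell+\bet_2}$. Affinity makes $\Re\lam=\sclOme{\ell+\bet}$ the $\theta$–convex combination of the outer lines with $\theta=\tfrac{\bet-\bet_1}{\bet_2-\bet_1}$, so three–lines gives the asserted interpolation bound for the $(j,m,\phi)$–component. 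Summing over $(j,m)$, integrating over $\phi$, and applying Hölder's inequality with exponents $\tfrac1{1-\theta},\tfrac1\theta$ (to turn $\sum(\cdot)^{1-\theta}(\cdot)^{\theta}$ into $(\sum\cdot)^{1-\theta}(\sum\cdot)^{\theta}$) yields \eqref{lem-cauchy-schwarz-2-wedge}; part \eqref{lem-cauchy-schwarz-2} is the same with a two–term sum over the rays and no $\phi$–integral.

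For \eqref{it-interp-11} and \eqref{it-interp-12-wedge}, put $t:=\eta-\bet\in(0,1)$. The first step is to apply \eqref{lem-cauchy-schwarz-2}, resp. \eqref{lem-cauchy-schwarz-2-wedge}, with $\bet_1=\bet$, $\bet_2=\bet+1$ (allowed since $\eta\in(\bet,\bet+1)$), giving $\HMS{\vf}{\ell}{\eta}\le\HMS{\vf}{\ell}{\bet}^{1-t}\HMS{\vf}{\ell}{\bet+1}^{\,t}$, resp. $\ZMS{\vf}{\ell}{\eta}\le\ZMS{\vf}{\ell}{\bet}^{1-t}\ZMS{\vf}{\ell}{\bet+1}^{\,t}$. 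The second step trades one unit of weight against one derivative: since $\scl{\ell+\bet+1}=\scl{\ell+1+\bet}$, the lines underlying $\HMS{\vf}{\ell}{\bet+1}$ and $\HMS{\vf}{\ell+1}{\bet}$ coincide, and on that line $\labs\lam\ge\labs{\Re\lam}=\labs{\scl{\ell+\bet+1}}$, whence $\labs\lam^{2\ell}\le\labs{\scl{\ell+\bet+1}}^{-2}\labs\lam^{2\ell+2}$ pointwise — provided $\scl{\ell+\bet+1}\ne0$ — and integration gives $\HMS{\vf}{\ell}{\bet+1}\le\labs{\scl{\ell+\bet+1}}^{-1}\HMS{\vf}{\ell+1}{\bet}$; the same pointwise estimate applied termwise in $(j,m)$ with $j\mapsto j+1$, together with the fact that $\ZMS{\vf}{\ell+1}{\bet}$ dominates the resulting sum, gives $\ZMS{\vf}{\ell}{\bet+1}\le\labs{\sclOme{\ell+\bet+1}}^{-1}\ZMS{\vf}{\ell+1}{\bet}$. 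As $\bet-\eta=-t$, combining the two steps produces exactly the constants $c=\labs{\scl{\ell+\bet+1}}^{\bet-\eta}$ and $c_\Ome=\labs{\sclOme{\ell+\bet+1}}^{\bet-\eta}$.

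The place where this scheme has to be modified — and the main obstacle — is the degenerate case $\scl{\ell+\bet+1}=0$ (resp. $\sclOme{\ell+\bet+1}=0$): then the line $\Re\lam=\scl{\ell+\bet+1}$ runs through the origin, the pointwise bound $\labs\lam^{2\ell}\lesssim\labs\lam^{2\ell+2}$ breaks down near $\lam=0$, and $\HMS{\vf}{\ell}{\bet+1}$ is genuinely not controlled by $\HMS{\vf}{\ell+1}{\bet}$ there. Here I would instead argue on the line $\Re\lam=\scl{\ell+\eta}$ carrying $\HMS{\vf}{\ell}{\eta}$, where one still has $\labs\lam\ge\labs{\scl{\ell+\eta}}\ne0$, and exploit the compact support of $\vf$ through a Hardy–type inequality on the logarithmic variable $s=\log r$ (in the spirit of Lemma~\ref{lem-hardy}) applied to $r^{-\scl{\ell+\bet}}(r\pa_r)^\ell\vf$; this is what produces the alternative constant $\labs{\scl{\ell+\eta}}^{2(\bet-\eta)}$ (resp. $\labs{\sclOme{\ell+\eta}}^{2(\bet-\eta)}$), and it is the only step that does not reduce to a routine three–lines / Hölder manipulation.
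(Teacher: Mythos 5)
Your treatment of \eqref{lem-cauchy-schwarz-2}, \eqref{lem-cauchy-schwarz-2-wedge} and of the non-degenerate branch of \eqref{it-interp-11}, \eqref{it-interp-12-wedge} is correct, and the route differs mildly from the paper's: you interpolate on the Mellin side via an $L^2$ three-lines theorem, whereas the paper simply splits the weight $r^{-\sclOme{\ell+\bet}}=r^{-\sclOme{\ell+\bet_1}/p}r^{-\sclOme{\ell+\bet_2}/p'}$ and applies H\"older in real space (with constant $1$, same as yours). Your second step --- trading one unit of weight for one derivative via $|\lam|\ge|\Re\lam|$ on the common line $\Re\lam=\scl{\ell+\bet+1}$ --- is exactly the paper's estimate \eqref{hardy-5}, so up to that point the two arguments are essentially equivalent.

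The genuine gap is the degenerate case $\scl{\ell+\bet+1}=0$ (resp.\ $\sclOme{\ell+\bet+1}=0$), which is precisely the case that produces the second constant $|\scl{\ell+\eta}|^{2(\bet-\eta)}$ and hence cannot be omitted. You correctly identify it as the obstacle, but the sketched fix --- ``a Hardy-type inequality in the logarithmic variable applied to $r^{-\scl{\ell+\bet}}(r\pa_r)^\ell\vf$'' --- does not go through in its natural instantiations. Writing $w:=(r\pa_r)^\ell\vf$ and $\delta:=1+\bet-\eta\in(0,1)$, Hardy's inequality (Lemma \ref{lem-hardy}) with exponent $-\delta$ yields $\HMS{\vf}{\ell}{\eta}\le\delta^{-1}\HMS{\vf}{\ell+1}{\eta-1}$, i.e.\ the derivative term carries the weight $\eta-1\in(\bet-1,\bet)$ rather than $\bet$, and lowering that weight back to $\bet$ would require control of $\HMS{\vf}{\ell+1}{\bet-1}$, which is not available; applying Hardy with exponent $\delta-1$ to $rw$ instead produces $\HMS{\vf}{\ell}{\eta}$ itself on the right with a prefactor $(1-\delta)^{-1}>1$, so it cannot be absorbed. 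Nor can one combine part \eqref{lem-cauchy-schwarz-2} with an unweighted bound $\HMS{\vf}{\ell}{\bet+1}\lesssim\HMS{\vf}{\ell+1}{\bet}$, since on the line $\Re\lam=0$ this is a Poincar\'e inequality on all of $\R$ in the variable $\log r$ and is false. The paper's resolution is a dyadic iteration: integration by parts plus Cauchy--Schwarz (splitting the weight as $r^{-2\scl{\ell+\eta}}=r^{-\scl{\ell+\bet+1-2^{-k}}}\cdot r^{-\scl{\ell+1+\bet}}$) gives
\begin{align*}
\HMS{\vf}{\ell}{\bet+1-2^{-(k+1)}}^2\ \le\ 2^{k+1}\,\HMS{\vf}{\ell}{\bet+1-2^{-k}}\,\HMS{\vf}{\ell+1}{\bet},
\end{align*}
which is iterated over $k$ down to $\HMS{\vf}{\ell}{\bet}$ and then combined with \eqref{lem-cauchy-schwarz-2} to reach general $\eta$; this is the non-routine step your proposal is missing.
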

\begin{proof}
  \eqref{lem-cauchy-schwarz-2}: Let $p:=\frac{\bet_2-\bet_1}{\bet_2-\bet}$ and $p':=\frac{\bet_2-\bet_1}{\bet-\bet_1}$.
  Then $\frac{1}{p}+\frac{1}{p'}=1$ and
  $\frac{\bet_1}{p}+\frac{\bet_2}{p'}=\bet$ and
 \begin{align*}
   \HMS{\vvv}{\ell}{\bet}^2&= \ \int_0^\infty  [r^{-\sclOme{\ell+\bet}} (r\pa_r)^\ell v(r)]^2 \,\frac{\d r}{r}  \
                   \stackrel{\mathclap{\eqref{def-sigOme}}}{=} \ \int_0^\infty |r^{-\sclOme{\ell+\bet_1}} (r\pa_r)^\ell  v(r)|^{\frac{2}{p}} |r^{-\sclOme{\ell+\bet_2}} (r\pa_r)^j v(r)|^{\frac{2}{p'}} \,\frac{\d r}{r}.
 \end{align*}
 The claim \eqref{lem-cauchy-schwarz-2} thus follows from H\"older's inequality.

 \medskip
 
 \eqref{it-interp-11}: If $\scl{\ell+\bet+1} \neq 0$, then
 \eqref{it-interp-11} is just a combination of \eqref{lem-cauchy-schwarz-2} and
 \eqref{hardy-5}.  We thus assume $\scl{\ell+\bet+1} = 0$.  Then
 $\scl{\ell+\eta} \neq 0$ for all $\eta \in (\bet,\bet+1)$.  We first show
 that for $\eta=\bet+1-2^{-(k+1)}$ for some $k\in \N_0$ we have
  \begin{align*}
  \DS \HMS{\vf}{\ell}{\eta} \ %
    \leq \ 
    2^{k+1}\HMS{\vf}{\ell}{\bet+1-2^{-k}}^{\frac12}
    \HMS{\vf}{\ell+1}{\bet}^{\frac12}.
  \end{align*}
  Indeed, we write $w := (r \p_r)^\ell v$ and obtain from integrating by parts
\begin{align*}
  \HMS{\vf}{\ell}{\eta}^2 %
  &= \ \int_0^\infty r^{-2 \scl{\ell+\eta}} \labs{w}^2 \frac{\d r}{r} \,%
    = \ - \frac{1}{2 \scl{\ell+\eta}} \int_0^\infty r \partial_r (r^{-2 \scl{\ell+\eta}}) \labs{w}^2 \frac{\d r}{r}  \
  = \ \frac{1}{\scl{\ell+\eta}} \int_0^\infty r^{-2 \scl{\ell+\eta}} w \, (r \partial_r w) \, \frac{\d r}{r},  \ %
\end{align*}
so that the assertion follows by an application of the Cauchy--Schwarz inequality and since $|\scl{\ell+\eta}|=2^{-(k+1)}$ in view of $\scl{\ell+\bet+1}=0$.
Iteratively, we then get the estimate \eqref{it-interp-11} for $\eta = \bet + 1 - 2^{-(k+1)}$ with a bound
\begin{align*}
\prod_{j=0}^k 2^{\frac{j+1}{2^{k-j}}}=2^{2k+2^{-k}}\le 2^{(k+1)(2-2^{-k})} = |\scl{\ell+\eta}|^{2(\bet-\eta)},
\end{align*}
where we have used $2k+2^{-k}\le (k+1)(2-2^{-k})$ for $k\in\N_0$ and $2-2^{-k}=\eta-\bet$.
In view of \eqref{lem-cauchy-schwarz-2}, the assertion in \eqref{it-interp-11}
follows for all $\eta \in (\bet,\bet+1)$.

\medskip

\eqref{lem-cauchy-schwarz-2-wedge}, \eqref{it-interp-12-wedge}: The
proofs follow analogously by an additional integration in $\phi$.
\end{proof}
\begin{lemma}[Trace estimates] \label{lem-trace} %
  Let $\ell\in\N$ and $\alp\in \R$.
  Then for all $v\in \CRci(\overline{\Ome}\BS \set{0})$, we have
   \begin{align}\label{trace-0}
     \sup_{\phi' \in [0,\ang]} \HM{v(\cdot,\phi')}{0}^2 \ &\le \ \big(2+\frac{c_\Ome}{\ang}\big)\ZM{v}{0}\ZM{v}{1},%
   \end{align}
   where $c_\Ome:=|\alp|^{-\frac12}$ if
  $\alp\neq 0$ and $c_\Ome:=2$
  otherwise.
  Moreover, it holds and
   \begin{align}\label{trace-1}
     \sup_{\phi' \in [0,\ang]} \HM{v(\cdot,\phi')}{\ell-\frac 12}^2 \ &\le \ \big(2+(\ang|\sclOme{\ell+\alp}|)^{-1}\big)\ZM{v}{\ell}^2.%
   \end{align}
 \end{lemma}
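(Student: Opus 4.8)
Both estimates are weighted trace inequalities, and the plan is to reduce each to the one–dimensional trace inequality in the angular variable $\phi\in[0,\ang]$, applied fiberwise. The elementary ingredient is: if $g\in\CR{1}([0,\ang])$ and $\phi_0\in[0,\ang]$ is a point where $|g|^2$ is minimal, then $|g(\phi_0)|^2\le\tfrac1\ang\int_0^\ang|g|^2\,\d\phi$, and integrating $(|g|^2)'$ from $\phi_0$ gives $\sup_{\phi'\in[0,\ang]}|g(\phi')|^2\le\tfrac1\ang\int_0^\ang|g|^2\,\d\phi+2\int_0^\ang|g||g'|\,\d\phi\le\tfrac1\ang\int_0^\ang|g|^2\,\d\phi+2\big(\int_0^\ang|g|^2\,\d\phi\big)^{1/2}\big(\int_0^\ang|g'|^2\,\d\phi\big)^{1/2}$. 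All the remaining work is in matching the radial weights so that the two factors on the right become (parts of) the norms $\ZM{v}{0},\ZM{v}{1}$, respectively $\ZM{v}{\ell}$.

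For \eqref{trace-0} I would start from the real-space representation \eqref{real-spc-plancherel} with $\ell=0$ and $\gam=1$ (note $\scl{\alp}=\alp-\tfrac12\ne0$), which reads $\HM{v(\cdot,\phi')}{0}^2=\int_0^\infty r^{-2\alp}|v(r,\phi')|^2\,\d r$. Since $\int_0^\infty r^{-2\alp}|v(r,\phi')|^2\,\d r\le\int_0^\infty r^{-2\alp}\sup_{\phi''}|v(r,\phi'')|^2\,\d r$, applying the angular trace inequality pointwise in $r$ and using Fubini gives $\sup_{\phi'}\HM{v(\cdot,\phi')}{0}^2\le\tfrac1\ang\int_0^\ang\!\int_0^\infty r^{-2\alp}|v|^2\,\d r\,\d\phi+2\int_0^\ang\!\int_0^\infty r^{-2\alp}|v||\pa_\phi v|\,\d r\,\d\phi$. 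In the cross term I write $r^{-2\alp}=r^{(-2\alp+1)/2}\cdot r^{(-2\alp-1)/2}$ and apply Cauchy--Schwarz in $r$ and $\phi$; since $\int_0^\ang\!\int_0^\infty r^{-2\alp+1}|v|^2\,\d r\,\d\phi=\ZM{v}{0}^2$ and $\int_0^\ang\!\int_0^\infty r^{-2\alp-1}|\pa_\phi v|^2\,\d r\,\d\phi$ is the $(j,m)=(0,1)$ summand of $\ZM{v}{1}^2$, this term is $\le2\ZM{v}{0}\ZM{v}{1}$ and produces the constant $2$. For the first term I would integrate by parts in $r$ — legitimate because $v$ is compactly supported away from the tip and $1-2\alp\ne0$ — to obtain $\int_0^\infty r^{-2\alp}|v|^2\,\d r=\tfrac{-2}{1-2\alp}\int_0^\infty r^{-2\alp+1}\Re(\bar v\,\pa_r v)\,\d r$, and then Cauchy--Schwarz (recognising $\int_0^\ang\!\int_0^\infty r^{-2\alp+1}|\pa_r v|^2\,\d r\,\d\phi$ as the $(j,m)=(1,0)$ summand of $\ZM{v}{1}^2$) gives $\int_0^\ang\!\int_0^\infty r^{-2\alp}|v|^2\,\d r\,\d\phi\le\tfrac2{1-2\alp}\ZM{v}{0}\ZM{v}{1}$. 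One checks $\tfrac2{1-2\alp}\le|\alp|^{-1/2}$ for $\alp\in(-1,0)$ and $\tfrac2{1-2\alp}=2$ at $\alp=0$, so this term is $\le\tfrac{c_\Ome}{\ang}\ZM{v}{0}\ZM{v}{1}$; adding the two contributions gives \eqref{trace-0}. (Alternatively one may recognise $\int_0^\ang\!\int_0^\infty r^{-2\alp}|v|^2$ as $\ZMS{v}{0}{\alp+1/2}^2$ and invoke the interpolation inequality Lemma~\ref{lem-interp}\eqref{it-interp-12-wedge} with $\bet=\alp$, $\eta=\alp+\tfrac12$.)

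For \eqref{trace-1}, since $\ell-\tfrac12\notin\Z$ the real-space representation is unavailable, so I would pass to Mellin variables. By Definition~\ref{def-homspace} and Lemma~\ref{def-tracenorm}, $\HM{v(\cdot,\phi')}{\ell-1/2}^2=\int_{\Re\lam=\sclOme{\ell+\alp}}|\lam|^{2\ell-1}\,|\widehat v(\lam,\phi')|^2\,\d\Im\lam$ and $\ZM{v}{\ell}^2=\sum_{j+m=\ell}\int_0^\ang\!\int_{\Re\lam=\sclOme{\ell+\alp}}|\lam|^{2j}\,|\pa_\phi^m\widehat v(\lam,\phi)|^2\,\d\Im\lam\,\d\phi$ both live on the \emph{same} vertical line $\Re\lam=\sclOme{\ell+\alp}$. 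Moving $\sup_{\phi'}$ inside the $\Im\lam$-integral and applying the angular trace inequality to $g=\widehat v(\lam,\cdot)$ for each fixed $\lam$ on that line, I multiply by $|\lam|^{2\ell-1}$ and estimate the two resulting pieces by $\tfrac{1}{\ang|\lam|}\,|\lam|^{2\ell}\int_0^\ang|g|^2\le\tfrac{1}{\ang|\sclOme{\ell+\alp}|}\,|\lam|^{2\ell}\int_0^\ang|g|^2$, using $|\lam|\ge|\Re\lam|=|\sclOme{\ell+\alp}|$ (nonzero whenever the asserted constant is finite), and by Young's inequality $2|\lam|^{2\ell-1}\big(\int_0^\ang|g|^2\big)^{1/2}\big(\int_0^\ang|g'|^2\big)^{1/2}\le|\lam|^{2\ell}\int_0^\ang|g|^2+|\lam|^{2\ell-2}\int_0^\ang|g'|^2$, where $\ell\ge1$ ensures $2\ell-2\ge0$. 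Integrating back over the line and using Fubini, these two integrals are, up to the scalar $(\ang|\sclOme{\ell+\alp}|)^{-1}$ on one of them, exactly the summands $(j,m)=(\ell,0)$ and $(j,m)=(\ell-1,1)$ of $\ZM{v}{\ell}^2$ (admissible indices since $\ell\ge1$), which gives \eqref{trace-1}.

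The angular direction is harmless — a bounded interval carrying no weight — so the only genuine issue is matching the radial weights. For \eqref{trace-1} this is painless: $\ZM{v}{\ell}$ controls $v$ together with all its derivatives up to order $\ell$ at one common radial scaling, so the $|\lam|^{-1}$ lost in the trace is recovered from $|\lam|\ge|\sclOme{\ell+\alp}|$, which is precisely why $\sclOme{\ell+\alp}$ enters the constant. The delicate step is the first term of \eqref{trace-0}: the radial homogeneity of $\HM{v(\cdot,\phi')}{0}$ lies strictly between that of $\ZM{v}{0}$ (which sees $v$ itself, but with too heavy a radial weight) and that of the radial-derivative part of $\ZM{v}{1}$ (the correct weight, but on $\pa_r v$), so one is forced to trade a radial derivative for a better weight through a Hardy-type integration by parts — the source of $c_\Ome$, which degenerates like a negative power of $|\alp|$ as $\alp\to0$ and is replaced by the absolute constant $2$ at $\alp=0$. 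Everything else is Cauchy--Schwarz and Fubini.
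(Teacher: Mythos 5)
Your argument is correct in substance and follows the same skeleton as the paper's proof: reduce to the one-dimensional inequality $\sup_{\phi'}|g(\phi')|^2\le\inf_{\phi'}|g(\phi')|^2+2\int_0^\ang|g||g'|\,\d\phi$, bound the cross term by Cauchy--Schwarz with the weight split between $\ZM{v}{0}$ and the angular part of $\ZM{v}{1}$ (resp.\ between the $(\ell,0)$ and $(\ell-1,1)$ summands of $\ZM{v}{\ell}^2$), and bound the averaged term by a Hardy/interpolation step. For \eqref{trace-1} your Mellin-side computation, including the use of $|\lam|\ge|\sclOme{\ell+\alp}|$ on the line, is essentially identical to the paper's. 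For \eqref{trace-0} the paper stays in Mellin variables and splits the cross term via the shifted Plancherel identity onto the lines $\Re\lam=\sclOme{\alp}$ and $\Re\lam=\sclOme{1+\alp}$, then controls the averaged term $\tfrac1\ang\ZMS{v}{0}{\alp+\frac12}^2$ by Lemma~\ref{lem-interp}\eqref{it-interp-12-wedge}; your physical-space weight splitting is equivalent for the cross term, and your direct integration by parts for the averaged term is a legitimate, more elementary alternative to the interpolation lemma.

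One caveat on the averaged term of \eqref{trace-0}: the lemma is stated for all $\alp\in\R$, while your integration by parts degenerates at $\alp=\tfrac12$ and produces the constant $\tfrac{2}{|1-2\alp|}$, which exceeds the claimed $|\alp|^{-1/2}$ for positive $\alp$ near $\tfrac12$ (roughly $\alp\in(0.13,1.87)$); your verification only covers $\alp\in(-1,0]$, the range actually used in the paper. Your parenthetical fallback to Lemma~\ref{lem-interp}\eqref{it-interp-12-wedge} repairs this for all $\alp$, but after squaring it yields $c_\Ome=|\alp|^{-1}$ rather than $|\alp|^{-1/2}$ --- which is also what the paper's own proof actually delivers (the stated $c_\Ome$ appears to drop a square), so this is a blemish of the statement rather than a gap in your argument.
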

 \begin{proof}
   For $\lam\in \C$, $\widehat v(\phi) := \widehat v(\lam,\phi)$ and for all $\phi',\phi''\in [0,\ang]$ we have
   \begin{align*}
    \labs{\widehat \vvv(\lam,\phi'')}^2 &= 2\Re \int_{\phi'}^{\phi''} \widehat\vvv(\lam,\phi) \,  \OL{\p_\phi \widehat\vvv(\lam,\phi)} \ \d\phi +  \labs{\widehat\vvv(\lam,\phi')}^2.
   \end{align*}
   We now
   \begin{enumerate}
   \item integrate over $\lam\in S_{\scl{\alp}}$ and use the generalized Plancherel identity in Lemma \ref{lem-mellin}\eqref{mel-plancherel} to the effect of $\int_{\phi'}^{\phi''} \int_{\Re\lam=\scl{\alp}} \widehat\vvv \,  \OL{\p_\phi \widehat\vvv} \, \dd\Im\lam \dd\phi = \int_{\phi'}^{\phi''} \int_{\Re\lam=\scl{\alp}} \widehat\vvv(\lam-\frac12,\phi) \,  \OL{\p_\phi \widehat\vvv(\lam+\frac12,\phi)}\, \dd\Im\lam \dd\phi$, or
   \item multiply by $\labs{\lam}^{2\ell-1}\in S_{\sclOme{\ell+\alp}}$ and then integrate over $\lam\in S_{\sclOme{\ell+\alp}}$, respectively,
   \end{enumerate}
   and obtain by the Cauchy-Schwarz inequality and \eqref{eq_mellin_domain} the estimates
   \begin{align*}
      \sup_{\phi' \in [0,\ang]} \HM{v(\cdot,\phi')}{0}^2 \ %
     &\le \ 2\ZM{v}{0}\ZM{\frac1r\p_\phi v}{0} + \inf_{\phi' \in [0,\ang]} \HM{v(\cdot,\phi')}{0}^2,\\
   \sup_{\phi' \in [0,\ang]} \HM{v(\cdot,\phi')}{\ell-\frac 12}^2 \ %
     &\le \ 2\ZMalp{(r\pa_r)^\ell v}{0}{\ell+\alp}\ZMalp{(r\pa_r)^{\ell-1}\p_\phi v}{0}{\ell+\alp} + \inf_{\phi' \in [0,\ang]} \HM{v(\cdot,\phi')}{\ell-\frac 12}^2.
   \end{align*}
   Let now $\eps>0$ and let $\phi'\in [0,\ang]$ be such that for all $\phi\in [0,\ang]$ it holds
   \begin{enumerate}
   \item $\HM{v(\cdot,\phi')}{0}^2\le \HM{v(\cdot,\phi)}{0}^2+\eps$, or
   \item $\HM{v(\cdot,\phi')}{\ell-\frac 12}^2\le \HM{v(\cdot,\phi)}{\ell-\frac12}^2+\eps$, respectively.
   \end{enumerate}
   In the first case, we obtain
   \begin{align*}
    \HM{v(\cdot,\phi')}{0}^2&\le \frac1{\ang} \int_0^\ang \HM{v(\cdot,\phi)}{0}^2 \dd \phi + \eps  = \frac{1}{\ang} \ZMalp{v}{0}{\alp+\frac12}^2 + \eps.
   \end{align*}
   Since $\eps>0$ was arbitrary, we arrive at
   \begin{align*}
    \inf_{\phi' \in [0,\ang]} \HM{v(\cdot,\phi')}{0}^2 &\le \frac{1}{\ang} \ZMalp{v}{0}{\alp+\frac12}^2 \le \frac{c_\Ome}{\ang} \ZM{v}{0}\ZMalp{v}{1}{\alp},.
   \end{align*}
   where we have used Lemma \ref{lem-interp}\eqref{it-interp-12-wedge} in the last step.
   Similarly, in the second case we arrive at
   \begin{align*}
    \inf_{\phi' \in [0,\ang]} \HM{v(\cdot,\phi')}{\ell-\frac 12}^2 
    & \le \frac1{\ang} \ZMalp{(r\pa_r)^\ell v}{0}{\ell+\alp}\ZMalp{(r\pa_r)^{\ell-1}v}{0}{\ell+\alp}\le \frac1{\ang} \ZMalp{v}{\ell}{\alp}\ZMalp{v}{\ell-1}{\alp+1}.
   \end{align*}
   In both cases the combination of the estimate for the supremum and the infimum (and \eqref{wedge-3} of Lemma \ref{lem_mellin_domain} in the second case) yields the result.
 \end{proof}
  The boundary norm in Definition \ref{def-homspace}\eqref{seminorm-x} can be formulated as a trace norm as the next lemma shows.
 We note that the trace estimate in our setting holds in all non--zero integer scalings.
\begin{lemma}[Boundary norms as trace norms]\label{def-tracenorm-a}
  For $\Gam \in \{ \p_0 \Ome, \p_1 \Ome \}$ and $\psi \in \cciL{\Gam}$ let $E_\psi$ be the space of functions $v \in \CRi(\OL\Ome \BS \{0\})$ with $v_{|\Gam} = \psi$.
  Let $\ell\in \N$ and $\alp\in\R$ be such that $\sclOme{\ell+\alp} \neq 0$ and $\ang\labs{\sclOme{\ell+\alp}}\leq \alp_0$.
  Then for all $\psi \in \cciL{\Gam}$ we have
   \begin{align*}
     c \HM{\psi}{\ell-\frac 12} \ %
     \leq  \ \inf_{v \in E_\psi} \ZM{v}{\ell} \ %
     \leq \ C  \HM{\psi}{\ell-\frac 12},
   \end{align*}
   where $c:=\big(2+ (\ang |\sclOme{\ell+\alp}|)^{-1} \big)^{-\frac12}$ and $C := (\ell+1)\max\big\{\rpconst \cosh^2 \rpconst, \frac{\rpconst+\sinh
       \rpconst \cosh \rpconst}{\sinh^2{\rpconst}}\big\}$.
\end{lemma}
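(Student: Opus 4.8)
The plan is to prove the two inequalities separately. The lower bound is a direct consequence of the trace estimate \eqref{trace-1} of Lemma~\ref{lem-trace}, while the upper bound requires producing one explicit extension of $\psi$ to $\Ome$ with controlled seminorm.

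For the lower bound, fix any $v\in E_\psi$ with $\ZM{v}{\ell}<\infty$ (if no such $v$ exists the inequality is trivial). Write $\Gam$ as the ray $\phi=\phi_\Gam$, where $\phi_\Gam=0$ if $\Gam=\p_0\Ome$ and $\phi_\Gam=\ang$ if $\Gam=\p_1\Ome$; in either case $\phi_\Gam\in[0,\ang]$ and $v(\cdot,\phi_\Gam)=\psi$. Since $\sclOme{\ell+\alp}\ne 0$, estimate \eqref{trace-1} (whose proof uses only the fundamental theorem of calculus in $\phi$ and the Cauchy--Schwarz inequality, and hence extends to any such $v$) gives
\begin{align*}
  \HM{\psi}{\ell-\frac12}^2 \ = \ \HM{v(\cdot,\phi_\Gam)}{\ell-\frac12}^2 \ \le \ \sup_{\phi'\in[0,\ang]}\HM{v(\cdot,\phi')}{\ell-\frac12}^2 \ \le \ \big(2+(\ang|\sclOme{\ell+\alp}|)^{-1}\big)\ZM{v}{\ell}^2 .
\end{align*}
Thus $c\,\HM{\psi}{\ell-\frac12}\le\ZM{v}{\ell}$ for every $v\in E_\psi$ with $c=\big(2+(\ang|\sclOme{\ell+\alp}|)^{-1}\big)^{-1/2}$, and taking the infimum over $v\in E_\psi$ yields the lower bound.

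For the upper bound it suffices to exhibit one $v\in E_\psi$ with $\ZM{v}{\ell}\le C\,\HM{\psi}{\ell-\frac12}$. I would define $v$ through its Mellin transform in the radial variable on the line $\Re\lam=\sclOme{\ell+\alp}$ by $\widehat v(\lam,\phi):=\widehat\psi(\lam)\,h(\lam,\phi)$, where
\begin{align*}
  h(\lam,\phi) \ := \ \frac{\cosh\big(|\lam|(\ang-\phi)\big)}{\cosh\big(|\lam|\ang\big)} \ \ (\Gam=\p_0\Ome), \qquad\qquad h(\lam,\phi) \ := \ \frac{\cosh\big(|\lam|\phi\big)}{\cosh\big(|\lam|\ang\big)} \ \ (\Gam=\p_1\Ome).
\end{align*}
Since $\psi\in\cciL{\Gam}$, the transform $\widehat\psi$ decays faster than any polynomial along vertical lines; together with $|h|\le 1$ and $|\p_\phi^m h|\le|\lam|^m$ for all $m\in\N_0$, this makes the Mellin inversion integral and all of its $\p_r^k\p_\phi^m$-derivatives converge absolutely and locally uniformly on $\{r>0\}\times[0,\ang]$, so $v\in\CRi(\OL\Ome\BS\{0\})$; moreover $\widehat v(\lam,\phi_\Gam)=\widehat\psi(\lam)$, i.e.\@ $v_{|\Gam}=\psi$, so $v\in E_\psi$. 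Using $\p_\phi^2 h=|\lam|^2 h$, hence $\p_\phi^{2k}h=|\lam|^{2k}h$ and $\p_\phi^{2k+1}h=|\lam|^{2k}\p_\phi h$, Plancherel's identity in the Mellin variable (cf.\@ Lemma~\ref{def-tracenorm}) gives
\begin{align*}
  \ZM{v}{\ell}^2 \ = \ \int_{\Re\lam=\sclOme{\ell+\alp}}|\widehat\psi(\lam)|^2\,|\lam|^{2\ell}\,\Big(\sum_{m=0}^{\ell}|\lam|^{-2m}\int_0^\ang|\p_\phi^m h(\lam,\phi)|^2\,\d\phi\Big)\,\d\Im\lam .
\end{align*}

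The crux is the pointwise estimate, uniform over the line $\Re\lam=\sclOme{\ell+\alp}$,
\begin{align*}
  \sum_{m=0}^{\ell}|\lam|^{-2m}\int_0^\ang|\p_\phi^m h(\lam,\phi)|^2\,\d\phi \ \le \ \frac{C^2}{|\lam|} ,
\end{align*}
after which substituting into the identity above and recognizing $\int_{\Re\lam=\sclOme{\ell+\alp}}|\lam|^{2\ell-1}|\widehat\psi(\lam)|^2\,\d\Im\lam=\HM{\psi}{\ell-\frac12}^2$ closes the argument. To obtain it, note that each summand equals $\cosh^2\big(|\lam|(\ang-\phi)\big)/\cosh^2\big(|\lam|\ang\big)$ or $\sinh^2\big(|\lam|(\ang-\phi)\big)/\cosh^2\big(|\lam|\ang\big)$, so integrating in $\phi$ with the explicit primitives of $\cosh^2$ and $\sinh^2$ reduces the bracket to an elementary function of $x:=|\lam|\ang\ge 0$ alone, and the hypothesis $\ang|\sclOme{\ell+\alp}|\le\rpconst$ together with these one-variable bounds and the $\ell+1$ summands produces the stated constant $C$. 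The main obstacle is precisely this \emph{uniformity} in $\lam$: the extension must decay on the natural scale $1/|\lam|$ in $\phi$, which rules out both a $\phi$-independent profile (too flat) and the genuinely $\Ome$-harmonic extension carrying the far boundary condition — the latter degenerates exactly when $|\Re\lam|\,\ang$ is small, i.e.\@ in the near-resonant regime still permitted by the hypothesis. Using the modulus $|\lam|$ in the hyperbolic profile is what makes the estimate clean and uniform; that this renders $\widehat v$ non-analytic in $\lam$ is immaterial, as only $v\in\CRi(\OL\Ome\BS\{0\})$ with the quantitative bound is needed.
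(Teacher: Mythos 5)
Your proof is correct, and while the lower bound is exactly the paper's argument (apply \eqref{trace-1} to an arbitrary $v\in E_\psi$ and take the infimum), your upper bound takes a genuinely different route. The paper also constructs the extension in Mellin variables, but uses the \emph{harmonic} profiles $\sin(\lam\phi)/\sin(\lam\ang)$ or $\cos(\lam\phi)/\cos(\lam\ang)$, choosing between the two according to whether $|\sin(\lam\ang)|^2\ge\frac12$ or $|\cos(\lam\ang)|^2\ge\frac12$, and then invokes the auxiliary estimate \eqref{int-coscos} of Lemma \ref{lem-aux1} to obtain $|\lam|\int_0^\ang |f(\lam\phi)|^2/|g(\lam\ang)|^2\,\d\phi\lesssim 1$; this is where the hypothesis $\ang|\sclOme{\ell+\alp}|\leq\alp_0$ enters its argument. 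Your hyperbolic profile $\cosh(|\lam|\phi)/\cosh(|\lam|\ang)$ sidesteps the case distinction entirely, since the normalizing denominator never vanishes, and the resulting one-variable function $x\mapsto (x+\sinh x\cosh x)/(2\cosh^2 x)$ is bounded on all of $[0,\infty)$ (by $\tfrac12+\tfrac1e$, say); consequently your bound is uniform on the whole line $\Re\lam=\sclOme{\ell+\alp}$ and in fact never uses the hypothesis $\ang|\sclOme{\ell+\alp}|\leq\alp_0$ --- your appeal to it at the end is superfluous rather than wrong. The price is that your extension is neither harmonic nor analytic in $\lam$, but, as you correctly note, neither property is required for membership in $E_\psi$ or for the Plancherel computation along a fixed vertical line. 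Your constant is of order $\sqrt{\ell+1}$, which is smaller than the stated $C$, so the claimed inequality follows a fortiori; what the paper's choice buys instead is a harmonic extension (reusable elsewhere) at the cost of the resonance-avoiding case analysis.
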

\begin{proof}
  Without loss of generality we assume $\Gam = \p_1 \Ome$.
  The lower bound follows directly from \eqref{trace-1}.
  For the upper bound, we note that for $\lam\in \C$ with $\Re\lam=\sclOme{\ell+\alp}$ we have either (a) $|\sin(\lam\ang)|^2 \geq \frac12$ or (b) $|\cos(\lam\ang)|^2 \geq \frac12$.
  Depending on these cases, we choose in Mellin variables either (a) $\hat v$ $=$
  $\sin(\lam\phi) \sin^{-1}(\lam\ang) \hat \psi(\lam)$ or (b) $\hat v$ $=$ $\cos(\lam\phi) \cos^{-1}(\lam\ang) \hat \psi(\lam)$.
  Both definitions yield harmonic extensions $v \in E_\psi$ of $\psi$.
  Hence, there are $f,g\in \set{\cos,\sin}$ such that
     \begin{align*}
       \ZM{v}{\ell}&=\sum_{j+m=\ell}\int_{\Re \lam = \sclOme{\ell+\alp}}\int_0^\ang |\lam^j\p_\phi^m \hat v|^2 \dd\phi\dd\Im\lam \\
       &\leq \ (\ell+1)\int_{\Re \lam = \sclOme{\ell+\alp}} \int_0^\ang \frac{|f(\lam\phi)|^2}{|g(\lam\ang)|^2} \ \dd\phi \ |\lam^\ell \hat \psi|^2 \dd\Im\lam  %
       \lupref{int-coscos}\leq \ C\int_{\Re \lam = \sclOme{\ell+\alp}} |\lam^{\ell-\frac 12}\hat \psi|^2 \dd\Im\lam = \HM{\psi}{\ell-\frac 12}.
     \end{align*}
     This yields the assertion.
   \end{proof}

\section{Variational Solution}\label{sec:var}

In this section we will establish for sufficiently smooth data a variational solution to the resolvent equation
\begin{align} \label{resolvent} %
  \begin{pdeq}
    \mu u - \Delta u \ &= \ f &&\text{in $\Ome$,} \\
    u + \pa_{\nu} u \ &= g &&\text{on $\p \Ome$}, \\
  \end{pdeq}
\end{align}
where $\mu\in \C$ is the complex variable related to a Laplace transform of \eqref{bvp} in time.
The idea is to use a Lax-Milgram argument in an (unweighted) space $H$ (see \eqref{def-space-H}) of sufficient regularity which ensures that \eqref{resolvent} is fulfilled not only in a weak sense, but pointwise almost everywhere.
In order to find a suitable sesquilinear form, we test \eqref{resolvent} with a certain linear combination of derivatives of $v\in H$ which ensures the right amount of smoothness of the solution, see Definition \ref{lem-bilinear}.
However, in order to use the fundamental lemma of calculus of variations to identify the Lax-Milgram solution with a distributional solution to \eqref{resolvent}, we show that the class of these linear combinations of derivatives of $v$ contains $C_c^\infty(\Ome)$ as $v$ runs through $H$, and a similar argument is given for functions on the boundary.
The outline of this section is therefore as follows:
In Section \ref{sec:testfct} we show that the class of test functions is rich enough in the above sense.
In Section \ref{sec:unweighted_var_sol} we use this richness of the test functions to obtain via a Lax-Milgram scheme a variational solution $u\in H$ which at the same time is a distributional solution.
Finally, in Section \ref{sec:we} we update the unweighted information on $u$ to a weighted estimate.

\subsection{Test Function Problem}\label{sec:testfct}

In this section, we will provide certain surjectivity results in the space of test functions.
In Section \ref{sec:unweighted_var_sol} we will define a sesquilinear form in terms of the function $v$, which itself is defined by a smooth and compactly supported function $w$ via the test function problem
\begin{align} \label{tf-resolvent}  
  \begin{pdeq}
  A v  &=  w  \qquad\qquad &&\text{in $\Ome$,} \\
  v  &=  0\qquad &&\text{on $\p \Ome$,}
  \end{pdeq}
\end{align}
where $A:=c_2 r^2\kap + c_0 - c_1 (r \pa_r)^2 - \pa_\phi^2$ and $\kap,c_0, c_1, c_2\in (0,\infty)$ are suitable constants.
The advantage of a sesquilinear form in terms of such a test function is that a Lax-Milgram argument immediately yields a solution with sufficient regularity such that all terms in \eqref{resolvent} are defined pointwise almost everywhere.
In this section we argue that the image of the operator $A$ (which is acting on the dual side of the problem) is large enough to ensure uniqueness for the primal objects.

\medskip

Note that problem \eqref{tf-resolvent} still has a (single) non-scaling invariance which does not allow for a pure Mellin approach.
We therefore want to employ a Lax-Milgram type argument in the Hilbert space $\calh$, defined as the closure of $\CRci(\Ome)$ with respect to the norm
\begin{align*}
\lVert v \rVert_{\calh}^2 &:= \kap \int_\Omega (|v|^2 + |r \nabla v|^2) \, \d x + \int_\Omega (|r^{-1} v|^2 + |\nabla v|^2 + |\nabla r\nabla v|^2) \, \d x.
\end{align*}

\medskip

\begin{proposition}[Test function problem] \label{prp-testfun}
Let $\kap, c_0, c_1, c_2\in(0,\infty)$.
For $w \in \cciL{\Ome}$ there is a unique solution $v \in \calh$ of \eqref{tf-resolvent}, and it holds
  \begin{align} \label{dirichlet-ha} 
    \int_\Ome \big(\kap|v|^2 + |\kap rv|^2 + \kap|r\nabla v|^2 + |r^{-1}v|^2 + |\nabla v|^2 +
    |\nabla r\nabla v|^2 \big) \dd x \ 
    \lesssim \ \int_\Ome |r^{-1} w|^2 \dd x.\ 
  \end{align}
\end{proposition}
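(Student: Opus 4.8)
The plan is to solve \eqref{tf-resolvent} by the Lax--Milgram theorem applied to the sesquilinear form associated with the operator $A = c_2 r^2\kap + c_0 - c_1(r\pa_r)^2 - \pa_\phi^2$ on the Hilbert space $\calh$, and then to upgrade the resulting natural energy bound to the full estimate \eqref{dirichlet-ha}. First I would define
\begin{align*}
 a(v,\tilde v) \ := \ \int_\Ome \Big( c_2\kap r^2 v\overline{\tilde v} + c_0 v\overline{\tilde v} + c_1 (r\pa_r v)\overline{(r\pa_r\tilde v)} + (\pa_\phi v)\overline{(\pa_\phi\tilde v)} \Big) \,\dd x,
\end{align*}
where the integration by parts in the radial variable uses $r\,\dd r\,\dd\phi$ as the measure so that $(r\pa_r)$ is (up to lower order) skew-adjoint; the boundary terms at $\phi=0,\ang$ vanish because test functions are compactly supported in $\Ome$. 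One checks that $a$ is bounded on $\calh\times\calh$ and coercive: coercivity in the $\kap$-weighted part ($|v|^2 + |rv|^2 + |r\nabla v|^2$ up to using $|\pa_\phi v|^2 \le |\nabla v|^2$ and $|r\pa_r v|^2\le |r\nabla v|^2$) is immediate from the $c_0,c_1,c_2,\kap>0$ terms, while control of the unweighted part $|r^{-1}v|^2 + |\nabla v|^2 + |\nabla r\nabla v|^2$ does \emph{not} come directly from $a$ and is exactly the point where the real work lies. The linear functional $\tilde v\mapsto \int_\Ome w\overline{\tilde v}\,\dd x$ is bounded on $\calh$ since $|\int_\Ome w\overline{\tilde v}| = |\int_\Ome (rw)(r^{-1}\tilde v)| \le \norm{r^{-1}w}_{\LR2}\norm{r^{-1}\tilde v}_{\LR2}\lesssim \norm{rw}_{\LR2}\norm{\tilde v}_{\calh}$, and likewise $\lesssim \norm{r^{-1}w}_{\LR2}\norm{\tilde v}_{\calh}$. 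Lax--Milgram then produces a unique $v\in\calh$ with $a(v,\tilde v) = \int_\Ome w\overline{\tilde v}$ for all $\tilde v\in\calh$, hence $Av=w$ distributionally, and the energy identity $a(v,v)=\int_\Ome w\bar v$ gives
\begin{align*}
 \int_\Ome\big(\kap|v|^2 + \kap|rv|^2 + \kap|r\nabla v|^2 + |\nabla v|^2\big)\,\dd x \ \lesssim \ \int_\Ome|r^{-1}w|^2\,\dd x,
\end{align*}
using $\norm{\nabla v}_{\LR2}^2 \le \norm{r\pa_r v}_{\LR2}^2\cdot(\dots)$ — actually this last bound needs care, so I would instead get $\norm{\pa_\phi v}_{\LR2}$ from $a$ and recover $\norm{r^{-1}\pa_\phi v}_{\LR2}$ and the remaining pieces in the next step.

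Next I would recover the purely unweighted quantities $\norm{r^{-1}v}_{\LR2}$, $\norm{\nabla v}_{\LR2}$ and the second-order group $\norm{\nabla r\nabla v}_{\LR2}$ (which by the paper's shorthand means $\norm{\pa_r r\pa_r v}_{\LR2} + \norm{\pa_r\pa_\phi v}_{\LR2} + \norm{r^{-1}\pa_\phi^2 v}_{\LR2}$, together with the first-order $\norm{r^{-1}\pa_\phi v}_{\LR2}$ already lumped into $|\nabla v|$). Since test functions vanish on $\p_0\Ome$ and $\p_1\Ome$, the Poincaré-type inequality \eqref{wedge-5} of Lemma \ref{lem_mellin_domain} (with $\ell=0$, applied to $v$ and to its derivatives) controls $\norm{r^{-\bet}v}$ by $\ang\norm{r^{-\bet}\pa_\phi v}$-type quantities in the appropriate scaling; in particular $\norm{r^{-1}v}_{\LR2} = \ZMS{v}{0}{1}$-type norm $\lesssim \ang\norm{r^{-1}\pa_\phi v}_{\LR2}$ once $\norm{r^{-1}\pa_\phi v}_{\LR2}$ is known, and $\norm{r^{-1}\pa_\phi v}_{\LR2}$ together with $\norm{\pa_r v}_{\LR2}$ (i.e.\ $|\nabla v|$) follows from testing the distributional equation $Av=w$ against $-r^{-2}$-type multipliers or, more robustly, from the Mellin representation: take the Mellin transform in $r$, so that for a.e.\ $\Re\lam$ on a suitable vertical line the profile $\hat v(\lam,\cdot)$ solves the ODE $(c_2\kap$-convolution$) + (c_0 + c_1\lam^2)\hat v - \pa_\phi^2\hat v = \hat w$ on $(0,\ang)$ with Dirichlet data, and the standard one-dimensional elliptic estimate on $(0,\ang)$ gives $\norm{\pa_\phi^2\hat v}_{\LR2(0,\ang)} + \snorm{\lam}\norm{\pa_\phi\hat v}_{\LR2(0,\ang)} + \snorm{\lam}^2\norm{\hat v}_{\LR2(0,\ang)} \lesssim \norm{\hat w}_{\LR2(0,\ang)}$ uniformly in $\lam$ on the line $\Re\lam=0$ (which corresponds to the $\LR2(\Ome,\dd x)$ scaling after the change of measure $\dd x = r\,\dd r\,\dd\phi$ and $r^{-1}$-weights). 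Integrating in $\Im\lam$ and using Plancherel (Lemma \ref{lem-mellin}) converts this into
\begin{align*}
 \norm{r^{-1}\pa_\phi^2 v}_{\LR2} + \norm{\pa_r\pa_\phi v}_{\LR2} + \norm{\pa_r r\pa_r v}_{\LR2} + \norm{r^{-1}\pa_\phi v}_{\LR2} + \norm{\pa_r v}_{\LR2} + \norm{r^{-1} v}_{\LR2} \ \lesssim \ \norm{r^{-1}w}_{\LR2},
\end{align*}
where the $r^2\kap$ term on the left is moved to the right using the already-established $\kap\norm{rv}_{\LR2}^2\lesssim\norm{r^{-1}w}_{\LR2}^2$ so the ODE forcing is controlled by $\norm{\hat w}+\kap\norm{r^2\hat v}$, closed by the energy step. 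Combining this second-order estimate with the first-order energy bound and \eqref{wedge-5} yields all six groups in \eqref{dirichlet-ha}. Uniqueness is immediate: two solutions differ by an element of $\calh$ annihilated by the coercive form $a$, hence zero.

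The main obstacle is the coupling between the genuinely unweighted estimate (the $|r^{-1}v|^2 + |\nabla v|^2 + |\nabla r\nabla v|^2$ part of $\norm{\cdot}_\calh$, needed both for membership in $\calh$ and for the conclusion) and the $\kap$-weighted estimate: the form $a$ only sees $\pa_\phi v$, $r\pa_r v$, $rv$ and $v$ directly, so $\norm{\pa_r v}_{\LR2}$, $\norm{r^{-1}v}_{\LR2}$ and all second-order terms must be bootstrapped from the PDE itself rather than from coercivity. The clean way around this is precisely the hybrid Mellin/ODE argument above, which exploits that \eqref{tf-resolvent} is scaling-invariant except for the single $r^2$ term, so that after Mellin transform the $\phi$-ODE has $\lam$-uniform elliptic estimates; one must just be careful that the $r^2\kap$ term, which breaks scaling, is handled perturbatively using the already-proven weighted bound on $\norm{rv}_{\LR2}$. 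A secondary technical point is justifying the integrations by parts and the Mellin/Plancherel manipulations for $v\in\calh$ rather than for smooth compactly supported $v$; this is routine by density, since $\CRci(\Ome)$ is dense in $\calh$ by definition and all the estimates above are closed under the $\calh$-norm.
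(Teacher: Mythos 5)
Your route is genuinely different from the paper's, but as written it has two real gaps. First, your sesquilinear form $a$ is not the weak formulation of $A$ with respect to $\dd x = r\,\dd r\,\dd\phi$: with this measure $(r\pa_r)^* = -(r\pa_r)-2$, so the correct form carries an extra term $2c_1\int_\Ome (r\pa_r v)\,\overline{\tilde v}\,\dd x$, whose real part on the diagonal equals $-2c_1\norm{v}_{\LR{2}}^2$. Hence $\Re a(v,v) = c_2\kap\norm{rv}^2 + (c_0-2c_1)\norm{v}^2 + c_1\norm{r\pa_r v}^2 + \norm{\pa_\phi v}^2$, which is not coercive even in your weak norm unless $c_0>2c_1$ — but the proposition allows arbitrary $c_0,c_1\in(0,\infty)$. (This is exactly why the paper works with the measure $\frac{\dd r}{r}\dd\phi$, i.e.\ tests $r^{-2}Av$, making $r\pa_r$ genuinely skew-adjoint, and controls the remaining cross terms by the small parameter $\delta$ rather than by largeness of $c_0$.)

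Second, and more seriously, the bootstrap from your weak solution to the unweighted second-order part of \eqref{dirichlet-ha} is the crux and is not justified. The Mellin/ODE estimate you invoke lives on the line $\Re\lam=0$ (since $\norm{r^{-1}v}$, $\norm{\nabla v}$, $\norm{\nabla r\nabla v}$ correspond to $\int|\hat v|^2$, $|\lam|^2|\hat v|^2+|\pa_\phi\hat v|^2$, etc.\ there), whereas your Lax--Milgram solution is only known to satisfy $\norm{v}_{\LR{2}(\dd x)},\norm{rv}_{\LR{2}(\dd x)}<\infty$, i.e.\ its Mellin transform is controlled only on $\Re\lam\in[-2,-1]$. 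An a priori estimate on a different line does not transfer to this solution "by density": the scaling-invariant operator $c_0-c_1(r\pa_r)^2-\pa_\phi^2$ with Dirichlet conditions has homogeneous solutions $r^{\pm\sqrt{(c_0+(k\pi/\ang)^2)/c_1}}\sin(k\pi\phi/\ang)$, whose exponents can lie anywhere in $\R\setminus\set{0}$ depending on $c_0,c_1,\ang$, so the solution operators on different Mellin lines differ by residues and one cannot freely shift lines. Identifying the variational solution with the regular one is precisely the non-trivial point; the paper avoids it by running Lax--Milgram directly in $\calh$ with the higher-order test functions $(1-\delta(r\pa_r)^2-\pa_\phi^2)\psi$, so that coercivity yields the full $\calh$-norm at once, at the price of the auxiliary surjectivity problem \eqref{problem-psi-test} (which is where the Mellin/sine-series computation actually enters). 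Your Mellin computation for the angular ODE is correct in itself (the symbol $c_0+c_1s^2+(k\pi/\ang)^2$ is uniformly positive on $\Re\lam=0$), but to use it you would have to either construct the solution directly on that line and then prove it coincides with the weak solution, or rerun the energy argument with the weight $r^{-2}$ so that the weak solution already lives on $\Re\lam=0$; neither is addressed.
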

\begin{proof}
We introduce another Hilbert space $\t\calh$, defined as the closure of $\CRci(\Ome)$ with respect to the norm
\begin{align*}
\lVert v \rVert_{\t\calh}^2 &:= \lVert v \rVert_{\calh}^2 + \kap\int_\Omega |(r\nabla)^2 v|^2 \, \d x.
\end{align*}
For a parameter $\delta > 0$ (specified below) we define $\t\calb \colon \calh\times\t\calh \to \C$ by
\[
\t\calb(v,\psi) := \int_\Omega r^{-2}A v \, \big(1 -\delta (r\pa_r)^2-\pa_\phi^2\big)\OL\psi\, \d x .
\]
Observe that for $v, \psi \in \CRci(\Ome)$ we obtain through integration by parts that
\begin{align*}
\int_\Omega r^{-2}A v \, \OL\psi \, \d x
&= c_2 \kap \int_\Omega v \, \OL\psi \, \d x + c_0 \int_\Omega r^{-2} v \, \OL\psi \, \d x + c_1 \int_\Omega (\partial_r v) \, (\partial_r \OL\psi) \, \d x \\
&\phantom{=} + \int_\Omega (r^{-1} \partial_\varphi v) \, (r^{-1} \partial_\varphi \OL\psi) \, \d x, \\
- \int_\Omega r^{-2}A v \, ((r \partial_r)^2 \OL\psi) \, \d x
&= c_2 \kap \int_\Omega (r \partial_r v) \, (r \partial_r \OL\psi) \, \d x + 2 c_2 \kap \int_\Omega v \, (r \partial_r \OL\psi) \, \d x + c_0 \int_\Omega (\partial_r v) \, (\partial_r \OL\psi) \, \d x \\
&\phantom{=} + c_1 \int_\Omega (\partial_r r \partial_r v) \, (\partial_r r \partial_r \OL\psi) \, \d x + \int_\Omega (\partial_r \partial_\varphi v) \, (\partial_r \partial_\varphi \OL\psi) \, \d x, \\
- \int_\Omega r^{-2}A v \, (\partial_\varphi^2 \OL\psi) \, \d x
&= c_2 \kap \int_\Omega (\partial_\varphi v) \, (\partial_\varphi \OL\psi) \, \d x + c_0 \int_\Omega (r^{-1} \partial_\varphi v) \, (r^{-1} \partial_\varphi \OL\psi) \, \d x \\
&\phantom{=} + c_1 \int_\Omega (\partial_r \partial_\varphi v) \, (\partial_r \partial_\varphi \OL\psi) \, \d x + \int_\Omega (r^{-1} \partial_\varphi^2 v) \, (r^{-1} \partial_\varphi^2 \OL\psi) \, \d x.
\end{align*}
It follows that there is $C>0$ such that $|\t\calb(v,\psi)| \le C \lVert v \rVert_{\calh} \lVert \psi \rVert_{\calh}$ for all $v\in \calh$ and $\psi \in \t\calh$.
Consequently, $\t\calb$ has a unique extension to a bounded sesquilinear form $\calb:\calh\times\calh\to\C$.
Moreover, for $v \in \calh$ it holds
\begin{align*}
\Re \calb(v,v)
&\ge \frac{\kap \, c_2 }{2} \int_\Omega |v|^2 \, \d x + \kap c_2 \delta (1 - 2 \delta) \int_\Omega |r \partial_r v|^2 \, \d x + \kap \, c_2 \int_\Omega |\partial_\varphi v|^2 \, \d x \\
&\phantom{=} + c_0 \int_\Omega (|r^{-1} v|^2 + \delta |\partial_r v|^2 + |r^{-1} \partial_\varphi v|^2) \, \d x
+ c_1 \int_\Omega (|\partial_r v|^2 + \delta |\partial_r r \partial_r v|^2 + |\partial_r \partial_\varphi v|^2) \, \d x \\
&\phantom{=} + \int_\Omega (|r^{-1} \partial_\varphi v|^2 + \delta |\partial_r \partial_\varphi v|^2 + |r^{-1} \partial_\varphi^2 v|^2) \, \d x.
\end{align*}
Hence, choosing $\delta := \frac14$ we obtain a constant $c > 0$ such that
\begin{align*}
\Re \calb(v,v) \ge c \Big(\kap \int_\Omega (|v|^2 + |r \nabla v|^2) \, \d x + \int_\Omega (|r^{-1} v|^2 + |\nabla v|^2 + |\nabla r\nabla v|^2) \, \d x\Big) \ge c \lVert v \rVert_{\calh}^2.
\end{align*}
In conclusion, $\calb:\calh\times\calh\to\C$ is a bounded and coercive sesquilinear form.

\medskip

Define $\calf \colon \calh \to \C$ for $\psi \in \calh$ through
\[
\calf(\psi) := \int_\Omega r^{-2} w \,  (1 - \delta (r \partial_r)^2 - \partial_\varphi^2) \OL\psi \, \d x.
\]
Then we can estimate
\[
|\calf(\psi)| \le \Big(\int_\Omega |r^{-1} w|^2 \, \d x\Big)^{\frac 1 2} \Big(\int_\Omega |r^{-1} (1 - \delta (r \partial_r)^2 - \partial_\varphi^2) \psi|^2 \, \d x\Big)^{\frac 1 2} \le C_w \lVert \psi \rVert_{\calh}
\]
for a $C_w < \infty$, that is, $\calf$ is a bounded anti-linear functional. The Lax-Milgram theorem entails existence of a unique $v \in \calh$ such that
\[
\calb(v,\psi) = \calf(\psi) \quad \text{for all } \psi \in \calh
\]
and
\begin{align}\label{dirichlet-ha-002}
\int_\Ome \big(\kap|v|^2 + \kap|r\nabla v|^2 + |r^{-1}v|^2 + |\nabla v|^2 +
    |\nabla r\nabla v|^2 \big) \dd x \ 
    \lesssim \ \int_\Ome |r^{-1} w|^2 \dd x.\ 
\end{align}
Since $v \in \calh$, due to the definition of $\calh$ and by taking traces, we conclude that the boundary condition in \eqref{tf-resolvent} is satisfied.
For $\psi\in \t\calh$ we obtain $\t\calb(v,\psi)=\calb(v,\psi)=\calf(\psi)$, so that
\[
\int_\Omega r^{-2}(A v - w) \, (1 - \delta (r \partial_r)^2 - \partial_\varphi^2) \OL\psi \, \d x = 0 \quad \text{for all } \psi \in \t\calh.
\]
In order to conclude that the first line in \eqref{tf-resolvent} is satisfied, we thus need to show that for each $\Phi \in C^\infty_\mathrm{c}(\Omega)$ there is $\psi\in\t\calh$ with
\begin{align}\label{problem-psi-test}
\begin{pdeq}
(1 - \delta (r \partial_r)^2 - \partial_\varphi^2) \psi &= \Phi && \text{in } \Omega, \\
\psi &= 0 && \text{on } \partial\Omega.
\end{pdeq}
\end{align}
We use the Mellin transform in $r$ and expand in a sine Fourier series in the angle $\varphi \in (0,\theta)$, that is,
\[
\hat\psi(\lambda,\varphi) = \sum_{k = 1}^\infty \hat \psi_k(\lambda) a_k(\varphi), \quad \text{where} \quad \hat\psi_k(\lambda) := \int_0^\theta \hat{\psi}(\lambda,\varphi) \, a_k(\phi) \, \d\phi \quad \text{and} \quad a_k(\varphi) := \sqrt{\tfrac 2 \theta} \sin\big(\tfrac{k \pi}{\theta} \varphi\big),
\]
satisfying \eqref{problem-psi-test} on taking
\[
\hat\psi_k(\lambda) := \frac{\hat\Phi_k(\lambda)}{1 - \delta \lambda^2 - (\tfrac{k \pi}{\theta})^2}, \quad \text{where} \quad \hat\Phi_k(\lambda) := \int_0^\theta \hat{\Phi}(\lambda,\varphi) \, a_k(\phi) \, \d\phi.
\]
Using the Plancherel identity for the Mellin transform and Parseval's identity for the sine Fourier series, we have
\begin{align*}
\lVert \psi \rVert_{\t\calh}^2 &\sim \int_0^\theta \int_0^\infty (\kap r^2 +1)(|\psi|^2 + |r \partial_r \psi|^2 + |\partial_\varphi \psi|^2 + |(r \partial_r)^2 \psi|^2 + |r \partial_r \partial_\varphi \psi|^2 + |\partial_\varphi^2 \psi|^2) \, \tfrac{\d r}{r} \, \d\varphi \\
&= \kap \int_0^\theta \int_{\Re\lambda = -1} \big((1 + |\lambda|^2 + |\lambda|^4) \, |\hat\psi(\lambda,\varphi)|^2 + (1 + |\lambda|^2) \, |\partial_\varphi \hat\psi(\lambda,\varphi)|^2 + |\partial_\varphi^2 \hat\psi(\lambda,\varphi)|^2\big) \, \d\Im\lambda \, \d\varphi \\
&\phantom{=} + \int_0^\theta \int_{\Re\lambda=0} \big((1 + |\lambda|^2 + |\lambda|^4) \, |\hat\psi(\lambda,\varphi)|^2 + (1 + |\lambda|^2) \, |\partial_\varphi \hat\psi(\lambda,\varphi)|^2 + |\partial_\varphi^2 \hat\psi(\lambda,\varphi)|^2\big) \, \d\Im\lambda \, \d\varphi \\
&= \kap \sum_{k = 1}^\infty \int_{\Re\lambda = -1} \big(1 + |\lambda|^2 + |\lambda|^4 + (1 + |\lambda|^2) \, (\tfrac{k\pi}{\theta})^2 + (\tfrac{k\pi}{\theta})^4\big) \, |\hat\psi_k(\lambda)|^2 \, \d\Im\lambda \\
&\phantom{=} + \sum_{k = 1}^\infty \int_{\Re\lambda=0} \big(1 + |\lambda|^2 + |\lambda|^4 + (1 + |\lambda|^2) \, (\tfrac{k\pi}{\theta})^2 + (\tfrac{k\pi}{\theta})^4\big) |\hat\psi_k(\lambda)|^2 \, \d\Im\lambda \\
&= \kap \sum_{k = 1}^\infty \int_\R \frac{3 + s^2 + s^4 + (2+s^2) (\tfrac{k\pi}{\theta})^2 + (\tfrac{k\pi}{\theta})^4}{(1-\delta - (\tfrac{k \pi}{\theta})^2 + \delta s^2 - 2i\delta s)^2} \, |\hat\Phi_k(-1+is)|^2 \, \d s \\
&\phantom{=} + \sum_{k = 1}^\infty \int_\R \frac{1 + s^2 + s^4 + (1 + s^2) \, (\tfrac{k\pi}{\theta})^2 + (\tfrac{k\pi}{\theta})^4}{(1 - (\tfrac{k \pi}{\theta})^2 + \delta s^2)^2} \, |\hat\Phi_k(is)|^2 \, \d s \\
&\lesssim_{\theta} \kap \sum_{k = 1}^\infty \int_\R |\hat\Phi_k(-1+is)|^2 \, \d s + \sum_{k = 1}^\infty \int_\R |\hat\Phi_k(is)|^2 \, \d s = \kap\int_\Omega |\Phi|^2 \, \d x + \int_\Omega r^{-2} |\Phi|^2 \, \d x < \infty.
\end{align*}
Hence, $\psi \in \t\calh$ and therefore $v$ fulfills \eqref{tf-resolvent}.
In particular
\begin{align*}
|\kap r v|^2 = c_2^{-2}|r w- r Av|^2\lesssim |rw|^2 + |r^{-1}v|^2 + |\nabla v|^2 + |\nabla r\nabla v|^2,
\end{align*}
so that \eqref{dirichlet-ha-002} updates to \eqref{dirichlet-ha}.
\end{proof}

  \begin{lemma}\label{prp-testfun-bdry}
   Let $\kap,c_0,c_1,c_2>0$.
   Then for each $\eta\in \CRci(\R_+)$ there is $\rho\in \CRi(\R_+)$ such that $\big(c_2 \kappa + r^{-2}(c_0 - c_1(r\pa_r)^2)\big)\rho=\eta$ and for each $\ell\in\Z$ with $2c_1\ell^2<c_0$ and each $j\in\N_0$ it holds
   \begin{align*}
   \kap\int_0^\infty |r^{\ell+1}(r\pa_r)^j\rho|^2 \, \frac{\d r}{r} + \int_0^\infty |r^\ell(r\pa_r)^j\rho|^2 \, \frac{\d r}{r} <\infty.
   \end{align*}
  \end{lemma}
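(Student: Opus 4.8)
The plan is to view the identity as a linear second-order ODE on the half-line $(0,\infty)$, singular at both endpoints, solve it explicitly by a Green's-function construction from the solutions that are recessive at $r=0$ and at $r=\infty$ respectively, and then read off the weighted bounds from the asymptotics of those recessive solutions. First I would multiply the equation by $r^2$ to bring it to the form $\mathcal{L}\rho = r^2\eta$, where $\mathcal{L} := c_0 + c_2\kap r^2 - c_1(r\pa_r)^2$ and $r^2\eta\in\CRci(\R_+)$ is supported in some interval $[\eps,R]\subset(0,\infty)$. With $\mu := \sqrt{c_0/c_1}>0$ and $\omega := \sqrt{c_2\kap/c_1}>0$, the substitution $z=\omega r$ turns the homogeneous equation $\mathcal{L}u=0$ into the modified Bessel equation $z^2u''+zu'-(z^2+\mu^2)u=0$ of order $\mu$; equivalently, Frobenius analysis at $r=0$ yields indicial exponents $\pm\mu$, and a WKB analysis at $r=\infty$ yields the two asymptotic behaviours $r^{-1/2}e^{\pm\omega r}$. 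I would let $\rho_0(r) := I_\mu(\omega r)$ and $\rho_\infty(r) := K_\mu(\omega r)$ denote the solutions of $\mathcal{L}u=0$ that are recessive at $r=0$ (so $\rho_0(r)=O(r^\mu)$ as $r\to0$, with $\rho_0$ growing at $\infty$) and at $r=\infty$ (so $\rho_\infty$ decays faster than any power at $\infty$, with $\rho_\infty(r)=O(r^{-\mu})$ as $r\to0$), respectively. These are linearly independent, so, writing $-\tfrac1{c_1 r}\mathcal{L}$ in Sturm--Liouville form, the associated Wronskian $W := r\bigl(\rho_0\rho_\infty'-\rho_0'\rho_\infty\bigr)$ is a non-zero constant.

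Next I would define, by variation of parameters,
\[
\rho(r) := \frac{-1}{c_1 W}\Bigl(\rho_\infty(r)\int_0^r t\,\eta(t)\,\rho_0(t)\,\d t + \rho_0(r)\int_r^\infty t\,\eta(t)\,\rho_\infty(t)\,\d t\Bigr),
\]
which solves $\mathcal{L}\rho = r^2\eta$ by a routine computation, hence also the asserted identity, and which belongs to $\CRi(\R_+)$ since $\rho_0,\rho_\infty$ and the two integrals are smooth on $(0,\infty)$. Because $\supp\eta\subset[\eps,R]$, one of the two integrals is constant on $(0,\eps]$ and the other is constant on $[R,\infty)$; hence $\rho = a_0\rho_0$ on $(0,\eps]$ and $\rho = a_\infty\rho_\infty$ on $[R,\infty)$ for suitable constants $a_0,a_\infty\in\C$. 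This structural fact is the point that makes the weighted bounds transparent: near $0$ only the well-behaved solution $\rho_0$ appears, and near $\infty$ only the fast-decaying solution $\rho_\infty$.

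For the weighted estimates I would fix $\ell\in\Z$ with $2c_1\ell^2<c_0$ and $j\in\N_0$; the hypothesis gives $\ell^2<c_0/(2c_1)<c_0/c_1=\mu^2$, hence $|\ell|<\mu$ and in particular $\ell+\mu>0$ (and a fortiori $\ell+1+\mu>0$). On $(0,\eps]$ one has $\rho = a_0 I_\mu(\omega r)$, and differentiating the power series of $I_\mu$ shows $(r\pa_r)^j\rho(r)=O(r^\mu)$ as $r\to0$, so $r^\ell(r\pa_r)^j\rho = O(r^{\ell+\mu})$ and $r^{\ell+1}(r\pa_r)^j\rho = O(r^{\ell+1+\mu})$, which are square-integrable against $\tfrac{\d r}{r}$ near $0$ precisely because $\ell+\mu>0$. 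On $[R,\infty)$ one has $\rho = a_\infty K_\mu(\omega r)$, and the exponential-decay asymptotics of $K_\mu$ together with the recurrence $zK_\mu'(z)=-\tfrac z2\bigl(K_{\mu-1}(z)+K_{\mu+1}(z)\bigr)$ give $(r\pa_r)^j\rho(r)=O\bigl(r^{j}e^{-\omega r}\bigr)$, so $r^N(r\pa_r)^j\rho$ decays exponentially for every $N$ and the corresponding integrals over $[R,\infty)$ converge; on the compact interval $[\eps,R]$ smoothness of $\rho$ suffices. Summing the three contributions gives $\kap\int_0^\infty|r^{\ell+1}(r\pa_r)^j\rho|^2\tfrac{\d r}{r} + \int_0^\infty|r^\ell(r\pa_r)^j\rho|^2\tfrac{\d r}{r}<\infty$, as claimed.

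The only genuinely delicate point is the behaviour at the two singular endpoints: one needs the sharp algebraic rate $r^\mu$ of the recessive solution at $r=0$ — this is exactly where the exponent condition $2c_1\ell^2<c_0$ enters — and the super-polynomial decay of the recessive solution at $r=\infty$, which is where positivity of $c_2\kap$ is used. I would also note that the exponent condition has a variational meaning paralleling Proposition~\ref{prp-testfun}: for $u\in\CRci(\R_+)$,
\[
\Re\,\langle\mathcal{L}u,\,r^{-2\ell}u\rangle_{L^2(\d r/r)} = (c_0-2c_1\ell^2)\,\|r^{-\ell}u\|^2 + c_2\kap\,\|r^{1-\ell}u\|^2 + c_1\,\|r^{-\ell}(r\pa_r)u\|^2,
\]
so that $2c_1\ell^2<c_0$ is precisely the coercivity threshold for $\mathcal{L}$ in the $\ell$-weighted $L^2$-space; this provides an alternative, Lax--Milgram-based construction of $\rho$ with control of $(r\pa_r)^j\rho$ for $j\le1$, from which the higher $j$ follow by solving the equation for $(r\pa_r)^2\rho$ and iterating.
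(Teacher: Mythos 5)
Your proof is correct, but it takes a genuinely different route from the paper. The paper constructs $\rho$ by a Lax--Milgram argument in the weighted space $\calk$ (closure of $\CRci(\R_+)$ under $\kap\int|r^{\ell+1}\rho|^2\frac{\d r}{r}+\int(|r^\ell\rho|^2+|r^\ell(r\pa_r)\rho|^2)\frac{\d r}{r}$), where the hypothesis $2c_1\ell^2<c_0$ is exactly the coercivity threshold of the form $\calc$ after Young's inequality, and then bootstraps the bounds for $j\ge 2$ by observing that $(r\pa_r)^j\rho$ solves the same equation with a controlled right-hand side. You instead diagonalize the operator explicitly: multiplying by $r^2$ and substituting $z=\omega r$ turns the homogeneous equation into the modified Bessel equation of order $\mu=\sqrt{c_0/c_1}$, and the variation-of-parameters formula with $I_\mu$, $K_\mu$ gives a closed-form solution whose behaviour is $a_0 I_\mu(\omega r)=O(r^\mu)$ near $0$ and $a_\infty K_\mu(\omega r)=O(r^{-1/2}e^{-\omega r})$ near $\infty$; the condition $2c_1\ell^2<c_0$ enters only through $|\ell|<\mu$, hence $\ell+\mu>0$. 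Both arguments are sound. Your explicit construction buys several things: all derivative orders $j$ are handled at once from the Bessel asymptotics rather than by iteration; the single function $\rho$ is manifestly independent of $\ell$, whereas the paper's $\rho$ is a priori built in an $\ell$-dependent space and its $\ell$-independence is left implicit; and your closing identity $\Re\langle\mathcal{L}u,r^{-2\ell}u\rangle=(c_0-2c_1\ell^2)\|r^{-\ell}u\|^2+c_2\kap\|r^{1-\ell}u\|^2+c_1\|r^{-\ell}(r\pa_r)u\|^2$ exhibits the sharp coercivity constant that the paper only approaches via an $\eps$-limit. What the paper's variational route buys in exchange is robustness: it requires no explicit solvability and is the same machinery used for the two-dimensional test-function problem in Proposition \ref{prp-testfun}, where no Bessel reduction is available.
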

\begin{proof}
 Introduce the Hilbert space $\calk$ as the closure of $\CRci(\R_+)$ with respect to the norm $\lVert\cdot\rVert_{\calk}$, where
\begin{align*}
  \NNN{\rho}{\calk}^2 \ 
  &= \  \kap\int_0^\infty |r^{\ell+1}\rho|^2  \, \frac{\d r}{r} + \int_0^\infty (|r^\ell \rho|^2 + |r \pa_r r^\ell \rho|^2) \, \frac{\d r}{r}\\
  & \cong_\ell  \kap\int_0^\infty |r^{\ell+1}\rho|^2  \, \frac{\d r}{r} + \int_0^\infty (|r^\ell \rho|^2 + |r^\ell(r \pa_r) \rho|^2) \, \frac{\d r}{r}.
\end{align*}
We define $\calc \colon \calk \times \calk \to \C$ by
\begin{align*}
\calc(\rho,\psi) &:= c_2 \kap \int_0^\infty  r^{2\ell+2} \rho \, \OL\psi \, \frac{\d r}{r} + (c_0 - c_1\ell^2) \int_0^\infty r^{2\ell} \rho \, \OL\psi \, \frac{\d r}{r} \\
&\quad - 2\ell c_1 \int_0^\infty   r^\ell \rho \, (r\partial_r r^\ell \OL\psi) \, \frac{\d r}{r} + c_1 \int_0^\infty  (r\partial_r r^\ell \rho) \, (r\partial_r r^\ell \OL\psi) \, \frac{\d r}{r}.
\end{align*}
Clearly $\calc(\rho,\psi)\lesssim \|\rho\|_\calk \|\psi\|_\calk$, and by Young's inequality we obtain for all $\eps>0$
\begin{align*}
\calc(\rho,\rho) &:= c_2 \kap \int_0^\infty  |r^{\ell+1} \rho|^2 \, \frac{\d r}{r} + (c_0 - (1+\eps^{-2})c_1\ell^2) \int_0^\infty |r^{\ell} \rho|^2 \, \frac{\d r}{r}  + c_1(1-\eps^2) \int_0^\infty  |r\partial_r r^\ell \rho|^2 \, \frac{\d r}{r}.
\end{align*}
Choosing $\eps\in (0,1)$ sufficiently close to $1$ such that $c_0 - (1+\eps^{-2})c_1\ell^2>0$ (recall that $2c_1\ell^2<c_0$ by assumption), we may employ the Lax-Milgram theorem and obtain a unique $\rho\in \calk$ such that $\calc(v,\psi)=\int_0^\infty r^{2\ell+2}\eta \OL\psi \frac{\dd r}{r}$ for all $\psi\in \calk$, in particular for all $\psi\in \CRci(\R_+)$.
Integrating by parts, we thus learn that
\begin{align*}
 \big(c_2 \kappa + r^{-2}(c_0 -\ell^2 - 2\ell c_1 (r\pa_r) - c_1(r\pa_r)^2)\big)r^\ell\rho=r^\ell\eta
\end{align*}
in the sense of distributions, that is $\big(c_2 \kappa + r^{-2}(c_0 - c_1(r\pa_r)^2)\big)\rho=\eta$.
Observe that quantitatively, only the information $\int_0^\infty |r^{\ell+1}\eta|^2 \frac{\dd r}{r}<\infty$ was used.
Hence, since $\rho_j:=(r\pa_r)^j \rho$ solves
\begin{align*}
 \big(c_2 \kappa + r^{-2}(c_0 - c_1(r\pa_r)^2)\big)\rho_j =(r\pa_r)^j\eta + c_1\sum_{m=0}^{j-1} \binom{j}{m} 2^{j-m} (r\pa_r)^{m} \rho
\end{align*}
for any $j\in\N$, the same argument yields iteratively the estimate for the higher derivatives.
\end{proof}

\subsection{Unweighted Variational Solutions with Higher Regularity}\label{sec:unweighted_var_sol}

Fix $\eps\in(0,\pi)$ and $\mu\in \Sigma_{\pi-\eps}$.
For $\kap:=|\mu|>0$ consider the space
\begin{align} \label{def-space-H} %
  H \ = \ \OL{\cciL{\OL \Ome\setminus\set{0}}}^{\NNN{\cdot}{H}},
\end{align}
where the norm $\NNN{\cdot}{H}$ is given by
\begin{align*}
  \NNN{u}{H}^2 \ 
  &= \  \kap\int_\Ome \Big( |u|^2 + \kap |ru|^2+ |r\nabla u|^2 \Big) \dd x + \int_\Ome \Big( |\nabla u|^2  +  |\nabla r\nabla  u|^2 \Big) \dd x + \int_{\p \Ome} \Big(  |u|^2 + \kap |r u|^2 + |r\pa_r u|^2 \dd r\Big).
\end{align*}
Here we write $\int_{\partial\Omega} f \dd r = \int_0^\infty f(r,0) + f(r,\ang) \dd r$.
We note that the space $H$ does not depend on $\kap > 0$.
Note that all terms in the norm have the same scaling if we use parabolic scaling in the sense that $\kap$ scales like the square of the inverse length.
\begin{definition}[Sesquilinear form] \label{lem-bilinear} For $c_0, c_1, c_2 \in \R$, we define
  $B : H \times \cciL{\OL \Ome\setminus\set{0}} \to \C$ by
  \begin{align*}
    B(u,v) \ 
    &:= \ \int_\Ome(\mu u  - \Delta u ) (c_0 - c_1 (r \p_r)^2   + c_2  |\mu| r^2 - \p_\phi^2) \OL v \dd x \\
    & \qquad + \int_{\p \Ome}(\gam u  + \p_\nu u) (c_0 - c_1 (r \p_r)^2 + c_2  |\mu| r^2) \OL v \dd r.
  \end{align*}
\end{definition}
Since $\mu u-\Delta u\in \LRloc{1}(\OL\Ome\setminus\set{0})$ and $\gamma u+\p_\nu u\in \LRloc{1}(\pa\Ome\setminus\set{0})$ for $u\in H$, the sesquilinear form is well--defined.
Using integration by parts we can show that the sesquilinear form has a unique continuous extension which is coercive on $H \times H$.
\begin{lemma}[Continuity and Coercivity]\label{lem-coercive} %
  Let  $c_0, c_1, c_2 \in \R$ and let $B$ be as in Definition \ref{lem-bilinear}.
  \begin{enumerate}
  \item\label{lem-coercive-i} There is a unique continuous extension $B : H \times H \to \C$.
  \item\label{lem-coercive-iii} For $u,v\in H$ with $v\restr{\p\Ome}=0$ it holds $B(u,v)=\int_\Ome(\mu u  - \Delta u ) (c_0 - c_1 (r \p_r)^2   + c_2  |\mu| r^2 - \p_\phi^2) \OL v \dd x$.
  \item\label{lem-coercive-ii} For $c_0 \gg c_1 \gg c_2 \gg 1$, we have $|B(u,u)| \ \gtrsim_\eps \ \NNN{u}{H}^2$ for all $u\in H$.
  \end{enumerate}
\end{lemma}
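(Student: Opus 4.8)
The plan is to establish all three parts by integration by parts, starting from $u,v\in\CRci(\OL\Ome\setminus\set{0})$ and passing to the limit via density of this space in $H$. Throughout I abbreviate $\mathcal L:=c_0+c_2|\mu|r^2-c_1(r\p_r)^2-\p_\phi^2$ and $\mathcal L_b:=c_0+c_2|\mu|r^2-c_1(r\p_r)^2$, so that $\mathcal L\OL v=\mathcal L_b\OL v-\p_\phi^2\OL v$, and I use that on the two radial boundary rays one has $\p_\nu=\mp r^{-1}\p_\phi$ while $r\p_r$ is tangential (so that $(r\p_r)^j\OL v$ and $r^2\OL v$ vanish on $\p\Ome$ whenever $\OL v$ does).

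For part \eqref{lem-coercive-i} I would integrate by parts repeatedly in $r$ (using $(r\p_r)^\ast=-(r\p_r)-2$ with respect to the bulk measure $r\,\d r\,\d\phi$, and $(r\p_r)^\ast=-(r\p_r)$ with respect to $\tfrac{\d r}{r}\,\d\phi$) and in $\phi$ in order to rewrite $B(u,v)$ as a finite sum of bulk integrals $\int_\Ome a\,(\mathsf Du)\,\OL{\mathsf D'v}\,\d x$ and boundary integrals $\int_{\p\Ome}b\,(\mathsf Du)\,\OL{\mathsf D'v}\,\d r$, where $\mathsf D,\mathsf D'$ are products of at most two of the scaled vector fields $1,\ r\p_r,\ \p_\phi,\ \sqrt{|\mu|}\,r$ in the bulk, at most one of them on the boundary, and the weights $a,b$ are exactly those appearing in $\NNN{\cdot}{H}^2$. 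The key point (and the delicate one) is that the boundary contributions generated by the $\phi$-integrations by parts in the term $\int_\Ome(-\Delta u)\mathcal L\OL v\,\d x$, which involve $\p_\nu u$ and higher-order normal derivatives, cancel among themselves and against the boundary integral $\int_{\p\Ome}(\gam u+\p_\nu u)\mathcal L_b\OL v\,\d r$ (using $\p_\nu u=\mp r^{-1}\p_\phi u$ and $\Delta u=r^{-2}\big((r\p_r)^2u+\p_\phi^2u\big)$), so that after collecting terms no boundary integral of order $\ge 2$ and no bulk integral of order $\ge 3$ remains. Cauchy--Schwarz then gives $|B(u,v)|\le C\NNN{u}{H}\NNN{v}{H}$ with $C=C(|c_0|,|c_1|,|c_2|)$, and by density we obtain the unique bounded extension $B\colon H\times H\to\C$. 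Part \eqref{lem-coercive-iii} is then immediate for smooth $v$ with $v|_{\p\Ome}=0$: since $\mathcal L_b\OL v$ vanishes on $\p\Ome$, the boundary integral in Definition \ref{lem-bilinear} is absent, and since both sides of the identity extend continuously in $v$ to $\set{v\in H:v|_{\p\Ome}=0}$, the identity persists there.

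For the coercivity \eqref{lem-coercive-ii} I would put $v=u$ in the balanced identity obtained above. Splitting $\mathcal L$ into its four summands and using $\int_\Ome r\p_r|u|^2\,\d x=-2\int_\Ome|u|^2\,\d x$ and the analogous one-dimensional identity on $\p\Ome$, the result organises as $B(u,u)=\mu\,\mathcal Q(u)+\mathcal N(u)+\mathcal R(u)$, where $\mathcal Q(u):=\int_\Ome\OL u\,\mathcal L u\,\d x$ has $\Re\mathcal Q(u)\gtrsim\int_\Ome\big(|u|^2+|\mu|\,|ru|^2+|r\nabla u|^2\big)\,\d x$ (using $c_0,c_1\ge1$, and $c_0\gg c_1$ to dominate the cross term $2c_1\int_\Ome\OL u\,r\p_r u\,\d x$, whose imaginary part is then also subordinate to $\Re\mathcal Q(u)$); the non-$\mu$ part $\mathcal N(u):=\int_\Ome\nabla u\cdot\nabla(\mathcal L\OL u)\,\d x+\gam\int_{\p\Ome}u\,\mathcal L_b\OL u\,\d r$ has $\Re\mathcal N(u)\gtrsim\int_\Ome\big(|\nabla u|^2+|\nabla r\nabla u|^2\big)\,\d x+\int_{\p\Ome}\big(|u|^2+|\mu|\,|ru|^2+|r\p_r u|^2\big)\,\d r$ (which requires $c_0>c_1$ so that the coefficient $c_0-\tfrac{c_1}{2}$ of $\int_{\p\Ome}|u|^2\,\d r$ is positive, and $c_1\ge2$ so that after collecting the $\p_\phi$-contributions $c_1\int_\Ome|\p_r r\p_r u|^2\,\d x+(c_1-1)\int_\Ome|\p_r\p_\phi u|^2\,\d x+\int_\Ome|r^{-1}\p_\phi^2u|^2\,\d x\gtrsim\int_\Ome|\nabla r\nabla u|^2\,\d x$); and $\mathcal R(u)$ collects the remaining lower-order and imaginary terms, each bounded by a product of the above good quantities with at least one factor carrying a strictly smaller power of $c_0,c_1$ or $c_2$ than the diagonal term dominating it. Finally, since $\mu\in\Sigma_{\pi-\eps}$ one has the elementary bound $|\mu Q_1+Q_2|\ge c_\eps(|\mu|Q_1+Q_2)$ for all reals $Q_1,Q_2\ge0$ with $c_\eps=c_\eps(\eps)>0$; because the hierarchy $c_0\gg c_1\gg c_2\gg1$ makes $|\mathcal R(u)|$, $|\Im\mathcal Q(u)|$ and $|\Im\mathcal N(u)|$ small compared with $\Re\mathcal Q(u)$ and $\Re\mathcal N(u)$, a standard sectoriality argument (estimating $\Re(e^{-i\vartheta}B(u,u))$ for a suitable $\vartheta=\vartheta(\arg\mu)$) yields $|B(u,u)|\gtrsim_\eps|\mu|\,\Re\mathcal Q(u)+\Re\mathcal N(u)\gtrsim\NNN{u}{H}^2$, first on $\CRci(\OL\Ome\setminus\set{0})$ and then on $H$ by density. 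The main obstacle is the bookkeeping just described: organising the many integration-by-parts identities so that every boundary term of order $\ge2$ visibly cancels, while simultaneously keeping $\mathcal R(u)$ strictly subordinate in the constant hierarchy; the sectoriality estimate and the density arguments are comparatively routine.
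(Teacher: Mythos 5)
Your overall strategy coincides with the paper's: derive an integration-by-parts identity expressing $B(u,v)$ as a ``balanced'' sesquilinear part whose diagonal has the form $\mu a^2+b^2$ plus a remainder; deduce \eqref{lem-coercive-i} by Cauchy--Schwarz and density; deduce \eqref{lem-coercive-iii} from the fact that $(c_0-c_1(r\partial_r)^2+c_2|\mu|r^2)\OL v$ vanishes on $\partial\Omega$ when $v$ does (as $r\partial_r$ is tangential) together with continuity of both sides; and deduce \eqref{lem-coercive-ii} from the sectoriality bound $|\mu a^2+b^2|\gtrsim_\eps |\mu|a^2+b^2$ (the paper's Lemma \ref{lem-complane}), Young's inequality, and the hierarchy $c_0\gg c_1\gg c_2\gg1$ to absorb the off-diagonal remainder. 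The mechanism by which the boundary terms from the $-(r\partial_r)^2\OL v$ test match the boundary integral in Definition \ref{lem-bilinear} is the commutator identity $\partial_\nu(r\partial_r)u=(r\partial_r+1)\partial_\nu u$ followed by the substitution $\partial_\nu u=g-u$; this part of your bookkeeping goes through as you describe.

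The one step that would fail as literally described is your treatment of the test term $-\int_\Omega(\mu u-\Delta u)\,\partial_\phi^2\OL v\,\dd x$. Integrating by parts in $\phi$ there produces boundary integrals pairing $\Delta u$ (equivalently $(r\partial_r)^2u$ and $\partial_\phi^2u$) restricted to $\partial\Omega$ with $\partial_\phi\OL v=\mp r\,\partial_\nu\OL v$. These are second-order quantities on the boundary controlled neither by $\NNN{u}{H}$ nor by $\NNN{v}{H}$ (the boundary part of the $H$-norm only sees $u$, $r\partial_r u$ and $|\mu|^{1/2}ru$), and there is nothing in the boundary integral of Definition \ref{lem-bilinear} --- which contains no $\partial_\phi^2\OL v$ --- for them to cancel against. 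The paper avoids this by \emph{not} integrating by parts in that term: it splits $\Delta u=r^{-2}\partial_\phi^2u+(\text{radial part})$, keeps the already-good pairing $\int_\Omega r^{-2}\partial_\phi^2u\,\partial_\phi^2\OL v\,\dd x$, and leaves the non-symmetric bulk remainder $-\int_\Omega(\mu u-\partial_r^2u)\,\partial_\phi^2\OL v\,\dd x$ untouched, estimating it on the diagonal by Young's inequality against $\tfrac12\int_\Omega r^{-2}|\partial_\phi^2u|^2\,\dd x$ and the $c_2$-controlled terms. Your remainder $\mathcal R(u)$ can accommodate exactly this term, so the fix is local, but the claimed visible cancellation of every boundary term of order $\ge2$ is not what happens and should not be the organizing principle of the computation.
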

\begin{proof}
  The proof rests on the identity
  \begin{align}\label{bilid}
  \begin{split}
    B(u,v) \ = \ \t B(u,v) &+ c_1\Big(2\int_\Ome \mu u r\p_r\OL v \dd x+ \int_{\p \Ome} u r \pa_r \OL v \dd r\Big) \\
    &+   2c_2 |\mu|\int_\Ome  r \pa_r u \OL v \dd x- \int_\Ome (\mu u - \p_r^2 u) \pa_\phi^2 \OL  v \dd x
 \end{split}
  \end{align}
  with
  \begin{align*} 
    \t B(u,v) \ &= \ c_0 \Big( \int_\Ome \mu  u \OL v \dd x+  \int_\Ome \nabla u \cdot \nabla \OL v \dd x+ \int_{\p \Ome} u \OL v \dd r\Big) \\
    &\qquad+ \ c_1 \Big( \int_\Ome \mu (r \pa_r  u) (r \pa_r \OL v) \dd x+ \int_\Ome \nabla   r \pa_r u \cdot \nabla r \pa_r  \OL v \dd x+ \int_{\p \Ome} (r \pa_r u) (r \pa_r \OL v) \dd r\Big) \\
    &\qquad+ \ c_2|\mu| \Big(  \int_\Ome \mu r^2 u  \OL v \dd x+  \int_\Ome  (r \nabla u) \cdot (r \nabla   \OL v)  \dd x+  \int_{\p \Ome}  r^2  u  \OL v \dd r\Big) \\
    &\qquad + \  \int_\Ome r^{-2} \pa_\phi^2 u \pa_\phi^2 \OL  v \dd x,
  \end{align*}
  which we will establish for $u,v \in \cciL{\overline\Ome\setminus\set{0}}$ below.

  \medskip
  
  Assuming that \eqref{bilid} holds, assertions \eqref{lem-coercive-i} and \eqref{lem-coercive-iii} follow immediately by density of $\cciL{\overline\Ome\setminus\set{0}}$ in $H$.
  Furthermore, we note that $\t B(u,u)$ has the form $\mu a^2 + b^2$ with $a, b \in \R$ (where $a, b$ depend on $|\mu|$).
  We thus can use Lemma \ref{lem-complane} to estimate $|\mu a^2 + b^2|$ $\gtrsim_\eps$ $|\mu| a^2 + b^2$ and get by an application of Young's inequality
  \begin{align*} 
    |B(u,u)| \ 
    &\gtrsim_\eps \ c_0 \Big( \int_\Ome |\mu|  |u|^2 + |\nabla u|^2 \dd x+ \int_{\p \Ome} |u|^2 \dd r\Big) \\
    &\qquad +  c_1 \Big( \int_\Ome \frac 12 |\mu| |r \pa_r  u|^2 - 2 |\mu| |u|^2 + |\nabla   r \pa_r u|^2 \dd x + \frac12 \int_{\p \Ome} |r \pa_r u|^2 - \frac12 |u|^2 \dd r\Big) \\
    &\qquad +  c_2 \Big(  \int_\Ome |\mu|^2 r^2 |u|^2 +  \frac 12  |\mu| |r \nabla u|^2 -  2  |\mu|   |u|^2 \dd x  + \int_{\p \Ome} |\mu| r^2  |u|^2 \dd r\Big) \\
    &\qquad + \frac 12 \int_\Ome \frac 1{r^2} |\p_\phi^2 u|^2 - r^2 |\mu|^2 |u|^2  -  |r \p_r^2 u|^2 \dd x. 
  \end{align*}
  For $c_0 \gg c_1 \gg c_2 \gg 1$ the negative terms on each line can then be absorbed by positive terms on the lines above.
  The positive terms yield the desired lower bound in \eqref{lem-coercive-ii}. 

  \medskip

  It remains to show \eqref{bilid}.
  We define $f := \mu u - \Delta u$ and $g := u + \pa_\nu u$.
  Testing $f$ with $v$ we get
  \begin{align*}
  \int_\Ome f \OL v \dd x 
    &=  \int_\Ome (\mu u - \Delta u) \OL v \dd x 
      =  \int_\Ome \mu  u \OL v \dd x + \int_\Ome \nabla u \cdot \nabla \OL v \dd x - \int_{\p \Ome} \p_\nu u  \OL v \dd r
  \end{align*}
  By the definition of $g$ this yields
  \begin{align} \label{test-v} %
    \mu \int_\Ome u \OL v \dd x + \int_\Ome \nabla u \cdot \nabla \OL v \dd x+ \int_{\p \Ome} u  \OL v \dd r\ %
    = \ \int_\Ome f \OL v \dd x+ \int_{\p \Ome} g \OL v \dd r. %
  \end{align}
  Before we continue, we first note that
  \begin{align*}
    \int_\Ome  u (r\pa_r)  \OL v  \dd x \ \
    &
    = \   - \int_\Ome (r\pa_r + 2) u  \OL v \dd x, \\ 
    \int_0^\infty  (r\pa_r u)   \OL v  \dd r \ \
    &
    = \ - \int_0^\infty  u  (r \pa_r + 1)  \OL v  \dd r,    \\
    (r \p_r + 2) \Delta u \ 
    &
      = \ \Delta (r \p_r) u, \\
    \p_\nu (r\p_r )u &= (r\p_r + 1) \p_\nu u.
  \end{align*}
  We next test with $- (r \p_r)^2 v$. Using the above identities we get
  \begin{align*}
    \hspace{2ex} & \hspace{-2ex} 
                   - \int_\Ome f (r \p_r)^2 \OL v \dd x  
    =   - \int_\Ome (\mu u - \Delta u) (r \p_r)^2 \OL v \dd x 
    =  \int_\Ome (r \p_r + 2) (\mu u - \Delta u) (r \p_r) \OL v \dd x\\
    &= \  \int_\Ome \mu (r \p_r + 2) u r \p_r \OL v \dd x - \int_\Ome (\Delta r \p_r u)  (r \p_r \OL v) \dd x\\
    &= \ \int_\Ome \mu  (r \p_r + 2) u r \p_r \OL v \dd x+ \int_\Ome \nabla  r \p_r u  \cdot \nabla r \p_r \OL v \dd x
      - \int_{\p \Ome} (\p_\nu  r \p_r u)  (r \p_r \OL v) \dd r\\
    &= \ \int_\Ome \mu  (r \p_r + 2) u r \p_r \OL v \dd x+ \int_\Ome \nabla  r \p_r u  \cdot \nabla r \p_r \OL v \dd x
      + \int_{\p \Ome} (r \p_r + 1) u  r \p_r \OL v \dd r
      - \int_{\p \Ome} (r \p_r + 1) g   r \p_r \OL v \dd r.
  \end{align*}
  Using $- \int_{\p \Ome} (r \p_r + 1) g   r \p_r \OL v \dd r=\int_{\p \Ome} g   (r \p_r)^2 \OL v \dd r$ and rearranging the terms, we thus learn
  \begin{align} \label{test-rdrv}
  \begin{split}
    \mu \int_\Ome (r \p_r + 2) u r \p_r \OL v \dd x&+ \int_\Ome \nabla  (r \p_r) u  \cdot \nabla (r \p_r) \OL v \dd x
      + \int_{\p \Ome} (r \p_r +1)u  r \p_r \OL v \dd r\\ %
      &= \ -\int_\Ome f (r \p_r)^2 \OL v \dd x- \int_{\p \Ome} g (r \p_r)^2 \OL v \dd r. %
  \end{split}
  \end{align}
  We also test with $r^2 v$. We calculate
  \begin{align}\label{test-rrmuv}
  \begin{split}
    \int_\Ome f r^2 \OL v \dd x 
    &=  \int_\Ome (\mu u - \Delta u) r^2 \OL v \dd x 
    = \ \mu \int_\Ome r^2 u \OL v  \dd x+ \int_{\Ome} \nabla  u  \cdot \nabla (r^2 \OL v) \dd x - \int_{\p \Ome} r^2 \p_\nu u \OL v \dd r\\ 
    &= \ \mu \int_\Ome r^2 u \OL v \dd x + \int_{\Ome} r^2 \nabla  u  \cdot \nabla \OL v \dd x+ 2 \int_{\Ome} r \p_r  u  \cdot \OL v \dd x+ \int_{\p \Ome} r^2 u \OL v \dd r- \int_{\p \Ome} r^2 g \OL v \dd r. 
  \end{split}
  \end{align}
  Finally,  we test the equation with $\p_\phi^2 v$ to get
  \begin{align} \label{test-phiphi} 
    - \int_\Ome f \p_\phi^2 \OL v \dd x =  - \int_\Ome (\mu u - \Delta u) \p_\phi^2 \OL v \dd x= \int_\Ome r^{-2} \p_\phi^2 u\p_\phi^2 \OL v \dd x- \int_\Ome (\mu u - \pa_r^2 u) \p_\phi^2 \OL v \dd x. 
  \end{align}
  If we add the identities $c_0$\eqref{test-v} + $c_1$\eqref{test-rdrv} + $c_2 |\mu|$\eqref{test-rrmuv} + \eqref{test-phiphi} we
  obtain the asserted identity.
\end{proof}
In order to apply the Lax--Milgram theorem, it is vital that the process of adding different derivatives of test functions in the sesqulinear form $B$ still yields a class of functions which engulfs $\CRci(\OL\Ome\BS\set{0})$ and is thus dense in $H$.
This was the purpose of Section \ref{sec:testfct}.
We make this precise in the following lemma.
\begin{lemma}[Variational solution]\label{lem-ex-sol} 
  Let $\eps\in(0,\pi)$ and $\mu\in \Sigma_{\pi-\eps}$.
  Suppose that $f\in \CRci(\OL\Ome\BS\set{0})$ and $g\in \CRci(\pa'\Ome)$.
  Then there exists a unique classical solution $u \in H$ to \eqref{resolvent}, and it holds
  \begin{align*}
  \|u\|_H + \ZMalp{\Delta u}{0}{0} + \HMalp{\p_\nu u}{0}{0}\lesssim \ZMalp{f}{0}{0}+\ZMalp{f}{0}{-1} + \HMalp{g}{0}{0}+\HMalp{g}{1}{-1}.
  \end{align*}
\end{lemma}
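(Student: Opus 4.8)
The plan is to apply the Lax--Milgram theorem using the sesquilinear form $B$ from Definition \ref{lem-bilinear} with constants chosen as in Lemma \ref{lem-coercive}\eqref{lem-coercive-ii}, namely $c_0\gg c_1\gg c_2\gg 1$. By Lemma \ref{lem-coercive}, $B\colon H\times H\to\C$ is bounded and coercive, so for every bounded antilinear functional $\calf$ on $H$ there is a unique $u\in H$ with $B(u,v)=\calf(v)$ for all $v\in H$. The natural choice is $\calf(v):=\int_\Ome f\,(c_0-c_1(r\pa_r)^2+c_2|\mu|r^2-\pa_\phi^2)\OL v\,\dd x+\int_{\pa\Ome}g\,(c_0-c_1(r\pa_r)^2+c_2|\mu|r^2)\OL v\,\dd r$; by the Cauchy--Schwarz inequality and the definition of $\NNN{\cdot}{H}$ this is bounded with norm controlled by $\ZMalp{f}{0}{0}+\ZMalp{f}{0}{-1}+\HMalp{g}{0}{0}+\HMalp{g}{1}{-1}$ (the $r^{-1}$-weighted terms arise because the test multiplier contains $r^2$, which pairs with the $r^{-1}$ factors hidden in $\NNN{\cdot}{H}$; the $\HMalp{g}{1}{-1}$ term comes from the $(r\pa_r)^2$ part). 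This already yields $\NNN{u}{H}\lesssim$ the right-hand side.

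The main obstacle is to upgrade the variational identity $B(u,v)=\calf(v)$ to the statement that $u$ solves \eqref{resolvent} pointwise a.e. For this I would proceed in two steps. \emph{Interior equation:} restricting to $v\in\CRci(\Ome)$, the boundary integrals in $B$ and $\calf$ drop out, and the identity becomes $\int_\Ome(\mu u-\Delta u-f)\,(c_0-c_1(r\pa_r)^2+c_2|\mu|r^2-\pa_\phi^2)\OL v\,\dd x=0$. Here is where Section \ref{sec:testfct} enters: by Proposition \ref{prp-testfun} (applied with the operator $A=c_2r^2\kap+c_0-c_1(r\pa_r)^2-\pa_\phi^2$, $\kap=|\mu|$), for every $\Phi\in\CRci(\Ome)$ there is $v\in\calh$ with $Av=\Phi$; since $\mu u-\Delta u-f\in\LRloc1$, an approximation argument shows the test class $\{(c_0-c_1(r\pa_r)^2+c_2|\mu|r^2-\pa_\phi^2)v : v\in\calh\}$ is rich enough to conclude $\mu u-\Delta u=f$ a.e.\ in $\Ome$ by the fundamental lemma of the calculus of variations. \emph{Boundary condition:} having the interior equation, I would integrate by parts back in $B(u,v)$ for general $v\in H$ — legitimate now because $\mu u-\Delta u=f$ is a genuine $\LRloc1$ function — to obtain $\int_{\pa\Ome}(\gam u+\pa_\nu u-g)\,(c_0-c_1(r\pa_r)^2+c_2|\mu|r^2)\OL v\,\dd r=0$ for all $v\in H$. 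Then Lemma \ref{prp-testfun-bdry} provides, for each $\eta\in\CRci(\R_+)$, a $\rho$ with $(c_2\kap+r^{-2}(c_0-c_1(r\pa_r)^2))\rho=\eta$ lying (together with $r\pa_r\rho$) in the relevant weighted space, so that suitable traces $v|_{\pa\Ome}$ realize an arbitrary $\eta$; this forces $\gam u+\pa_\nu u=g$ on $\pa'\Ome$.

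Finally, with both equations in hand I would record the remaining estimates. From $\Delta u=\mu u-f$ and $\NNN{u}{H}\lesssim\cdots$ one gets $\ZMalp{\Delta u}{0}{0}\lesssim |\mu|\,\ZMalp{u}{0}{0}+\ZMalp{f}{0}{0}\lesssim$ the right-hand side (using that $\kap\ZMalp{u}{0}{0}^2$ is part of $\NNN{u}{H}^2$). For the normal derivative, $\pa_\nu u=g-\gam u$ on $\pa'\Ome$, and $\HMalp{u}{0}{0}$ restricted to the boundary is controlled by $\NNN{u}{H}$ through the boundary terms $\int_{\pa\Ome}|u|^2\dd r$ in the norm; hence $\HMalp{\pa_\nu u}{0}{0}\lesssim\HMalp{g}{0}{0}+\HMalp{u}{0}{0}\lesssim$ the right-hand side. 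Uniqueness follows from coercivity of $B$ together with the surjectivity of the test function class (which guarantees $B(u,\cdot)\equiv0$ implies $u$ solves the homogeneous problem, whence $u=0$), or more directly from the uniqueness clause in Lax--Milgram once one knows the solution operator is well defined. Collecting the three bounds gives the asserted estimate.
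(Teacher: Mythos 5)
Your proposal is correct and follows essentially the same route as the paper: Lax--Milgram for the coercive form $B$ from Lemma \ref{lem-coercive}, Proposition \ref{prp-testfun} to recover the interior equation via the surjectivity of $A$ onto $\cciL{\Ome}$, Lemma \ref{prp-testfun-bdry} (with an angular cutoff to separate $\p_0\Ome$ and $\p_1\Ome$) to recover the Robin condition, and the identities $\Delta u=\mu u-f$, $\pa_\nu u=g-u$ for the remaining bounds. The only cosmetic difference is that the paper writes the boundary part of the functional already in the integrated-by-parts form $c_1\int_{\p\Ome}(r\pa_r+1)g\,r\pa_r\OL v\,\dd r$, since the $H$-norm controls only one tangential derivative of $v$ on the boundary --- exactly the bookkeeping you indicate with the $\HMalp{g}{1}{-1}$ term.
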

\begin{proof}
  Let $c_0,c_1,c_2>0$ be as in Lemma \ref{lem-coercive}.
  Define a bounded anti-linear form $F$ on $H$ via
  \begin{align*}
  \langle F,v\rangle := \int_\Ome f \big(c_0 - c_1 (r \p_r)^2 + c_2 |\mu| r^2  - \p_\phi^2\big) \OL v \dd x + \int_{\p\Ome} g  \big(c_0 + c_2 r^2 |\mu|\big) \OL v \dd r+ c_1\int_{\p\Ome} (r\p_r+1) g r\p_r \OL v \dd r.
  \end{align*}
  By the definition of $H$ we have $\|F\|_{H'}\lesssim \ZMalp{f}{0}{0}+\ZMalp{f}{0}{-1} + \HMalp{g}{0}{0}+\HMalp{g}{1}{-1}$.
  By Lemma \ref{lem-coercive} and the Lax-Milgram theorem, there is hence a unique $u \in H$ such that for all $v \in H$ we have $B(u,v)  = \langle F,v\rangle$ and $\|u\|_H\lesssim \|F\|_{H'}$.
  For $w \in \cciL{\Ome}$ we solve the test function problem
    in Proposition \ref{prp-testfun} with $\kap:=|\mu|$, i.e.
  \begin{align*} 
    \begin{pdeq}
      \big(c_0 - c_1 (r \p_r)^2 + c_2 |\mu| r^2  - \p_\phi^2\big)  v \ 
      & = \ w, \qquad\qquad &&\text{in $\Ome$},\\
      v \ 
      & = \ 0 \qquad &&\text{on $\p \Ome$}.
    \end{pdeq}
  \end{align*}
  This yields a $v \in \calh$ with $v\restr{\p\Ome}=0$, so that $v\in H$ and thus Lemma \ref{lem-coercive}\eqref{lem-coercive-iii} gives
  \begin{align*}
    \int_{\Ome}(\mu u - \Delta u - f) \OL w \dd x 
    =  0 \qquad
    \forall w \in \cciL{\Ome}.
  \end{align*}
  It follows that $\mu u - \Delta u = f$ in $\Ome$.
  In order to verify the boundary condition, we choose $\eta \in \cciL{\p_0\Ome}$ arbitrary.
  Consider the solution $\rho\in \CRi(\p_0\Ome)$ from Lemma \ref{prp-testfun-bdry} to 
  \begin{align*}
    \big(c_0 - c_1 (r \p_r)^2 + c_2 r^2 |\mu|\big) \rho \ = \ \eta,
  \end{align*}
  and set $v(r,\phi):=\rho(r)\psi(\phi)$ for some $\psi\in \CRi([0,\ang])$ with $1_{[0,\phi']}\le \psi\le 1_{[0,\phi'']}$ for $0<\phi'<\phi''<\ang$.
  Consequently, we have by the definition of $B(u,v)$ and the rapid decay of $v$ towards the tip and due to $\mu u-\Delta u =f$ in $\Ome$, that for all $\eta\in \CRci(\p_0\Ome)$ it holds
  \begin{align*}
  \int_{\pa_0\Ome} ( u + \pa_\nu u - g) \OL \eta \dd r= \int_{\pa_0\Ome} ( u + \pa_\nu u - g) \big(c_0 - c_1 (r \p_r)^2 + c_2 r^2 |\mu|\big) \OL v \dd r = B(u,v)-\langle F,v\rangle=0,
  \end{align*}
  so that $ u + \pa_\nu u = g$ on $\pa_0\Ome$.
  By analogy we also have $ u + \pa_\nu u = g$ on $\pa_1\Ome$.
  Using $-\Delta u=f-u$ and $\p_\nu u=g-u$ we also obtain the additional estimate.
\end{proof}

\subsection{Weighted Estimates}\label{sec:we}

Next, we show that the unique classical solution $u\in H$ from Lemma
\ref{lem-ex-sol} is contained in a weighted space.
We use a negative weight which imposes less control near the tip but more control at infinity.
Recall that the definition of $H$ involves a parameter $\kap>0$.
\begin{lemma}\label{lem-r-weight}
  Let $\kap>0$.
  Then for all $u\in H$ and $\alpha\in [-1,0)$ it holds
    \begin{align*}
       \ZM{u}{0} + \ZM{\nabla u}{0}  + \HM{u}{0} \lesssim_{\kap} \|u\|_H < \ \infty
    \end{align*}
    and
    \begin{align*}    
      |\alp|^2\ZMalp{u}{0}{\alp+1}\le \ZMalp{u}{0}{0}^{-\alp}\ZMalp{\nabla u}{0}{0}^{1+\alp}.
    \end{align*}
\end{lemma}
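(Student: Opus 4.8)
I would prove both parts by reducing to $u\in\CRci(\OL\Ome\setminus\set 0)$ and then passing to the limit. By construction $H$ is the completion of $\CRci(\OL\Ome\setminus\set 0)$ in $\NNN\cdot H$, so any $u\in H$ is an $\LR2(\Ome)$--limit of such functions along which, by the very shape of $\NNN\cdot H$, also $ru$, $\nabla u$ and the two boundary traces converge in the respective $\LR2$--spaces. Since every seminorm appearing below is lower semicontinuous along such sequences (Fatou) while the right--hand sides converge, the two displayed estimates, once established for smooth $u$, transfer verbatim to all of $H$ and in particular yield the finiteness assertion. So from now on $u$ is smooth.

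The first inequality is a plain interpolation. Unwinding Definition~\ref{def-homspace}, $\ZM u0^2=\int_\Ome r^{-2\alp}\snorm u^2\dd x$, $\ZM{\nabla u}0^2=\int_\Ome r^{-2\alp}\snorm{\nabla u}^2\dd x$ and $\HM u0^2=\int_{\p'\Ome}r^{-2\alp}\snorm u^2\dd r$, with exponent $-2\alp\in(0,2]$ because $\alp\in[-1,0)$. For $\alp\in(-1,0)$ I would write $r^{-2\alp}\snorm u^2=(\snorm u^2)^{1+\alp}(r^2\snorm u^2)^{-\alp}$ and apply H\"older with exponents $\tfrac1{1+\alp}$ and $-\tfrac1\alp$, so that $\int_\Ome r^{-2\alp}\snorm u^2\le\big(\int_\Ome\snorm u^2\big)^{1+\alp}\big(\int_\Ome r^2\snorm u^2\big)^{-\alp}\lesssim_\kap\NNN uH^2$, by the summands $\kap\int_\Ome\snorm u^2$ and $\kap^2\int_\Ome r^2\snorm u^2$ of $\NNN uH^2$. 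The same computation applies to $\nabla u$ and to the boundary integral, using the corresponding summands of $\NNN uH^2$; the endpoint $\alp=-1$ is one of those summands directly.

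The second inequality I would obtain straight from Lemma~\ref{lem-interp}. For $\alp=-1$ it is the identity $\ZMalp u00=\ZMalp u00$, so let $\alp\in(-1,0)$. The key observation is that the three quantities are the seminorms $\ZMalp u0{\alp+1}$, $\ZMalp u00$ and $\ZMalp u10$ of Definition~\ref{def-homspace}: the first two directly, and for the last by expanding
\[
\ZMalp u10^2=\int_0^\ang\!\int_0^\infty\big(\snorm{\pa_\phi u}^2+\snorm{r\pa_r u}^2\big)\tfrac{\dd r}r\dd\phi=\int_\Ome\big(r^{-2}\snorm{\pa_\phi u}^2+\snorm{\pa_r u}^2\big)\dd x=\ZMalp{\nabla u}00^2 .
\]
Now I would apply Lemma~\ref{lem-interp}\eqref{it-interp-12-wedge} with $\ell=0$, $\bet=0$ and $\eta:=\alp+1\in(0,1)$. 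Here $\sclOme{\ell+\bet+1}=\sclOme1=0$, so the exceptional branch applies and $c_\Ome=\snorm{\sclOme{\ell+\eta}}^{2(\bet-\eta)}=\snorm\alp^{-2(\alp+1)}$, giving $\ZMalp u0{\alp+1}\le\snorm\alp^{-2(\alp+1)}\,\ZMalp u00^{-\alp}\,\ZMalp u10^{1+\alp}$. Finally $\snorm\alp^{-2(\alp+1)}\le\snorm\alp^{-2}$ since $\snorm\alp<1$ and $-2(\alp+1)>-2$, and multiplying by $\snorm\alp^2$ gives the claim.

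I do not expect a serious analytic obstacle once Lemma~\ref{lem-interp} is in hand: the weighted interpolation content (a sub-critical weighted Hardy / Caffarelli--Kohn--Nirenberg estimate, whose constant degenerates as the exponent $\alp+1$ approaches the critical value $1$) is already packaged there. The one point needing care is that here the scaling shift $\sclOme{\bet+1}$ \emph{vanishes}, which drops one into the exceptional branch of Lemma~\ref{lem-interp}\eqref{it-interp-12-wedge} with the worse exponent $-2(\alp+1)$ on $\snorm\alp$ instead of $-(\alp+1)$; this is still $\le\snorm\alp^{-2}$, which is precisely why the statement carries the $\snorm\alp^2$ prefactor absorbing the degeneration as $\alp\uparrow0$. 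If one preferred to avoid citing Lemma~\ref{lem-interp}, the substitution $r=e^s$ turns the second inequality into a one-dimensional exponentially weighted Gagliardo--Nirenberg bound $\int_\R e^{-2\alp s}\snorm U^2\dd s\lesssim\alp^{-2}\big(\int_\R e^{2s}\snorm U^2\dd s\big)^{-\alp}\big(\int_\R\snorm{U'}^2\dd s\big)^{1+\alp}$, which is elementary: normalise by scaling and translation so that $\int_\R e^{2s}\snorm U^2=\int_\R\snorm{U'}^2$, then split $\R$ at $0$ (on $[0,\infty)$ use $e^{-2\alp s}\le e^{2s}$, on $(-\infty,0]$ bound $\snorm{U(s)}^2\lesssim(1+\snorm s)\int_\R\snorm{U'}^2$ via the fundamental theorem of calculus and a mean--value point in $[0,2]$, and integrate against $e^{-2\alp s}$).
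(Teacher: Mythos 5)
Your proof is correct and takes essentially the same route as the paper: the key step, the weighted interpolation via Lemma~\ref{lem-interp}\eqref{it-interp-12-wedge} with $\ell=\bet=0$, $\eta=\alp+1$ (exceptional branch, constant $|\alp|^{-2(\alp+1)}\le|\alp|^{-2}$), is exactly the paper's argument. The only cosmetic differences are that the paper handles the first estimate by the pointwise bound $r^{-2\alp}\le 1+r^2$ rather than H\"older, and passes to the limit by showing $(u_n)$ is Cauchy in $\VSooalp{0}{\alp+1}$ rather than by Fatou; both variants are fine.
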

\begin{proof}
 The estimate $r^{-2\alpha}\leq 1+r^{2}$ gives
 \begin{align*}
\ZM{u}{0} + \ZM{\nabla u}{0} + \HM{u}{0}\lesssim \ZMalp{u}{0}{0} + \ZMalp{ru}{0}{0} + \ZMalp{\nabla u}{0}{0} + \ZMalp{r\nabla u}{0}{0} + \HMalp{u}{0}{0} + \HMalp{ru}{0}{0}&\lesssim_{\kap} \|u\|_H.
\end{align*}
For $\alp=-1$, this is already the complete statement, since $\ZMalp{u}{0}{\alp+1}=\ZM{u}{0}\le \|u\|_H$.

\medskip

If $\alp\in (-1,0)$, we use Lemma \ref{lem-interp}\eqref{it-interp-12-wedge} applied with $\ell=\bet=0$ and $\eta=\alp+1$, so that $|\sclOme{\ell+\eta}|^{2}=\alp^2$, and obtain for all $v\in \CRci(\OL\Ome\BS\set{0})$
\begin{align} \label{rhs-ab100} 
  \alp^4\ZMalp{v}{0}{\alp+1}^2  \  
  \le  \ \ZMalp{v}{0}{0}^{-2\alp} \ZMalp{\nabla v}{0}{0}^{2(1+\alp)}.
\end{align}
By the definition of $H$ there is $\seqN{u}\subset \CRci(\OL\Ome\BS\set{0})$ with $\|u-u_n\|_H \to 0$, in particular $\ZMalp{u-u_n}{0}{0} + \ZMalp{\nabla u-\nabla u_n}{0}{0}\to 0$ as $n\to \infty$, and $u_n\to u$ pointwise almost everywhere.
Using \eqref{rhs-ab100} with $u_n-u_m$, we see that $\seqN{u}$ is Cauchy in the Banach space $\VSooalp{0}{\alp+1}$, and by the pointwise almost everywhere convergence $u_n\to u$, its limit is $u$.
Hence the claimed estimate follows by approximation.
\end{proof}
\begin{lemma}[Weighted Laplace] \label{lem-alptest} 
 Let $\eps\in (0,\pi)$, $\mu\in \Sigma_{\pi-\eps}$, $\alp \in [-1,0]$, and let $u \in H$ with $\ZM{\Delta u}{0}+\HM{\p_\nu u}{0}<\infty$.
 Then it holds 
  \begin{align*}
  \int_{\Ome} r^{-2\alp} f \OL u \dd x+ \int_{\p \Ome} r^{-2\alp} g  \OL u \dd r 
  &=   \mu \ZM{u}{0}^2 + \ZM{\nabla u}{0}^2  
    -    2 \alp^2 \ZMalp{u}{0}{\alp+1}^2  
    + \HM{u}{0}^2,
\end{align*}  
where $f:=\mu u-\Delta u$ and $g:= u + \p_\nu u$.
\end{lemma}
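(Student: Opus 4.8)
The plan is to test the two relations defining $f$ and $g$ — namely $\mu u - \Delta u = f$ in $\Ome$ and $u + \p_\nu u = g$ on $\p'\Ome$ — against the weighted conjugate $r^{-2\alp}\OL u$, and to read off each term of the asserted formula after integrating by parts. I would first carry out the computation for $u \in \cciL{\OL\Ome\setminus\set{0}}$, where all integrations by parts are legitimate and every boundary contribution at $r=0$ and as $r \to \infty$ vanishes by compact support. Multiplying $\mu u - \Delta u = f$ by $r^{-2\alp}\OL u$, integrating over $\Ome$ and applying Green's first identity yields
\begin{align*}
 \int_\Ome r^{-2\alp} f \OL u \dd x \ = \ \mu \int_\Ome r^{-2\alp} |u|^2 \dd x \ + \ \int_\Ome \nabla u \cdot \nabla( r^{-2\alp} \OL u) \dd x \ - \ \int_{\p\Ome} (\p_\nu u)\, r^{-2\alp} \OL u \dd r.
\end{align*}
Because $\nabla(r^{-2\alp}) = -2\alp\, r^{-2\alp-1} \nabla r$ and $\nabla u \cdot \nabla r = \p_r u$, the middle integral equals $\int_\Ome r^{-2\alp}|\nabla u|^2 \dd x - 2\alp \int_\Ome r^{-2\alp-1}(\p_r u)\OL u \dd x$; inserting $\p_\nu u = g - u$ into the boundary integral produces $-\int_{\p\Ome} r^{-2\alp} g \OL u \dd r + \int_{\p\Ome} r^{-2\alp}|u|^2 \dd r$. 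Since $\gam=1$ throughout this section, Definition \ref{def-homspace} identifies $\int_\Ome r^{-2\alp}|u|^2 \dd x = \ZM{u}{0}^2$, $\int_\Ome r^{-2\alp}|\nabla u|^2\dd x = \ZM{\nabla u}{0}^2$ and $\int_{\p\Ome} r^{-2\alp}|u|^2 \dd r = \HM{u}{0}^2$, so that the computation already gives the claimed identity up to the cross term $-2\alp \int_\Ome r^{-2\alp-1}(\p_r u)\OL u\dd x$.

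To dispose of the cross term I would integrate by parts once more, this time in the radial variable: using $\p_r(r^{-2\alp}) = -2\alp\, r^{-2\alp-1}$ together with the vanishing of the endpoint contributions at $r=0$ and $r\to\infty$, it rewrites as $-2\alp^2 \int_\Ome r^{-2\alp-2}|u|^2 \dd x = -2\alp^2 \ZMalp{u}{0}{\alp+1}^2$, again by Definition \ref{def-homspace}; the degenerate values $\alp = 0$ and $\alp = -1$ are covered at once by this computation, and the sector condition $\mu \in \Sigma_{\pi-\eps}$ plays no role in the identity itself. For a general $u \in H$ with $\ZM{\Delta u}{0} + \HM{\p_\nu u}{0} < \infty$, Lemma \ref{lem-r-weight} already gives $\ZM{u}{0} + \ZM{\nabla u}{0} + \HM{u}{0} < \infty$ and $\ZMalp{u}{0}{\alp+1} < \infty$, so $f = \mu u - \Delta u$ and $g = u + \p_\nu u$ lie in the relevant weighted $\LR{2}$-spaces, every term in the identity is finite, and the left-hand side is bounded by $\ZM{f}{0}\ZM{u}{0} + \HM{g}{0}\HM{u}{0}$ via the weighted Cauchy--Schwarz inequality. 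The identity would then follow for such $u$ by a cutoff/approximation argument: run Green's identity and the radial integration by parts on the truncated wedge $\Ome \cap \set{1/R < r < R}$ and let $R \to \infty$, controlling the truncation errors by the finiteness just recorded and by the fact that $\alp \le 0$ makes the weight $r^{-2\alp}$ harmless near the tip.

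The step I expect to be the main obstacle is exactly this passage to arbitrary $u \in H$: the $H$-norm controls neither the Laplacian $\Delta u$ in $\Ome$ nor the normal trace $\p_\nu u$ on $\p'\Ome$, so the approximating sequence (or the truncation) has to be set up so that the weighted quantities $\ZM{\Delta u}{0}$ and $\HM{\p_\nu u}{0}$ are not lost in the limit, and one must verify that the commutator terms arising from the radial cutoff genuinely vanish as $R \to \infty$; this is where the sign condition $\alp \in [-1,0]$ and the estimates of Lemma \ref{lem-r-weight} enter.
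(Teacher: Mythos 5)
Your argument is essentially the paper's: test $\mu u - \Delta u = f$ against $r^{-2\alp}\OL u$, apply Green's identity, insert $\p_\nu u = g - u$ in the boundary term, and reduce the cross term $-2\alp\int_\Ome r^{-2\alp-1}(\p_r u)\,\OL u\,\dd x$ to $-2\alp^2\ZMalp{u}{0}{\alp+1}^2$ by a radial integration by parts. The one organizational difference is the limiting step: rather than truncating to $\set{1/R<r<R}$ and chasing arc contributions (which would also force you to justify Green's identity on the annular sector with a non-smooth $u$ in both slots), the paper keeps $u$ fixed and approximates only the test-function slot by $u_n\in\cciL{\OL\Ome\BS\set{0}}$ with $\|u-u_n\|_H\to 0$; the hypotheses $\ZM{\Delta u}{0}+\HM{\p_\nu u}{0}<\infty$ together with $\ZM{u-u_n}{0}+\HM{u-u_n}{0}\lesssim\|u-u_n\|_H$ (Lemma \ref{lem-r-weight}) then let every pairing pass to the limit, and no boundary terms at $r=0$ or $r=\infty$ ever appear --- which is precisely the obstacle you flag at the end. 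One caveat, shared with the paper's own proof: for complex-valued $u$ a single radial integration by parts only yields $2\Re\int_0^\infty r^{-2\alp}(\p_r u)\OL u\,\dd r = 2\alp\int_0^\infty r^{-2\alp-1}|u|^2\,\dd r$, i.e.\ it identifies the real part of the cross term, so the identity as written holds modulo a purely imaginary remainder bounded by $2|\alp|\,\ZMalp{u}{0}{\alp+1}\ZM{\nabla u}{0}$; this is not a defect of your proposal relative to the paper, and the remainder can be absorbed in the application in Lemma \ref{lem-weisol} just like the real part.
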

\begin{proof}
  Let $u_n\in \CRci(\OL\Ome\BS\set{0}$ with $\lim_{n\to\infty}\|u-u_n\|_H=0$.
  Then integration by parts yields
\begin{align*}
                 \int_{\Ome} r^{-2\alp} (-\Delta u) \OL {u_n } \dd x
                 &=   \int_{\Ome} \nabla u \cdot \nabla (r^{-2\alp}\OL {u_n}) \dd x 
                 - \int_{\p \Ome} r^{-2\alp} (\p_\nu u)  \OL {u_n} \dd r \\ 
               &=   \int_{\Ome} r^{-2\alp} \nabla u\cdot \nabla\OL{u_n} \dd x 
                 -    2\alp \int_{\Ome} r^{-2\alp-1} u \p_r \OL {u_n} \dd x 
                 + \int_{\p \Ome} r^{-2\alp} (\pa_\nu u)  \OL {u_n} \dd r.
\end{align*}
Letting $n\to\infty$, we may use $\ZM{\Delta u}{0}+\HM{\p_\nu u}{0}<\infty$ to infer
\begin{align*}
                 \int_{\Ome} r^{-2\alp} (-\Delta u) \OL {u } \dd x
               &=   \int_{\Ome} r^{-2\alp} |\nabla u|^2 \dd x 
                 -   2\alp \int_{\Ome} r^{-2\alp-1} u \p_r \OL u \dd x 
                 + \int_{\p \Ome} r^{-2\alp} (\pa_\nu u) \OL u \dd r.
\end{align*}
Additionally, we observe by another approximation (using $\ZMalp{u}{0}{\alp+1}+\ZM{\nabla u}{0}\lesssim \|u\|_H<\infty$)
\begin{align*}
  - 2\alp \int_{\Ome} r^{-2\alp-1} u \p_r \OL  u \dd x 
  &=  - 2\alp \lim_{n\to\infty}\int_{\Ome} r^{-2\alp-1} u_n \p_r \OL  {u_n} \dd x  - \alp \int_0^\ang \int_0^\infty \p_r (r^{-2\alp}  |u_n|^2) \dd r\dd\phi \\
  &=   2 \alp^2 \lim_{n\to\infty}\int_0^\ang \int_0^\infty r^{-2\alp-1} |u_n|^2 \ \dd r\dd\phi \ 
  =   2 \alp^2 \lim_{n\to\infty}\int_\Ome r^{-2\alp-2} |u_n|^2 \dd x\\
  &= 2 \alp^2 \int_\Ome r^{-2\alp-2} |u|^2 \dd x,
\end{align*}
so that the assertion follows upon writing $\pa_\nu u=g-u$ and rearranging the terms.
\end{proof}
From the above lemmas we derive the following estimate on the weighted norms.
\begin{lemma}[Weighted variational solution] \label{lem-weisol}
  Let $\eps\in(0,\pi)$, $\mu\in \Sigma_{\pi-\eps}$, and $\alp \in [-1,0]$.
  Then for any $f \in \cciL{\OL \Ome \BS \{ 0 \}}$ and $g\in \cciL{\p \Ome \BS \{ 0 \}}$ the solution $u \in H$ to \eqref{resolvent} from Lemma \ref{lem-ex-sol} satisfies the estimate
\begin{align*}
                 |\mu| \ZM{u}{0}  +  |\mu|^{\frac12}\ZM{\nabla u}{0} &+  \ZM{\Delta u}{0} +    |\mu|^\frac12 \big(\HM{u}{0} + \HM{ \pa_\nu u}{0}\big) \\
                 & \lesssim_{\eps,\alp}    \ZM{f}{0} + |\mu|^{\frac 14}\HM{ g}{0}    +  |\mu|^{\frac \alp 2} \big(  \ZMalp{f}{0}{0}  +  |\mu|^{\frac 14}\HMalp{  g}{0}{0} \big). 
\end{align*}
For $\resconst>0$ the implicit constant can be chosen uniformly in $|\alp|\ge \resconst$.
\end{lemma}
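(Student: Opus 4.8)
The plan is to derive the estimate from two facts already at hand: the weighted energy identity of Lemma~\ref{lem-alptest} and the interpolation inequality of Lemma~\ref{lem-r-weight}. I would run the argument in two stages --- first for the unweighted exponent $\alp=0$, then for $\alp\in[-1,0)$ --- using the unweighted bounds to tame a term of the ``wrong'' sign that appears in the weighted identity. By density and continuity it suffices to treat $f\in\cciL{\OL \Ome \BS \{ 0 \}}$, $g\in\cciL{\p \Ome \BS \{ 0 \}}$ and the variational solution $u\in H$ from Lemma~\ref{lem-ex-sol}; by Lemma~\ref{lem-r-weight} this $u$ lies in the weighted spaces involved, so Lemma~\ref{lem-alptest} applies for every $\alp\in[-1,0]$. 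I abbreviate $D_0:=\ZMalp{f}{0}{0}+|\mu|^{\frac14}\HMalp{g}{0}{0}$ and $D:=\ZM{f}{0}+|\mu|^{\frac14}\HM{g}{0}+|\mu|^{\frac\alp2}D_0$, and note $|\mu|^{\frac\alp2}D_0\le D$ since $\alp\le 0$.

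In the first stage I would invoke Lemma~\ref{lem-alptest} with $\alp=0$, where the term $2\alp^2\ZMalp{u}{0}{\alp+1}^2$ is absent, so that $\mu\ZMalp{u}{0}{0}^2+\ZMalp{\nabla u}{0}{0}^2+\HMalp{u}{0}{0}^2=\int_\Ome f\OL u\dd x+\int_{\pa\Ome}g\OL u\dd r$. The left-hand side has the shape $\mu a^2+b^2$, so Lemma~\ref{lem-complane} bounds it below by $c_\eps(|\mu|a^2+b^2)$; estimating $|\int_\Ome f\OL u|\le\ZMalp{f}{0}{0}\ZMalp{u}{0}{0}$ and, by the trace estimate \eqref{trace-0} (recall $\ZM{v}{1}=\ZM{\nabla v}{0}$) together with Young's inequality, $|\int_{\pa\Ome}g\OL u|\le\HMalp{g}{0}{0}\HMalp{u}{0}{0}\lesssim\HMalp{g}{0}{0}\ZMalp{u}{0}{0}^{\frac12}\ZMalp{\nabla u}{0}{0}^{\frac12}$, and absorbing into the left-hand side, one arrives at $|\mu|\ZMalp{u}{0}{0}^2+\ZMalp{\nabla u}{0}{0}^2+\HMalp{u}{0}{0}^2\lesssim_\eps|\mu|^{-1}D_0^2$; in particular $\ZMalp{u}{0}{0}\lesssim_\eps|\mu|^{-1}D_0$ and $\ZMalp{\nabla u}{0}{0}\lesssim_\eps|\mu|^{-\frac12}D_0$.

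For the second stage, Lemma~\ref{lem-alptest} for general $\alp\in[-1,0]$ gives $\mu\ZM{u}{0}^2+\ZM{\nabla u}{0}^2+\HM{u}{0}^2=\int_\Ome r^{-2\alp}f\OL u\dd x+\int_{\pa\Ome}r^{-2\alp}g\OL u\dd r+2\alp^2\ZMalp{u}{0}{\alp+1}^2$. The term $2\alp^2\ZMalp{u}{0}{\alp+1}^2$ has the wrong sign to be absorbed into the coercive left-hand side, and I expect controlling it to be the main obstacle. Here I would use Lemma~\ref{lem-r-weight}, which provides $\alp^4\ZMalp{u}{0}{\alp+1}^2\le\ZMalp{u}{0}{0}^{-2\alp}\ZMalp{\nabla u}{0}{0}^{2(1+\alp)}$; plugging in the first-stage bounds turns this into $2\alp^2\ZMalp{u}{0}{\alp+1}^2\lesssim\alp^{-2}|\mu|^{\alp-1}D_0^2=\alp^{-2}|\mu|^{-1}\bigl(|\mu|^{\frac\alp2}D_0\bigr)^2\le\alp^{-2}|\mu|^{-1}D^2$. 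The remaining two right-hand terms are then handled exactly as in the first stage (Cauchy--Schwarz, the trace estimate \eqref{trace-0}, Young, absorption), giving $|\mu|\ZM{u}{0}^2+\ZM{\nabla u}{0}^2+\HM{u}{0}^2\lesssim_{\eps,\alp}|\mu|^{-1}D^2$; hence $|\mu|\ZM{u}{0}$, $|\mu|^{\frac12}\ZM{\nabla u}{0}$ and $|\mu|^{\frac12}\HM{u}{0}$ are all $\lesssim_{\eps,\alp}D$.

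The two remaining quantities then come for free: $\Delta u=\mu u-f$ gives $\ZM{\Delta u}{0}\le|\mu|\ZM{u}{0}+\ZM{f}{0}\lesssim_{\eps,\alp}D$, and the Robin relation $\pa_\nu u=g-u$ on $\pa\Ome$ gives $|\mu|^{\frac12}\HM{\pa_\nu u}{0}\le|\mu|^{\frac12}\HM{g}{0}+|\mu|^{\frac12}\HM{u}{0}\lesssim_{\eps,\alp}D$. Finally, the only $\alp$-dependence entering the constants is the factor $\alp^{-2}$ produced in the second stage, together with an equally harmless $\alp$-dependent constant in \eqref{trace-0}; both are bounded once $|\alp|\ge\resconst$, which yields the asserted uniformity. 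The degeneration as $|\alp|\to0$ is unavoidable: it reflects the resonance of the Neumann Laplacian at the constant mode $r^0$, which is precisely why working with negative weights --- for which the unweighted norms of the first stage remain finite --- is essential here.
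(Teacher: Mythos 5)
Your argument is correct and rests on exactly the same ingredients as the paper's proof: the weighted identity of Lemma~\ref{lem-alptest}, the interpolation bound of Lemma~\ref{lem-r-weight} to control the sign-indefinite term $2\alp^2\ZMalp{u}{0}{\alp+1}^2$, Lemma~\ref{lem-complane} for coercivity of $\mu a^2+b^2$, the trace estimate \eqref{trace-0} for the boundary terms, and the equation itself for $\Delta u$ and $\pa_\nu u$. The only (equivalent) organizational difference is that the paper tests once with the combination $C_0|\mu|^\alp\OL u+r^{-2\alp}\OL u$ and absorbs the bad term into the combined coercive quantity by taking $C_0\sim\alp^{-2}$ large, whereas you first close the unweighted ($\alp=0$) estimate and then substitute it into the weighted identity; both yield the same $\alp^{-2}$ degeneration and hence the same uniformity for $|\alp|\ge\resconst$.
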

\begin{proof}
  We note that by Lemma \ref{lem-r-weight} and $\Delta u = \mu u-f$ all terms on the left-hand side of the claimed estimate are finite.
  Testing \eqref{resolvent} with $C_0 |\mu|^\alp \OL u + r^{-2\alp} \OL u$ for some large $C_0 > 0$, we obtain from Lemma \ref{lem-alptest}
\begin{align} \label{comp-prob} 
                 \int_{\Ome} f  (C_0 |\mu|^\alp \OL u + r^{-2\alp} \OL u) \dd x&+ \int_{\p\Ome} g  (C_0 |\mu|^\alp \OL u + r^{-2\alp} \OL u) \dd r = z - 2\alp^2 \ZMalp{u}{0}{\alp+1}^2,
\end{align}
where --- in order to deal with the complexity of the problem and in particular $\mu$ --- we have introduced the complex number
\begin{align*}
  z \ &:= \ C_0 |\mu|^\alp \Big(  \mu \ZMalp{u}{0}{0}^2 + \ZMalp{\nabla u}{0}{0}^2 + \HMalp{u}{0}{0}^2 \Big) + \mu\ZM{u}{0}^2 + \ZM{\nabla u}{0}^2 + \HM{u}{0}^2.
\end{align*}
We have the form $z = \mu a^2 + b^2$ for $a, b \in \R$ and by Lemma
\ref{lem-complane} we get $|z| \gtrsim_\eps(|\mu| a^2 + b^2)$, i.e.
\begin{align*}
  |z| \ &\gtrsim_\eps \ C_0 |\mu|^\alp \Big(  |\mu| \ZMalp{u}{0}{0}^2 + \ZMalp{\nabla u}{0}{0}^2 + \gam\HMalp{u}{0}{0}^2 \Big) + |\mu|\ZM{u}{0}^2 + \ZM{\nabla u}{0}^2 + \HM{u}{0}^2.
\end{align*}
The remaining term on the right hand side of \eqref{comp-prob} can be estimated via Lemma \ref{lem-r-weight} by
\begin{align*} 
  \alp^4\ZMalp{u}{0}{\alp+1}^2  \  
  \le  \ \ZMalp{u}{0}{0}^{-2\alp} \ZMalp{\nabla u}{0}{0}^{2(1+\alp)}
  \le  \ |\mu|^{\alp}  ( |\mu| \ZMalp{u}{0}{0}^2 + \ZMalp{\nabla u}{0}{0}^2).
\end{align*}
For sufficiently large $C_0(\alp,\eps):=\alp^{-2}c_0(\eps)$, this term can be absorbed into $|z|$, and we can estimate the right hand side of \eqref{comp-prob} using the triangle inequality from below.
Applying the Cauchy-Schwarz inequality  and Young's inequality to the left-hand side of \eqref{comp-prob},
we have for $\delta>0$
\begin{align*}
\Big|\int_{\Ome} f  (C_0 |\mu|^\alp \OL u + r^{-2\alp} \OL u)\dd x\Big| &\le C_\delta |\mu|^{-1}(C_0|\mu|^\alp\ZMalp{f}{0}{0}^2 + \ZM{f}{0}^2) + \delta (C_0 |\mu|^{\alp+1} \ZMalp{u}{0}{0}^2 + |\mu| \ZM{u}{0}^2), \\
\Big|\int_{\p\Ome} g  (C_0 |\mu|^\alp \OL u + r^{-2\alp} \OL u) \dd r\Big|&\le C_\delta |\mu|^{-\frac12}(C_0|\mu|^\alp \HMalp{g}{0}{0}^2 + \HM{g}{0}^2) + \delta |\mu|^\frac12(C_0|\mu|^\alp\HMalp{u}{0}{0}^2 + \HM{u}{0}^2) \\
&\lesssim C_\delta |\mu|^{-\frac12}(C_0|\mu|^\alp \HMalp{g}{0}{0}^2 + \HM{g}{0}^2) \\
&\qquad + \delta (C_0|\mu|^\alp(|\mu| \ZMalp{u}{0}{0}^2 + \ZMalp{u}{1}{0}^2) + |\mu| \ZM{u}{0}^2 + \ZM{u}{1}^2),
\end{align*}
where we have used $|\mu|^\frac14\HMalp{u}{0}{\bet}\lesssim |\mu|^\frac12\ZMalp{u}{0}{\bet} + \ZMalp{u}{1}{\bet}$ for $\bet\in \set{\alp,0}$ in view of \eqref{trace-0} in Lemma \ref{lem-trace}.
Absorbing the corresponding solution terms, we obtain
\begin{align*}
  \hspace{6ex} & \hspace{-6ex} 
                C_0 |\mu|^\alp \Big(  |\mu| \ZMalp{u}{0}{0}^2 + \ZMalp{\nabla u}{0}{0}^2 + \HMalp{u}{0}{0}^2 \Big) + |\mu|\ZM{u}{0}^2 + \ZM{\nabla u}{0}^2 + \HM{u}{0}^2 \\ 
  &\lesssim_{\eps,\alp}  \ |\mu|^{-1}(|\mu|^\alp\ZMalp{f}{0}{0}^2 + \ZM{f}{0}^2) + |\mu|^{-\frac12}(|\mu|^\alpha \HMalp{g}{0}{0}^2 + \HM{g}{0}^2). \ 
\end{align*}
In particular, this yields the claimed estimate after multiplying by $|\mu|$ and using the equation in order to get the corresponding control on $\Delta u$ and $|\mu|^{\frac12}\pa_\nu u$ as well.
\end{proof}

\section{Resolvent Problem and Parabolic Equation}\label{sec:res_par}

\subsection{Maximal Regularity for Resolvent Equation}

In this section, we improve the regularity results from Lemma \ref{lem-weisol} iteratively by writing $- \Delta u \ = \ f - \mu u$ and $\pa_\nu u \ = \ g - u$, and using elliptic regularity.
\begin{theorem}[Base regularity for homogeneous norm]\label{thm-res-base-reg} 
  Let $\eps\in(0,\pi)$, $\mu\in \Sigma_{\pi-\eps}$ with $|\mu|\ge 1$, and $\alp \in (-1,0)$.
  Suppose that \eqref{ass-alp} is fulfilled with $\ell=0$.
  Then for $f \in \CRci(\OL\Ome\BS\set{0})$ and $g \in \CRci(\p' \Ome)$, there is $\poly_u\in \VSpol_{2,\alp}$ such that the unique solution $u\in H$ of
  \eqref{resolvent} from Lemma \ref{lem-ex-sol} satisfies
  \begin{align*} 
  |\mu|\ZM{u}{0} + |\mu|^\frac12\ZM{u}{1}+ \ZM{u-\poly_u}{2} + \|\poly_u\|_{\VSpol_{2,\alp}}+|\mu|^{\frac 12}\HM{
      u}{0} \lesssim_{\alp_0,\alp_1,\eps} \ X(\mu),
  \end{align*}
  where
  \begin{align} \label{def-X} %
    X(\mu) \ := \ \ZM{f}{0} + \HM{g}{\frac 12} + |\mu|^{\frac 14}\HM{ g}{0}     +     |\mu|^{\frac \alp 2}(\ZMalp{ f}{0}{0}  +   |\mu|^{\frac14}\HMalp{ g}{0}{0}  ).
  \end{align}
\end{theorem}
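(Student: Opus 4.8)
The plan is to bootstrap the base estimates of Lemma~\ref{lem-weisol} by weighted elliptic regularity for the Neumann Laplacian, treating the zeroth-order terms $\mu u$ (from the equation) and $u$ (from the Robin condition) as additional data. First I would apply Lemma~\ref{lem-weisol} for the given $\alp\in(-1,0)$ and $\mu$: since $|\mu|\ge 1$ and $|\mu|^{\alp/2}\le 1$, and since $\ZM{\nabla u}{0}$ is comparable to $\ZM{u}{1}$ with a constant controlled by $\ang|\alp|\ge\alp_1$ (Lemma~\ref{lem_mellin_domain}\eqref{wedge-2}), this already yields
\[
|\mu|\,\ZM{u}{0} + |\mu|^{1/2}\,\ZM{u}{1} + \ZM{\Delta u}{0} + |\mu|^{1/2}\big(\HM{u}{0}+\HM{\pa_\nu u}{0}\big)\ \lesssim_{\eps,\alp_1}\ X(\mu),
\]
with the constant uniform in $\alp$ subject to $\ang|\alp|\ge\alp_1$. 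Hence only the top-order contribution $\ZM{u-\poly_u}{2}+\|\poly_u\|_{\VSpol_{2,\alp}}$ remains, and the polynomial correction $\poly_u$ itself still has to be produced.

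Next I would recast \eqref{resolvent} as the pure Neumann problem $-\Delta u = F$ in $\Ome$, $\pa_\nu u = \psi$ on $\p'\Ome$, with $F:=f-\mu u$ and $\psi:=g-u$. From the previous step, $\ZM{F}{0}\le\ZM{f}{0}+|\mu|\,\ZM{u}{0}\lesssim X(\mu)$, while the trace estimate \eqref{trace-1} with $\ell=1$ gives $\HM{u}{1/2}\lesssim\ZM{u}{1}$ (constant governed by $\ang|\alp|\ge\alp_1$), so that $\HM{\psi}{1/2}\le\HM{g}{1/2}+\HM{u}{1/2}\lesssim\HM{g}{1/2}+|\mu|^{-1/2}X(\mu)\lesssim X(\mu)$; the term $\HM{g}{1/2}$ in \eqref{def-X} is exactly what this step forces into the estimate.

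The substantive step is the weighted elliptic estimate for this Neumann problem, which I would carry out via the Mellin transform in $r$. After transforming, $-\Delta u=F$ reduces, for each $\lambda$, to the one-dimensional boundary value problem $-\pa_\phi^2\hat u-\lambda^2\hat u=\widehat{r^2F}$ on $(0,\ang)$ with Neumann data at $\phi\in\{0,\ang\}$ determined by $\hat\psi$. The symbol $-\pa_\phi^2-\lambda^2$ with Neumann conditions is invertible away from $\lambda\in\tfrac\pi\ang\Z$, and on the line $\Re\lambda=\sclOme{2+\alp}=\alp+1$ the bound $\dist(\ang(\alp+1),\pi\Z)\ge\alp_1$ from \eqref{ass-alp} with $\ell=0$ makes its inverse bounded by $\dist(\ang(\alp+1),\pi\Z)^{-1}$, uniformly in $\Im\lambda$ (this is the Mellin-symbol analysis of Proposition~\ref{prp-ellneumann} and Appendix~\ref{sec:neu}). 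Combining this with the Mellin--Plancherel identity (Lemma~\ref{def-tracenorm}) and the Hardy and trace estimates of Lemmas~\ref{lem-hardy-mel}, \ref{lem_mellin_domain} and \ref{def-tracenorm-a} — which invoke the remaining conditions $\ang|\alp|\ge\alp_1$, $\ang|\alp-1|\ge\alp_1$, $|\alp+1|\le\alp_0$ — gives $\ZM{\cdot}{2}$-bounds for the inverse on the line $\Re\lambda=\alp+1$. Since the variational solution from Lemma~\ref{lem-ex-sol} lives on the line $\Re\lambda=\sclOme{0+\alp}=\alp-1$ (its norms $\ZM{u}{0},\ZM{u}{1}$ are finite by Lemma~\ref{lem-r-weight}), one then shifts the Mellin inversion contour from $\Re\lambda=\alp-1$ to $\Re\lambda=\alp+1$: the residues at the poles $\lambda\in\tfrac\pi\ang\Z$ lying in the strip $\alp-1<\Re\lambda<\alp+1$ are monomials $a_q(\phi)\,r^q$ with $q\in\calq$, $q<\alp+1$, and $a_q\in\WSR22((0,\ang))$ — no logarithm occurs because $u\in H$ has finite Dirichlet energy $\int_\Ome|\nabla u|^2\,\dd x<\infty$, which rules out logarithmic behavior at the tip and forces the pole of $\hat u$ at $\lambda=0$ to be at most simple — so their sum is the required $\poly_u\in\VSpol_{2,\alp}$, while the remainder lies in $\ZSooalp2\alp$. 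Estimating the residue coefficients by the datum norms of $F$ and $\psi$, one obtains
\[
\ZM{u-\poly_u}{2}+\|\poly_u\|_{\VSpol_{2,\alp}}\ \lesssim_{\alp_0,\alp_1,\ang}\ \ZM{F}{0}+\HM{\psi}{1/2}+\ZM{u}{0}+\ZM{u}{1}\ \lesssim\ X(\mu),
\]
and combining with the first step completes the proof.

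The main obstacle is this contour shift together with the accompanying bookkeeping: one must determine the orders of the poles of the $\phi$-resolvent, check that each residue is genuinely of the monomial form admitted by $\VSpol_{2,\alp}$ (coefficients in $\WSR22((0,\ang))$, no logarithmic terms), verify convergence of the integral over the shifted contour, and track the constants explicitly in $\dist(\ang(\alp+1),\pi\Z)$, $\ang|\alp|$ and $\ang|\alp-1|$ so that the bound is uniform for $|\alp|\ge\resconst$ and $|\alp+1|\le\rpconst$. By contrast the parabolic weights cause no difficulty at this point, since $-\Delta$ is scaling invariant and $\mu$ enters only through $F=f-\mu u$; the hypothesis $|\mu|\ge1$ is used merely to absorb the negative powers of $|\mu|$ when the first-step bounds are reinserted.
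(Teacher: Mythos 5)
Your proposal is correct and follows essentially the same route as the paper's proof: Lemma \ref{lem-weisol} for the zeroth- and first-order bounds, recasting \eqref{resolvent} as a Neumann problem with data $f-\mu u$ and $g-u$, the Mellin-based elliptic regularity of Proposition \ref{prp-ellneumann} to produce the polynomial correction, and exclusion of the logarithmic kernel element via the finite Dirichlet energy of $u\in H$. The only details worth noting are that the paper performs the shift starting from scaling $0$ (using $u\in\VSooalp{0}{\bet}$ for all $\bet<1$ from Lemma \ref{lem-r-weight}) rather than from $\alp-1$, which is what rules out negative-exponent residues $r^{-\pi/\ang}$ for wide angles $\ang>\pi$, and that it bounds $\|\poly_u\|_{\VSpol_{2,\alp}}$ via the interpolation-type Lemma \ref{lem-inter1-wedge} rather than by estimating residue coefficients directly.
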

\begin{proof}
  By Lemma \ref{lem-weisol} (and since $\ZM{\nabla u}{0}=\ZM{u}{1}$) it suffices to find $\poly_u\in \VSpol_{2,\alp}$ such that $\ZM{u-\poly_u}{2}\lesssim X(\mu)$.
  Since $u$ is a classical solution of the resolvent problem \eqref{resolvent} with Robin boundary conditions, it is also a solution of the elliptic problem
    \begin{align*}
    \begin{pdeq}
          \DeltaB \vvv \ &= \ \t f &&\qquad\text{in} \ \MOme,  \\
          \pa_\nu \vvv \ &= \ \t g &&\qquad\text{on} \ \pa'\Ome, 
    \end{pdeq}
  \end{align*}
  with data $\t f := -\Delta u$ and $\t g := g - u$.
  Observe that
  \begin{align*}
  \ZMS{\t f}{0}{\alp} + \HMS{\t g}{\frac12}{\alp}
  \lesssim_{\rpconst,\resconst} \ \ZMS{\Delta u}{0}{\alp}  + \HMS{g}{\frac12}{\alp} + \HMS{ u}{\frac12}{\alp}.
  \end{align*}
  Since $u\in \VSooalp{1}{\alp}$ by Lemma \ref{lem-r-weight} and Lemma \ref{lem-ZM-dense}, the trace estimate in Lemma \ref{lem-trace} yields $\HMS{u}{\frac12}{\alp}\le (2+(\ang|\alp|)^{-1})\ZM{u}{1}\lesssim_{\resconst} \ZM{u}{1}$, so that
   \begin{align}\label{js502}
  \ZMS{\t f}{0}{\alp} + \HMS{\t g}{\frac12}{\alp}
     &\upref{trace-1}\lesssim_{\rpconst,\resconst} \  \ZMS{\Delta u}{0}{\alp} + \HMS{g}{\frac 12}{\alp}  + \ZM{u}{1} %
     \lesssim_{\rpconst,\resconst,\eps} \  X(\mu),
   \end{align}
   where the last estimate follows by Lemma \ref{lem-weisol}.
  As $u\in H$ and thus in particular $u\in \VSooalp{0}{\bet}$ for all $\bet\in [-1,1)$ by Lemma \ref{lem-r-weight} and Lemma \ref{lem-ZM-dense}, we obtain from Proposition \ref{prp-ellneumann} and $\sclOme{1}=0$ a generalized polynomial\footnote{For the definition of $\ker_N^{\sigma_1,\sigma_2}$ see Definition \ref{def-formal-ker}. The word \textit{generalized} refers to the fact that at this point we have not yet excluded a possible logarithmic contribution.} $\poly_u\in \ker_N^{0,\sclOme{2+\alp}}$ such that $u-\poly_u\in \ZSoo{2}$ and 
  \begin{align*}
  \ZM{u - \poly_u}{2} \upref{est-redsol-2}\lesssim_{\rpconst,\resconst}
  \ZMS{\t f}{0}{\alp} + \HMS{\t g}{\frac12}{\alp}
  \stackrel{\eqref{js502}}{\lesssim}_{\rpconst,\resconst,\eps} \ X(\mu).
  \end{align*}
   Observe that $\poly_u(r,\phi)=a + b\ln r + \mathfrak{q}_u(r,\phi)$ with $\mathfrak{q}_u(r,\phi):=\sum_{\pi_k\in (\tfrac{\pi}{\ang}\Z)\cap (0,\sclOme{2+\alp})} u^{\pi_k}(\phi) r^{\pi_k}$.
   Since $u^{\pi_k}(\phi)=c^{\pi_k}\cos(\pi_k\phi)$ for some constant $c^{\pi_k}\in \C$ by the proof of Proposition \ref{prp-ellneumann}, we have $\|u^{\pi_k}\|_{W^{2,2}((0,\ang))}\lesssim \|u^{\pi_k}\|_{L^2(0,\ang))}<\infty$ and thus $\|\mathfrak{q}_u\|_{\VSpol_{2,\alp}}\lesssim \|\mathfrak{q}_{u}\|_{\VSpol_{0,\alp+2}}<\infty$.
   Before estimating this quantity more precisely, we observe that by
   $\ZMalp{u}{1}{0}<\infty$ and $\alp+1\ge 0$ it holds
   \begin{align*}
   b\int_0^\ang\int_0^1 |r\p_r\ln r|^2 \frac{\dd r}{r} \dd\phi &\lesssim    \int_0^\ang\int_0^1 |r\p_r u|^2 \frac{\dd r}{r} \dd\phi +    \int_0^\ang\int_0^1 r^{-2(\alp+1)} |r\p_r(u-\poly_u)|^2 \frac{\dd r}{r} \dd\phi \\
   & \quad + \int_0^\ang \int_0^1 |\mathfrak{q}_u(r,\phi)|^2 \frac{\dd r}{r} \dd\phi \\
   & \le \ZMalp{u}{1}{0}^2 + \ZMalp{u-\poly_u}{1}{\alp+1}^2 + \|\mathfrak{q}_u\|_{\VSpol_{2,\alp}} \\ &\stackrel{\mathclap{\eqref{wedge-3}}}{\lesssim} \ \ZMalp{u}{1}{0}^2 + \ZMalp{u-\poly_u}{2}{\alp}^2 + \|\mathfrak{q}_u\|_{\VSpol_{2,\alp}}<\infty,
      \end{align*}
   where we have used $\sclOme{\alp+2}\ne 0$ in the application of \eqref{wedge-3}.
   Since $\int_0^\ang\int_0^1 |r\p_r \ln r|^2 \frac{\dd r}{r} \dd\phi =\infty$, this necessitates $b=0$.
     Thus $\poly_u(r,\phi)=a+\mathfrak{q}_u(r,\phi)$.
     Finally, for this polynomial $\poly_u$ we obtain from Lemma \ref{lem-inter1-wedge}
   \begin{align*}
	\|\poly_u\|_{\VSpol_{2,\alp}}&\lesssim \|\poly_{u}\|_{\VSpol_{0,\alp+2}} \lesssim \ZMalp{u}{0}{\alp+1} + \ZMalp{u-\poly_u}{0}{\alp+2} \lesssim \ZM{u}{1} + \ZM{u-\poly_u}{2}\le X(\mu).
	\qedhere
   \end{align*}
 \end{proof}
\begin{corollary}\label{js600}
 In the situation of Theorem \ref{thm-res-base-reg}, the solution $u$ satisfies $u\in \ZS{2}$ and
 \begin{align*}
 \sum_{j=0}^2 |\mu|^{\frac j2}\ZN{u}{2-j}\lesssim_{\rpconst,\resconst,\eps} \ZN{f}{0} + \HN{g}{\frac12} + |\mu|^\frac14\HN{g}{0} + |\mu|^{\frac\alp 2}(\ZNS{f}{0}{0} + |\mu|^\frac14\HNalp{g}{0}{0}).
 \end{align*}
\end{corollary}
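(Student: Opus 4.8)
The plan is to obtain the estimate by reassembling Theorem~\ref{thm-res-base-reg}. Since $\gam=1$ throughout this section, Definition~\ref{def-homspace} gives $\ZN{v}{k}^2=\sum_{\ell=0}^k\ZM{v}{\ell}^2$ on regular spaces, and on the boundary $\HN{g}{\frac12}^2=\HM{g}{0}^2+\HM{g}{\frac12}^2$, $\HN{g}{0}^2=2\HM{g}{0}^2$, $\HNalp{g}{0}{0}^2=2\HMalp{g}{0}{0}^2$, while $\ZN{f}{0}=\ZM{f}{0}$ and $\ZNS{f}{0}{0}=\ZMalp{f}{0}{0}$. Because $|\mu|\ge1$, the term $\HM{g}{0}$ is dominated by $|\mu|^{\frac14}\HM{g}{0}$, so the right-hand side of the claim is comparable to the quantity $X(\mu)$ from~\eqref{def-X}. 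It therefore suffices to show $u\in\ZS{2}$ and $\sum_{j=0}^2|\mu|^{\frac j2}\ZN{u}{2-j}\lesssim_{\rpconst,\resconst,\eps}X(\mu)$.

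For the membership I would use that Theorem~\ref{thm-res-base-reg} provides $\poly_u\in\VSpol_{2,\alp}$ with $u-\poly_u\in\ZSoo{2}$ and, by its proof, $\poly_u=a+\sum_{\pi_k\in(\frac\pi\ang\Z)\cap(0,\sclOme{2+\alp})}c^{\pi_k}\cos(\pi_k\phi)\,r^{\pi_k}$, carrying no logarithm; in particular its coefficient functions lie in $\WSR{2}{2}((0,\ang))$ with norms comparable to $\|\poly_u\|_{\VSpol_{2,\alp}}$. Writing $u=\tilde u+\zet\poly_u$ with $\tilde u:=u-\zet\poly_u$, near the tip $\tilde u=u-\poly_u\in\ZSoo{2}$, whereas away from the tip $\tilde u$ differs from $u$ only by the smooth correction $\zet\poly_u$, which is compactly supported in $r$, and $u$ itself decays sufficiently fast at infinity by Lemma~\ref{lem-r-weight}; hence $\tilde u\in\ZSoo{2}$ and $u\in\ZSoo{2}\oplus\zet\VSpol_{2,\alp}=\ZS{2}$, with $\ZN{u}{2}^2=\ZNA{\tilde u}{2}{\alp}^2+\|\poly_u\|_{\VSpol_{2,\alp}}^2$ by Definition~\ref{def-nonhom-norm}\eqref{add-norm}.

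Next I would bound the three contributions $|\mu|^{\frac j2}\ZN{u}{2-j}$. For $j=0$: by Theorem~\ref{thm-res-base-reg} one has $\|\poly_u\|_{\VSpol_{2,\alp}}\lesssim X(\mu)$, and $\ZNA{\tilde u}{2}{\alp}^2=\sum_{\ell=0}^2\ZM{\tilde u}{\ell}^2$ is handled order by order: for $\ell=2$, write $\tilde u=(u-\poly_u)+(1-\zet)\poly_u$ and use $\ZM{u-\poly_u}{2}\lesssim X(\mu)$ together with $\ZM{(1-\zet)\poly_u}{2}\lesssim_{\resconst,\ang}\|\poly_u\|_{\VSpol_{2,\alp}}$ (the far-field cut-off monomials $(1-\zet)r^qa_q$ have exponents $q\in\calq$ with $q<\sclOme{2+\alp}$, and in fact $\ang|\sclOme{2+\alp}-q|\ge\dist(\ang\sclOme{2+\alp},\pi\Z)\ge\resconst$ by~\eqref{ass-alp} with $j=0$, so the monomial estimates are uniform); for $\ell\in\{0,1\}$, write $\tilde u=u-\zet\poly_u$, use $\ZM{u}{\ell}\lesssim X(\mu)$ (from Theorem~\ref{thm-res-base-reg} and $|\mu|\ge1$), and $\ZM{\zet\poly_u}{\ell}\lesssim_{\resconst,\ang}\|\poly_u\|_{\VSpol_{2,\alp}}$ (the near-tip cut-off monomials have $q\ge0>\sclOme{\ell+\alp}$ with $\ang|\sclOme{\ell+\alp}|\ge\resconst$ by~\eqref{ass-alp}). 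This gives $\ZN{u}{2}\lesssim X(\mu)$. For $j=1,2$: since $\calq\subset[0,\infty)$ while $\sclOme{\alp},\sclOme{1+\alp}<0$, one has $\VSpol_{0,\alp}=\VSpol_{1,\alp}=\{0\}$, so $\ZN{u}{0}=\ZM{u}{0}$ and $\ZN{u}{1}^2=\ZM{u}{0}^2+\ZM{u}{1}^2$; since $|\mu|\ge1$ and $|\mu|\ZM{u}{0},\,|\mu|^{\frac12}\ZM{u}{1}\lesssim X(\mu)$ by Theorem~\ref{thm-res-base-reg}, it follows that $|\mu|\ZN{u}{0}+|\mu|^{\frac12}\ZN{u}{1}\lesssim X(\mu)$. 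Summing the three contributions and recalling the comparison of $X(\mu)$ with the right-hand side of the claim finishes the proof.

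I expect the main (and essentially only) point of care to be the weight bookkeeping: the seminorms $\ZM{u}{0}$ and $\ZM{u}{1}$ entering the full norm $\ZN{u}{2}$ carry the weights $r^{1-\alp}$ and $r^{-\alp}$, not the weight $r^{-(1+\alp)}$ natural to the second-order estimate, so the splitting of $\tilde u$ must be chosen according to the order $\ell$, and the genuine polynomial $\poly_u$ must be cut off by $\zet$ to sit inside $\ZS{2}$; both are controlled uniformly by the separation condition~\eqref{ass-alp}, and the only juggling of powers of $|\mu|$ is absorbed by the hypothesis $|\mu|\ge1$. With these in place the corollary is a direct consequence of Theorem~\ref{thm-res-base-reg}.
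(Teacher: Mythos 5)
Your argument is correct and follows essentially the same route as the paper's proof: decompose $u=(u-\zet\poly_u)+\zet\poly_u$, control $\ZM{u-\poly_u}{2}$ and $\ZM{(1-\zet)\poly_u}{2}$ at top order, control $\ZM{u}{\ell}$ and $\ZM{\zet\poly_u}{\ell}$ for $\ell\in\set{0,1}$, bound all cut-off polynomial contributions by $\|\poly_u\|_{\VSpol_{2,\alp}}\lesssim X(\mu)$, and absorb the lower-order norms using $|\mu|\ge 1$. Your additional bookkeeping (that $\VSpol_{0,\alp}=\VSpol_{1,\alp}=\set{0}$ and that the constants in the monomial estimates are uniform via \eqref{ass-alp}) is implicit in the paper's proof and correctly spelled out here.
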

\begin{proof}
Let $\poly_u\in \VSpol_{2,\alp}$ be as in Theorem \ref{thm-res-base-reg}.
Since $\poly_u$ contains only terms of scaling between $\sclOme{\alp+1}$ and $\sclOme{\alp+2}$, and since $\supp\zet\subset [0,2]$ and $\supp(1-\zet)\subset [2,\infty)$, we obtain
\begin{align*}
\ZM{\zet\poly_u}{0} + \ZM{\zet\poly_u}{1} + \ZM{(1-\zet)\poly_u}{2} \lesssim \|\poly_u\|_{\VSpol_{2,\alp}}\lesssim X(\mu).
\end{align*}
Thus, writing $u=(u-\zet\poly_u)+\zet\poly_u$, we conclude by Theorem \ref{thm-res-base-reg} 
\begin{align*}
\ZN{u}{2}&\lesssim \ZM{u-\zet\poly_u}{0} + \ZM{u-\zet\poly_u}{1} + \ZM{u-\zet\poly_u}{2} + \|\poly_u\|_{\VSpol_{2,\alp}} \\
&\lesssim \ZM{u}{0} + \ZM{\zet\poly_u}{0} + \ZM{u}{1} + \ZM{\zet\poly_u}{1} + \ZM{u-\poly_u}{2}  + \ZM{(1-\zet)\poly_u}{2} + \|\poly_u\|_{\VSpol_{2,\alp}} \lesssim X(\mu),
\end{align*}
where we have used $|\mu|\ge 1$ in the last step.
Since $|\mu|\ZN{u}{0}=|\mu|\ZM{u}{0}$ and $|\mu|^\frac12\ZN{u}{1}\lesssim |\mu|\ZM{u}{0}+|\mu|^\frac12\ZM{u}{1}$ in virtue of $|\mu|\ge 1$, we obtain by Theorem \ref{thm-res-base-reg}
\begin{align*}
  \sum_{j=0}^2 |\mu|^{\frac j2}\ZN{u}{2-j}\lesssim_{\rpconst,\resconst,\eps} X(\mu).
\end{align*}
This gives the result, since $X(\mu)$ is trivially controlled by the right-hand side of the claimed estimate.
\end{proof}

\begin{proposition}\label{thm-res-high-smooth} %
  Let $\eps\in(0,\pi)$, $\mu\in \Sigma_{\pi-\eps}$ with $|\mu|\ge 1$, and $\alp \in [-1,0]$.
  Suppose $\ell\in \N$ and $(\alp,\ell)$ satisfy \eqref{ass-alp}.
  Let $f \in \CRci(\OL\Ome\BS\set{0})$ and $g \in \CRci(\p'\Ome)$.
  Then  for all $0 \leq m \leq \ell$ there is $\poly_u\in \VSpol_{m+2,\alp}$ such that unique solution $u\in H$ of \eqref{resolvent} from Lemma \ref{lem-weisol} satisfies the estimate
  \begin{align*}
    \ZM{u-\poly_u}{m+2} + \|\poly_u\|_{\VSpol_{m+2,\alp}} \ %
       &\lesssim_{\ang, \rpconst,\resconst,\eps,\ell} \ \sum_{j=0}^{m} |\mu|^{\frac 12(m-j)} \big( \ZMS{f}{j}{\alp}  + \HMS{g}{j + \frac 12}{\alp} \big) \\
       &\qquad \qquad \qquad +  |\mu|^{\frac m2}  \big(|\mu|^{\frac 14}\HM{ g}{0} + |\mu|^{-\frac14}\HM{g}{1}\big). %
     \end{align*}
\end{proposition}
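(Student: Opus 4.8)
The plan is to argue by induction on $m$. The case $m=0$ is Theorem~\ref{thm-res-base-reg}, which in combination with Corollary~\ref{js600} and $|\mu|\ge1$ moreover gives $u\in\ZS{2}$ with $\sum_{j=0}^{2}|\mu|^{j/2}\ZN{u}{2-j}$ bounded by the right‑hand side at order $0$. For the inductive step I would assume the assertion (in this upgraded form) at order $m-1$, so that $u\in\ZS{m+1}$ with
\begin{align*}
\sum_{j=0}^{m+1}|\mu|^{j/2}\ZN{u}{m+1-j}\ \lesssim_{\ang,\rpconst,\resconst,\eps,\ell}\ \sum_{j=0}^{m-1}|\mu|^{\frac{m-1-j}{2}}\big(\ZMS{f}{j}{\alp}+\HMS{g}{j+\frac12}{\alp}\big)+|\mu|^{\frac{m-1}{2}}\big(|\mu|^{\frac14}\HM{g}{0}+|\mu|^{-\frac14}\HM{g}{1}\big),
\end{align*}
and since $|\mu|\ge1$ the right‑hand side here is dominated by that of the order‑$m$ claim.

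In the inductive step I would regard $u$ as a solution of the Neumann problem $\Delta u=\mu u-f$ in $\Ome$, $\pa_\nu u=g-u$ on $\p'\Ome$, and apply the weighted elliptic regularity for the Neumann Laplacian, Proposition~\ref{prp-ellneumann}, at regularity order $m$. This is admissible precisely because~\eqref{ass-alp} keeps all the relevant scalings $\sclOme{j+\alp},\sclOme{j+\alp+1},\sclOme{j+\alp+2}$ with $j\le\ell$ bounded away from $0$, and through $|\sclOme{\ell+\alp+2}|\le\rpconst$ also bounded above. This produces a generalized polynomial $\poly_u\in\ker_N^{\sclOme{1},\sclOme{m+2+\alp}}$ (cf.\ Definition~\ref{def-formal-ker}) with $u-\poly_u\in\ZSoo{m+2}$ and
\begin{align*}
\ZM{u-\poly_u}{m+2}\ \lesssim_{\rpconst,\resconst}\ \ZMS{\mu u-f}{m}{\alp}+\HMS{g-u}{m+\frac12}{\alp},
\end{align*}
where on the right one also feeds in, as a polynomial source of lower scaling, the contribution $\mu\poly_u^{(m-1)}$ coming from the previously extracted expansion. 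The possible logarithmic term $b\ln r$ in $\poly_u$ is ruled out exactly as in the proof of Theorem~\ref{thm-res-base-reg}: finiteness of $\ZM{u}{1}$ from Lemma~\ref{lem-weisol}, together with $\ZM{u-\poly_u}{m+2}<\infty$ and~\eqref{wedge-3}, forces $b=0$. The polynomial norm $\|\poly_u\|_{\VSpol_{m+2,\alp}}$ is then estimated against $\ZM{u}{m+1}+\ZM{u-\poly_u}{m+2}$ by the interpolation inequalities of Lemma~\ref{lem-interp}, as in the final line of the proof of Theorem~\ref{thm-res-base-reg}.

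It remains to bound the data norms appearing on the right above. For the $f,g$‑contributions, every piece $\|r^{-\sclOme{m+\alp}}(r\pa_r)^{j}\pa_\phi^{k}f\|$ with $j+k<m$ is, by~\eqref{wedge-3}, controlled by $|\sclOme{m+\alp}|^{-(m-j-k)}\ZMS{f}{m}{\alp}$, and similarly the lower‑order radial, angular, and trace pieces of $g-u$ reduce — via the trace characterization (Lemma~\ref{def-tracenorm-a}), the trace estimates (Lemma~\ref{lem-trace}), and the Hardy‑type inequalities of Lemma~\ref{lem_mellin_domain} — to the terms $\ZMS{f}{j}{\alp},\HMS{g}{j+\frac12}{\alp},|\mu|^{\frac14}\HM{g}{0},|\mu|^{-\frac14}\HM{g}{1}$ of the claim, with constants depending only on $\ang,\rpconst,\resconst$ thanks to~\eqref{ass-alp}. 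For the $u$‑contributions one splits $\ZMS{\mu u}{m}{\alp}=|\mu|\ZMS{u}{m}{\alp}$ and $\HMS{u}{m+\frac12}{\alp}\lesssim_{\resconst}\ZMS{u}{m+1}{\alp}$ (Lemma~\ref{lem-trace}); the crucial point is that the inductive estimate carries $\ZM{u}{m-1},\ZM{u}{m},\ZM{u}{m+1}$ weighted by $|\mu|^{1},|\mu|^{1/2},|\mu|^{0}$, respectively, so that after multiplying by the power of $|\mu|$ dictated by the elliptic‑regularity step these terms reproduce the claimed right‑hand side at order $m$. An equivalent, more hands‑on route for the bookkeeping — and for handling the angular derivatives — is to differentiate~\eqref{resolvent}: using $(r\pa_r+2)\Delta=\Delta(r\pa_r)$ and $\pa_\nu(r\pa_r)=(r\pa_r+1)\pa_\nu$ from the proof of Lemma~\ref{lem-coercive}, $r\pa_r u$ solves~\eqref{resolvent} with data $(r\pa_r+2)f-2\mu u$ and $(r\pa_r+1)g-u$, lowering the order by one, while $\pa_\phi^2v=r^2\Delta v-(r\pa_r)^2v$ converts $\phi$‑derivatives into radial ones, the single genuine $\pa_\phi$‑derivative being governed by the Dirichlet condition $\pa_\phi u=\pm r(g-u)$ on $\p'\Ome$.

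The step I expect to be the main obstacle is precisely this weight‑and‑$|\mu|$ bookkeeping: differentiation shifts the monomial weight $r^{-\sclOme{\cdot}}$, so one must interlock the Hardy‑type inequalities of Lemma~\ref{lem_mellin_domain}, the interpolation inequalities of Lemma~\ref{lem-interp}, the trace estimates of Lemma~\ref{lem-trace}, and the resonance‑avoiding assumption~\eqref{ass-alp} in such a way that the final constant is independent of $\mu\in\Sigma_{\pi-\eps}$ with $|\mu|\ge1$, the powers of $|\mu|$ on the right‑hand side come out exactly as stated, and the uniformity in $|\alp|\ge\resconst$ inherited from Lemma~\ref{lem-weisol} is preserved.
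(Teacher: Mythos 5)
Your proposal follows essentially the same route as the paper's proof: induction on $m$, with each step an application of the weighted elliptic Neumann regularity (Proposition \ref{prp-ellneumann}) to $\Delta u=\mu u-f$, $\pa_\nu u=g-u$, exclusion of the logarithmic kernel element exactly as in Theorem \ref{thm-res-base-reg}, control of the polynomial norm via Lemma \ref{lem-inter1-wedge}/interpolation, and the same $|\mu|$-bookkeeping using $|\mu|\ge 1$. The only point to repair is that your intermediate bounds $\ZMS{\mu u}{m}{\alp}=|\mu|\ZMS{u}{m}{\alp}$ and $\HMS{u}{m+\frac12}{\alp}\lesssim\ZMS{u}{m+1}{\alp}$ are stated for the full solution $u$, whose seminorms of order $\ge 2$ are generically infinite because of the singular polynomial part; as in the paper these must be applied to the regular remainder from the previous step (e.g.\ $\HM{u-\poly_{u,2}}{\frac 32}\lesssim\ZM{u-\poly_{u,2}}{2}$ and $|\mu|\ZMS{u-\poly_{u,2}}{2}{\alp}$), with the polynomial's contribution to the data fed in separately — which your text already anticipates, so this is an imprecision of notation rather than a gap in the argument.
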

\begin{proof}
 We argue by induction.
 By Theorem \ref{thm-res-base-reg} we obtain a polynomial $\poly_{u,2}\in \ker_N^{0,\sclOme{2+\alp}}$ such that $u-\poly_{u,2}\in \ZSoo{2}$ and the estimate is valid for $m = 0$.
 By application of the elliptic regularity estimate of Proposition \ref{prp-ellneumann} there is $\poly_{u,3}\in \ker_N^{\sclOme{2+\alp},\sclOme{3+\alp}}$ such that $u-\poly_{u,3} \in \ZSoo{3}$ and
    \begin{align*}
       \ZM{u-\poly_{u,2}-\poly_{u,3}}{3} \ %
      &\lesssim_{\ang,\rpconst,\resconst} \ \ZMS{f}{1}{\alp} +   |\mu|\ZMS{u}{1}{\alp} + \HMS{g}{\frac 32}{\alp} +  \HM{u-\poly_{u,2}}{\frac 32} \\ %
      &\lesssim_{\ang,\rpconst,\resconst} \ \ZMS{f}{1}{\alp}  + \HMS{g}{\frac 32}{\alp} +  |\mu|^{\frac 12} X(\mu),
    \end{align*}
    where we have used the trace estimate
    $\HM{u-\poly_{u,2}}{\frac 32} \lesssim_{\ang,\rpconst,\resconst}
    \ZM{u-\poly_{u,2}}{2}$ from Lemma \ref{lem-trace}, and $|\mu|\ge 1$ in the last step.
    By the same argument as in the proof of Theorem \ref{thm-res-base-reg} the polynomial $\poly_{u,2}+\poly_{u,3}$ does not contain a contribution of $\ln r$ or $1$, and is estimated by
    \begin{align*}
    \|\poly_{u,2}+\poly_{u,3}\|_{\VSpol_{3,\alp}}\lesssim \ZM{u}{1} + \ZM{u-\poly_{u,2}-\poly_{u,3}}{3}\le \ZMS{f}{1}{\alp}  + \HMS{g}{\frac 32}{\alp} +  |\mu|^{\frac 12} X(\mu).
    \end{align*}
    Analogously, we get $\poly_{u,4}\in \ker_N^{\sclOme{3+\alp},\sclOme{4+\alp}}$ such that $u-\poly_{u,4} \in \ZSoo{4}$ and
     \begin{align*}
       \ZM{u-\poly_{u,2}-\poly_{u,3}-\poly_{u,4}}{4} \ %
       &\lesssim_{\ang,\rpconst,\resconst} \ \ZMS{f}{2}{\alp} +  |\mu|\ZMS{u-\poly_{u,2}}{2}{\alp} + \HMS{g}{\frac 52}{\alp} + \HM{u-\poly_{u,2}-\poly_{u,3}}{\frac 52} \\
       &\lesssim_{\ang,\rpconst,\resconst} \  \ZMS{f}{1}{\alp}  + \HMS{g}{\frac 32}{\alp} + \ZMS{f}{2}{\alp}  + \HMS{g}{\frac 52}{\alp} +  |\mu|  X(\mu),
     \end{align*}
     as well as
    \begin{align*}
    \|\poly_{u,2}+\poly_{u,3}+\poly_{u,4}\|_{\VSpol_{4,\alp}}&\lesssim_{\ang,\rpconst,\resconst} \ZM{u}{1} + \ZM{u-\poly_{u,2}-\poly_{u,3}-\poly_{u,4}}{4} \\
    &\lesssim_{\ang,\rpconst,\resconst} \ZMS{f}{1}{\alp}  + \HMS{g}{\frac 32}{\alp} + \ZMS{f}{2}{\alp}  + \HMS{g}{\frac 52}{\alp} +  |\mu|  X(\mu).
    \end{align*}
     Iteratively, this yields for $0 \leq m \leq \ell$ the asserted estimate, if one also observes that $|\mu|^{\frac m2}X(\mu)$ is included in the right-hand side, since by Lemma \ref{lem-interp} we have
  \begin{align*}
    |\mu|^{\frac \alp 2}(\ZMalp{ f}{0}{0}  +   |\mu|^{\frac 14}\HMalp{ g}{0}{0}) \  %
    &\lesssim_{\rpconst,\resconst}  \ \ZM{f}{0} +  |\mu|^{-\frac 12}\ZM{ f}{1} +   |\mu|^{\frac 14}\HM{g}{0} + |\mu|^{-\frac14}\HM{g}{1}.
    \qedhere
 \end{align*}
 \end{proof}
 
 \begin{corollary}\label{js601}
 In the situation of Proposition \ref{thm-res-high-smooth}, the solution $u$ satisfies $u\in \ZS{\ell+2}$ and
 \begin{align*}
 \sum_{j=0}^{\ell+2}|\mu|^{\frac j2}\ZN{u}{\ell+2-j}\lesssim_{\rpconst,\resconst,\eps} \sum_{j=0}^\ell |\mu|^{\frac j2}(\ZN{f}{\ell-j}  + \HN{g}{\ell-j+\frac 12}) + |\mu|^{\frac\ell 2}(|\mu|^{\frac14}\HN{g}{0} + |\mu|^{-\frac14}\HN{g}{1}).
 \end{align*}
\end{corollary}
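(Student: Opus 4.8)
The plan is to run the cut-off argument of Corollary~\ref{js600} at every level of regularity. The proof of Proposition~\ref{thm-res-high-smooth} in fact produces nested generalized polynomials $\poly_u^{(m)}:=\poly_{u,2}+\dots+\poly_{u,m+2}\in\VSpol_{m+2,\alp}$, $0\le m\le\ell$, with $u-\poly_u^{(m)}\in\ZSoo{m+2}$ and $\ZM{u-\poly_u^{(m)}}{m+2}+\|\poly_u^{(m)}\|_{\VSpol_{m+2,\alp}}\lesssim Z_m(\mu)$, where $Z_m(\mu)$ abbreviates the right-hand side of Proposition~\ref{thm-res-high-smooth} for that $m$; moreover, by the exclusion of logarithmic contributions carried out there and in the proof of Theorem~\ref{thm-res-base-reg}, each $\poly_u^{(m)}$ is a finite sum of monomials $a_q(\phi)r^q$ with $q\in\calq$, $0\le q<\sclOme{m+2+\alp}$, where $a_q$ is constant for $q=0$. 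I would first record, using only $|\mu|\ge1$ and the trivial inequalities $\ZM{f}{j}\le\ZN{f}{j}$, $\HM{g}{s}\le\HN{g}{s}$, that $|\mu|^{\frac{\ell-m}2}Z_m(\mu)$ is dominated by the right-hand side of the claimed estimate for every $0\le m\le\ell$, and, using in addition the interpolation bound from Lemma~\ref{lem-interp} in the form employed at the end of the proof of Proposition~\ref{thm-res-high-smooth}, that so is $|\mu|^{\frac\ell2}X(\mu)$ with $X(\mu)$ as in \eqref{def-X}.

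The core step is to turn, for each $2\le k\le\ell+2$, the homogeneous control on $u-\poly_u^{(k-2)}$ into an estimate for $\ZN{u}{k}$. Following Corollary~\ref{js600}, I would write $u=(u-\zet\poly_u^{(k-2)})+\zet\poly_u^{(k-2)}$ and verify that $u-\zet\poly_u^{(k-2)}$ is the regular part of $u$: indeed $u-\poly_u^{(k-2)}\in\VSooalp{k}{\alp}$, and $(1-\zet)\poly_u^{(k-2)}$ is supported in $[2,\infty)$ with all scalings strictly below $\sclOme{k+\alp}$, hence lies in $\VSooalp{k}{\alp}$ by Lemma~\ref{lem-ZM-dense}. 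By Definition~\ref{def-nonhom-norm} this gives
\begin{align*}
\ZN{u}{k}^2=\sum_{k'=0}^{k}\ZMM{u-\zet\poly_u^{(k-2)}}{k'}{\alp}^2+\|\poly_u^{(k-2)}\|_{\VSpol_{k,\alp}}^2,
\end{align*}
so it suffices to bound each seminorm $\ZMM{u-\zet\poly_u^{(k-2)}}{k'}{\alp}$ by $Z_{k-2}(\mu)$. For $k'\le1$ this follows from Lemma~\ref{lem-r-weight} and Lemma~\ref{lem-weisol} (which control $\ZM{u}{0}$ and $\ZM{u}{1}$) together with $\ZMM{\zet\poly_u^{(k-2)}}{k'}{\alp}\lesssim\|\poly_u^{(k-2)}\|_{\VSpol_{k,\alp}}$, valid because every monomial $\zet a_q r^q$ with $q\ge0$ lies in $\VSooalp{k'}{\alp}$ for $k'\le1$. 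For $2\le k'\le k$ I would use the decomposition
\begin{align*}
u-\zet\poly_u^{(k-2)}=\bigl(u-\poly_u^{(k'-2)}\bigr)+(1-\zet)\poly_u^{(k'-2)}-\zet\bigl(\poly_{u,k'+1}+\dots+\poly_{u,k}\bigr)
\end{align*}
and estimate the $k'$-th seminorm of the three summands: the first by $\ZM{u-\poly_u^{(k'-2)}}{k'}\lesssim Z_{k'-2}(\mu)\le Z_{k-2}(\mu)$; the second, supported in $[2,\infty)$ with scalings below $\sclOme{k'+\alp}$, by $\|\poly_u^{(k'-2)}\|_{\VSpol_{k',\alp}}\le\|\poly_u^{(k-2)}\|_{\VSpol_{k,\alp}}$; and the third, supported in $[0,2]$ with scalings $q\ge\sclOme{k'+\alp}$ which are in fact strictly larger, since \eqref{ass-alp} forces $\sclOme{k'+\alp}=\sclOme{(k'-2)+\alp+2}\notin\tfrac\pi\ang\Z$ while the nonconstant scalings lie in $\tfrac\pi\ang\Z$, again by $\lesssim\|\poly_u^{(k-2)}\|_{\VSpol_{k,\alp}}$. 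This yields $u\in\ZS{k}$ with $\ZN{u}{k}\lesssim Z_{k-2}(\mu)$ (for $k=2$ one may simply invoke Corollary~\ref{js600}), and in particular $u\in\ZS{\ell+2}$.

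It then remains to assemble the estimate. Writing $\sum_{j=0}^{\ell+2}|\mu|^{\frac j2}\ZN{u}{\ell+2-j}=\sum_{k=0}^{\ell+2}|\mu|^{\frac{\ell+2-k}2}\ZN{u}{k}$, for $2\le k\le\ell+2$ the core step gives $|\mu|^{\frac{\ell+2-k}2}\ZN{u}{k}\lesssim|\mu|^{\frac{\ell-(k-2)}2}Z_{k-2}(\mu)$, which the first paragraph bounds by the claimed right-hand side; for $k\in\{0,1\}$ one uses $\ZN{u}{0}=\ZM{u}{0}$, $\ZN{u}{1}^2=\ZM{u}{0}^2+\ZM{u}{1}^2$ and the bound $|\mu|\ZM{u}{0}+|\mu|^{\frac12}\ZM{u}{1}\lesssim X(\mu)$ from Lemma~\ref{lem-weisol}, so that $|\mu|^{\frac{\ell+2-k}2}\ZN{u}{k}\lesssim|\mu|^{\frac\ell2}X(\mu)$, again dominated by the right-hand side. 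I expect the main obstacle to be the bookkeeping in the core step—checking that $u-\zet\poly_u^{(k-2)}$ really lies in $\VSooalp{k}{\alp}$ and that all three pieces of each of its seminorms are controlled—where the scaling restrictions from \eqref{ass-alp} and the nested structure of the $\poly_{u,j}$ are exactly what forces the localized terms into the correct spaces.
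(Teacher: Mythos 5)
Your proposal is correct and follows exactly the route the paper intends: the paper's proof of this corollary is the one-line remark that one repeats the cut-off argument of Corollary \ref{js600} with Proposition \ref{thm-res-high-smooth} in place of Theorem \ref{thm-res-base-reg}, and your decomposition $u=(u-\zet\poly_u^{(k-2)})+\zet\poly_u^{(k-2)}$ at each regularity level, using the support of $\zet$ and the scaling ranges of the nested polynomial pieces, is precisely that argument written out in full. The only content beyond the paper's terse proof is your explicit bookkeeping for the intermediate seminorms $2\le k'<k$ via the nesting $\poly_u^{(k-2)}=\poly_u^{(k'-2)}+\poly_{u,k'+1}+\dots+\poly_{u,k}$ and the non-resonance condition \eqref{ass-alp}, which correctly fills in what the paper leaves implicit.
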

\begin{proof}
The proof is analogous to Corollary \ref{js600}, if one replaces the application of Theorem \ref{thm-res-base-reg} by that of Proposition \ref{thm-res-high-smooth}.
\end{proof}

\begin{proposition}[Polynomial problem]\label{thm-res-pol}
  Let $\eps\in(0,\pi)$, $\mu\in \Sigma_{\pi-\eps}$, and $\alp \in [-1,0]$.
  Suppose $\ell\in \N$ and $(\alp,\ell)$ satisfy \eqref{ass-alp}.
  Let $\poly_f\in \VSpol_{\ell,\alp}$, $\poly_g\in \HSpol_{\ell+\frac12,\alp}$.
  Then there exists a solution $\poly_u\in \VSpol_{\ell+2,\alp}$ to \eqref{resolvent}, and we have
  \begin{align*}
  \sum_{j=0}^{\ell}|\mu|^{\frac j2}\|\poly_u\|_{\VSpol_{\ell-j+2,\alp}}\lesssim_{\rpconst,\resconst,\ell,\ang} \sum_{j=0}^\ell |\mu|^{\frac j2}\big(\|\poly_f\|_{\VSpol_{\ell-j,\alp}}+\|\poly_g\|_{\HSpol_{\ell-j+\frac12,\alp}}\big).
  \end{align*}
\end{proposition}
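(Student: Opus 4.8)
The plan is to solve the resolvent problem \eqref{resolvent} restricted to the finite-dimensional spaces of admissible monomial expansions, treating it as a triangular system in the scaling exponent $q\in\calq$. Write $\poly_f(r,\phi)=\sum_{q\in\calq, q<\sclOme{\ell+\alp}} f_q(\phi) r^q$ and $\poly_g(r)=\sum_{q\in\calq, q<\scl{\ell+\frac12+\alp}} g_q r^q$, and seek $\poly_u(r,\phi)=\sum_{q\in\calq, q<\sclOme{\ell+2+\alp}} u_q(\phi) r^q$. Since the condition $\frac\pi\ang\notin\Q$ makes the representation of elements of $\calq$ as $j+\frac\pi\ang\ell$ unique (cf.\@ the footnote to Theorems \ref{thm-ex} and \ref{thm-higher}), the collection $\{u_q\}$ is determined by matching coefficients of like powers $r^q$ in the two lines of \eqref{resolvent}. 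The key computation is that $\mu u - \Delta u$ applied to $u_q(\phi)r^q$ produces $\mu u_q(\phi) r^q - \big(u_q''(\phi) + q^2 u_q(\phi)\big) r^{q-2}$, so the term of homogeneity $q-2$ in the bulk couples $u_q$ with $f_{q-2}$, giving the ODE $-u_q'' - q^2 u_q = f_{q-2} + \mu u_{q-2}$ on $(0,\ang)$ for $q\ge 2$ (and $-u_q''-q^2 u_q = f_{q-2}$ with the convention $u_{q-2}=0$ when $q-2\notin\calq$ or $q-2$ is out of range), while the Robin condition $u + \pa_\nu u = g$ contributes, at homogeneity $q-1$ on $\pa'\Ome$, a boundary datum involving $g_{q-1}$ and lower-order $u$-coefficients. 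First I would fix the hierarchy: order $\calq$ by size, and note that the equation for $u_q$ involves only $f_{q-2}$, $g_{q-1}$, and coefficients $u_{q'}$ with $q'=q-2$, so one solves successively for increasing $q$.

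The core step is the solvability and estimate for each ODE $-u_q'' - q^2 u_q = F_q$ on $(0,\ang)$ with the inhomogeneous Robin-type boundary conditions at $\phi=0$ and $\phi=\ang$. Here the scaling constraint \eqref{ass-alp} enters decisively: the relevant resonance set is $\frac\pi\ang\Z$, and \eqref{ass-alp} via the terms $\dist(\ang\sclOme{j+\alp+2},\pi\Z)$, $\ang|\sclOme{j+\alp+1}|$, $\ang|\sclOme{j+\alp}|$ guarantees that $q\notin\frac\pi\ang\Z$ for the relevant range of $q$ (namely $q<\sclOme{\ell+2+\alp}$), equivalently $\sin(q\ang)\ne 0$, equivalently the two-point boundary value problem for $-u_q'' - q^2 u_q$ is nonsingular; the distance bounds give quantitative control $|\sin(q\ang)|^{-1}\lesssim_{\resconst}1$ and hence a bound on the Green's function. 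Thus for each $q$ one gets $\|u_q\|_{W^{2,2}((0,\ang))}\lesssim_{\resconst,\ang} \|F_q\|_{L^2((0,\ang))} + |\text{boundary data}|$, and unwinding, $\|u_q\|_{W^{2,2}}\lesssim \|f_{q-2}\|_{W^{0,2}} + |\mu|\,\|u_{q-2}\|_{W^{0,2}} + |g_{q-1}| + (\text{lower-order }u\text{-terms})$. I would then insert these into the weighted norms of Definition \ref{def-nonhom-norm}: $\|\poly_u\|_{\VSpol_{\ell-j+2,\alp}}^2 = \sum_q |\mu|^{(\sclOme{\ell-j+2+\alp}-q)} \|u_q\|_{W^{\ell-j+2,2}}^2$ (using $\gam=1$ and $\kap=|\mu|$ as fixed elsewhere), noting that the $W^{k,2}((0,\ang))$-norms of the one-dimensional coefficient $u_q$ are all equivalent up to constants depending on $\ang$ and the (finitely many) values of $q$, so only the $|\mu|$-weights matter for tracking the $|\mu|$-powers. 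Matching powers of $|\mu|$ across the recursion $u_q\leftrightarrow f_{q-2}, g_{q-1}, \mu u_{q-2}$ — the shift $q\mapsto q-2$ in the bulk corresponds exactly to the shift $\sclOme{k+2+\alp}\mapsto\sclOme{k+\alp}$ in the weight exponent, and one factor of $\mu$ is absorbed by one factor of $|\mu|^{-1}$ in the weight difference — gives the claimed telescoping estimate $\sum_{j=0}^\ell |\mu|^{j/2}\|\poly_u\|_{\VSpol_{\ell-j+2,\alp}}\lesssim \sum_{j=0}^\ell |\mu|^{j/2}(\|\poly_f\|_{\VSpol_{\ell-j,\alp}}+\|\poly_g\|_{\HSpol_{\ell-j+\frac12,\alp}})$.

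The main obstacle I anticipate is bookkeeping the $|\mu|$-powers correctly through the recursion, in particular handling the boundary terms: the Robin datum at homogeneity $q-1$ mixes a power-$(q-1)$ contribution from $g$ with a power-$(q-1)$ value of $\pa_\nu u$, and one must check that the scaling weight attached to $\HSpol_{\cdot,\alp}$ (which uses $\scl{s+\bet}=s+\bet-\frac12$ rather than $\sclOme{}$) is consistent with the wedge weight so that the boundary contributions land in the right $|\mu|^{j/2}$-slot — this is exactly why the statement pairs $\VSpol_{\ell-j,\alp}$ with $\HSpol_{\ell-j+\frac12,\alp}$. A secondary technical point is that $\calq$ is not symmetric under $q\mapsto q-2$ (e.g.\@ $q$ could be $\frac\pi\ang$ while $q-2\notin\calq$), so the recursion must be set up so that whenever $u_{q-2}$ is genuinely absent the corresponding term simply vanishes; the uniqueness of the decomposition $q=j+\frac\pi\ang\ell$ keeps this unambiguous. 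Once these are tracked, the estimate follows by summing the per-$q$ bounds, which are finite in number, so no convergence issue arises and all constants depend only on $\rpconst,\resconst,\ell,\ang$ as asserted.
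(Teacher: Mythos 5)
Your overall strategy coincides with the paper's: expand in monomials $r^q$, $q\in\calq$, match like powers to obtain a triangular family of two-point boundary value problems in the angular variable, solve these via the Green's function of $\pa_\phi^2+q^2$ with Neumann-type data, and telescope the $|\mu|$-powers through the recursion (the paper invokes Lemma \ref{lem-v-est} with $\lam:=q$ for the per-exponent estimate). However, there is a genuine gap in your solvability step. You claim that \eqref{ass-alp} ``guarantees that $q\notin\frac{\pi}{\ang}\Z$ for the relevant range of $q$'', so that $|\sin(q\ang)|^{-1}\lesssim_{\resconst}1$ and each BVP is nonsingular. This is false: the conditions in \eqref{ass-alp} only keep the \emph{endpoints} $\sclOme{j+\alp+2}$ of the admissible ranges away from $\frac{\pi}{\ang}\Z$; the set $\calq=\setc{n+\frac{\pi}{\ang}m}{n,m\in\N_0}$ itself contains the resonant exponents $q=\frac{\pi}{\ang}m$ (the case $n=0$), for which $\sin(q\ang)=\sin(\pi m)=0$ and the homogeneous Neumann problem for $\pa_\phi^2+q^2$ has the nontrivial kernel element $\cos(q\phi)$. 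Your Green's function bound breaks down exactly there. The paper resolves this by writing $q=n+\frac{\pi}{\ang}m$ (unique since $\frac\pi\ang\notin\Q$) and observing that for $n=0$ all source terms $f^{n-2,m}$, $\mu u^{n-2,m}$, $g^{n-1,m}$, $u^{n-1,m}$ vanish because the indices $n-2,n-1$ are negative; the Fredholm solvability condition is then trivially met and one may simply set $u^{0,m}=0$. Without this observation your induction does not get past the resonant exponents.

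A secondary, fixable inaccuracy: you assert that the equation for $u_q$ involves only $f_{q-2}$, $g_{q-1}$ and $u_{q-2}$. In fact the Robin condition at homogeneity $q-1$ reads $\mp\pa_\phi u_q = g_{q-1}-u_{q-1}$ on the two rays, so $u_q$ is also coupled to the \emph{trace} of $u_{q-1}$, not just to $u_{q-2}$. The system remains triangular in increasing $q$, and the coupling is controlled by the one-dimensional trace bound $|u^{n-1,m}(0)|+|u^{n-1,m}(\ang)|\lesssim_\ang\NNN{u^{n-1,m}}{\WSR{1}{2}((0,\ang))}$ together with the index correspondence $n-1+\frac\pi\ang m<\sclOme{\ell-j+\alp}$ versus $n+\frac\pi\ang m\le\scl{\ell-j+\frac12+\alp}$, which is precisely why $\VSpol_{\ell-j,\alp}$ is paired with $\HSpol_{\ell-j+\frac12,\alp}$; but as written your bookkeeping of the recursion omits this coupling. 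Finally, note that with the convention $\gam=1$ used throughout the proofs the norms $\NNN{\cdot}{\VSpol_{k,\alp}}$ carry no $|\mu|$-weights; the $|\mu|^{j/2}$ factors live outside the norms, so your identity $\NNN{\poly_u}{\VSpol_{\ell-j+2,\alp}}^2=\sum_q|\mu|^{\sclOme{\ell-j+2+\alp}-q}\NNN{u_q}{\WSR{\ell-j+2}{2}}^2$ is not the definition being used, even if the intended scaling heuristic is sound.
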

\begin{proof}
 Since $\frac\pi\ang\notin Q$, we may decompose each $q\in \calq$ uniquely into $n,m\in\N_0$ with $q=n+\frac{\pi}{\ang} m$.
 Matching like terms, we are led for each $q=n+\kappa m< \sclOme{\ell+2+\alp}$ to the problem
 \begin{align*}
 \begin{pdeq}
 (q^2+\p_\phi^2) u^{n,m}(\phi) &= f^{n-2,m}(\phi) - \mu u^{n-2,m}(\phi), \\
 -\p_\phi u^{n,m}(0) & = g^{n-1,m} - u^{n-1,m}(0), \\
 \p_\phi u^{n,m}(\ang) & = g^{n-1,m} - u^{n-1,m}(\ang). \\
 \end{pdeq} 
 \end{align*}
 For $n=0$ all terms on the right-hand side vanish leading to $u^{0,m}(\phi)=0$.
 For $n\ne 0$ we may use Lemma \ref{lem-v-est} with $\lam:=q$, and obtain iteratively the estimate noting that $n +\frac{\pi}{\ang}m\le \scl{\ell-j+\frac12+\alp}$ if and only if $n-1+\frac{\pi}{\ang}m< \sclOme{\ell-j+\alp}$, and that
 \begin{align*}
 |u^{n-1,m}(0)|+|u^{n-1,m}(\ang)|\lesssim_\ang \|u^{n-1,m}\|_{W^{1,2}(0,\ang)}.
 \end{align*}
 Observe that $\|\poly_u\|_{\VSpol_{1,\alp}}=\|\poly_u\|_{\VSpol_{0,\alp}}=0$ due to $\sclOme{1+\alp}<0$, which explains why the sum on the left-hand side of the claimed estimate runs only to $\ell$ instead of $\ell+2$.
\end{proof}

\begin{theorem}[Higher regularity]\label{thm-res-high} %
  Let $\eps\in(0,\pi)$, $\mu\in \Sigma_{\pi-\eps}$ with $|\mu|\ge 1$, and $\alp \in [-1,0]$.
  Suppose $\ell\in \N$ and $(\alp,\ell)$ satisfy \eqref{ass-alp}.
  Let $f \in \ZS{\ell}$ and $g \in \HS{\ell+\frac12}(\p\Ome)$.
  Then there is a unique solution $u\in \ZS{\ell+2}$ of \eqref{resolvent}, and it satisfies the estimate
  \begin{align*}
    \sum_{j=0}^{\ell+2} |\mu|^{\frac j2}\ZN{u}{\ell-j+2} \ %
       &\lesssim_{\ang, \rpconst,\resconst,\eps,\ell} \  \sum_{j=0}^\ell |\mu|^{\frac j2}(\ZN{f}{\ell-j}  + \HN{g}{\ell-j+\frac 12}) + |\mu|^{\frac\ell 2}(|\mu|^{\frac14}\HN{g}{0} + |\mu|^{-\frac14}\HN{g}{1}). %
     \end{align*}
\end{theorem}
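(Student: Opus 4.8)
The plan is to combine the three preceding results---Corollary \ref{js601} (which handles the regular part of the data via elliptic bootstrapping and gives control of the solution in the regular space $\ZS{\ell+2}$ up to a polynomial correction), Proposition \ref{thm-res-pol} (which solves the ODE system for the polynomial part of the data), and a density argument---to pass from smooth compactly supported data to general data in $\ZS{\ell}$ and $\HS{\ell+\frac12}(\p\Ome)$. First I would treat uniqueness: if $u\in \ZS{\ell+2}$ solves \eqref{resolvent} with $f=0$ and $g=0$, then in particular $u\in H$ (since $\ZS{\ell+2}\hookrightarrow H$ after adding the weight, using $|\mu|\ge 1$ and the embedding $r^{-2\alp}\lesssim 1+r^2$ as in Lemma \ref{lem-r-weight}), so $u$ is the Lax--Milgram solution from Lemma \ref{lem-ex-sol}, which is unique; hence $u=0$. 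This argument uses that $\alp\in[-1,0]$ so the weighted space is locally controlled by the unweighted one.

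For existence, I would first assume $f\in \CRci(\OL\Ome\BS\set 0)$ and $g\in\CRci(\p'\Ome)$. Writing $f=(f-\zet\poly_f)+\zet\poly_f$ and $g=(g-\zet\poly_g)+\zet\poly_g$ according to the splitting $\ZS{\ell}=\ZSoo{\ell}\oplus\zet\VSpol_{\ell,\alp}$ and $\HS{\ell+\frac12}=\HSooalp{\ell+\frac12}{\alp}\oplus\zet\HSpol_{\ell+\frac12,\alp}$ from Definition \ref{def-nonhom-norm}---note for smooth data there is actually no polynomial part, but this is the decomposition that survives the limit---one solves the regular contribution by Corollary \ref{js601}, obtaining $u_{\mathrm{reg}}\in\ZS{\ell+2}$, and the polynomial contribution by Proposition \ref{thm-res-pol}, obtaining $\poly_u\in\VSpol_{\ell+2,\alp}$; then $u:=u_{\mathrm{reg}}+\zet\poly_u$ solves \eqref{resolvent} up to a compactly supported, smooth error coming from $[\Delta,\zet]\poly_u$ and $(1-\zet)$-terms, which is absorbed into a further application of Corollary \ref{js601}. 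Adding the two estimates (with the $|\mu|^{j/2}$ weights) and using that $\|\poly_f\|_{\VSpol_{\ell-j,\alp}}\le\ZN{f}{\ell-j}$ and $\|\poly_g\|_{\HSpol_{\ell-j+\frac12,\alp}}\le\HN{g}{\ell-j+\frac12}$ by the definition of the $\ZS{\cdot}$- and $\HS{\cdot}$-norms in Definition \ref{def-nonhom-norm}\eqref{add-norm}--\eqref{add-norm-bdr} yields the claimed estimate for smooth data.

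Finally I would remove the smoothness assumption by density: for general $f\in\ZS{\ell}$, $g\in\HS{\ell+\frac12}(\p\Ome)$, pick $f_n\to f$ in $\ZS{\ell}$ and $g_n\to g$ in $\HS{\ell+\frac12}(\p\Ome)$ with $f_n,g_n$ smooth and compactly supported (possible because $\CRci$ is dense in the regular parts by definition, and the polynomial parts are finite-dimensional and, after multiplying by a further cutoff and mollifying the coefficients, can be approximated by smooth compactly supported functions in each seminorm thanks to Lemma \ref{lem-ZM-dense}); the a priori estimate shows $\{u_n\}$ is Cauchy in $\ZS{\ell+2}$, and the limit $u$ solves \eqref{resolvent} and obeys the estimate. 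The main obstacle I anticipate is the bookkeeping in the density step for the \emph{polynomial} part of the boundary datum together with the compatibility between the interior polynomial $\poly_f$ and the boundary polynomial $\poly_g$ under the ODE solve of Proposition \ref{thm-res-pol}: one must check that approximating $f$ and $g$ separately still produces data for which Proposition \ref{thm-res-pol} applies coherently, and that the resulting $\poly_{u_n}$ converge---this is where the condition $\tfrac\pi\ang\notin\Q$ (unique decomposition of each $q\in\calq$) and the uniform-in-$n$ bounds from \eqref{ass-alp} are essential. A secondary technical point is verifying that $u=u_{\mathrm{reg}}+\zet\poly_u\in\ZS{\ell+2}$ with the correct norm, i.e.\ that $\zet\poly_u$ lands in $\zet\VSpol_{\ell+2,\alp}$ with $\|\poly_u\|_{\VSpol_{\ell+2,\alp}}$ controlled, which follows from Proposition \ref{thm-res-pol} applied with $j=0$.
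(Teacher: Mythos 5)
Your core construction is the same as the paper's: split $f=f_1+\zet\poly_f$, $g=g_1+\zet\poly_g$ according to Definition \ref{def-nonhom-norm}, solve the polynomial ODE system via Proposition \ref{thm-res-pol} to get $\poly_u$, observe that $\zet\poly_u$ solves \eqref{resolvent} up to the commutator error $\mathfrak{q}_f=-\nabla\zet\cdot\nabla\poly_u-(\Delta\zet)\poly_u$ supported in $\set{1\le r\le 2}$, feed $f_1-\mathfrak{q}_f$ and $g_1$ into Corollary \ref{js601}, and set $u=u_{\mathrm{reg}}+\zet\poly_u$. That is exactly the paper's proof, and your bookkeeping of the norms is right.

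Two points where your write-up deviates in a way that would not survive scrutiny. First, the \enquote{smooth data first, then density} framing is not only unnecessary but the density step would fail for the singular component: $\ZS{\ell}$ is by construction the \emph{direct sum} $\ZSoo{\ell}\oplus\zet\VSpol_{\ell,\alp}$ with the sum norm from Definition \ref{def-nonhom-norm}\eqref{add-norm}, and every $\CRci$ function has vanishing polynomial component, so $\CRci$ approximants can only converge to elements with $\poly_f=0$; Lemma \ref{lem-ZM-dense} says precisely that $\zet r^q$ with $q<\sclOme{\ell+\alp}$ is \emph{not} approximable in the regular seminorms. The paper avoids this entirely: the polynomial part is solved exactly (no approximation), and density is only invoked --- implicitly --- for the regular part, where $\CRci$ is dense by definition of $\ZSoo{\ell}$ and $\HSoo{\ell+\frac12}$. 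You should restructure accordingly: apply your smooth-data argument directly to $f_1-\mathfrak{q}_f$ and $g_1$, and drop the approximation of the polynomial data. Second, your uniqueness argument claims $\ZS{\ell+2}\hookrightarrow H$ \enquote{using $r^{-2\alp}\lesssim 1+r^2$}; this inequality runs in the wrong direction for that embedding. The $H$-norm contains $\kap^2\int_\Ome r^2|u|^2\,\dd x$, whereas the zeroth-order piece of $\ZN{u}{\ell+2}$ only controls $\int_\Ome r^{-2\alp}|u|^2\,\dd x$ with $-2\alp\in[0,2)$ for $\alp\in(-1,0)$, which does not dominate the $r^2$-weighted integral at infinity. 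Lemma \ref{lem-r-weight} gives the opposite containment ($H$ into the weighted spaces). Uniqueness has to come from identifying the constructed solution with the variational one of Lemma \ref{lem-ex-sol} on the common (dense, regular) class of data rather than from an embedding of the target space into $H$.
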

\begin{proof}
  Write $f=f_1+\zet\poly_f$ with $f_1\in \ZSoo{\ell}$ and $\poly_f\in \VSpol_{\ell,\alp}$, and similarly $g=g_1+\zet\poly_g$ with $g_1\in \HSoo{\ell+\frac12}$ and $\poly_g\in \HSpol_{\ell+\frac12}$.
  Denote by $\poly_u\in \VSpol_{\ell+2,\alp}$ the solution to \eqref{resolvent} from Proposition \ref{thm-res-pol}.
  Observe that
  \begin{align*}
  \begin{pdeq}
  \mu (\zet\poly_u) - \Delta (\zet\poly_u) &= \zet\poly_f + \mathfrak{q}_f && \text{in } \Ome, \\
  \zet\poly_u + \p_\nu (\zet\poly_u) & = \zet\poly_g && \text{on } \p'\Ome,
    \end{pdeq}
  \end{align*}
  with $\mathfrak{q}_f := - \nabla\zet\nabla\poly_u - (\Delta\zet)\poly_u$.
  From $\supp\nabla\zet\subset [1,2]$ and since the polynomials $\poly_u$ have coefficients which are contained in $W^{\ell+2,2}((0,\ang))$, we obtain that $\mathfrak{q}_f\in \VSoo{\ell}$ with $\sum_{j=0}^\ell |\mu|^{\frac j2}\ZM{\mathfrak{q}_f}{\ell-j}\lesssim \sum_{j=0}^{\ell+2}|\mu|^{\frac j2}\|\poly_u\|_{\VSpol_{\ell-j+2,\alp}}$.
  Now let $u_{\text{reg}}\in \ZS{\ell+2}$ be the solution from Proposition \ref{thm-res-high-smooth} with data $f_1-\mathfrak{q}_f$ and $g_1$.
  By Corollary \ref{js601} we have $u_{\text{reg}}\in \ZS{\ell+2}$, and $u:=u_{\text{reg}}+\zet\poly_u\in \ZS{\ell+2}$ solves \eqref{resolvent}.
  Moreover, the claimed estimate follows from Corollary \ref{js601} and Proposition \ref{thm-res-pol}.
 \end{proof}

 \subsection{Proofs of the Main Theorems}

In this section we give the proof of Theorems \ref{thm-ex} and \ref{thm-higher}.
As noted in Section \ref{sec:int}, we may restrict to $\gam=1$.
\begin{proof}[Proof of Theorem \ref{thm-ex}]
%
%
  We extend $F$ and $G$ to negative times by $0$. For $\mu\in \C$ with $\Re\mu=\bet$ let $\Phi[f,g]$ be the
  solution operator of the resolvent problem from Theorem \ref{thm-res-base-reg}
  with right-hand side $f:= \call F$ and $g:=\call G$.
  We note that $f = f(\mu)$ and $g = g(\mu)$ depend on the parameter $\mu$ and also the solution operator $\Phi = \Phi(\mu)$ of the resolvent problem depends on the (same) parameter $\mu \in \C$, but we will suppress this dependence in our notation.
  Since $|\mu|\geq \bet \geq 1$, we may apply Theorem \ref{thm-res-base-reg} and Corollary \ref{js600} to $u :=\Phi [f,g]$, which yield
  \begin{align*}
                   \sum_{j=0}^2 |\mu|^{\frac j2}\ZN{u}{2-j} +   |\mu|^\frac12\HN{u}{0} \
    &\lesssim_{\alp_0,\alp_1,\ang} \ \ZN{f}{0} + \HN{g}{\frac12} + |\mu|^\frac14\HN{g}{0} + |\mu|^{\frac\alp 2}(\ZNS{f}{0}{0} + |\mu|^\frac14\HNalp{g}{0}{0}).
  \end{align*}
  We define $U$ $:=$ $\call^{-1}_\bet[u]$.
  Then by construction we have $\partial_t U-\Delta U=F$ on $\R\times \Ome$, $U+\pa_\nu U=G$ on $\R\times\p\Ome$, while the Hilbert-space valued Paley-Wiener Theorem \cite[Theorem 1.8.3]{ABH11} shows that $U=0$ for negative times.
  By Plancherel's identity for the Laplace transform (Lemma \ref{lem-laplace}\eqref{lap-plancherel}) we obtain
 \begin{align*}      
                  \|U\|_{\mathbb{E}} &=\sum_{j=0}^2 \NNN{U}{H_{\bet,0}^{\frac j2}(\VS{2-j})} + \NNN{U}{H_{\bet,0}^{\frac12}(\HS{0}(\p'\Ome))}
                  \lesssim \ \sum_{j=0}^2\NNN{|\cdot|^{\frac j2}u}{L^2(S_\bet,\VS{2-j})}+ \NNN{|\cdot|^{\frac12}u}{L^2(S_\bet, \HS{0}(\p'\Ome))}  \\ %
                & \lesssim_{\alp_0,\alp_1,\ang}  \NNN{f}{{L^2(S_\bet,\HS{0})}} + \NNN{g}{L^2(S_\bet, \HS{\frac12}(\p'\Ome))}+ \NNN{|\cdot|^\frac14 g}{L^2(S_\bet, \HS{0}(\p'\Ome))}   \\
                &\qquad \quad \quad  +   \NNN{|\cdot|^{\frac \alp 2} f}{{L^2(S_\bet,L^2(\Ome))}}   +     \NNN{|\cdot|^{\frac14 + \frac \alp 2} g}{L^2(S_\bet, L^2(\p\Ome))}    \ %
                \lesssim \ \|F\|_{\mathbb{F}} + \|G\|_{\mathbb{G}}.
 \end{align*}
 This proves the result.
\end{proof}

\begin{proof}[Proof of Theorem \ref{thm-higher}]
  The proof is analogous as the one before,
  but we replace the application of Theorem \ref{thm-res-base-reg} by the application of Theorem \ref{thm-res-high}, so that we have for $u :=\Phi [f,g]$ we have
     \begin{align*}
                      \sum_{j=0}^{\ell+2}|\mu|^{\frac j2}\ZN{u}{\ell-j+2}  %
       \lesssim_{\alp_0,\alp_1,\ang,\ell} \ \sum_{j=0}^{\ell} |\mu|^{\frac j2} \Big( \ZN{f}{\ell-j}  +  \HN{g}{\ell-j + \frac 12} \Big) +  |\mu|^{\frac \ell 2}  \big(|\mu|^{\frac14}\HN{g}{0} + |\mu|^{-\frac14}\HN{g}{1}\big). %
     \end{align*}
     We define $U %
     := \call^{-1}_\bet[u]$.
     As in the proof of Theorem \ref{thm-ex}, $U$ satisfies the equation, and we have
     \begin{align*}
\|U\|_{\mathbb{E}_{\ell+2}}=\sum_{j=0}^{\ell+2} \NNN{U}{H_{\bet,0}^{\frac j2}(\VS{\ell-j+2})}
		&\lesssim \sum_{j=0}^{\ell+2} \NNN{|\cdot|^{\frac j2}u}{L^2(S_\bet,\ZS{\ell-j+2})} \\ %
       &\lesssim_{\alp_0,\alp_1,\ang,\ell} \ \sum_{j=0}^{\ell} \Big(\NNN{|\cdot|^{\frac j2} f}{L^2(S_\bet,\ZS{\ell-j})}  +  \NNN{|\cdot|^{\frac j2}g}{L^2(S_\bet,\HS{\ell-j+\frac12})} \Big) \\
       &\qquad\qquad\qquad\qquad +  \NNN{|\cdot|^{\frac14} g}{L^2(S_\bet,\HS{0})} + \NNN{|\cdot|^{-\frac14} g}{L^2(S_\bet,\HS{1})}\big) \\
                 &\lesssim %
                     \sum_{j=0}^\ell \Big( \NNN{F}{H_{\bet,0}^{\frac j 2}(\VS{\ell-2})} + \NNN{G}{H_{\bet,0}^{\frac j 2}(\HSL{\ell-j+\frac 12}{\p' \Ome})}\Big) \\
                 &\qquad\qquad\qquad\qquad +  \NNN{G}{H_{\bet,0}^{\frac12(\ell+\frac12)}(\HS{0}(\p'\Ome))} + \NNN{G}{H_{\bet,0}^{\frac12(\ell-\frac12)}(\HS{1}(\p'\Ome))} \\
                 &= \|F\|_{\mathbb{F}_\ell} + \|G\|_{\mathbb{G}_{\ell+\frac12}},
     \end{align*}
     which is the desired bound.
\end{proof}

\appendix

  \section{Integral Transforms}\label{app-a}

  In this section we provide known properties of the Mellin transform and Laplace transform, which constitute an important tool in our analysis.
  It will be convenient to have these transforms defined for Hilbert space valued functions.
  We recall that given $\beta\in\R$, we write $S_\beta$ for the line $\setc{\lambda\in \C}{\Re\lambda=\beta}$, and more generally $S_{(\beta_1,\beta_2)}$ for the strip $\setc{\lambda\in \C}{\Re\lambda\in (\beta_1,\beta_2)}$ if $\beta_1<\beta_2$.
  We recall Bochner spaces of vector-valued integrable functions, and the concept of vector-valued analytic functions, see e.g. \cite[Chapter 1.1 and Appendix A]{ABH11}.
\begin{definition}[Mellin transform] Let $\mathsf{H}$ be a Hilbert space. %
\begin{enumerate}
\item   For $\psi\in \LRloc{1}(\R_+,\mathsf{H})$ and $\lam\in\C$ we define
\begin{align*}
  \calm\psi(\lam) \ := \ \widehat \psi(\lam) \ := \ \frac{1}{\sqrt{2\pi}} \int_0^\infty r^{-\lam} \psi(r) \ \frac{\d r}{r}.
\end{align*}
Whenever this integral converges, we call it the Mellin transform of $\psi$ at $\lam$.
\item  For $\bet\in \R$, $\phi\in \LRloc{1}(S_\bet,\mathsf{H})$ and $r\in \R_+$, we define
 \begin{align}\label{mellin-inv}
\calm_\beta^{-1}\phi(r) \ := \ \frac{1}{\sqrt{2\pi}} \int_{\Re \lam = \bet} r^\lam \phi(\lam) \ \d\Im\lam.
\end{align}
Whenever this integral converges, we call it the inverse Mellin transform of $\phi$ at $r$ (along $S_\bet$).
\end{enumerate}
\end{definition}
For general functions $\psi \in L_{\textrm{loc}}^1(\R_+,\mathsf{H})$, the Mellin transform
might fail to converge for certain $\lam\in\C$. However, if it is well defined
for some $\lam_1,\lam_2\in \C$ with $\Re\lam_1=\bet_1$ and $\Re\lam_2=\bet_2$,
then convergence is also guaranteed on the so called strip of convergence
$S_{(\bet_1,\bet_2)} \SUS \C$. For functions on the wedge $\Ome$ we apply the
Mellin transform in the radial direction and consider the angular variable as a
parameter, \emph{i.e.}, for $\Mpot \in \cciL{\overline{\Ome} \BS \set{0},\mathsf{H}}$ we
write
  \begin{align*}
    \widehat\Mpot(\lam,\phi) \ := \ %
    {\frac{1}{\sqrt{2\pi}}} \int_0^\infty r^{-\lam} \Mpot(r,\phi) \ \frac{\d r}{r} \qquad %
    \text{for $(\lam,\phi) \in \C \times [0,\theta]$.}
  \end{align*}
  The Mellin transform has several
  useful properties which are listed below.
\begin{lemma}[Properties of Mellin transform] \label{lem-mellin} %
  For $\psi \in \cciL{\R_+,\mathsf{H}}$ the Mellin transform $\widehat\psi$ is an entire function.
  Furthermore, we have
  \begin{enumerate}
  \item\label{mel-1}
    $ \widehat{r^\beta \psi}(\lam) \ = \ \widehat\psi\vp{\lam-\beta}$ for all $\beta\in\R$, $\lam\in\C$.
  \item\label{mel-2} $\widehat{r \pa_r \psi}(\lam) \  = \ \lam \widehat\psi\vp{\lam}$.
  \item\label{mel-plancherel} %
    $\displaystyle \int_0^\infty r^{-2\bet} \overline{\psi_1(r)} \psi_2(r) \
    \frac{\d r}{r} \ %
    = \ \int_{\Re \lam=\bet} \overline{\widehat\psi_1(\lam+\gam)}
    \widehat\psi_2(\lam-\gam) \ \d\Im\lam$ for all $\psi_1,\psi_2\in \cciL{\R_+,\mathsf{H}}$ and $\bet,\gam\in \R$. \\
    In particular, for $\bet\in \R$ the Mellin transform can be continuously extended to a linear
    operator
    \begin{align} \label{mellin-iso} %
          \calm_\bet:\big\{ \psi \ : \ r^{-\bet-\frac12} \psi\in L^2(\R_+,\mathsf{H}) \big\} \ \to \ L^2(S_\bet,\mathsf{H}).
        \end{align}
      \item For every $\bet\in\R$ the map \eqref{mellin-iso} is an isometric isomorphism.
      Whenever the integral in \eqref{mellin-inv} converges, it yields the inverse of the map \eqref{mellin-iso}.
\item\label{mel-6} If $\bet_1,\bet_2\in\R$, $\bet_1<\bet_2$, and $r^{\bet_1-\frac12}\psi, r^{\bet_2-\frac12}\psi\in L^2(\R_+,\mathsf{H})$, then $\widehat\psi$ is analytic on the strip $S_{(\bet_1,\bet_2)}$.
  \end{enumerate}
\end{lemma}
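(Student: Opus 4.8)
The plan is to reduce every assertion to classical, Hilbert-space valued Fourier analysis on $\R$ by means of the substitution $r=e^{-t}$. Under this change of variables one has $\frac{\d r}{r}=-\d t$, so
\[
  \widehat\psi(\lam) \ = \ \frac{1}{\sqrt{2\pi}}\int_\R e^{\lam t}\,\psi(e^{-t})\,\d t,
\]
and, writing $\lam=\bet+i\tau$, the right-hand side equals $\frac{1}{\sqrt{2\pi}}\int_\R e^{i\tau t}\,g_\bet(t)\,\d t$ with $g_\bet(t):=e^{\bet t}\psi(e^{-t})$, that is (up to the chosen normalisation and a reflection $\tau\mapsto-\tau$) the Fourier transform of $g_\bet$. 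For $\psi\in\cciL{\R_+,\mathsf{H}}$ the function $t\mapsto\psi(e^{-t})$ lies in $\cciL{\R,\mathsf{H}}$, hence $g_\bet\in\cciL{\R,\mathsf{H}}$ for every $\bet\in\R$; consequently $\lam\mapsto\widehat\psi(\lam)$ is entire, either by the vector-valued Paley--Wiener theorem (see \cite{ABH11}) or directly by differentiating under the integral sign, which is legitimate since the integrand has compact $t$-support.

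Next I would settle \eqref{mel-1} and \eqref{mel-2} by direct computation: \eqref{mel-1} is the identity $r^{-\lam}r^{\bet}=r^{-(\lam-\bet)}$, while \eqref{mel-2} follows from $\int_0^\infty r^{-\lam}(r\pa_r\psi)\,\frac{\d r}{r}=\int_0^\infty r^{-\lam}\pa_r\psi\,\d r$ by integration by parts, the boundary contributions vanishing because $\supp\psi$ is a compact subset of $(0,\infty)$. For the Plancherel identity \eqref{mel-plancherel} I would first treat $\gam=0$: under $r=e^{-t}$ one has $r^{-2\bet}\,\overline{\psi_1(r)}\,\psi_2(r)=\overline{g_\bet^{(1)}(t)}\,g_\bet^{(2)}(t)$ with $g_\bet^{(j)}(t):=e^{\bet t}\psi_j(e^{-t})$, so the left-hand side of \eqref{mel-plancherel} equals $\int_\R\overline{g_\bet^{(1)}}\,g_\bet^{(2)}\,\d t$, and Plancherel's theorem for the $\mathsf{H}$-valued Fourier transform turns this into $\int_{\Re \lam=\bet}\overline{\widehat\psi_1(\lam)}\,\widehat\psi_2(\lam)\,\d\Im\lam$. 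The general case then follows by applying the $\gam=0$ identity to $\phi_1:=r^{-\gam}\psi_1$ and $\phi_2:=r^{\gam}\psi_2$, which are again in $\cciL{\R_+,\mathsf{H}}$, using \eqref{mel-1} to identify $\widehat{\phi_1}(\lam)=\widehat\psi_1(\lam+\gam)$ and $\widehat{\phi_2}(\lam)=\widehat\psi_2(\lam-\gam)$, and noting $\overline{\phi_1}\phi_2=\overline{\psi_1}\psi_2$ because $\gam\in\R$.

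The mapping properties in \eqref{mellin-iso} are read off from \eqref{mel-plancherel}: with $\psi_1=\psi_2=\psi$ and $\gam=0$ it gives $\|\widehat\psi\|_{L^2(S_\bet,\mathsf{H})}=\|r^{-\bet-\frac12}\psi\|_{L^2(\R_+,\mathsf{H})}$, so by density of $\cciL{\R_+,\mathsf{H}}$ in $\{\psi:r^{-\bet-\frac12}\psi\in L^2(\R_+,\mathsf{H})\}$ the operator $\calm_\bet$ extends uniquely to an isometry. Bijectivity and the inversion formula \eqref{mellin-inv} follow upon observing that $\calm_\bet$ is the composition of the substitution operator $\psi\mapsto g_\bet$ (unitary onto $L^2(\R,\mathsf{H})$ by an elementary change of variables) with the unitary Fourier transform on $L^2(\R,\mathsf{H})$, hence a unitary onto $L^2(S_\bet,\mathsf{H})$ whose inverse, transported back to the $r$-variable, is exactly the integral \eqref{mellin-inv}. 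Finally, for \eqref{mel-6} I would split $\widehat\psi(\lam)=\frac{1}{\sqrt{2\pi}}\bigl(\int_0^1+\int_1^\infty\bigr)r^{-\lam}\psi\,\frac{\d r}{r}$ and bound each half by the Cauchy--Schwarz inequality, pairing each piece with the weighted $L^2$-bound that is finite there; this yields absolute and locally uniform convergence of both integrals on the corresponding open strip of convergence, whence analyticity there by Morera's theorem together with Fubini (or, equivalently, by differentiating under the integral sign on the strip). I do not expect a genuine obstacle here: the argument is entirely classical once the reduction is made, and the only care needed is in tracking the weight exponents $\pm\bet\pm\tfrac12$ and in invoking the \emph{vector-valued} forms of Plancherel's theorem and Paley--Wiener, for which I would refer to \cite{ABH11}.
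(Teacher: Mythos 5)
Your proof is correct. Note, however, that the paper does not actually prove this lemma: it simply refers to \cite[Chapter~6]{KozlovMazyaRossmann-Book} for the scalar case (remarking that the arguments carry over to $\mathsf{H}$-valued functions) and to \cite[Theorem 73]{Tit86} for the generalized Parseval identity. What you have written out — the substitution $r=e^{-t}$ turning $\calm$ into the Fourier transform of $g_\bet(t)=e^{\bet t}\psi(e^{-t})$, direct computation plus integration by parts for \eqref{mel-1} and \eqref{mel-2}, reduction of the general Parseval identity to the case $\gam=0$ via $\phi_1=r^{-\gam}\psi_1$, $\phi_2=r^{\gam}\psi_2$, unitarity of the composition for the inversion, and the split at $r=1$ with Cauchy--Schwarz for analyticity on the strip — is precisely the standard argument those references carry out, so your proposal supplies the details the paper omits rather than taking a different route. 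One caveat you should make explicit when "tracking the weight exponents": under the normalization fixed by \eqref{mel-plancherel} and \eqref{mellin-iso}, the weight $r^{-\bet-\frac12}\psi\in L^2(\R_+,\mathsf{H})$ corresponds to the line $\Re\lam=\bet$, so the hypothesis of part \eqref{mel-6} as literally printed ($r^{\bet_j-\frac12}\psi\in L^2$) pairs with the lines $\Re\lam=-\bet_j$; your Cauchy--Schwarz argument is the right one, but you must match the sign convention of the statement (which appears to carry a typo) for the conclusion to read $S_{(\bet_1,\bet_2)}$.
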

\begin{proof}
  We refer to e.g.\@ \cite[Chapter~6]{KozlovMazyaRossmann-Book} in the scalar case.
  The arguments carry over verbatim to the Hilbert space case.
  Parseval's identity in the generalized form can be found e.g.~in \cite[Theorem 73]{Tit86}.
\end{proof}

\medskip

For time--dependent functions we use the Laplace transform.
Since all functions have an extension to negative times by zero, it will be possible to use the two-sided Laplace transform.
In our convention, it is given as follows:
\begin{definition}[Laplace transform] Let $\mathsf{H}$ be a Hilbert space. %
  \begin{enumerate}
  \item For $f\in \LRloc{1}(\R,\mathsf{H})$ and $\mu\in\C$ we define
    \begin{align*}
      \LL f(\mu)  \ := \ \frac 1{\sqrt{2\pi}} \int_{-\infty}^\infty e^{-\mu t} f(t) \dd t.
    \end{align*}
   Whenever this integral converges, we call it the Laplace transform of $f$ at $\mu$. 
  \item For $\bet\in \R$, $g\in \LRloc{1}(S_\bet,\mathsf{H})$ and $r\in \R_+$, we define
    \begin{align}\label{laplace-inv}
      \LL_\bet^{-1} g(t)  \ := \ \frac 1{\sqrt{2\pi}} \int_{\Re \mu = \bet}  e^{\mu t} g(\mu) \dd\Im \mu.
    \end{align}
   Whenever this integral converges, we call it the inverse Laplace transform of $g$ at $t$ (along $S_\beta$). 
  \end{enumerate}
\end{definition}
We note that the Mellin transform is given by the composition of the
Laplace transform and the change of variables $t = \ln r$. In particular, we get
the corresponding properties as for the Mellin transform also for the Laplace
transform if we replace the factor $r^\bet$ by $e^{\bet t}$ and $\frac{\dd r}{r}$ by $\dd t$.
\begin{lemma}[Properties of Laplace transform] \label{lem-laplace} %
  Let $\mathsf{H}$ be a Hilbert space.
  For $\psi \in \cciL{\R,\mathsf{H}}$ the Laplace transform $\widehat\psi$ is an entire function.
  Furthermore, we have
  \begin{enumerate}
  \item\label{lap-1}
    $ \LL(e^{\bet t} \psi)(\mu) \ = \ \LL\psi\vp{\mu-\bet}$ for all $\bet\in\R$.
  \item\label{lap-2} $\LL{\p_t  \psi}(\mu) \  = \ \mu \LL\psi\vp{\mu}$.
  \item\label{lap-plancherel} %
    $\displaystyle \int_{-\infty}^\infty e^{-2\bet t} \overline{\psi_1(t)} \psi_2(t)\dd t  %
    = \int_{\Re \mu=\bet} \overline{\LL\psi_1(\mu+\gam)} \LL\psi_2(\mu-\gam) \dd\Im\mu$ for all $\psi_1, \psi_2\in \cciL{\R,\mathsf{H}}$ and $\bet,\gam\in \R$. \\
    In particular, for $\bet\in \R$ the Laplace transform can be continuously extended to a linear
    operator
    \begin{align} \label{laplace-iso} %
          \call_\bet:\big\{ \psi \ : \ e^{-\bet t} \psi\in L^2(\R,\mathsf{H}) \big\} \ \to \ L^2(S_\bet,\mathsf{H}).
        \end{align}
      \item For every $\bet\in\R$ the map \eqref{laplace-iso} is an isometric isomorphism.
      Whenever the integral in \eqref{laplace-inv} converges, it yields the inverse of the map \eqref{laplace-iso}.
\item\label{lap-6} If $\bet_1,\bet_2\in\R$, $\bet_1<\bet_2$, and $e^{-\bet_1t}\psi, e^{-\bet_2t}\psi\in L^2(\R,\mathsf{H})$, then $\LL\psi$ is analytic on the strip $S_{(\bet_1,\bet_2)}$.
  \end{enumerate}
\end{lemma}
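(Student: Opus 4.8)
The plan is to deduce every claim from the corresponding property of the Mellin transform in Lemma~\ref{lem-mellin}, exploiting that the two-sided Laplace transform is conjugate to the Mellin transform via the diffeomorphism $\exp\colon\R\to\R_+$. Concretely, to $\psi\colon\R\to\mathsf{H}$ I would associate $g_\psi\colon\R_+\to\mathsf{H}$, $g_\psi(r):=\psi(\ln r)$, and observe that $\psi\in\cciL{\R,\mathsf{H}}$ if and only if $g_\psi\in\cciL{\R_+,\mathsf{H}}$. Substituting $r=e^t$ in the defining integral, so that $\tfrac{\d r}{r}=\d t$ and $r^{-\mu}=e^{-\mu t}$, produces the identity $\LL\psi(\mu)=\calm g_\psi(\mu)$ on the common strip of convergence. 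In particular, for $\psi\in\cciL{\R,\mathsf{H}}$ the function $\LL\psi$ is entire since $\calm g_\psi$ is, by the first part of Lemma~\ref{lem-mellin}.

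Given this dictionary, each assertion transfers mechanically. For \eqref{lap-1}, the substitution sends $e^{\bet t}\psi(t)$ to $r^{\bet}g_\psi(r)$, so $\LL(e^{\bet t}\psi)(\mu)=\calm(r^{\bet}g_\psi)(\mu)=\calm g_\psi(\mu-\bet)=\LL\psi(\mu-\bet)$ by Lemma~\ref{lem-mellin}\eqref{mel-1}. For \eqref{lap-2}, the chain rule gives $\p_t=r\p_r$ under $r=e^t$, so $\p_t\psi$ corresponds to $r\p_r g_\psi$, and Lemma~\ref{lem-mellin}\eqref{mel-2} yields $\LL(\p_t\psi)(\mu)=\mu\,\calm g_\psi(\mu)=\mu\,\LL\psi(\mu)$. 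For \eqref{lap-plancherel}, substituting in the left-hand integral turns it into $\int_0^\infty r^{-2\bet}\overline{g_{\psi_1}(r)}g_{\psi_2}(r)\,\tfrac{\d r}{r}$, and Lemma~\ref{lem-mellin}\eqref{mel-plancherel} combined with $\calm g_{\psi_i}=\LL\psi_i$ gives the stated Parseval identity; the isometric-isomorphism statement then follows because $\psi\mapsto g_\psi$ is a unitary map from $\{\psi:e^{-\bet t}\psi\in L^2(\R,\mathsf{H})\}$ onto $\{g:r^{-\bet-\frac12}g\in L^2(\R_+,\mathsf{H})\}$ (as $\|e^{-\bet t}\psi\|_{L^2(\R)}=\|r^{-\bet-\frac12}g_\psi\|_{L^2(\R_+)}$), post-composed with the Mellin isometry of Lemma~\ref{lem-mellin}; applying the same conjugation to \eqref{mellin-inv} identifies $\LL_\bet^{-1}$ in \eqref{laplace-inv} as the inverse whenever the integral converges. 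Finally, \eqref{lap-6} is immediate from Lemma~\ref{lem-mellin}\eqref{mel-6}, since the hypotheses $e^{-\bet_i t}\psi\in L^2(\R,\mathsf{H})$ translate into the weighted $L^2$-conditions for $g_\psi$ relevant to that item, and $\LL\psi$ is analytic on a strip exactly when $\calm g_\psi$ is.

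I do not expect a genuine obstacle here; the whole argument is bookkeeping. The only points that deserve care are: tracking how the exponential weight $e^{\bet t}$, the operator $\p_t$, and the measure $\d t$ are transported to $r^{\bet}$, $r\p_r$, and $\tfrac{\d r}{r}$ under $r=e^t$; checking that the function-space correspondences are preserved ($\cciL{}$-spaces, the weighted $L^2$-spaces, and lines $S_\bet$ resp. strips $S_{(\bet_1,\bet_2)}$ in the spectral variable); and recording, exactly as in the proof of Lemma~\ref{lem-mellin}, that the cited scalar-valued results extend verbatim to $\mathsf{H}$-valued functions. This is precisely the reduction already anticipated in the discussion preceding the lemma.
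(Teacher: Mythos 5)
Your proposal is correct and is exactly the argument the paper intends: the paper gives no separate proof of Lemma~\ref{lem-laplace}, only the remark preceding it that the Mellin transform is the Laplace transform composed with $t=\ln r$, so that all properties transfer upon replacing $r^\bet$ by $e^{\bet t}$ and $\tfrac{\dd r}{r}$ by $\dd t$ — which is precisely your dictionary $g_\psi(r)=\psi(\ln r)$, $\LL\psi=\calm g_\psi$. The bookkeeping you carry out (including the unitarity $\|e^{-\bet t}\psi\|_{L^2(\R)}=\|r^{-\bet-\frac12}g_\psi\|_{L^2(\R_+)}$) is accurate and complete.
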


\section{Neumann Problem on the Wedge}\label{sec:neu}

In this section, we consider the elliptic boundary problem with Neumann conditions, i.e.
  \begin{align}\label{ell-neu}
    \begin{pdeq}
          \DeltaB \vvv \ &= \ \fff &&\qquad\text{in} \ \MOme,  \\
          \pa_\nu \vvv \ &= \ \ggg &&\qquad\text{on} \ \pa'\Ome. 
    \end{pdeq}
  \end{align}
Observe that the direction of the outer normal vector yields $\p_\nu=-\frac1r\p_\phi$ on $\p_0\Ome$ and $\p_\nu=\frac1r\p_\phi$ on $\p_1\Ome$.
We note that elliptic problems of the form \eqref{ell-neu} have been studied in the literature, see e.g.~\cite{BGKMRS24,KozlovMazyaRossmann-Book,KSW24}.
Since we treat higher regularity beyond the regime of the first resonance, we include details here for the convenience of the reader.
We first give a solution of the corresponding system to \eqref{ell-neu} in Mellin variables.
\begin{lemma}[Neumann problem in Mellin variables] \label{lem-v-est} %
  Let $\ang \in (0,2\pi)$, $\ffff\in\CRi([0,\ang])$, and
  $\gggg_1,\gggg_2\in\C$.  Then for any $\lam \in \C$ with
  $\lam \notin \frac{\pi}{\ang} \Z$, there is a unique classical solution
  $\vv(\lam,\cdot)$ to the boundary-value problem
\begin{subequations} \label{sys-gen}
  \begin{align} %
    (\lam^2  + \p_\phi^2) \vv(\lam,\phi) \ &= \ \ffff(\phi) \qquad \text{for } \phi \in (0,\ang), \label{sys-gen-uphi} \\
    -\p_\phi \vv(\lam,0) \ &= \ \gggg_1, \label{sys-gen-navier} \\ %
    \p_\phi\vv(\lam,\ang) \ &= \ \gggg_2. \label{sys-gen-stress}
  \end{align}
\end{subequations}
The function $\vv(\cdot,\phi):\C\to\C$ is meromorphic for each
$\phi\in [0,\ang]$ with all poles contained in $\frac{\pi}{\ang} \Z$.
The pole at $\lam=0$ is at most of order $2$ and all other poles are simple.
The solution $\vv$ can be represented as
\begin{subequations}\label{v-repr}
\begin{align}\label{lem-v-repr-1}
\vv(\lam,\phi) \ = \ -G(\lam,\phi,0)\gggg_1 + G(\lam,\phi,\ang)\gggg_2 + \int_0^\ang G(\lam,\phi,\phi') \ffff(\phi') \ \d\phi',
\end{align}
where the meromorphic Green's function $G(\cdot,\phi,\phi')$ to \eqref{sys-gen}
is  given by
\begin{align}\label{lem-v-repr-3}
  G(\lam,\phi,\phi') = \frac{1}{\lam\sin(\lam\ang)}
  \begin{TC}
    \cos(\lam(\ang-\phi'))\cos (\lam \phi) &\text{for $\phi \in [0,{\phi'}]$}, \\
    \cos(\lam\phi')\cos (\lam(\ang-\phi)) &\text{for $\phi \in ({\phi'},\ang]$}.
  \end{TC}  
\end{align}
\end{subequations}
Furthermore, if $\rpconst,\resconst>0$ and $\ell \in \N_0$, then whenever $\ang\labs{\Re \lam}\leq \rpconst$ and $\dist(\ang\lam, \pi\Z)\geq \resconst$, we have
  \begin{align} %
    \hspace{6ex} & \hspace{-6ex} %
                   \sum_{j+m=\ell+2}  \int_0^\ang  |\lam|^{2j}\labs{\p_\phi^m \vv(\lam,\phi)}^2 \d\phi  \lesssim_{\rpconst,\resconst,\ell} \ \sum_{j+m=\ell} \int_0^\ang  |\lam|^{2j}|\pa_\phi^m \ffff(\phi)|^2 \d\phi +  |\lam|^{2\ell + 1}(|\gggg_1|^2 +|\gggg_2|^2).  \label{sys-gen-est}
\end{align}
\end{lemma}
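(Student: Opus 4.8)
The plan is to prove the estimate \eqref{sys-gen-est} directly from the explicit Green's function representation \eqref{v-repr}, combined with differentiation of the ODE \eqref{sys-gen-uphi} to handle the tangential derivatives. First I would reduce to two elementary building blocks: pointwise bounds on the Green's kernel $G(\lam,\phi,\phi')$ and its $\phi$-derivatives, and an interpolation-type bound controlling intermediate tangential derivatives of $\vv$ by the top-order ones and $\ffff$. The condition $\ang|\Re\lam|\le \rpconst$ confines $\lam$ to a vertical strip, and $\dist(\ang\lam,\pi\Z)\ge \resconst$ keeps $|\sin(\lam\ang)|$ bounded below uniformly, so on this set one has $|\lam\sin(\lam\ang)|\gtrsim_{\rpconst,\resconst}|\lam|\,e^{\ang|\Im\lam|}$ while each factor $\cos(\lam(\ang-\phi'))$, $\cos(\lam\phi)$ etc.\ is $\lesssim e^{\ang|\Im\lam|}$ (using $|\cos z|,|\sin z|\le e^{|\Im z|}$ and that $\phi,\phi',\ang-\phi,\ang-\phi'\in[0,\ang]$). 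Hence the exponential factors cancel, giving $|G(\lam,\phi,\phi')|\lesssim_{\rpconst,\resconst}|\lam|^{-1}$ and, after differentiating in $\phi$ (which only brings down powers of $\lam$), $|\pa_\phi^m G(\lam,\phi,\phi')|\lesssim_{\rpconst,\resconst}|\lam|^{m-1}$ for $m\le 1$, with an extra care at $\phi=\phi'$ where the kernel is only Lipschitz but $\pa_\phi^2 G$ picks up a Dirac-type jump accounted for by the equation itself.

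Next I would carry out the $m=\ell+2$, $j=0$ case, i.e.\ the pure tangential derivative $\pa_\phi^{\ell+2}\vv$. Here it is cleanest \emph{not} to differentiate the representation but to use the ODE: from \eqref{sys-gen-uphi} we get $\pa_\phi^2\vv = \ffff - \lam^2\vv$, and differentiating $m$ times, $\pa_\phi^{m+2}\vv = \pa_\phi^m\ffff - \lam^2\pa_\phi^m\vv$, so that
\begin{align*}
\int_0^\ang|\pa_\phi^{\ell+2}\vv|^2\,\d\phi \ \lesssim\ \int_0^\ang|\pa_\phi^\ell\ffff|^2\,\d\phi + |\lam|^4\int_0^\ang|\pa_\phi^\ell\vv|^2\,\d\phi.
\end{align*}
This reduces everything to controlling $\int_0^\ang|\lam|^{2j}|\pa_\phi^m\vv|^2\,\d\phi$ for $j+m=\ell$, $j+m=\ell+1$, and the cross terms with $j+m=\ell+2$, $j\ge 1$. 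For $j+m=\ell$ and $j+m=\ell+1$ one uses the representation \eqref{v-repr} together with the kernel bounds above: $\pa_\phi^m\vv$ is a sum of $\pm\pa_\phi^m G(\lam,\phi,0)\gggg_1$, $\pa_\phi^m G(\lam,\phi,\ang)\gggg_2$ and $\int_0^\ang\pa_\phi^m G(\lam,\phi,\phi')\ffff(\phi')\,\d\phi'$ for $m\le 1$; for $m\ge 2$ one first writes $\pa_\phi^m\vv = \pa_\phi^{m-2}\ffff - \lam^2\pa_\phi^{m-2}\vv$ and iterates down to $m\in\{0,1\}$. Each application of the Green's function yields one factor $|\lam|^{-1}$ against the boundary data (explaining the $|\lam|^{2\ell+1}(|\gggg_1|^2+|\gggg_2|^2)$ term), and Cauchy–Schwarz in $\phi'$ handles the forcing term, producing $\int_0^\ang|\pa_\phi^m\ffff|^2\,\d\phi$ with the matching power of $|\lam|$. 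Finally the mixed terms $j+m=\ell+2$ with $1\le j\le\ell+2$ are obtained by the same ODE-iteration: each pair of $\phi$-derivatives above the available $\ell+2$ budget is traded for $\lam^2$ times fewer derivatives plus a derivative of $\ffff$, bookkeeping the powers so that the total scaling weight $|\lam|^{2j}$ with $2j+2m$ homogeneity is preserved.

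I would organize the bookkeeping by a single induction on $\ell$ (base case $\ell=0$), at each step using the ODE identity $\pa_\phi^{m+2}\vv=\pa_\phi^m\ffff-\lam^2\pa_\phi^m\vv$ to lower the derivative count and the $m\in\{0,1\}$ Green's-function estimates as the terminal input; the summation $\sum_{j+m=\ell+2}|\lam|^{2j}\int|\pa_\phi^m\vv|^2$ then collapses, term by term, onto $\sum_{j+m=\ell}|\lam|^{2j}\int|\pa_\phi^m\ffff|^2 + |\lam|^{2\ell+1}(|\gggg_1|^2+|\gggg_2|^2)$ with constants depending only on $\rpconst,\resconst,\ell$ and $\ang$. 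The main obstacle I anticipate is getting the uniform lower bound on $|\lam\sin(\lam\ang)|$ and the cancellation of the $e^{\ang|\Im\lam|}$ growth exactly right, including the borderline behaviour of $G$ and $\pa_\phi G$ across $\phi=\phi'$ and at the possible double pole $\lam=0$ — which is, however, excluded here by $\dist(\ang\lam,\pi\Z)\ge\resconst>0$, so this is ultimately a matter of careful but routine estimation of trigonometric functions on the strip, not a conceptual difficulty.
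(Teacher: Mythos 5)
Your overall architecture (explicit Green's function for the terminal cases, the ODE identity $\pa_\phi^{m+2}\vv=\pa_\phi^m\ffff-\lam^2\pa_\phi^m\vv$ to bootstrap higher tangential derivatives, induction on $\ell$) matches the paper's, but the base case $\ell=0$ has a genuine quantitative gap in the large-$\labs{\Im\lam}$ regime. Your terminal input is the pointwise bound $\labs{G(\lam,\phi,\phi')}\lesssim\labs{\lam}^{-1}$ followed by Cauchy--Schwarz in $\phi'$. Test this against the target for $\ell=0$, $j=2$, $m=0$: you need $\int_0^\ang\labs{\vv}^2\,\d\phi\lesssim\labs{\lam}^{-4}\int_0^\ang\labs{\ffff}^2\,\d\phi+\labs{\lam}^{-3}(\labs{\gggg_1}^2+\labs{\gggg_2}^2)$. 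The pointwise bound plus Cauchy--Schwarz gives only $\labs{\vv(\lam,\phi)}\lesssim\labs{\lam}^{-1}\|\ffff\|_{L^2}$, hence $\int\labs{\vv}^2\lesssim\labs{\lam}^{-2}\|\ffff\|_{L^2}^2$ --- off by a factor $\labs{\lam}^{2}$; likewise $\int_0^\ang\labs{G(\lam,\phi,0)}^2\,\d\phi\lesssim\labs{\lam}^{-2}$ is off by $\labs{\lam}$ from the required $\labs{\lam}^{-3}$. The missing decay comes from the fact that for $\labs{\Im\lam}$ large the kernel is not merely bounded by $\labs{\lam}^{-1}$ but concentrates: $\labs{G(\lam,\phi,\phi')}\sim\labs{\lam}^{-1}e^{-\labs{\Im\lam}\,\labs{\phi-\phi'}}$, so its $L^1_{\phi'}$-norm is $\lesssim\labs{\lam}^{-2}$ (giving the Schur bound) and the $L^2_\phi$-norm of $G(\lam,\cdot,0)$ is $\lesssim\labs{\lam}^{-3/2}$. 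Your proposal never extracts this off-diagonal/boundary-layer decay, so as written the forcing and boundary contributions cannot be closed with the stated powers of $\labs{\lam}$.

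The paper avoids this issue by splitting the two contributions differently: for $\gggg_1=\gggg_2=0$ it does not use the Green's function at all but tests \eqref{sys-gen-uphi} with $\OL{\vv}$, takes real parts, and uses $\Re(\lam^2)\sim-\labs{\lam}^2$ when $\ang\labs{\Im\lam}\gtrsim 1$ to get the full resolvent gain $\int(\labs{\lam}^2\labs{\vv}^2+\labs{\pa_\phi\vv}^2)\lesssim\labs{\lam}^{-2}\int\labs{\ffff}^2$ (the pointwise/Jensen argument is reserved for the bounded-$\lam$ region, where it does suffice); for $\ffff=0$ it computes the integral $\labs{\lam}\int_0^\ang\labs{f(\lam\phi)}^2/\labs{g(\lam\ang)}^2\,\d\phi$ exactly (Lemma \ref{lem-aux1}), which is where the extra factor $\labs{\lam}^{-1}$ in $L^2_\phi$ is harvested. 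Your plan is repairable --- either adopt the energy identity for the forcing term, or sharpen your kernel estimates to include the exponential off-diagonal decay and replace Cauchy--Schwarz by Schur's test --- but one of these additional ideas is indispensable; the uniform bound $\labs{G}\lesssim\labs{\lam}^{-1}$ alone is provably insufficient.
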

\begin{proof}
  The claim about the pole set follows directly from the formula for $G$.
  For $\lam\ang\notin \pi\Z$ it follows from standard ODE arguments that $\vv(\lam,\cdot)$ is the unique solution to \eqref{sys-gen}.
If $\ffff=0$, \eqref{sys-gen-est} follows from the representation
of $\vv$ and Lemma~\ref{lem-aux1}. In the following, we hence assume
$\gggg_1 = \gggg_2 = 0$. For $\gggg_1 = \gggg_2 = 0$, we test \eqref{sys-gen-uphi} with
$\overline{ \vv}$ to get
\begin{align*}
  \int_0^\ang\overline{ \vv} \ffff  \ \d\phi \ %
  &= \ \int_0^\ang \overline{ \vv} \p_\phi^2 \vv  \ \d\phi  + \lam^2 \int_0^\ang \labs{\vv}^2 \ \d\phi  \ %
  = \ - \int_0^\ang \labs{\p_\phi \vv}^2 \d\phi +  \lam^2 \int_0^\ang \labs{\vv}^2 \ \d\phi.
\end{align*}
We take the real part and absorb the term on the left hand side using Young's
inequality.  If $\lam$ has a large imaginary part
$\ang\labs{\Im \lam} \geq 2\rpconst$, we have $\Re (\lam^2) \sim - \labs{\lam}^2$,
which implies
\begin{align}\label{sys-gen-large-freq}
  \int_0^\ang  \left( \labs{\lam}^2 \labs{\vv}^2 + \labs{\p_\phi \vv}^2 \right) \d\phi \ %
  \lesssim \ \labs{\lam}^{-2} \int_0^\ang \labs{\ffff}^2 \d\phi.
\end{align}
If $\ang\labs{\Im\lam}\leq 2\rpconst$ (and hence $\ang\labs{\lam}\leq 3\rpconst\lesssim 1$), we have $\labs{\lam G(\lam,\cdot,\phi')}$, $\labs{\pa_\phi G(\lam,\cdot,\phi')} \lesssim 1$ since the sine and cosine are bounded on $B_{(\ang-\phi')\labs{\lam}},B_{\phi'\labs{\lam}}\subset B_{3\rpconst}\subset \C$ and since by the assumptions on $\lam$
\begin{align*}
  |\sin(\ang\lam)|^2 \ = \ \sin^2(\ang\Re \lam) + \sinh^2(\ang\Im\lam) \geq \sin^2(\ang\Re \lam)\gtrsim_{\alp_1} \ 1.
\end{align*}
With Jensen's inequality we estimate
\begin{align}\label{sys-gen-low-freq}
  \int_0^\ang \left( \labs{\lam}^2 \labs{\vv}^2 + \labs{\p_\phi \vv}^2 \right) \d\phi \ %
  \upref{lem-v-repr-3}\lesssim  \ \ang \Big(\int_0^\ang \labs{\ffff} \d\phi\Big)^2 \ %
  \leq \ \ang^2 \int_0^\ang \labs{\ffff}^2 \d\phi \lesssim \labs{\lam}^{-2} \int_0^\ang \labs{\ffff}^2 \d\phi,
\end{align}
where we have used $\ang\labs{\lam}\lesssim 1$ in the last step.
By multiplying \eqref{sys-gen-large-freq} and \eqref{sys-gen-low-freq} with $\labs\lam^2$ and using equation~\eqref{sys-gen-uphi} once more, we obtain the corresponding bound on $\pa_\phi^2 \vv$, thus proving \eqref{sys-gen-est} for $\ell = 0$.

\medskip

Now assume that the assertion holds for $\ell\in \N_0$.
Multiplying \eqref{sys-gen} by $\lam$, we obtain from \eqref{sys-gen-est}
\begin{align*} %
  & \sum_{j+m=\ell+2} \int_0^\ang  \labs{\lam^{j+1} \pa_\phi^m \vv(\lam,\phi)}^2 \d\phi  \ %
    \lesssim \ \sum_{j+m=\ell} \int_0^\ang |\lam^{j+1}\pa_\phi^m \ffff(\phi)|^2 \d\phi +  |\lam^{\ell+1 + \frac 12}\gggg_1|^2 +  |\lam^{\ell+1 + \frac 12} \gggg_2|^2. 
\end{align*}
Using
$\pa_\phi^{\ell+3}\vv=-\lam^2\pa_\phi^{\ell+1}\vv + \pa_\phi^{\ell+1} \ffff$ by
virtue of \eqref{sys-gen-uphi}, we obtain the assertion for $\ell+1$ and can
conclude by induction. 
\end{proof}
The above solution gives us information about the kernel of the Laplace operator with Neumann boundary condition.
\begin{definition}[Formal kernel]\label{def-formal-ker} %
  For $\ang \in (0,2\pi)$ and $k\in\Z$, we write
  \begin{align*}
    \pi_k:=\frac{k\pi}{\ang}
  \end{align*}
  and define the kernel of the Laplace operator $\Delta$ for the Neumann problem \eqref{ell-neu} by
  \begin{align*}
    \kerneu \ 
    &:= \ \spann \Big \langle \set{\ln r}  \cup  \setc{r^{\pi_k} \cos(\pi_k \phi)}{k\in \Z} \Big \rangle \ \in \ \spann \big\langle \ln r \big\rangle \oplus \PSOme.
  \end{align*}
  For $\sig_1,\sig_2\in \R$ we define the kernel of limited scaling width by
  \begin{align*}
    \kerneun{\sig_1}{\sig_2} \ 
    &:= \ \spann \Big \langle \set{1,\ln r} \cup \setc{r^{\pi_k} \cos(\pi_k \phi)}{\pi_k\in [\sig_1,\sig_2]\cup [\sig_2,\sig_1]} \Big \rangle.
  \end{align*}
\end{definition}
\begin{proposition}[Elliptic Neumann problem] \label{prp-ellneumann} %
  Let $\ang \in (0,2\pi)$, $\fff \in \CRci(\overline{\Ome}\BS\set{0})$
  and $\ggg\in \CRci(\p'\Ome)$.
  Then the following assertions hold.
  \begin{enumerate}
  \item\label{prp-ellneumanni} Let $(\ell,\alp) \in \N_0 \times \R$ fulfill $\sclOme{\ell+\alp+2}\notin \frac{\pi}{\ang}\Z$.
  Then there exists a classical solution $\vvv\in \ZSoo{\ell+2}$ to \eqref{ell-neu} with $\sum_{j=0}^\ell \ZMM{\vvv}{\ell-j+2}{\alp+j}<\infty$.
  \item\label{prp-ellneumannii} Let $(\ell_1,\bet_1), (\ell_2,\bet_2) \in \N_0 \times \R$ fulfill $\sigma_j:=\sclOme{\ell_j+\bet_j+2}\notin \frac{\pi}{\ang}\Z$ for $j\in\set{1,2}$.
  Then for two classical solutions $v_1$, $v_2$ to \eqref{ell-neu} with $\ZMalp{v_1}{\ell_1+2}{\bet_1}<\infty$ and $\ZMalp{v_2}{\ell_2+2}{\bet_2}<\infty$ we have $v_1 - v_2 \in \kerneun{\sig_1}{\sig_2}$.
  \item\label{prp-ellneumanniii} Let $\rpconst,\resconst>0$.
  If $(\ell,\alp)\in \N_0\times \R$ fulfills $\dist(\ang\sclOme{\ell+\alp+2},\pi\Z)\ge \resconst$, $\ang|\sclOme{\ell+\alp+1}|\ge \resconst$ (and $\ang|\sclOme{\ell+\alp}|\ge\resconst$ if $\ell>0$) as well as $\ang|\sclOme{\ell+\alp+2}|\le \rpconst$, then we have
 \begin{align}\label{est-redsol-2}
   \ZM{\vvv}{\ell+2} &\lesssim_{\rpconst,\resconst,\ell} \ \ZMS{\fff}{\ell}{\alp} + \HMS{\ggg}{\ell+\frac12}{\alp}.
 \end{align}
 \item\label{prp-ellneumanniv} Let $\rpconst,\resconst>0$, $\bet_1,\bet_2\in \R$ and $\vartheta\in (0,1)$ with $\dist(\vartheta,\set{0,1})>\resconst$.
 Write $\bet:=(1-\vartheta)\bet_1+\vartheta\bet_2$.
 Let $\ell_1,\ell_2\in \N_0$ fulfill $\dist(\ang\sclOme{\ell_j+\bet_j+2},\pi\Z)\ge \resconst$, $\ang|\sclOme{\ell_j+\bet_j+1}|\ge \resconst$ (and $\ang|\sclOme{\ell_j+\bet_j}|\ge\resconst$ if $\ell_j>0$) as well as $\ang|\sclOme{\ell_j+\bet_j+2}|\le \rpconst$ for $j\in \set{1,2}$, and let $v_1\in \ZSooalp{\ell_1+2}{\bet_1}$, $v_2\in \ZSooalp{\ell_2+2}{\bet_2}$ be the corresponding classical solutions to \eqref{ell-neu}.
 Define $\ell:=(1-\vartheta)\ell_1+\vartheta\ell_2$.
 If $\sigma_1<\sigma_2$ for $\sigma_j:=\sclOme{\ell_j+\bet_j+2}$, and if $v_1-v_2\in \ker_N^{\sigma_1,\sigma_2}$ does not contain a contribution from $\langle \set{1,\ln r}\rangle$, then it holds
  \begin{align*}
   \NNN{v_1-v_2}{\VSpol_{\ell+2,\bet}}\lesssim_{\rpconst,\resconst,\ell,\ang} \sum_{j=1}^2 \big(\ZMS{\fff}{\ell_j}{\bet_j} + \HMS{\ggg}{\ell_j+\frac12}{\bet_j}\big).
  \end{align*}
  \end{enumerate}
\end{proposition}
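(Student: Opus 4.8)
The plan is to estimate directly the finitely many Fourier--radial modes of $\poly:=v_1-v_2$. Since $\dist(\ang\sigma_j,\pi\Z)\ge\resconst$ forces $\sigma_j\notin\tfrac\pi\ang\Z$, Proposition~\ref{prp-ellneumann}\eqref{prp-ellneumannii} gives $\poly\in\kerneun{\sigma_1}{\sigma_2}$, and after discarding the excluded $\langle\{1,\ln r\}\rangle$-contribution, $\poly(r,\phi)=\sum_k c_k\,r^{\pi_k}\cos(\pi_k\phi)$ is a finite sum of Laplace--Neumann modes with exponents $\pi_k=\tfrac{k\pi}\ang\in(\sigma_1,\sigma_2)$ (the endpoints are excluded as $\sigma_1,\sigma_2\notin\tfrac\pi\ang\Z$). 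In particular $a_k:=\pi_k-\sigma_1>0$ with $\ang a_k=k\pi-\ang\sigma_1$, so $\dist(\ang\sigma_1,\pi\Z)\ge\resconst$ yields the uniform two-sided bound $\resconst/\ang\le a_k\le\sigma_2-\sigma_1\le 2\rpconst/\ang$; moreover $\pi_k\ne0$ (otherwise the mode is the excluded constant) and $|\pi_k|\le\rpconst/\ang$.

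Next I would feed each $v_j$ into Proposition~\ref{prp-ellneumann}\eqref{prp-ellneumanniii} with parameters $(\ell_j,\bet_j)$ --- whose required hypotheses are exactly those imposed here --- using $\ell_j<\ell/\resconst$ (from $\ell=(1-\vartheta)\ell_1+\vartheta\ell_2$ and $\dist(\vartheta,\{0,1\})>\resconst$) and the uniqueness of solutions in $\ZSooalp{\ell_j+2}{\bet_j}$ (cf.~\eqref{prp-ellneumannii}) to obtain $\ZMalp{v_j}{\ell_j+2}{\bet_j}\lesssim_{\rpconst,\resconst,\ell}R_j$, where $R_j:=\ZMS{\fff}{\ell_j}{\bet_j}+\HMS{\ggg}{\ell_j+\frac12}{\bet_j}$. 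Because $\sclOme{\ell_j+\bet_j+2}=\sigma_j$ and $\resconst\le\ang|\sigma_j|\le\rpconst$, the scaling inequality \eqref{wedge-3} of Lemma~\ref{lem_mellin_domain} (extended from $\CRci$ to the completion via Lemma~\ref{lem-ZM-dense}) upgrades this to $\ZMalp{v_j}{0}{\sigma_j+1}\lesssim_{\rpconst,\resconst,\ell}R_j$, i.e.\ control of $\int_0^\ang\int_0^\infty r^{-2\sigma_j}|v_j|^2\,\tfrac{\d r}r\,\d\phi$.

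The heart of the argument is a weighted $L^2$-bound for $\zet\poly$ near the tip in the scaling of the \emph{lowest} admissible mode; write $\norm{\cdot}$ for the $L^2$-norm on $\R_+\times(0,\ang)$ with measure $\tfrac{\d r}r\,\d\phi$. On $\supp\zet\subset[0,2]$ one has $r^{-\sigma_1}\le 2^{\sigma_2-\sigma_1}r^{-\sigma_2}\le 2^{2\rpconst/\ang}r^{-\sigma_2}$, so the triangle inequality and the previous step give $\norm{r^{-\sigma_1}\zet\poly}\le\norm{r^{-\sigma_1}\zet v_1}+2^{2\rpconst/\ang}\norm{r^{-\sigma_2}\zet v_2}\lesssim_{\rpconst,\resconst,\ell,\ang}R_1+R_2$. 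For the reverse bound I would expand $\norm{r^{-\sigma_1}\zet\poly}^2$ and use that the $\cos(\pi_k\phi)$ are mutually orthogonal Neumann eigenfunctions on $(0,\ang)$ with $\norm{\cos(\pi_k\cdot)}_{L^2(0,\ang)}^2=\tfrac\ang2$: the result is a positive-definite quadratic form in $(c_k)$ that is block-diagonal in the angular wavenumber $|\pi_k|$, with blocks of size $\le 2$ equal (up to the factor $\tfrac\ang2$) to Gram matrices of the generalized monomials $r^{a_k}$ in $L^2\big((0,\infty),\zet^2\tfrac{\d r}r\big)$. Since the $a_k$ lie in the fixed compact interval $[\resconst/\ang,2\rpconst/\ang]$ and, within a block, differ by $\ge 2\pi/\ang$, a routine estimate for Gram matrices of monomials with well-separated exponents shows each block has smallest eigenvalue $\gtrsim_{\resconst,\rpconst,\ang}1$; hence $\sum_k|c_k|^2\lesssim_{\rpconst,\resconst,\ell,\ang}(R_1+R_2)^2$. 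Finally, by definition of the polynomial norm ($\gam=1$) one has $\NNN{\poly}{\VSpol_{\ell+2,\bet}}^2=\sum_{\pi_k<\sclOme{\ell+2+\bet}}|c_k|^2\norm{\cos(\pi_k\cdot)}_{W^{\ell+2,2}((0,\ang))}^2\le\big(\sup_k\norm{\cos(\pi_k\cdot)}_{W^{\ell+2,2}((0,\ang))}^2\big)\sum_k|c_k|^2$, and $\norm{\cos(\pi_k\cdot)}_{W^{\ell+2,2}((0,\ang))}^2\lesssim_{\ell,\ang}(1+|\pi_k|)^{2(\ell+2)}\lesssim_{\rpconst,\ell,\ang}1$; together with $R_1+R_2=\sum_{j=1}^2\big(\ZMS{\fff}{\ell_j}{\bet_j}+\HMS{\ggg}{\ell_j+\frac12}{\bet_j}\big)$ this is precisely the asserted estimate.

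The point requiring care is the uniformity of every implicit constant in $\bet_1,\bet_2$: they may depend only on $\rpconst,\resconst,\ell,\ang$. This is exactly what the separation $a_k\ge\resconst/\ang$ (provided by $\dist(\ang\sigma_1,\pi\Z)\ge\resconst$), the bounds $\resconst\le\ang|\sigma_j|\le\rpconst$, and $\ell_j<\ell/\resconst$ are for; they keep the radial integrals and the small Gram blocks non-degenerate uniformly. The one place where more than plain angular orthogonality is needed is when $\sigma_1<0$, where modes $r^{+|\pi_k|}\cos(|\pi_k|\phi)$ and $r^{-|\pi_k|}\cos(|\pi_k|\phi)$ of the same wavenumber can coexist in $\poly$ and the $2\times2$ positivity above is invoked; for the parameters relevant to Theorems~\ref{thm-ex}--\ref{thm-higher} one has $\sigma_j>0$, the form is exactly diagonal, and this does not arise. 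A minor bookkeeping subtlety is that $\poly$ need not itself lie in $\VSpol_{\ell+2,\bet}$ (it may carry modes with $\pi_k\ge\sclOme{\ell+2+\bet}$), but the polynomial norm ignores these, so bounding the full $\sum_k|c_k|^2$ more than suffices.
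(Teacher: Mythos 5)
Your proposal addresses only assertion \eqref{prp-ellneumanniv} of the proposition. Everything else is invoked as a black box: you cite \eqref{prp-ellneumannii} to conclude $v_1-v_2\in\kerneun{\sig_1}{\sig_2}$ and \eqref{prp-ellneumanniii} to get $\ZMalp{v_j}{\ell_j+2}{\bet_j}\lesssim R_j$, but parts \eqref{prp-ellneumanni}--\eqref{prp-ellneumanniii} are themselves part of what is to be proved, and they contain the bulk of the work: the construction of a classical solution as the inverse Mellin transform of the mode solution $\vv(\lam,\cdot)$ from Lemma \ref{lem-v-est} along the line $\Re\lam=\sclOme{\ell+\alp+2}$ (including the exponential decay in $\Im\lam$ needed to justify convergence and smoothness), the contour-shifting/residue computation that identifies the difference of two solutions with an element of $\kerneun{\sig_1}{\sig_2}$ and pins down the order of the pole at $\lam=0$, and the uniform weighted estimate \eqref{est-redsol-2}. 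None of this appears in your write-up, so as a proof of the proposition it is incomplete and, as written, circular.

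Restricted to part \eqref{prp-ellneumanniv} (granting the earlier parts), your route is genuinely different from the paper's and looks workable. The paper reads off each coefficient $c_k$ from the explicit residue formula $\mathrm{Res}_\psi(\pi_k)=\frac{r^{\pi_k}\cos(\pi_k\phi)}{\pi_k\ang}\big[-\gggg_1(\pi_k)+(-1)^k\gggg_2(\pi_k)+\int_0^\ang\cos(\pi_k\phi')\ffff(\pi_k,\phi')\,\d\phi'\big]$ obtained in the proof of \eqref{prp-ellneumannii}, and controls it mode by mode with the multiplicative interpolation Lemma \ref{lem-inter1-wedge}, which extracts a single monomial coefficient from $\ZMalp{\vvv}{0}{\bet_1}$ and $\ZMalp{\vvv-cr^\bet}{0}{\bet_2}$. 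You instead extract all coefficients simultaneously from the weighted bound on $\norm{r^{-\sig_1}\zet(v_1-v_2)}$ via angular orthogonality and a Gram-matrix lower bound for the radial monomials $r^{a_k}$; the separation and compactness inputs ($a_k\ge\resconst/\ang$, $|\pi_k|\le\rpconst/\ang$, $\ell_j\le\ell/\resconst$) are correctly identified as what makes the constants admissible. Two points would need to be written out rather than asserted: the uniform lower bound on the smallest eigenvalue of the $2\times2$ Gram blocks (strict positivity of the determinant plus a compactness argument is fine, but it is not in the paper and is not entirely ``routine'' to quantify), and the extension of \eqref{wedge-3} from $\CRci$ to elements of $\ZSooalp{\ell_j+2}{\bet_j}$, for which Lemma \ref{lem-ZM-dense} indeed suffices. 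The paper's Lemma \ref{lem-inter1-wedge} accomplishes the same coefficient extraction with less machinery, which is why it is the route taken there.
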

\begin{proof}
  \textit{(i):} For fixed $\ell\in\N_0$ and $\alp\in\R$ with $\ang(\ell+\alp+1)\notin \pi\Z$ we define $\vvv: (0,\infty)\times [0,\ang] \to \R$ via
\begin{align}\label{lem-redsol-def}
\vvv(r,\phi):= \frac{1}{\sqrt{2\pi}}\int_{\Re\lam=\ell+\alp+1} r^\lam \vv(\lam,\phi)\,\d\Im\lam,
\end{align}
where $\vv(\lam,\cdot)$ is given by \eqref{lem-v-repr-1} with data
$\ffff:=\widehat{r^2\fff}(\lam,\cdot)$, $\gggg_1:=\widehat{(r\ggg)}(\lam,0)$, and
$\gggg_2:=\widehat{(r\ggg)}(\lam,\ang)$.
Observe that for all $\phi\in [0,\ang]$,
$\vv(\cdot,\phi)$ is meromorphic with its only poles in $\tfrac{\pi}{\ang} \Z$,
and that for $\Rel\in\R$ and $m\in \N_0$, there are $M,\eps,\delta\in(0,\infty)$
such that for all $\lam\in S_{[-\Rel,\Rel]}$ and
$\labs{\Im\lam}\geq M$ it holds
\begin{align}\label{sys-gen-im-est}
\labs{\pa_\phi^m\vv(\lam,\phi)}\leq \delta e^{-\eps\labs{\Im\lam}}.
\end{align}
Indeed, for all $m\in\N_0$ and $\phi\in(0,\ang)$ the functions $\pa_\phi^m \widehat{r^2\fff}(\cdot,\phi)$, $\widehat{(r\ggg)}(\cdot,0)$, and $\widehat{(r\ggg)}(\cdot,\ang)$ are analytic with exponential decay as $\labs{\Im\lam}\to\infty$ as in \eqref{sys-gen-im-est}.
Since $\labs{\sin(\lam\phi)}\sim\labs{\cos(\lam\phi)}\sim e^{\phi\labs{\Im\lam}}$ as $\labs{\Im\lam}\to\infty$, the Green's function $G(\lam,\phi,\phi')$ is bounded on the set $\setc{\lam\in S_{[-\Rel,\Rel]}}{\labs{\Im\lam}\geq M}$, so that the exponential decay is transferred to $\lam \mapsto \pa_\phi^m\vv(\lam,\phi)$ as claimed.
By Plancherel's theorem in form of Lemma \ref{lem-mellin}\eqref{mel-plancherel} and the exponential decay of $\lam \mapsto r^\lam\vv(\lam,\phi)$ on $\Re\lam=\ell+\alp+1$, we obtain that all derivatives $(r\pa_r)^j\pa_\phi^m\vvv$ are contained in $L^2_{\text{loc}}(\overline{\Omega}\setminus\set{0})$.
Hence, $\vvv$ is smooth.
Moreover,
by \eqref{mellin-inv} in Lemma \ref{lem-mellin} it holds $\widehat\vvv(\lam,\phi)=\vv(\lam,\phi)$ for all $\phi\in[0,\ang]$ and all $\lam\in S_{\ell+\alp+1}$.

\medskip

Next we verify that $v$ does indeed solve \eqref{ell-neu}.
Note that by Lemma \ref{lem-v-est}, we have for all $\lam\in S_{\ell+\alp+1}$
\begin{align*}
  \left(\lam^2 + \pa_\phi^2\right) \widehat\vvv(\lam,\phi) \ = \ \widehat{(r^2\fff)}(\lam,\phi) \quad \text{for} \quad \phi \in (0,\ang),
\end{align*}
as well as $-\pa_\phi\widehat\vvv(\lam,0) = \widehat{(r\ggg)}(\lam,0)$ and $\widehat\vvv(\lam,\ang) = \widehat{\ggg}(\lam,\ang)$.
Since the Mellin transform induces an isomorphism between $\{u: r^{\ell+\alp+\frac12} u\in L^2(\R_+,\C)\}$ and $L^2(S_{\ell+\alp+1},\C)$, cf.~Lemma \ref{lem-mellin}\eqref{mel-plancherel}, we conclude that $\vvv$ solves problem~\eqref{ell-neu} by Lemma \ref{lem-mellin}\eqref{mel-2}.
This proves that $v$ is a classical solution to \eqref{ell-neu}.

\medskip

For $\Re\lam = \sclOme{\ell+\alp+2} = \scl{\ell+\alp+\frac32}=\ell+\alp+1$, the
assumptions on $\lam$ in Lemma \ref{lem-v-est} are fulfilled, so that from
\eqref{sys-gen-est} and the Mellin representation of the norms we get for all $\rpconst,\resconst>0$ with $\dist(\ang\sclOme{\ell+\alp+2},\pi\Z)\ge \resconst$ and $\ang|\sclOme{\ell+\alp+2}|\le \rpconst$, and for all $0\le j\le \ell$ that
\begin{align} \label{est-redsol} %
  \ZMM{\vvv}{\ell -j + 2}{\alp+j} \ &\lesssim_{\rpconst,\resconst,\ell} \
                          \ZMS{r^{2}\fff}{\ell-j}{\alp+j+2} +
                          \HMS{r\ggg}{\ell-j+\frac12}{\alp+j+1}<\infty.
\end{align}
In particular $\vvv\in \HSoo{\ell+2}$ by Lemma \ref{lem-ZM-dense}.

\medskip

\textit{(ii):} By classical methods two classical solutions $\vvv_1$ and
$\vvv_2$ with at most polynomial growth differ only by elements in $\kerneu$,
cf.~\cite{Mar17,Wil68}.  Suppose first that $\sig_1=\sig_2$.  Since
$\ZMalp{v}{\ell_1+2}{\bet_1}=\infty$ for all
$v$ in the span of $\setc{r^{\pi_k}\cos(\pi_k\varphi)}{k\in\Z\setminus\set{0}}$ and
$\ZMalp{v}{\ell_1+2}{\bet_1}=0$ for
$\kerneun{\sig_1}{\sig_1}=\spann\big\langle\set{1,\ln r}\big\rangle$ (the set
equality being a direct consequence of \eqref{ass-alp}), we obtain the
result for $\sig_1=\sig_2$.

\medskip

Suppose now $\sig_1\ne \sig_2$.
By the result for $\sig_1=\sig_2$, $\vvv_1$ and $\vvv_2$ are uniquely determined up to elements in $\spann\big\langle\set{1,\ln r}\big\rangle$.
We can hence assume that $\vvv_1$ and $\vvv_2$ are given via \eqref{lem-redsol-def} corresponding to $(\ell_1, \bet_1)$ and $(\ell_2, \bet_2)$, respectively.
Without loss of generality we assume $\ell_1+\bet_1 \ge \ell_2+\bet_2$.
Since $\lam \mapsto r^\lam\vv(\lam,\phi)$ is meromorphic with the uniform exponential decay \eqref{sys-gen-im-est}, the values of $\vvv_1(r,\phi)$ and $\vvv_2(r,\phi)$ differ by the sum of the residues of $\psi(\lam):= r^\lam\vv(\lam,\phi)$ evaluated at the poles $\pi_k=\frac{k\pi}{\ang}$, $k\in\Z$ which lie between $\sig_1$ and $\sig_2$, that is
\begin{align}\label{lem-redsol-res-cond3}
\vvv_1(r,\phi) - \vvv_2(r,\phi) = \sum_{\pi_k\in (\frac{\pi}{\ang}\Z)\cap (\sig_2,\sig_1)} \text{Res}_\psi(\pi_k).
\end{align}
Since all poles at $\lam\ne 0$ are simple, the residue for $k\ne 0$ is calculated with help of
\begin{align*}
\cos(\pi_k(\ang-\phi)) = (-1)^k \cos(\pi_k\phi), \quad \text{Res}_{1/\sin(\lam\ang)}(\pi_k)=\frac{1}{\ang\cos(\pi_k\ang)} = \frac{(-1)^{k}}{\ang}
\end{align*}
via the definition of $\vv$ in \eqref{lem-v-repr-1} and \eqref{lem-v-repr-3} as
\begin{align*}
  \hspace{2ex} & \hspace{-2ex} %
\text{Res}_\psi (\pi_k) = r^{\pi_k}\text{Res}_{\vv(\cdot,\phi)}(\pi_k) 
= \frac{r^{\pi_k}\cos(\pi_k\phi)}{\pi_k \ang} \Big[-\gggg_1(\pi_k)+(-1)^k\gggg_2(\pi_k)+\int_0^\ang \cos(\pi_k\phi')\ffff(\pi_k,\phi')\d\phi'\Big].
\end{align*}
For $k=0$, we observe that $G(\cdot,\phi,\phi')$ is even in $\lam$ and
holomorphic away from $\lam=0$ so that
\begin{align*}
  \text{Res}_{G(\cdot,\phi,\phi')}(0)=0.
\end{align*}
Since $\text{Res}_{uv}(0)=\text{Res}_u(0) v(0)+ (\lam^2 u(\lam) \pa_\lam)|_{\lam=0} v$ if $u$ possesses a pole of order at most two at $\lam=0$ and $v$ is holomorphic in $0$, we have
\begin{align*}
  \text{Res}_\psi (0) \ %
  &=\ (\lam^2 G(\lam,\phi,0)  \pa_\lam)|_{\lam=0}(-r^\lam \gggg_1(\lam)) + (\lam^2 G(\lam,\phi,\ang)  \pa_\lam)|_{\lam=0}(r^\lam \gggg_2(\lam))  \\
  &\qquad +\int_0^\ang (\lam^2 G(\lam,\phi,\phi')  \pa_\lam)|_{\lam=0}(r^\lam \ffff(\lam,\phi')\d\phi') \\
  &= \ \frac{1}{\ang}(\ln r +\p_\lam) \Big[-\gggg_1(0)+\gggg_2(0)+\int_0^\ang \ffff(0,\phi')\d\phi'\Big].
\end{align*}
Thus \eqref{lem-redsol-res-cond3} gives $v_1 - v_2 \in \kerneun{\sig_1}{\sig_2}$.

\medskip

\textit{(iii):} 
Estimate \eqref{est-redsol-2} follows from \eqref{est-redsol}, Lemma~\ref{lem-hardy-mel} and
Lemma~\ref{lem_mellin_domain}: Indeed, applying Lemma~\ref{lem_mellin_domain}
with $\beta=2$, we have that
\begin{align*}
\ZMS{r^{2}\fff}{\ell}{\alp+2} \stackrel{\eqref{wedge-1}}{\le} \sum_{j=0}^{\ell}\max\Big\{\left|\tfrac{\sclOme{\ell+\alp+2}}{\sclOme{\ell+\alp}}\right|^j,1\Big\}\ZMS{\fff}{\ell}{\alp} \lesssim_{\rpconst,\resconst,\ell} \ZMS{\fff}{\ell}{\alp}.
\end{align*}
On the other hand, applying Lemma~\ref{lem-hardy-mel} with $\ell$ replaced by $\ell+\frac12$ and $\beta=1$, we have with $\scl{\ell+\frac12+\alp+\beta}=\sclOme{\ell+\alp+2}$ and $\scl{\ell+\frac12+\alp}=\sclOme{\ell+\alp+1}$ that
\begin{align*}
\HMS{r\ggg}{\ell+\frac12}{\alp+1} \stackrel{\eqref{hardy-3}}{\le} \max\Big\{\left|\tfrac{\sclOme{\ell+\alp+2}}{\sclOme{\ell+\alp+1}}\right|^{\ell+\frac12},1\Big\} \HMS{\ggg}{\ell+\frac12}{\alp} &\lesssim_{\rpconst,\resconst,\ell} \HMS{\ggg}{\ell+\frac12}{\alp}.
\end{align*}
\textit{(iv):}
Follows from \eqref{lem-redsol-res-cond3} and the representation of $\text{Res}_{\psi}(\pi_k)$ for $k\ne 0$ in the proof of part $(ii)$, if one observes Lemma \ref{lem-inter1-wedge} below.
\end{proof}
Finally we give a lemma which is a multiplicative variant of a corresponding lemma in \cite{GiacomelliGnannKnuepferOtto-2014}.
\begin{lemma} \label{lem-inter1-wedge} Let $\bet_1 < \bet < \bet_2$. Then there
  is $C_\bet < \infty$, such that for any $v \in L_{\rm loc}^1(\Ome)$ and
  $\ccc \in L_{\rm loc}^1((0,\ang))$
  \begin{align} \label{est-inhom} %
        \NTL{\ccc}{(0,\ang)} \ %
        &\lesssim \  C_\bet\ZMalp{\vvv}{0}{\bet_1}^{\frac{\bet_2-\bet}{\bet_2-\bet_1}} \ZMalp{\vvv-c r^\bet}{0}{\bet_2}^{\frac{\bet-\bet_1}{\bet_2-\bet_1}}, %
      \end{align}
      as long as both factors on the right hand side are finite.
    \end{lemma}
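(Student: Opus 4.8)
The plan is to freeze the angular variable, reduce the asserted estimate to a one–dimensional weighted interpolation inequality in the radial variable, and prove that one by a dyadic decomposition of $\R_+$ followed by an optimisation over the dyadic scale.

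\textbf{Reduction to a pointwise-in-$\phi$ estimate.} Write $\vartheta_1:=\tfrac{\bet_2-\bet}{\bet_2-\bet_1}$ and $\vartheta_2:=\tfrac{\bet-\bet_1}{\bet_2-\bet_1}$, so $\vartheta_1,\vartheta_2\in(0,1)$ and $\vartheta_1+\vartheta_2=1$. Unravelling the first item of Definition~\ref{def-homspace} (with $\gam=1$ and $\sclOme{\sig}=\sig-1$) gives, for $\bet'\in\R$, the identity $\ZMalp{u}{0}{\bet'}^2=\int_0^\ang\big(\int_0^\infty r^{1-2\bet'}|u(r,\phi)|^2\,\d r\big)\,\d\phi$. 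Fix $\phi\in(0,\ang)$, put $w:=\vvv(\cdot,\phi)$, $\ccc:=\ccc(\phi)$ and
\[
I_1(\phi):=\int_0^\infty r^{1-2\bet_1}\labs{w}^2\,\d r,\qquad I_2(\phi):=\int_0^\infty r^{1-2\bet_2}\labs{w-\ccc\,r^{\bet}}^2\,\d r .
\]
I would establish the pointwise bound $\labs{\ccc(\phi)}^2\lesssim_{\bet_1,\bet,\bet_2}I_1(\phi)^{\vartheta_1}I_2(\phi)^{\vartheta_2}$ for a.e.\ $\phi$; integrating it over $(0,\ang)$ and applying H\"older's inequality with exponents $\vartheta_1^{-1},\vartheta_2^{-1}$ yields $\int_0^\ang\labs{\ccc}^2\,\d\phi\lesssim\big(\int_0^\ang I_1\big)^{\vartheta_1}\big(\int_0^\ang I_2\big)^{\vartheta_2}=\ZMalp{\vvv}{0}{\bet_1}^{2\vartheta_1}\,\ZMalp{\vvv-\ccc\,r^{\bet}}{0}{\bet_2}^{2\vartheta_2}$, which is the square of the claim (the constant $C_\bet$ being the root of the implicit constant). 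The standing assumption that both factors be finite is exactly what makes the right–hand side meaningful: if, e.g., $\ZMalp{\vvv}{0}{\bet_1}=0$ then $\vvv\equiv0$, and since $\int_0^\infty r^{1-2\bet_2}\labs{r^{\bet}}^2\,\d r=\infty$ this forces $\ccc\equiv0$, so the inequality is trivial; the case $\ZMalp{\vvv-\ccc\,r^{\bet}}{0}{\bet_2}=0$ is analogous.

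\textbf{The one–dimensional estimate.} Fix $\phi$ with $I_1:=I_1(\phi)$, $I_2:=I_2(\phi)$ both in $(0,\infty)$. From the trivial identity $\ccc\,r^{\bet}=w(r)-\big(w(r)-\ccc\,r^{\bet}\big)$ and the triangle inequality, $\labs{\ccc}^2 r^{2\bet}\lesssim\labs{w(r)}^2+\labs{w(r)-\ccc\,r^{\bet}}^2$ for a.e.\ $r>0$. Integrating over the dyadic ring $A_k:=[2^k,2^{k+1})$, $k\in\Z$, and using that on $A_k$ every power $r^{s}$ is comparable to $2^{ks}$ with $\bet$–dependent constants, one obtains
\[
\labs{\ccc}^2\,2^{(2\bet+1)k}\ \lesssim\ \int_{A_k}\!\big(\labs{w}^2+\labs{w-\ccc\,r^{\bet}}^2\big)\,\d r\ \lesssim\ 2^{(2\bet_1-1)k}\!\int_{A_k}\!r^{1-2\bet_1}\labs{w}^2\,\d r+2^{(2\bet_2-1)k}\!\int_{A_k}\!r^{1-2\bet_2}\labs{w-\ccc\,r^{\bet}}^2\,\d r,
\]
hence $\labs{\ccc}^2\lesssim 2^{ak}I_1+2^{bk}I_2$ for every $k\in\Z$, with real exponents $a<b$ and $a<0$ dictated by $\bet_1,\bet,\bet_2$. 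Minimising the right–hand side over $k\in\Z$ — the optimal continuous choice $k_\star$ being determined by $2^{(a-b)k_\star}=I_2/I_1$, and replacing $k_\star$ by the nearest integer costing only a $\bet$–dependent factor — leaves a bound of the form $\lesssim_{\bet_1,\bet,\bet_2}I_1^{\vartheta_1}I_2^{\vartheta_2}$ with $\vartheta_1,\vartheta_2>0$ and $\vartheta_1+\vartheta_2=1$ as in the statement; the hypothesis $\bet_1<\bet<\bet_2$ is precisely what makes this balancing non-degenerate. This is the required pointwise-in-$\phi$ bound.

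\textbf{The main obstacle.} The one genuine point is the weight–splitting in the one–dimensional step: no single pointwise inequality can by itself separate the two radial weights $r^{1-2\bet_1}$ and $r^{1-2\bet_2}$. Localising to a single dyadic scale $2^k$, on which all weights degenerate to explicit powers of $2^k$, and then optimising the resulting one–parameter family of estimates over $k\in\Z$ is exactly what produces the geometric mean with the correct exponents. The remaining ingredients — the reduction through Definition~\ref{def-homspace}, the H\"older step in $\phi$, the square–root bookkeeping, and the degenerate cases — are routine. (Equivalently, the substitution $\tau=\ln r$ turns the statement into the classical fact that the coefficient of a pure exponential in a function on $\R$ is controlled by the geometric mean of two exponentially weighted $L^2$–norms taken on the two sides of that exponent, proved by the same translation–optimisation; this is the multiplicative counterpart of the additive lemma in \cite{GiacomelliGnannKnuepferOtto-2014}.)
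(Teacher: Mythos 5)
Your strategy is the same as the paper's: restrict to an annulus on which $r^{\bet}$ is comparable to a constant, split by the triangle inequality, bound each piece by the corresponding weighted norm, and optimise over the scale. (The paper works with a single annulus $(\tfrac R2,R)\times(0,\ang)$ and a continuous parameter $R$, keeping the $\phi$-integration throughout, whereas you freeze $\phi$, use dyadic annuli and then H\"older in $\phi$; this is a cosmetic difference.) Your treatment of the degenerate cases is also fine.

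The gap is in the final balancing step. Your displayed dyadic inequality is correct and gives $\labs{\ccc}^2\lesssim 2^{ak}I_1+2^{bk}I_2$ with $a=2(\bet_1-\bet-1)$ and $b=2(\bet_2-\bet-1)$. Optimising over $k$ yields $\labs{\ccc}^2\lesssim I_1^{b/(b-a)}I_2^{-a/(b-a)}$, i.e.\ exponents $\frac{\bet_2-\bet-1}{\bet_2-\bet_1}$ and $\frac{\bet+1-\bet_1}{\bet_2-\bet_1}$ --- not the exponents $\frac{\bet_2-\bet}{\bet_2-\bet_1}$ and $\frac{\bet-\bet_1}{\bet_2-\bet_1}$ of \eqref{est-inhom} --- and the optimisation is nondegenerate only when $a<0<b$, i.e.\ when $\bet_1<\bet+1<\bet_2$. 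Under the stated hypothesis $\bet_1<\bet<\bet_2$ alone one can have $b<0$ (e.g.\ $\bet_1=0$, $\bet=\tfrac12$, $\bet_2=1$), in which case the infimum over $k$ of your right-hand side is $0$; this is consistent only because finiteness of $I_2$ then already forces $\ccc=0$, a case you do not address. So your parenthetical claim that ``the hypothesis $\bet_1<\bet<\bet_2$ is precisely what makes this balancing non-degenerate'' and your assertion that the optimisation returns exponents ``as in the statement'' are both unjustified as written. To be fair, the unit shift is present in the paper itself: its proof writes $R^{2(\bet_1-\bet)}$ where the weight $r^{-2\sclOme{\bet_1}}=r^{2-2\bet_1}$ actually produces $R^{2(\bet_1-\bet-1)}$, and the lemma is invoked in Theorem \ref{thm-res-base-reg} and Proposition \ref{prp-ellneumann} with $\bet\in(\bet_1-1,\bet_2-1)$, so your computation is arguably the correct version of the intended statement. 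But as a proof of \eqref{est-inhom} as literally stated, the last step does not close: you should either carry your exponents through honestly (arriving at the shifted exponents under the shifted hypothesis) or explain how the claimed exponents follow, which your inequality does not give.
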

    \begin{proof}
  We may assume $\ZMalp{\vvv}{0}{\bet_1}, \ZMalp{\vvv-c r^\bet}{0}{\bet_2}\in (0,\infty)$.
  Let $R>0$ and $\Ome_R = (\frac12 R,R) \times (0,\ang)$. Then
  \begin{align} \label{lap-wedge} %
    \NTL{\ccc}{(0,\ang)}^2 \
    &\lesssim_\bet \ R^{-2\bet} \int_{\Ome_R} |\ccc r^\bet|^2 \ \frac{dr}r d\phi \
    \lesssim_\bet  \  R^{-2\bet} \int_{\Ome_R} |\vvv|^2 \ \frac{dr}r d\phi
      + C_\bet R^{-2\bet} \int_{\Ome_R} |\vvv-\ccc r^\bet|^2 \ \frac{dr}r d\phi  \\
    &\lesssim_\bet \  R^{2(\bet_1-\bet)} \ZMalp{\vvv}{0}{\bet_1}^2 + C_\bet R^{2(\bet_2-\bet)} \ZMalp{\vvv-cr^\beta}{0}{\bet_2}^2. \notag
  \end{align}
  Estimate \eqref{est-inhom} follows by minimizing the right hand side in $R$, i.e.~with $R^{\bet_2-\bet_1}$ \ $:=$ $\tfrac{\ZMalp{\vvv}{0}{\bet_1}}{\ZMalp{\vvv-cr^\beta}{0}{\bet_2}}$. %
\qedhere
\end{proof}

\section{Auxiliary Estimates} %

\begin{lemma}[Auxiliary estimate] \label{lem-aux1} %
  Let $\rpconst>0$, $\resconst\in(0,\frac{\pi}{2}]$. Then there is $c>0$ such that for all $\ang>0$, for all $f, g \in \{ \sin, \cos \}$, and for all $\lam \in \C$ with $\ang\labs{\Re\lam}\leq \rpconst$ and
  \begin{align*}
     \dist(\ang|\Re \lam|, g^{-1}(\{0\}))\geq \resconst
  \end{align*}
  we have
  \begin{align}\label{eq-f-cosh} 
  0<\sin(\resconst)\leq \labs{g(\lam\ang)} \leq \cosh(\ang\Im\lam)
  \end{align}
  and
  \begin{align} \label{int-coscos} %
    |\lam| \int_0^\ang \frac{|f(\lam \phi)|^2}{|g(\lam \ang)|^2} \ \d\phi \ %
    \leq \   \max\left\{\frac{2\rpconst \cosh^2(\rpconst)}{\sin^2(\resconst)}, \frac{\rpconst+\sinh(\rpconst)\cosh(\rpconst)}{\sinh^2{\rpconst}}\right\}.
  \end{align}
\end{lemma}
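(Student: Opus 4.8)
The plan is to prove the two assertions of Lemma~\ref{lem-aux1} separately, starting with the pointwise bound \eqref{eq-f-cosh} and then using it together with an elementary estimate on $|f(\lam\phi)|^2$ to obtain the integral bound \eqref{int-coscos}. Throughout I write $\lam = a+is$ with $a=\Re\lam$, $s=\Im\lam$, so that $\ang a$ ranges in $[-\rpconst,\rpconst]$ by hypothesis. For the upper bound in \eqref{eq-f-cosh}, I would use the addition formulas $\sin(\lam\ang)=\sin(a\ang)\cosh(s\ang)+i\cos(a\ang)\sinh(s\ang)$ and $\cos(\lam\ang)=\cos(a\ang)\cosh(s\ang)-i\sin(a\ang)\sinh(s\ang)$, whence $|g(\lam\ang)|^2 = g_0(a\ang)^2\cosh^2(s\ang) + (1-g_0(a\ang)^2)\sinh^2(s\ang) = (1-g_0(a\ang)^2) + g_0(a\ang)^2\cosh^2(s\ang) \le \cosh^2(s\ang)$, where $g_0$ denotes $\sin$ (resp. $\cos$) when $g=\sin$ (resp. $g=\cos$). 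For the lower bound, the same identity gives $|g(\lam\ang)|^2 \ge g_0(a\ang)^2$; since $\dist(\ang|a|, g^{-1}(\{0\}))\ge \resconst$ and $\resconst\le \tfrac\pi2$, monotonicity of $|g_0|$ on the relevant interval yields $|g_0(a\ang)|\ge \sin(\resconst)$, which is $>0$. (Here I use that $g^{-1}(\{0\})$ is $\pi\Z$ for $\sin$ and $\tfrac\pi2+\pi\Z$ for $\cos$, and in each case the distance-$\resconst$ condition forces $|g_0(a\ang)|\ge|\sin(\resconst)|=\sin(\resconst)$.)

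For the integral bound \eqref{int-coscos}, I would first record the elementary inequality $|f(\lam\phi)|^2 \le \cosh^2(s\phi)$, proved exactly as the upper bound above with $\ang$ replaced by $\phi\in[0,\ang]$. Combining with the lower bound from \eqref{eq-f-cosh} gives
\[
|\lam|\int_0^\ang \frac{|f(\lam\phi)|^2}{|g(\lam\ang)|^2}\,\d\phi \;\le\; \frac{|\lam|}{\sin^2(\resconst)}\int_0^\ang \cosh^2(s\phi)\,\d\phi .
\]
Now I would split into two regimes according to whether $|s|$ is small or large compared to $1/\ang$. When $\ang|s|\le \rpconst$ (so $\ang|\lam|\le \sqrt2\,\rpconst$ and $\cosh(s\phi)\le\cosh(\rpconst)$ for $\phi\le\ang$), the integral is bounded by $\ang\cosh^2(\rpconst)$, and multiplying by $|\lam|\le \sqrt2\rpconst/\ang$ gives a bound of order $\rpconst\cosh^2(\rpconst)$; this is where the first term in the maximum in \eqref{int-coscos} comes from (up to checking the constant $2$ works). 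When $\ang|s|\ge \rpconst$, I would instead compute $\int_0^\ang\cosh^2(s\phi)\,\d\phi = \tfrac\ang2 + \tfrac{\sinh(s\ang)\cosh(s\ang)}{2s}$ exactly, bound $|\lam|\le |a|+|s|$, and estimate $|a|\le\rpconst/\ang$ against $|s|\ge\rpconst/\ang$ so that $|\lam|\le 2|s|$; then $|\lam|(\tfrac\ang2+\tfrac{\sinh(s\ang)\cosh(s\ang)}{2s}) \le \ang|s| + \sinh(\ang|s|)\cosh(\ang|s|)\cdot\tfrac{|\lam|}{2|s|}$, and since $t\mapsto (t+\sinh t\cosh t)/\sinh^2 t$ is decreasing in $t$ on $[\rpconst,\infty)$ one gets the second term of the maximum after dividing and multiplying by $\sinh^2(\ang|s|)$ and using $\ang|s|\ge\rpconst$. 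Taking the maximum over the two regimes and dividing by $\sin^2(\resconst)$ yields \eqref{int-coscos}.

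I expect the main obstacle to be the bookkeeping of constants in the large-$|s|$ regime: one has to verify that the quantity $\frac{t + \sinh t \cosh t}{\sinh^2 t}$ is genuinely monotone decreasing for $t\ge\rpconst$ (equivalently for all $t>0$, which follows from a short computation with $\frac{d}{dt}$ and the inequality $\sinh t\cosh t \ge t$) so that its supremum over $t=\ang|s|\ge\rpconst$ is attained at $t=\rpconst$, and to track the factor $|\lam|/|s|\le 2$ carefully so that no extra constant larger than those appearing in the stated maximum creeps in. The small-$|s|$ regime requires only that $\sqrt2\,\rpconst\le 2\rpconst$, which is immediate. Everything else is routine manipulation of hyperbolic identities; I would present the two regimes as two displayed estimates and conclude by taking the maximum, noting that the claim is uniform in $\ang>0$ because $\ang$ enters only through the dimensionless products $\ang a$ and $\ang s$.
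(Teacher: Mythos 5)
Your argument follows the paper's proof essentially step for step: the same elementary identity $\labs{g(z)}^2=g_0(\Re z)^2+\sinh^2(\Im z)$ yields both bounds in \eqref{eq-f-cosh}, and the integral estimate is obtained by the same case split at $\ang|\Im\lam|=\rpconst$, with the same exact evaluation of $\int_0^\ang\cosh^2(s\phi)\,\d\phi$ and the same monotone function $h(t)=(t+\sinh t\cosh t)/\sinh^2 t$. Two points need repair in the write-up. First, your intermediate simplification $(1-g_0(a\ang)^2)+g_0(a\ang)^2\cosh^2(s\ang)$ is not equal to $g_0(a\ang)^2\cosh^2(s\ang)+(1-g_0(a\ang)^2)\sinh^2(s\ang)$; the correct simplification is $g_0(a\ang)^2+\sinh^2(s\ang)$, equivalently $\cosh^2(s\ang)-(1-g_0(a\ang)^2)$. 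This slip is harmless for the two bounds in \eqref{eq-f-cosh}, but it matters for the second point. Second, in the regime $\ang|s|\ge\rpconst$ you cannot proceed from your displayed inequality, whose denominator is $\sin^2(\resconst)$: that route leaves an unbounded factor $\sinh^2(\ang|s|)/\sin^2(\resconst)$. You must instead use the sharper lower bound $\labs{g(\lam\ang)}^2\ge\sinh^2(\ang\Im\lam)$, which follows immediately from the \emph{correct} identity above but not from your erroneous intermediate form (take $g_0(a\ang)=0$ and $|s|$ large to see it fail there). Your phrase ``dividing and multiplying by $\sinh^2(\ang|s|)$'' indicates you intend exactly this cancellation, but the inequality is never stated; once it is inserted, the computation closes exactly as in the paper and the claimed constants come out.
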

\begin{proof}
  By a straightforward calculation we have the elementary formula
    \begin{align}\label{eq-cos-cosh} 
  \labs{f(z)}^2=f(\Re z)^2 + \sinh^2(\Im z) \qquad \text{for $f\in\set{\sin,\cos}$}.
  \end{align}
  By the symmetry properties of $\sin$ and $\cos$ and the condition
  $\resconst\in(0,\frac{\pi}{2}]$ this gives both
  \begin{align*} 
    |f(z)|^2 \ %
    &\leq \ 1 + \sinh^2(\Im z) = \cosh^2(\Im z), \\ 
    |g(\lam\ang)|^2 \ %
    &\geq \ g(\ang\Re \lam)^2\geq \sin^2(\resconst)>0,
  \end{align*}
  which together proves \eqref{eq-f-cosh}.
  
  \medskip

  For the proof of \eqref{int-coscos} we consider two cases: We first assume
  that $\ang\labs{\Im\lam} \leq \rpconst$ holds. In particular
  $\labs{\lam\phi}\leq \ang|\lam|\leq 2\rpconst$ for $\phi\in (0,\ang)$ and
  hence $\labs{f(\lam\phi)}^2\leq \cosh^2(\phi\Im\lam)\leq \cosh^2(\rpconst)$ by
  \eqref{eq-f-cosh} and the symmetry and monotonicity of $\cosh$. Using
  \eqref{eq-f-cosh} we get
  \begin{align*}
    |\lam|\int_0^\ang \frac{|f(\lam \phi)|^2}{|g(\lam \ang)|^2} \ \d\phi \ %
    \leq \ \ang |\lam| \frac{\cosh^2(\rpconst)}{\sin^2(\resconst)} \ %
    \leq \ \frac{2\rpconst \cosh^2(\rpconst)}{\sin^2(\resconst)}.
  \end{align*}
  It remains to consider the case when
  $\ang\labs{\Im\lam} \geq \rpconst \geq \ang\labs{\Re\lam}$. Then
  $|\lam| \leq 2\labs{\Im\lam}$ and by \eqref{eq-cos-cosh} we get
  $|g(\lam\ang)|^2\geq \sinh^2(\ang\Im \lam)$.  Since
  $h(t):=(t + \sinh(t)\cosh(t))/\sinh^2(t)$ is monotonically decreasing for
  $t>0$, we arrive at
  \[ %
  |\lam|\int_0^\ang \frac{\labs{f(\lam \phi)}^2}{\labs{g(\lam \ang)}^2} \ \d\phi %
  \le
    \frac{|\lam|}{\sinh^2(\ang\Im\lam)}\int_0^\ang \cosh^2 (\phi\Im\lam)\ \d\phi =  \frac{|\lam| }{2\labs{\Im\lam}}h(\ang\labs{\Im\lam})\leq h(\rpconst).
  \]
\end{proof}
For the proof of the coercivity estimate we note the following simple fact:
  \begin{lemma}\label{lem-complane} %
    Let $\omega\in [0,\frac\pi 2]$. Then for all $z,w\in \C\setminus\set{0}$ with
    $|\arg z-\arg w|\le 2\omega$ there holds
    \begin{align*}
     \frac{|z+w|}{|z|+|w|}\ge \cos \omega.
    \end{align*}
  \end{lemma}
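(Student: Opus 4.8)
The statement to prove is Lemma \ref{lem-complane}: for $\omega \in [0,\tfrac{\pi}{2}]$ and $z,w \in \C\setminus\{0\}$ with $|\arg z - \arg w| \le 2\omega$, one has $|z+w|/(|z|+|w|) \ge \cos\omega$. The plan is a direct computation via the law of cosines. Write $r := |z|$, $s := |w|$, and let $\vartheta := \arg z - \arg w$ be the angle between the two vectors, so $|\vartheta| \le 2\omega$. Then $|z+w|^2 = r^2 + s^2 + 2rs\cos\vartheta$, while $(|z|+|w|)^2 = r^2 + s^2 + 2rs$.

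First I would reduce to showing $r^2 + s^2 + 2rs\cos\vartheta \ge \cos^2\omega\,(r^2+s^2+2rs)$. Since $\cos$ is even and decreasing on $[0,\pi]$, and $|\vartheta|\le 2\omega \le \pi$, we have $\cos\vartheta \ge \cos(2\omega) = 2\cos^2\omega - 1$. Therefore
\begin{align*}
r^2 + s^2 + 2rs\cos\vartheta \ \ge \ r^2 + s^2 + 2rs(2\cos^2\omega - 1) \ = \ (r-s)^2 + 4rs\cos^2\omega \ \ge \ (r^2+s^2)\cos^2\omega + 4rs\cos^2\omega,
\end{align*}
where in the last step I used $(r-s)^2 \ge 0$ together with $\cos^2\omega \le 1$, so that $(r-s)^2 \ge (r-s)^2\cos^2\omega$. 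The right-hand side equals $\cos^2\omega\,(r^2 + s^2 + 2rs)^{}\cdot$ — more precisely $\cos^2\omega\big((r-s)^2 + 4rs\big) = \cos^2\omega\,(r+s)^2$. Taking square roots (everything is nonnegative) yields $|z+w| \ge \cos\omega\,(|z|+|w|)$, which is the claim.

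I do not expect any genuine obstacle here; the only points requiring a little care are that $2\omega$ stays in the range $[0,\pi]$ where $\cos$ is monotone decreasing (which holds precisely because $\omega \le \tfrac{\pi}{2}$), and the harmless bound $(r-s)^2 \ge (r-s)^2\cos^2\omega$ used to absorb the cross term cleanly. One should also note the degenerate cases are fine: if $z+w=0$ this forces $\vartheta = \pi$, hence $\omega = \tfrac{\pi}{2}$ and $\cos\omega = 0$, so the inequality holds trivially.
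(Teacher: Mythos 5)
Your proof is correct, but it takes a different route from the paper's. You argue algebraically via the law of cosines: $|z+w|^2 = r^2+s^2+2rs\cos\vartheta$ with $|\vartheta|\le 2\omega\le\pi$, then use monotonicity of cosine and the double-angle identity $\cos(2\omega)=2\cos^2\omega-1$ to reduce the claim to $(r-s)^2+4rs\cos^2\omega\ge\cos^2\omega\,(r+s)^2$, which follows from $(r-s)^2\ge(r-s)^2\cos^2\omega$. All steps check out, including your handling of the degenerate case $z+w=0$. The paper instead rotates the picture so that the bisector of the angle between $z$ and $w$ lies on the positive real axis; then $\Re z/|z|=\Re w/|w|=\cos\psi$ for some $\psi\in[0,\omega]$, and the chain $(|z|+|w|)\cos\omega\le(|z|+|w|)\cos\psi=\Re(z+w)\le|z+w|$ finishes in one line. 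The projection argument is shorter and makes the geometric content (the constant $\cos\omega$ is exactly the projection onto the bisector in the worst case) transparent, while your computation is more mechanical and avoids having to set up the rotation and the half-angle $\psi$ explicitly. Both are complete and elementary; there is nothing to fix.
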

  \begin{proof}
    After rotation we may assume that $\Re z, \Re w\ge 0$ with $\tfrac{\Re z}{|z|}=\tfrac{\Re w}{|w|}=\cos\psi$ for some $\psi\in [0,\omega]$.
    Therefore we have
    \begin{align*}
    (|z|+|w|)\cos \omega &\le  (|z|+|w|) \cos \psi = \Re z + \Re w = \Re(z+w) \le |z+w|.
    \qedhere
    \end{align*}
  \end{proof}

\paragraph{Data Availability Statement.} Data sharing is not applicable to this article as no new data were created or analyzed in this study.
\paragraph{Acknowledgements.} MB acknowledges funding from the project \emph{Analysis of Moving Contact Lines (AnaCon)} (with project number OCENW.M20.194 of the research programme ENW - M) and MVG appreciates funding from the project \emph{Codimension two free boundary problems} (with project number VI.Vidi.223.019 of the research programme ENW - Vidi) both financed by the Dutch Research Council (NWO).
HK gratefully acknowledges support by Germany’s Excellence Strategy EXC-2181/1 – 390900948.
NM is supported by NSF grant DMS-1716466 and by Tamkeen under the NYU Abu Dhabi Research Institute grant of the center SITE.
FBR is supported by the Vici grant VI.C.212.027 of the NWO.

\small

\end{document}